\newcommand{\indentalign}{\hspace{0.3in}&\hspace{-0.3in}}
\newcommand{\la}{\langle}
\newcommand{\ra}{\rangle}
\newcommand{\defeq}{\stackrel{\rm{def}}{=}}
\newcommand{\supp}{\operatorname{supp}}
\newcommand{\hi}{\textnormal{hi}}
\newcommand{\lo}{\textnormal{lo}}
\newcommand{\sgn}{\operatorname{sgn}}
\newcommand{\pv}{\operatorname{pv}}
\newcommand{\am}{\mathfrak{a}}
\newcommand{\cm}{\mathfrak{c}}
\newtheorem{theorem}{Theorem}
\newtheorem{proposition}[theorem]{Proposition}
\newtheorem{lemma}[theorem]{Lemma}
\newtheorem{corollary}[theorem]{Corollary}
\theoremstyle{remark}
\numberwithin{equation}{section}
\numberwithin{theorem}{section}
\numberwithin{table}{section}
\numberwithin{figure}{section}
\title[BO soliton dynamics]{Benjamin-Ono soliton dynamics in a \\ slowly varying potential revisited}
\author{Justin Holmer}
\email{justin\_holmer@brown.edu}
\address{Brown University, Box 1917, 151 Thayer St., Providence, RI 02912, USA}
\author{Katherine Zhiyuan Zhang}
\email{zz3463@nyu.edu}
\address{Warren Weaver Hall, Office 519, 251 Mercer St, New York, NY 10012, USA}
\begin{document}

\maketitle

\begin{abstract}
The Benjamin Ono equation with a slowly varying potential is
$$
\text{(pBO)} \qquad u_t + (Hu_x-Vu + \tfrac12 u^2)_x=0
$$
with $V(x)=W(hx)$, $0< h \ll 1$, and $W\in C_c^\infty(\mathbb{R})$, and $H$ denotes the Hilbert transform.  The soliton profile is 
$$Q_{a,c}(x) = cQ(c(x-a)) \,, \text{ where } Q(x) = \frac{4}{1+x^2}$$ 
and $a\in \mathbb{R}$, $c>0$ are parameters.   For initial condition $u_0(x)$ to (pBO) close to $Q_{0,1}(x)$, it was shown in Zhang \cite{Z} that the solution $u(x,t)$ to (pBO) remains close  to $Q_{a(t),c(t)}(x)$ and approximate parameter dynamics for $(a,c)$ were provided, on a dynamically relevant time scale.    In this paper, we prove \emph{exact} $(a,c)$ parameter dynamics.  This is achieved using the basic framework of the paper \cite{Z} but adding a \emph{local virial} estimate for the linearization of (pBO)  around the soliton.  This is a local-in-space estimate averaged in time, often called a \emph{local smoothing} estimate, showing that effectively the remainder function in the perturbation analysis is smaller near the soliton than globally in space.    A weaker version of this estimate is proved in Kenig \& Martel \cite{KM} as part of a ``linear Liouville'' result, and we have adapted and extended their proof for our application.
\end{abstract}

\section{Introduction}

Let $H$ be the Hilbert transform, corresponding to the Fourier multiplier $i\sgn \xi$, so that the operator $D=-\partial_x H$ is the positive operator with Fourier multiplier $|\xi|$.   (For further elaboration on notational conventions, see \S \ref{S:notation}.)  The Benjamin-Ono equation (BO) is
$$
\text{(BO)} \qquad 
\partial_t u = \partial_x (-H\partial_x u - \frac12 u^2)
$$
with $u$ real-valued, on $\mathbb{R}$.  The equation (BO) is a model for 1D long internal waves in a stratified fluid, introduced by Benjamin \cite{Ben} and Ono \cite{Ono}.  By working with the three transformations $u(x,-t)$, $u(-x,t)$, and $-u(x,t)$ we are in fact covering all four sign choices in $\partial_t u = \partial_x (\pm H\partial_x u \pm \frac12 u^2)$, and hence we do not have a distinction between ``focusing" or ``defocusing" problems for this equation.   Moreover, (BO) also satisfies translational invariance in space and has the scaling invariance, for $\lambda>0$,
$$u \text{ solves (BO)} \implies u_\lambda(x,t) = \lambda u(\lambda x, \lambda^2 t) \text{ solves (BO)}$$
(BO) is completely integrable, so it enjoys infinitely many conserved quantities \cite{BK}, the first three of which are
$$
M_0(u) = \frac12 \int u^2\,, \quad E_0(u) = -\frac12\int uHu_x - \frac{1}{6} \int u^3, $$
$$E_1(u) = \frac12\int u^2_x +\frac{3}{8}\int u^2 Hu_x -\frac{1}{16} \int u^4 $$
Tao \cite{Tao} proved local well-posedness of (BO) in $H_x^1$, and global well-posedness follows using the aforementioned conserved quantities.  This result followed several earlier results at higher regularity, including \cite{Saut, Iorio, GV, Ponce, KT, KK}.  The innovation Tao introduced was a gauge transformation to reduce the effective regularity of the nonlinearity. Following \cite{Tao}, there were a few improvements to even lower regularity, using the gauge transformation idea combined with bilinear Strichartz estimates, culminating in the $L^2$ result by Ionescu \& Kenig \cite{IK} and Molinet \& Pilod \cite{MolPil}.

More recently, there have been substantial innovations in the study of (BO) and related equations.  Saut \cite{Saut2019} provides an overview of the derivations from physical models and the mathematical literature.  Mu\~noz \& Ponce \cite{MP2019} and Linares, Mendez, \& Ponce \cite{LMP2021} obtained local $L^\infty$ estimates on an expanding spatial window as $t\to \infty$.  A normal forms procedure in the format of the ``quasilinear modified energy method'' was developed by Ifrim \& Tataru \cite{IT2019} resulting in a new dispersive decay estimate for $L^2$ weighted initial data and its application to a new proof of $L^2$ global well-posedness.  Kim \& Kwon \cite{KK2019} obtained $H^{1/2}$ scattering for defocusing higher-power nonlinearity via monotonicity estimates and the concentration compactness and rigidity method.   A unique continuation result for (BO) was obtained by Kenig, Ponce, \& Vega \cite{KPV2020}.  Deng, Tzvetkov,  \& Visciglia \cite{TV2013, TV2014, TV2015, D2015, DTV2015} constructed invariant measures concentrated on Sobolev spaces $H^s(\mathbb{T})$ and Sy \cite{S2018} constructed a measure concentrated on $C^\infty(\mathbb{T})$.  There have been advances in the integrability and inverse scattering theory associated to (BO).  In particular, G\'erard, Kappeler, and Topalov \cite{GKT2020} have studied the Lax operator on the $\mathbb{T}$, while Wu \cite{Wu2016, Wu2017} has studied the direct scattering problem on $\mathbb{R}$.  Miller \& Wetzel \cite{MW2016, MW2016b} have done calculations for rational data and studied the small dispersion limit.  Soliton dynamics and blow-up have been considered by Gustafson, Takaoka, Tsai \cite{GTT}, Kenig \& Martel \cite{KM}, Martel \& Pilod \cite{MP2017}, and Zhang \cite{Z}.  New numerical simulations for solitons and blow-up have been produced by Ria\~no, Roudenko, Wang \& Yang \cite{RWY2020, RRY2021}.  Boundary value problems have been studied by Hayashi \& Kaikina \cite{HK2012} and control and stabilization by Laurent, Linares, \& Rosier \cite{LLR2015}.

In this paper, our interest is in soliton dynamics.   Amick \& Toland \cite{AT} and Frank \& Lenzmann \cite{FrLen} showed that there is a unique (up to translations) nontrivial $L^\infty$ solution to 
$$Q -HQ' - \frac12 Q^2 =0$$
given by
$$Q(y) = \frac{4}{1+y^2}$$
For any $c>0$, $a\in \mathbb{R}$, taking $Q_{a,c}(x) = cQ(c(x-a))$ we have
\begin{equation}
\label{E:scaled-sol}
cQ_{a,c} - HQ_{a,c}' - \frac12 Q^2_{a,c} =0
\end{equation}
Then
$$u(x,t) = Q_{ct,c}(x) = cQ(c(x-ct))$$
solves (BO) and we call it the \emph{single soliton} solution to distinguish it from the exact multi-soliton solutions \cite{Case} arising from the completely integrable structure.   The (BO) soliton is only decaying at infinity at power rate unlike for the famous Korteweg-de Vries (KdV) model, where the soliton enjoys exponential decay.

From the physical standpoint, it is of interest to consider the effects of perturbations of the equation on the dynamics of solitons.   For example, Matsuno \cite{Mat01, Mat02} derived a higher-order BO equation
$$\partial_t u + 4uu_x + Hu_{xx} = \epsilon f(u,u_x,u_{xx},u_{xxx})$$
where the right side is a specific nonlinear function, and carried out a heuristic multiscale analysis of the effect of this perturbation on the dynamics of multisolitons.   This equation considered in \cite{Mat01, Mat02} describes the unidirectional motion of interfacial waves in a two-layer fluid system, and provides motivation to consider the mathematical theory of Hamiltonian perturbations of (BO), for which we consider the following model case.  
$$
\text{(pBO)} \qquad
\partial_t u =  \partial_x (- H\partial_x u + V u - \frac12 u^2) 
$$
with \emph{slowly varying} potential
\begin{equation}
\label{E:potential}
V(x) = W(hx) \,, \qquad W\in C_c^\infty(\mathbb{R}) \text{ and } 0<h \ll 1
\end{equation}

The well-posedness of (pBO) in $H^1$ can be proved by adapting the gauge-transform method of Tao \cite{Tao}.   The Hamiltonian has been perturbed to
$$E(u) = E_0(u) + \frac12 \int Vu^2$$
(pBO) is of the form $\partial_t u = JE'(u)$, where $J = \partial_x$.    

Our main result (Theorem \ref{T:main} below) is a strengthening of Theorem 1.1 in Zhang \cite{Z}, on the dynamical behavior of near soliton solutions to (pBO).   For the statement, we will need the \emph{reference trajectory}, which is the solution $(\bar A(s), \bar C(s))$ to 
\begin{equation}
\label{E:ref-traj}
\left\{
\begin{aligned}
&\dot {\bar C} =  \bar CW'(\bar A) \\
&\dot {\bar A} =  \bar C- W( \bar A) 
\end{aligned}
\right.
\end{equation}
with initial condition $(\bar A(0),\bar C(0)) = (0,1)$, which is an $h$-independent system.   Using this reference trajectory, we can define $S_0>0$ to be the first time $s>0$ such that  $\bar C(s)=\frac12$ or $\bar C(s)=2$, or take $S_0=+\infty$ if $C(s)$ never reaches either $\frac12$ or $2$.    Thus, for all $0\leq s < S_0$, we have
$$\frac12 \leq \bar C(s) \leq 2$$
Let
\begin{equation}
\label{E:bar-convert}
\bar a(t)=h^{-1}\bar A(ht)\,, \qquad \bar c(t) = \bar C(ht)
\end{equation}
so that
$$\left\{\begin{aligned}
& \dot{\bar c} = h\bar c W'(h \bar a) \\
& \dot{\bar a} = \bar c - W(h \bar a)
\end{aligned} \right.
$$
with initial condition $(\bar a(0), \bar c(0))= (0,1)$.  Now let us introduce the \emph{exact trajectory}, which is the solution $(\hat A(s), \hat C(s))$ to
\begin{equation}
\label{E:exact-traj}
\left\{
\begin{aligned}
&\dot {\hat C} =  \hat CW'(\hat A) + \frac12 \hat C^{-1}h^2W'''(\hat A)\\
&\dot {\hat A} =  \hat C- W( \hat A)  +\frac12 \hat C^{-2} h^2 W''(\hat A)
\end{aligned}
\right.
\end{equation}
with initial condition $(\hat A(0),\hat C(0)) = (0,1)$, which is an $h$-dependent trajectory.  With a conversion analogous to \eqref{E:bar-convert}
\begin{equation}
\label{E:hat-convert}
\hat a(t)=h^{-1}\hat A(ht)\,, \qquad \hat c(t) = \hat C(ht)
\end{equation}
we have that $(\hat a, \hat c)$ solves
\begin{equation}
\label{E:exact-traj-t}
\left\{
\begin{aligned}
& \dot{\hat c} = h\hat c W'(h \hat a) + \frac12 \hat c^{-1}h^3W'''(h\hat a)\\
& \dot{\hat a} = \hat c - W(h \hat a) +\frac12 \hat c^{-2} h^2 W''(h\hat a)
\end{aligned} \right.
\end{equation}
with initial condition $(\hat a(0), \hat c(0))= (0,1)$.   

By elementary ODE perturbation (Lemma \ref{L:gronwall}), 
$$|\hat A - \bar A| \lesssim h^2e^{\mu s}\,, \qquad |\hat C - \bar C| \lesssim h^2 e^{\mu s}$$ 
for some $\mu>0$, which under the transformations \eqref{E:bar-convert}, \eqref{E:hat-convert} converts to 
\begin{equation}
\label{E:hatbar-compare}
|\hat a - \bar a| \lesssim he^{\mu ht}\,, \qquad |\hat c - \bar c| \lesssim h^2 e^{\mu ht}
\end{equation}

Our main theorem is the following.

\begin{theorem}[exact effective dynamics for (pBO)]
\label{T:main}
Given a potential $W\in C_c^\infty(\mathbb{R})$ (as in \eqref{E:potential}), there exists $\kappa \geq 1$, $\mu>0$, and $0<h_0\ll 1$ such that the following holds.  Let $0< h \leq h_0$ and suppose the initial data $u_0\in H_x^1$ satisfies
$$\| u_0(x) - Q_{0,1}(x) \|_{H_x^{1/2}} \leq h^{3/2}$$
Letting $(\hat a, \hat c)$ be the exact trajectory \eqref{E:exact-traj-t}, then $u$ solving (pBO) with initial condition $u_0$ satisfies
\begin{equation}
\label{E:control}
\| u(x,t) - Q_{\hat a(t),\hat c(t)}(x) \|_{H_x^{1/2}} \leq \kappa h^{3/2}e^{\mu ht}
\end{equation}
for $0\leq t \leq T_0=h^{-1}\min( \frac14\mu^{-1} \ln h^{-1}, S_0)$.  
\end{theorem}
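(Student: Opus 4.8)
The plan is to use the standard modulation/Lyapunov framework, as in \cite{Z}, but to close the estimate to \emph{exact} parameter dynamics rather than up to polynomial errors by inserting the local virial (local smoothing) estimate for the linearized flow. First I would decompose the solution as
$$u(x,t) = Q_{a(t),c(t)}(x) + w(x,t),$$
where the modulation parameters $(a(t),c(t))$ are defined by imposing the two orthogonality conditions $\langle w, \partial_a Q_{a,c}\rangle = \langle w, \partial_c Q_{a,c}\rangle = 0$ (or the natural symplectically-adjusted versions); the implicit function theorem gives these as long as $\|w\|_{H^{1/2}}$ stays small, with $c(t)$ trapped in $[\tfrac12,2]$ on the relevant interval. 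Plugging this ansatz into (pBO) and using \eqref{E:scaled-sol} yields an evolution equation $\partial_t w = \mathcal{L}_{a,c} w + (\text{modulation terms involving } \dot a - c,\ \dot c) + (\text{potential forcing } \sim h Q) + (\text{quadratic } \sim w^2)$, where $\mathcal{L}_{a,c}$ is the linearized (BO)-soliton operator conjugated to the moving frame. Projecting onto the orthogonality directions produces the modulation ODEs for $(\dot a, \dot c)$; the key computational point, already essentially in \cite{Z}, is that expanding the potential $V(x) = W(hx)$ in a Taylor series about $x = a$ and computing the relevant inner products against the (explicit, rational) soliton and its derivatives produces exactly the right-hand sides of \eqref{E:exact-traj-t} \emph{through order $h^3$ and $h^2$ respectively}, with remainder controlled by $h\|w\|_{L^2_{\mathrm{loc}}} + \|w\|_{L^2}^2 + h^3$.

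The core new ingredient is the energy–virial estimate. I would set up a nonlinear Lyapunov functional of the form
$$\mathcal{F}(t) = E(u) - c(t) M(u) + (\text{lower-order corrections pinning the } a\text{-translation}),$$
designed so that, modulo the modulation parameters, $\mathcal{F}(t) - \mathcal{F}(Q_{a,c})$ is comparable to $\|w\|_{H^{1/2}}^2$ by the coercivity of the linearized energy on the orthogonal complement of the two-dimensional null/negative subspace of $\mathcal{L}_{a,c}$ (this coercivity is classical for the (BO) soliton, cf.\ \cite{FrLen, KM}). Differentiating $\mathcal F$ in time, the quadratic-in-$w$ part of the time derivative would normally only be bounded by $h\|w\|_{H^{1/2}}^2$-type terms; instead I would add a virial multiplier piece $\int \phi(x) w\, \partial_x(\text{something}) w$ with a well-chosen bounded weight $\phi$ localized near the soliton, and invoke the \emph{local virial / local smoothing estimate for the linearization of (pBO) around the soliton} — the adaptation of Kenig–Martel \cite{KM} advertised in the abstract — to gain
$$\int_0^{T} \|w(t)\|_{L^2_{\mathrm{loc}}}^2\, dt \lesssim \|w(0)\|_{H^{1/2}}^2 + (\text{forcing}).$$
Combining this time-integrated local bound with the pointwise energy bound lets me upgrade the crude modulation estimate: the dangerous term $h\|w\|_{L^2_{\mathrm{loc}}}$ in $\dot a - c$ and $\dot c$ is now \emph{integrable in time with a gain}, so that $(a,c)$ agrees with the exact trajectory $(\hat a,\hat c)$ up to $O(h^{3/2} e^{\mu h t})$ errors rather than merely $O(h)$.

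Assembling the pieces: a continuity (bootstrap) argument on the interval $0 \le t \le T_0$ with the hypothesis $\|w(t)\|_{H^{1/2}} \le 2\kappa h^{3/2} e^{\mu h t}$, where I (i) use $\mathcal F$ and its time derivative plus the local virial bound to recover $\|w(t)\|_{H^{1/2}} \le \kappa h^{3/2} e^{\mu h t}$ with $\kappa$ absorbing all universal constants, provided $h_0$ is small and $t \le h^{-1}\cdot \tfrac14 \mu^{-1}\ln h^{-1}$ so the exponential factor stays $\lesssim h^{-1/4}$; (ii) feed the improved $w$-bound into the modulation ODEs and use Lemma \ref{L:gronwall}-type comparison with \eqref{E:exact-traj-t} to get $|a(t) - \hat a(t)| + |c(t) - \hat c(t)| \lesssim h^{1/2} e^{\mu h t}$; (iii) convert $\|u - Q_{a,c}\|_{H^{1/2}} + |a-\hat a|\,\|\partial_a Q\| + |c - \hat c|\,\|\partial_c Q\| $ into the stated bound \eqref{E:control} on $\|u - Q_{\hat a,\hat c}\|_{H^{1/2}}$. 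The main obstacle I anticipate is the local virial estimate itself: Kenig–Martel's version is proved in the context of a linear Liouville theorem and is somewhat weaker, so one must carefully construct the virial weight $\phi$ so that the commutator $[\mathcal L_{a,c}, \phi]$ is sign-definite modulo the two unstable/neutral modes, handle the nonlocal Hilbert-transform structure (which, unlike KdV, prevents simple integration-by-parts and forces commutator estimates for $D = -\partial_x H$ against the weight), and control the coupling to the slowly varying potential $V$ and to the modulation terms — all while keeping the constants uniform for $c \in [\tfrac12,2]$. A secondary subtlety is matching the orthogonality conditions and the choice of $\mathcal F$ so that the \emph{exact} (not merely leading-order) $h^2$- and $h^3$-terms in \eqref{E:exact-traj-t} emerge without leftover secular terms.
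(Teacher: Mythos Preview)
Your overall architecture --- modulation decomposition, energy/coercivity control of the remainder, local virial to upgrade time-integrated local norms, then ODE comparison with the exact trajectory --- is correct and is what the paper does. But there is a genuine gap in the modulation step, and it is exactly the point at which the paper's argument is more delicate than you indicate.

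With the orthogonality conditions you write, $\langle w,\partial_a Q_{a,c}\rangle=\langle w,\partial_c Q_{a,c}\rangle=0$ (equivalently $\langle w,Q'\rangle=\langle w,(yQ)'\rangle=0$), the $\dot a$--equation does \emph{not} have remainder $h\|w\|_{L^2_{\mathrm{loc}}}+\|w\|^2+h^3$. Differentiating the orthogonality and isolating the linearized term gives a contribution $\langle \partial_y\mathcal L_c w,\,\partial_y Q_c\rangle=-\langle w,\mathcal L_c Q_c''\rangle$, which is genuinely of size $\|w\|_{L^2_{\mathrm{loc}}}$ with \emph{no} $h$ in front; the same happens for the paper's ``nonsymplectic'' conditions $\langle v,Q\rangle=\langle v,Q'\rangle=0$ (see Lemma~\ref{L:nonsymp-ODE-control}, where the term $\tfrac{1}{4\pi}\cm^{-3}\langle v,\mathcal L_c\partial_y^2 Q_\cm\rangle$ survives in the $\dot\am$ equation). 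Feeding the local virial bound $\|w\|_{L^2_TL^2_{\mathrm{loc}}}\lesssim h^{3/2}$ into $\int_0^T\|w\|_{L^2_{\mathrm{loc}}}\,dt$ with $T\sim h^{-1}$ yields only $|a-\hat a|\lesssim h$, hence $\|u-Q_{\hat a,\hat c}\|_{H^{1/2}}\lesssim h$, not $h^{3/2}$. (Separately, your step~(ii) asserts $|a-\hat a|\lesssim h^{1/2}$, which is both inconsistent with your own remainder claim and far too weak for step~(iii) to recover \eqref{E:control}; you need $|a-\hat a|\lesssim h^{3/2}$ at least.)

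The paper's resolution is to run \emph{two} decompositions. First the nonsymplectic one $\langle v,Q\rangle=\langle v,Q'\rangle=0$, because these are precisely the conditions under which the spectral/positivity estimates behind the local virial (Propositions~\ref{P:L2-spec-est} and \ref{P:tildeLspec}, hence Theorem~\ref{T:local-virial}) are available; this yields Proposition~\ref{P:nonsymp-estimates}. Then one converts (Lemma~\ref{L:coord-conv}, Corollary~\ref{C:convert}) to the \emph{symplectic} decomposition $\langle w,Q\rangle=\langle w,yQ\rangle=0$. The point of the symplectic choice is that the dangerous linear term now vanishes identically: differentiating $\langle w,yQ_c\rangle=0$ produces $\langle \partial_y\mathcal L_c w,\,yQ_c\rangle=-\langle w,\mathcal L_c(yQ_c)'\rangle=\langle w,Q_c\rangle=0$ by the other orthogonality and $\mathcal L(yQ)'=-Q$. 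With that term gone, Lemma~\ref{L:symp-ODE-control} gives modulation remainders genuinely of order $h^3+\|w\|_{L^2_{\mathrm{loc}}}^2+h^2(\ln h^{-1})\|w\|_{L^2_{\mathrm{loc}}}$, and Gronwall against \eqref{E:exact-traj} yields $|a-\hat a|\lesssim h^2$, which suffices. Your sketch collapses these two roles into a single orthogonality choice; that is where it breaks.
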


In Zhang \cite{Z}, this result is obtained without specific equations for $\hat a$, $\hat c$, only the comparison estimate \eqref{E:hatbar-compare}.  For this reason, we refer to the result as providing \emph{exact dynamics} -- the precision of the parameter dynamics meets (in fact exceeds) the bound on the remainder \eqref{E:control}.  Notice that the $|\hat a-\bar a|$ estimate in \eqref{E:hatbar-compare} is not sufficiently strong to replace $(\hat a, \hat c)$ in \eqref{E:control} by $(\bar a, \bar c)$.   If this exchange were made, the upper bound in \eqref{E:control} would need to be replaced with $h e^{\mu ht}$.  Although in Theorem \ref{T:main}, the starting point is taken to be $(a(0),c(0))=(0,1)$, by scaling and translating the equation and potential, this result covers the case of general initial starting point $(a(0),c(0))$, with $a(0)\in \mathbb{R}$ and $c(0)>0$.  An overview of the literature on results on the dynamics of solitons in a slowly varying potential are given in the introduction of Zhang \cite{Z}.

The proof of Theorem \ref{T:main} relies upon an adaptation of a \emph{local virial} estimate in Kenig \& Martel \cite{KM}.  Let 
$$\mathcal{L} =-H\partial_y +1 -Q$$ be the linearized operator and we consider $v$ solving
\begin{equation} \label{E:D1C}
\partial_t v = \mathbb{P} v + \partial_y \mathcal{L} v + \partial_y  f 
\end{equation}
with 
\begin{equation}  \label{def:P}
\mathbb{P} v : = \frac{\la v , \mathcal{L} \partial_y^2 Q \ra }{\| \partial_y Q \|_{L^2}^2} \partial_y Q . 
\end{equation}
where $f=f(y,t)$ is a forcing function.  We will assume that $v$ satisfies the \emph{nonsymplectic} orthogonality conditions
\begin{equation}
\label{E:D1E}
\la v, Q \ra =0 \,, \qquad \la v, Q' \ra =0
\end{equation}

For any $\gamma>0$ and $y_0\in \mathbb{R}$, let
\begin{equation}
\label{E:g-def}
g_{\gamma,y_0}(y) = \gamma^{-1} \arctan( \gamma(y-y_0))
\end{equation}
so that
$$g_{\gamma,y_0}'(y) = \frac{1}{1+\gamma^2(y-y_0)^2} = \la \gamma(y-y_0) \ra^{-2}$$  
is a spatial localization factor with scale $\gamma>0$ and center $y_0$.

Define the operator 
$$\mathcal{D}_\gamma := 1+\gamma\partial_y $$ 
and let $\mathcal{D}_\gamma^{-1}$ be the Fourier multiplier operator with symbol $(1+i\gamma \xi)^{-1}$.    The operator $\mathcal{D}_\gamma^{-1}$ will be used in the analysis to conjugate our equation to a dual equation.  We remark that if $f$ is a real-valued function, then $\mathcal{D}_\gamma^{-1} f$ is also real-valued.  Furthermore, we remark that $\mathcal{D}_\gamma^{-1}\mathcal{L}$ is a pseudodifferential operator of order $0$.   

\begin{theorem}[local virial estimate for linearized Benjamin-Ono]
\label{T:local-virial}
There exists $0< \gamma_0 \ll 1$ such that for all $0<\gamma\leq \gamma_0$, for any time length $T>0$,  for any spatial center $y_0\in \mathbb{R}$, and for any solution $v$ to \eqref{E:D1C} satisfying \eqref{E:D1E}, we have 
\begin{equation}
\label{E:D1D}
\|  \la D_y \ra^{1/2} ((g_{\gamma,y_0}')^{1/2} v ) \|_{L_{[0,T]}^2L_y^2}^2 \lesssim_\gamma \|v\|_{L_{[0,T]}^\infty L_y^2}^2 + G_\gamma (f,v)
\end{equation}
where 
\begin{equation}
\label{E:G}
G_\gamma (f, v) = \int_0^T \int g_{\gamma,y_0} \, v \, \partial_y f \, dy dt + \int_0^T \int_y g_{\gamma,0} (\mathcal{D}_\gamma^{-1} \mathcal{L} v) ( \mathcal{D}_\gamma^{-1} \mathcal{L} \partial_y f) \, dy \, dt 
\end{equation}
Importantly,  the implicit constant in \eqref{E:D1D} is independent of $T$, $y_0$.
\end{theorem}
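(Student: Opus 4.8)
The plan is to prove \eqref{E:D1D} by a virial (local smoothing) computation: exhibit a functional $\mathcal{I}(t)$ that is controlled pointwise in time by $\|v(t)\|_{L_y^2}^2$ with a constant depending on $\gamma$ but \emph{not} on $T$ or $y_0$, and whose time derivative, computed along \eqref{E:D1C}, dominates the left side of \eqref{E:D1D} up to the forcing terms collected in $G_\gamma$ and up to errors that can be absorbed once $\gamma$ is small. Concretely I would take (after suitably normalizing the relative weight of the two pieces)
$$\mathcal{I}(t) = \mathcal{I}_1(t) + \mathcal{I}_2(t), \qquad \mathcal{I}_1(t) = \tfrac12\int g_{\gamma,y_0}(y)\,v(y,t)^2\,dy, \qquad \mathcal{I}_2(t) = \tfrac12\int g_{\gamma,0}(y)\,\bigl(\mathcal{D}_\gamma^{-1}\mathcal{L}v\bigr)(y,t)^2\,dy .$$
Since $\|g_{\gamma,\cdot}\|_{L_y^\infty}\lesssim\gamma^{-1}$ and $\mathcal{D}_\gamma^{-1}\mathcal{L}$ is bounded on $L_y^2$ (it is pseudodifferential of order $0$, as recorded before the theorem), we get $|\mathcal{I}_1(t)|+|\mathcal{I}_2(t)|\lesssim_\gamma\|v(t)\|_{L_y^2}^2$ uniformly in $y_0$. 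Integrating $\tfrac{d}{dt}\mathcal{I}$ over $[0,T]$ and estimating the endpoint terms $|\mathcal{I}(0)|+|\mathcal{I}(T)|$ this way produces the $\|v\|_{L^\infty_{[0,T]}L_y^2}^2$ term on the right of \eqref{E:D1D}; it then remains to show that $\int_0^T\tfrac{d}{dt}\mathcal{I}\,dt$ bounds (a constant times) the left side minus $G_\gamma$. Throughout I would first run the computation for $v$ smooth and rapidly decaying, then pass to the general case by approximation, since \eqref{E:D1D} is an a priori estimate.

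For $\tfrac{d}{dt}\mathcal{I}_1$, substitute $\partial_t v = \mathbb{P}v - H\partial_y^2 v + \partial_y v - \partial_y(Qv) + \partial_y f$ (using $\partial_y\mathcal{L}v = -H\partial_y^2 v + \partial_y v - \partial_y(Qv)$). The term $\int g_{\gamma,y_0}\,v\,(-H\partial_y^2 v)\,dy$ is the engine: writing $-H\partial_y^2 = \partial_y D_y$ and integrating by parts, it should produce, after symmetrization, a main term $c\,\|D_y^{1/2}\bigl((g_{\gamma,y_0}')^{1/2}v\bigr)\|_{L_y^2}^2$ with $c>0$, which together with the trivial bound $\|(g_{\gamma,y_0}')^{1/2}v\|_{L_y^2}^2\le\|v\|_{L_y^2}^2$ reconstitutes the full $\langle D_y\rangle^{1/2}$ quantity on the left of \eqref{E:D1D}. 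The transport term equals $-\tfrac12\int g_{\gamma,y_0}'\,v^2$ and is harmless; the soliton term equals $\tfrac12\int g_{\gamma,y_0}\,Q'\,v^2 - \tfrac12\int g_{\gamma,y_0}'\,Q\,v^2$, which is $\lesssim_\gamma\|v\|_{L_y^2}^2$ uniformly in $y_0$ because $Q,Q'\in L_y^1\cap L_y^\infty$; the rank-one term $\langle v,\mathcal{L}\partial_y^2 Q\rangle\,\|Q'\|_{L^2}^{-2}\int g_{\gamma,y_0}\,v\,\partial_yQ$ is likewise $\lesssim_\gamma\|v\|_{L_y^2}^2$; and $\int g_{\gamma,y_0}\,v\,\partial_y f$ is exactly the first term of $G_\gamma$. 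The delicate point is that the integration by parts in the dispersive term generates commutator quadratic forms involving $[D_y^{1/2},\cdot]$ (equivalently commutators of the nonlocal operator $H$ with the localizer), which are of the \emph{same} half-derivative order as the gained term and hence borderline rather than perturbative. Here I would invoke the adaptation of Kenig \& Martel \cite{KM}: rewrite these borderline forms through the conjugation $\mathcal{D}_\gamma^{-1}$, whose symbol $(1+i\gamma\xi)^{-1}$ is tuned to the localization scale $\gamma^{-1}$ of $g_{\gamma,y_0}$, so that the manifestly positive part is isolated and the remaining errors are controlled for $\gamma\le\gamma_0$.

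For $\tfrac{d}{dt}\mathcal{I}_2 = \int g_{\gamma,0}(\mathcal{D}_\gamma^{-1}\mathcal{L}v)\,\mathcal{D}_\gamma^{-1}\mathcal{L}\partial_t v$, substitute \eqref{E:D1C} ($\mathcal{L}$ being $t$-independent and self-adjoint, $\mathcal{L}\partial_t v$ makes sense): the $\partial_y f$ contribution is precisely the second term of $G_\gamma$; the principal term $\int g_{\gamma,0}(\mathcal{D}_\gamma^{-1}\mathcal{L}v)(\mathcal{D}_\gamma^{-1}\mathcal{L}\partial_y\mathcal{L}v)$ yields, by the same $\partial_y D_y$ mechanism now in the conjugated variable, a second nonnegative local smoothing term concentrated near $0$; and the $\mathbb{P}v$ term together with the remaining commutators are $\lesssim_\gamma\|v\|_{L_y^2}^2$. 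The role of $\mathcal{I}_2$ is to absorb exactly those soliton-localized pieces left over from the $\mathcal{I}_1$ computation that are genuinely of local smoothing order (neither $O(\gamma)$-small nor bounded by $\|v\|_{L_y^2}^2$): using that $\mathcal{D}_\gamma^{-1}\mathcal{L}$ is order $0$ and invertible on the subspace cut out by \eqref{E:D1E} — equivalently the coercivity $\langle\mathcal{L}v,v\rangle\gtrsim\|v\|_{L^2}^2$ valid under \eqref{E:D1E}, with $\mathbb{P}$ precisely removing the degenerate $\partial_yQ$ direction — one transfers the near-$0$ smoothing control of $\mathcal{D}_\gamma^{-1}\mathcal{L}v$ back to that of $v$ itself. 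Collecting everything, choosing $\gamma_0$ (and the relative normalization of $\mathcal{I}_1,\mathcal{I}_2$) small enough that all error terms are swallowed by the two positive terms, gives \eqref{E:D1D}. I expect the main obstacle to be exactly this interface: controlling the half-derivative-order commutator errors produced when the nonlocal Hilbert transform meets the spatial localizer — borderline, not perturbative — \emph{uniformly in $y_0$ and in $T$}, which is the technical heart of the Kenig \& Martel argument that must here be extended to accommodate the forcing $f$ and the rank-one term $\mathbb{P}v$.
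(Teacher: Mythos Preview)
Your overall architecture---two virial functionals $\mathcal{I}_1,\mathcal{I}_2$, endpoint control by $\|v\|_{L^\infty L^2}^2$, forcing terms assembling into $G_\gamma$---matches the paper's Propositions~\ref{P:step1} and~\ref{P:local-virial2}. But there is a genuine gap in how you handle the potential terms, and a related misunderstanding of why $\mathcal{I}_2$ is needed.

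In the $\mathcal{I}_1$ computation you write that the soliton term $\tfrac12\int g_{\gamma,y_0}Q'v^2-\tfrac12\int g'_{\gamma,y_0}Qv^2$ and the rank-one term $\mathbb{P}v$ are ``$\lesssim_\gamma\|v\|_{L_y^2}^2$''. Pointwise in $t$ this is true, but after integrating over $[0,T]$ it gives $\lesssim_\gamma\int_0^T\|v(t)\|_{L_y^2}^2\,dt$, which is \emph{not} bounded by $\|v\|_{L^\infty_{[0,T]}L_y^2}^2$ uniformly in $T$. This destroys the $T$-independence you need. In the paper these terms are instead bounded by $\gamma^{-1}\|(g'_{\gamma,0})^{1/2}v\|_{L^2_{[0,T]}L_y^2}^2$ (using that $Q,Q'$ are localized near $y=0$), and this $y_0=0$ local-smoothing quantity is exactly what $\mathcal{I}_2$ must control.

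This forces the second gap. You describe the principal term in $\partial_t\mathcal{I}_2$ as yielding ``a second nonnegative local smoothing term'' by ``the same $\partial_yD_y$ mechanism''. But the equation for $\psi=\mathcal{D}_\gamma^{-1}\mathcal{L}v$ is $\partial_t\psi=\mathcal{L}\partial_y\psi+\cdots$ (note the order: $\mathcal{L}\partial_y$, not $\partial_y\mathcal{L}$), and the virial computation $\partial_t\int g_{\gamma,0}\psi^2$ produces the quadratic form $\langle\tilde{\mathcal{L}}\psi,\psi\rangle$ with $\tilde{\mathcal{L}}=-2H\partial_y+1-yQ'-Q$. The positivity of this form is \emph{not} automatic from dispersion---it is a spectral fact (Proposition~\ref{P:tildeLspec}) requiring the specific orthogonality conditions \eqref{E:v-orth} that $\psi$ inherits, and this is the entire reason for passing to the dual variable: the analogous operator arising from $\partial_t\int g\,v^2$ is \emph{not} known to be coercive under \eqref{E:D1E}. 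Relatedly, the $[D^{1/2},\cdot]$ commutators you flag as ``borderline'' are in fact perturbative of size $O(\gamma^{3/4})$ (Lemmas~\ref{L:gamma-comm}, \ref{L:com5}); the conjugation by $\mathcal{D}_\gamma^{-1}$ is not there to tame them but to reach a problem where $\tilde{\mathcal{L}}$ is the virial operator.
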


Notice that this is a local smoothing type estimate, where $\frac12$ derivative is gained after localizing in space and averaging in time.  Such an estimate can be proved for the linearization around $0$  using the Fourier representation of the propagator and Plancherel's theorem.

A weaker version of this estimate (estimate (3.7) in \cite{KM}) is proved in Kenig \& Martel \cite{KM} as part of their Theorem 3 or `linear Liouville' result (starting on p. 923).  We have isolated and refined this estimate, and to do so we use still essentially follow their method of passing to a dual problem and implementing a positive commutator argument.  But to obtain our version of the estimate, we use a slightly different transformation and corresponding dual problem, prove and employ additional commutator estimates, avoid using a uniform spatial decay hypothesis (as in (3.6) of \cite{KM}), and also invoke an extra spectral estimate.  

We will prove Theorem \ref{T:local-virial} in \S \ref{S:local-virial}, after giving the needed spectral estimates in \S\ref{S:spectral} and commutator estimates in \S \ref{S:commutator}.  In more detail, the paper begins as follows:  in \S\ref{S:notation}, we give an overview of notational conventions used in the paper (definitions of Fourier and Hilbert transforms), together with basic properties of the soliton profile $Q$ and the associated linearized operator $\mathcal{L}$.  In \S\ref{S:spectral}, spectral properties of $\mathcal{L}$ are stated and referenced and key coercivity (or positivity) properties of $\mathcal{L}^2$ and $\mathcal{L}$ are proved.  In \S\ref{S:commutator} commutator lemmas are stated and proved that will be employed in \S\ref{S:local-virial}, which features the proof of Theorem \ref{T:local-virial}. 

The proof of Theorem \ref{T:local-virial} proceeds as follows.  Setting $\psi = \mathcal{D}_\gamma^{-1} \mathcal{L} v$, for $\gamma>0$ chosen sufficiently small, the problem is reformulated in terms of $\psi$.  The equation satisfied by $\psi$ is \eqref{E:veqn}, roughly of the form
$$\partial_t \psi = \mathcal{L}\partial_y \psi + \mathcal{D}_\gamma^{-1}\mathcal{L}\partial_y f$$
and $\psi$ satisfies orthogonality conditions \eqref{E:v-orth}, approximately of the form
$$\la \psi, Q' \ra =0 \, \qquad \la \psi, (yQ)' \ra =0$$
The parameter $\gamma>0$ is taken sufficiently small so that the error terms in both of these approximations  become subordinate.  A new commutator estimate, Lemma \ref{L:com6}, proved via a weighted Schur test and a spectral estimate, Proposition \ref{P:L2-spec-est}, shows that once a local virial estimate is obtained for $\psi$, it can be recovered for $v$.  Thus, the task has been reduced to proving the local virial estimate for $\psi$.  To see what this entails, first consider the (nonlocal) virial identity obtained by computing $\partial_t \int y \psi^2 \, dy$, which upon substituting the equation for $\psi$ and reducing via integration by parts, yields a dominate term of the form $\la \tilde{\mathcal{L}} \psi, \psi \ra$, where $\tilde{\mathcal{L}}$ is given by \eqref{E:tildeLdef},
$$\tilde{\mathcal{L}} \defeq -2H\partial_y + 1 - yQ'-Q$$
In Proposition \ref{P:tildeLspec}, this operator is shown to be positive on the codimension two subspace given by the orthogonality conditions for $\psi$.  This is ultimately the purpose of passing from $v$ to the dual problem for $\psi$, as the operator that results from the computation of $\partial_t \int y v^2 \, dy$ is not known to be positive on the codimension two subspace described by the orthogonality conditions for $v$.  

The local virial estimate in Theorem \ref{T:local-virial} is applied to the (pBO) equation in \S\ref{S:pBO} in the following context.  Let 
\begin{equation}
\label{E:zeta-def01}
\zeta(x,t) = u(x,t) - Q_{\am(t),\cm(t)}(x)
\end{equation}
where the parameters $\am(t)$, $\cm(t)$ are selected to achieve orthogonality conditions \eqref{E:D1E} for $v(y,t) = \zeta(y+\am(t),t)$ (that is, $x=y+\am(t)$).    Theorem \ref{T:local-virial}, together with estimates for parameter trajectories and energy estimates, yields Proposition \ref{P:nonsymp-estimates}, which in particular provides the estimate
\begin{equation}
\label{E:v-est01}
\|v \|_{L_{[0,T]}^\infty H_y^{1/2}} + \sup_n \| v \|_{L_{t\in [0,T]}^2L_{y\in(n,n+1)}^2} \leq \kappa h^{3/2} e^{\mu hT}
\end{equation}
The parameter trajectory estimates and energy estimate appearing in \S\ref{S:pBO} are only a slight modification of those in Zhang \cite{Z}, although they have been reproduced here to make the paper self contained.  The main new ingredient beyond the material in Zhang \cite{Z} is the use of the local virial estimate, Theorem \ref{T:local-virial}.

The exact dynamics reported in Theorem \ref{T:main} are obtained in \S \ref{S:exact} as a consequence of Proposition \ref{P:nonsymp-estimates} using a different decomposition of $u(x,t)$.  Let 
\begin{equation}
\label{E:eta-def01}
\eta(x,t) = u(x,t) - Q_{a(t),c(t)}
\end{equation}
where the parameters $a(t)$, $c(t)$ are selected to achieve the \emph{symplectic} orthogonality conditions 
\begin{equation}
\label{E:D1F}
\la w, Q \ra =0 \,, \qquad \la w, yQ \ra=0
\end{equation}
for $w(y,t) = \eta(x+a(t),t)$ (so here $x=y+a(t)$).  In \S \ref{S:exact}, it is detailed how to convert the estimate \eqref{E:v-est01} to a similar estimate for $w$
\begin{equation} 
\label{E:w-est01}
\begin{aligned}
&\|w\|_{L_{[0,T]}^\infty H_y^{1/2}}  \leq \kappa h^{3/2} e^{\mu hT} \\
&\sup_n \| w\|_{L_{[0,T]}^2L_{y\in(n,n+1)}^2} \leq \kappa h^{3/2} (\ln h^{-1}) e^{\mu hT}
\end{aligned}
\end{equation}
The estimates \eqref{E:w-est01}, together with parameter trajectory estimates for $a(t)$, $c(t)$ analogous to those in \S\ref{S:pBO} for $\am(t)$, $\cm(t)$,  and similar to those in \cite{Z}, yield Theorem \ref{T:main}.  The reason that the advertised exact dynamics are now achievable, but were not in \cite{Z}, is that the local-virial estimate for $w$ (the second estimate of \eqref{E:w-est01}) is now available to control the terms in the ODE comparison estimate (Lemma \ref{L:gronwall}), which effectively achieves a gain of a power of $h$ in comparison to merely using the energy bound for $w$ (the first estimate of \eqref{E:w-est01}).

The method of deriving and applying a local virial estimate for the linearized equation in the setting of nonlinear dispersive PDE to achieve rigidity results on soliton dynamics was introduced about 20 years ago as a ``nonlinear Liouville theorem'' in the case of the $L^2$-critical gKdV by Martel \& Merle \cite{MM-liouville}.  The method of converting from $v$ to a dual problem for $\psi$ was introduced by Martel \cite{Martel-dual} in the gKdV context, where the transformation $\psi= \mathcal{L}v$ is used.  The addition of the regularization operator was used by Kenig \& Martel \cite{KM} in their treatment of asymptotic stability for the Benjamin-Ono equation. They used  $\psi= (1-\delta \Delta)^{-1} \mathcal{L} v$ while we use $\psi=\mathcal{D}_\gamma^{-1}\mathcal{L}v$, since the explicit kernel of the operator $\mathcal{D}_\gamma^{-1}$ facilitates the proof of some commutator estimates that we use to convert the estimate for $\psi$ back to an estimate for $v$.    This method of using a regularized transformation was also applied by Farah, Holmer, Roudenko, \& Yang \cite{FHRY2018} in the study of blow-up of the $L^2$ critical 2D Zakharov-Kuznetsov (ZK) equation, and a different method of handling regularity issues was recently developed in the context of asymptotic stability for solitary waves of the 3D ZK equation by the same authors in \cite{FHRY2020}.

We conclude this paper by showing in \S\ref{S:linear-Liouville} that the linear Liouville property (Theorem 3 in \cite[\S 3]{KM}) that appeared in Kenig \& Martel's proof of asymptotic stability for single soliton solutions to (BO) can be proved using the version of the local virial inequality in Theorem \ref{T:local-virial} instead of the one appearing in \cite{KM}.  

\begin{theorem}[linear Liouville property for linearized Benjamin-Ono]
\label{T:linear-Liouville}
Suppose that $v$ solves \eqref{E:D1C} with $f \equiv 0$ and $v$ satisfies the orthogonality conditions \eqref{E:D1E}.  Moreover, we assume that $\|v \|_{L_{t\in \mathbb{R}}^\infty L_y^2} < \infty$ and $v$ satisfies the following uniform-in-time spatial localization property:  there exists a constant $C>0$ such that for each $y_0\geq 1$ and each $t\in \mathbb{R}$,
\begin{equation}
\label{E:uniformspatial}
\int_{|y|\geq y_0} |v(y,t)|^2 \, dy \leq \frac{C}{y_0}
\end{equation}
Then $v\equiv 0$.
\end{theorem}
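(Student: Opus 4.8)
The plan is to follow the scheme of Kenig \& Martel's proof of their Theorem~3, with Theorem~\ref{T:local-virial} inserted in place of their weaker inequality. First I would run the virial estimate globally in time: taking $f\equiv 0$ in \eqref{E:D1C} makes $G_\gamma(f,v)=0$, so Theorem~\ref{T:local-virial} applied on $[0,T]$ gives a bound uniform in $T$; since \eqref{E:D1C} is autonomous, translating in $t$ and letting $T\to\infty$ on both half-lines yields, for each fixed $\gamma\in(0,\gamma_0]$ and $y_0\in\mathbb{R}$,
$$\int_{\mathbb{R}}\|\la D_y\ra^{1/2}\big((g_{\gamma,y_0}')^{1/2}v(\cdot,t)\big)\|_{L^2_y}^2\,dt \;\lesssim_\gamma\; \|v\|_{L^\infty_t L^2_y}^2 \;<\;\infty,$$
so the time-integral of the local (near $y_0$, at scale $\gamma^{-1}$) $H^{1/2}$-mass of $v$ is finite. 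Second, using that $\mathcal{L}$ is self-adjoint with $\mathcal{L}Q'=0$ (differentiate the soliton equation $Q-HQ'-\tfrac12Q^2=0$), that $\partial_y$ is skew-adjoint, and the definition \eqref{def:P} of $\mathbb{P}$, one checks $\tfrac{d}{dt}\la\mathcal{L}v,v\ra = 2\la\mathcal{L}v,\mathbb{P}v+\partial_y\mathcal{L}v\ra = 0$; hence $E_0:=\la\mathcal{L}v(\cdot,t),v(\cdot,t)\ra$ is constant, and the conditions \eqref{E:D1E} are consistent with \eqref{E:D1C} (indeed the $\mathbb{P}v$ term is designed to preserve them). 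By the coercivity of $\mathcal{L}$ on $\{Q,Q'\}^\perp$ from \S\ref{S:spectral}, $E_0\geq\lambda\|v(\cdot,t)\|_{H^{1/2}_y}^2\geq 0$, so $\|v(\cdot,t)\|_{H^{1/2}_y}$ is bounded uniformly in $t$, and it remains only to prove $E_0=0$.

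Next I would extract a limit object. Given $t_n\to+\infty$, set $v_n(y,t):=v(y,t+t_n)$: it solves \eqref{E:D1C} with $f\equiv 0$, is bounded in $L^\infty_t H^{1/2}_y$ (by the previous paragraph), satisfies \eqref{E:D1E} and \eqref{E:uniformspatial}, and via the equation $\partial_t v_n$ is bounded in $L^\infty_t H^{-2}_{\loc}$. By Aubin--Lions and a diagonal argument, along a subsequence $v_n\to\tilde v$ in $C_{\loc}(\mathbb{R}_t;L^2_{\loc}(\mathbb{R}_y))$ and weak-$*$ in $L^\infty_t L^2_y$, with $\tilde v$ a solution of \eqref{E:D1C} ($f\equiv 0$) satisfying \eqref{E:D1E}, \eqref{E:uniformspatial} and $\|\tilde v\|_{L^\infty_tL^2_y}\leq\|v\|_{L^\infty_tL^2_y}$. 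Because the integral above is finite, $\int_{t_n-1}^{t_n+1}\!\int g_{\gamma,0}'|v|^2\,dy\,dt\to 0$; passing to the limit (local convergence, with \eqref{E:uniformspatial} controlling the spatial tail uniformly) gives $\int_{-1}^{1}\!\int g_{\gamma,0}'|\tilde v|^2\,dy\,dt=0$, so $\tilde v\equiv 0$ on $\mathbb{R}_y\times[-1,1]$, whence $\tilde v\equiv 0$ by uniqueness for \eqref{E:D1C} (a standard $L^2$ energy estimate). Thus every subsequential limit vanishes, so $v(\cdot,t_n)\to 0$ in $L^2_{\loc}$, and with \eqref{E:uniformspatial} in fact $\|v(\cdot,t_n)\|_{L^2_y}\to 0$; as $t_n\to+\infty$ was arbitrary, $\|v(\cdot,t)\|_{L^2_y}\to 0$ as $t\to+\infty$, and symmetrically as $t\to-\infty$.

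Finally I would close the argument. Writing $E_0=\|v(\cdot,t)\|_{\dot H^{1/2}_y}^2+\|v(\cdot,t)\|_{L^2_y}^2-\int Q\,v(\cdot,t)^2$ and letting $t\to\pm\infty$ (the last two terms vanish, since $Q\in L^\infty$) shows $\|v(\cdot,t)\|_{\dot H^{1/2}_y}^2\to E_0$, so it suffices to produce $\tau_n\to+\infty$ with $\|v(\cdot,\tau_n)\|_{\dot H^{1/2}_y}\to 0$. From the finite integral of the first paragraph there is $\tau_n\to+\infty$ with $\|(g_{\gamma,0}')^{1/2}v(\cdot,\tau_n)\|_{H^{1/2}_y}\to 0$, i.e.\ $v(\cdot,\tau_n)\to 0$ in $H^{1/2}_{\loc}$; combining this with \eqref{E:uniformspatial} and the uniform $H^{1/2}$-bound (interpolation yields $H^s$ spatial tightness for every $s<\tfrac12$) gives $v(\cdot,\tau_n)\to 0$ in $H^s(\mathbb{R})$ for all $s<\tfrac12$. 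The remaining step --- upgrading this to $s=\tfrac12$, equivalently ruling out $\dot H^{1/2}$-mass of $v(\cdot,\tau_n)$ escaping to spatial infinity --- is the main obstacle, and it is where the strengthened virial estimate earns its keep: I expect to handle it by using that the implicit constant in Theorem~\ref{T:local-virial} is \emph{independent of $y_0$}, so that for $|y_0|$ large the weight $(g_{\gamma,y_0}')^{1/2}$ sees only the tail of $v$, which by \eqref{E:uniformspatial} is small in $L^2$, while far from the soliton $\mathcal{L}$ differs from $-H\partial_y+1$ by the rapidly decaying potential $-Q$, so that $v$ there is governed up to controllable errors by the constant-coefficient linearization, for which local smoothing is available by Fourier methods; these together control the $\dot H^{1/2}$-mass of $v(\cdot,\tau_n)$ at spatial infinity, forcing $\|v(\cdot,\tau_n)\|_{\dot H^{1/2}_y}\to 0$ and hence $E_0=0$. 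By the coercivity recalled above, $\|v(\cdot,t)\|_{H^{1/2}_y}^2\leq E_0/\lambda=0$ for all $t$, so $v\equiv 0$.
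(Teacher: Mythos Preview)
Your argument has a genuine gap at exactly the point you flag as ``the main obstacle.'' You correctly establish that $E_0=\la\mathcal{L}v,v\ra$ is conserved, that $\|v(\cdot,t)\|_{L^2}\to 0$ as $t\to\pm\infty$, and hence that $\|D^{1/2}v(\cdot,t)\|_{L^2}^2\to E_0$; what remains is to find a sequence along which the global $\dot H^{1/2}$-norm vanishes. Your proposed fix does not work as stated: the right side of Theorem~\ref{T:local-virial} is the \emph{global} $\|v\|_{L^\infty_t L^2_y}^2$, so shifting $y_0$ does not make it small, and even if one invokes constant-coefficient local smoothing in the far field, every such estimate is again local in $y$ and cannot be summed to control the global $\dot H^{1/2}$-norm at a fixed time. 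Nothing in your setup prevents, for each $t$, the $\dot H^{1/2}$-mass from sitting near $|y|\sim R(t)$ with $R(t)\to\infty$---consistent with $\|v(t)\|_{L^2}\to 0$ and with the hypothesis \eqref{E:uniformspatial}.

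The paper closes this gap by a different mechanism that you have omitted entirely: the Kenig--Martel monotonicity estimate (their Proposition~2). Combining the local virial (in the weak form $\sup_{|I|=1}\|v\|_{L^2_t L^2_{y\in I}}<\infty$) with \eqref{E:uniformspatial} yields a time-tempered bound $\int_{\mathbb{R}}\la t-t_0\ra^{-4/5}\|v(t)\|_{L^2_y}^2\,dt<\infty$, which is exactly what is needed to send $t\to-\infty$ in the monotonicity inequality and upgrade the spatial decay from $y_0^{-1}$ to $y_0^{-6/5}$ uniformly in $t$. This improved decay makes $\int|y|v^2\,dy$ finite, so one can run the \emph{unlocalized} virial identity $\partial_t\int y\,v^2$; its right side is $-\|v\|_{L^2}^2-2\|D^{1/2}v\|_{L^2}^2+\int(Q-yQ')v^2$, and integrating in time gives $\|v\|_{L^2_t H^{1/2}_y}<\infty$ globally. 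From this a sequence $t_n$ with $\|v(t_n)\|_{H^{1/2}}\to 0$ is immediate, and conservation plus coercivity of $\la\mathcal{L}v,v\ra$ finishes. Your compactness/Aubin--Lions detour is then unnecessary; conversely, without the monotonicity step, the detour does not suffice.
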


We will prove Theorem \ref{T:linear-Liouville} in \S \ref{S:linear-Liouville} using Theorem \ref{T:local-virial} and a monotonicity lemma from \cite{KM}.

\section{Notation and basic computations}
\label{S:notation}

We fix a convention for the Fourier transform (in dimension $1$) and its inverse:
$$\hat f(\xi) = \int e^{-ix\xi} f(x) \, dx \,, \qquad \check g(x) = \frac{1}{2\pi} \int e^{ix\xi} g(\xi)\, d\xi$$
and the Hilbert transform:
$$Hf(x) = -\frac{1}{\pi} \pv \int_{-\infty}^{+\infty} \frac{f(y)}{x-y} \, dy = -\frac{1}{\pi} \pv \frac{1}{x} * f$$
Hence
$$\widehat{Hf}(\xi) = i (\sgn \xi) \hat f(\xi)$$
The fractional derivative operator $D^s$ is defined as $\widehat{D^sf}(\xi) = |\xi|^s \hat f(\xi)$, and thus $-H\partial_x = D$.   

The soliton profile $Q(x)$ is defined explicitly by the formula
\begin{equation}
\label{E:D2}
Q(x) = \frac{4}{1+x^2}
\end{equation}
We have the partial fraction decomposition
$$\frac{1}{1+y^2} \frac{1}{x-y} = - \frac{1}{1+x^2}\frac{1}{y-x} + \frac{x}{1+x^2} \frac{1}{1+y^2} + \frac{1}{1+x^2} \frac{y}{1+y^2}$$
and hence (since first and third term integrate to zero)
\begin{equation}
\label{E:D4}HQ = -\frac{4}{\pi} \frac{x}{1+x^2} \int \frac{dy}{1+y^2} = \frac{-4x}{1+x^2}=-xQ
\end{equation}
From this, and the easily confirmed identity (direct computation)
\begin{equation}
\label{E:D4p}
xQ' = \frac12Q^2 -2Q
\end{equation}
we obtain that $Q$ solves the soliton profile equation
\begin{equation}
\label{E:D3}
Q -HQ' - \tfrac12 Q^2 = 0
\end{equation}
Amick \& Toland \cite{AT} showed that $Q(x)$ is the unique solution to \eqref{E:D3} such that $Q(x) \to 0$ as $|x|\to \infty$.   For soliton dynamics problems, we introduce the modulation parameters of translation $a$ and scale $c$ and define
$$Q_{a, c} = cQ(c(x-a))$$
so that $Q=Q_{0,1}$.  Note that from \eqref{E:D3}, $Q_{a, c}$ solves the equation
\begin{equation}
\label{E:D3p}
cQ_{a, c} -HQ'_{a, c} - \tfrac12 Q_{a ,c}^2 = 0
\end{equation}

The operator corresponding to linearization of \eqref{E:D3p} at $c=1$, $a=0$ is 
\begin{equation}
\label{E:Ldef}
\mathcal{L} \defeq I - H\partial_x - Q
\end{equation}
We also define a rescaled version of $\mathcal{L}$:
$$\mathcal{L}_c \defeq c - H\partial_x - c Q (cx) , $$ 
whose properties are basically the same as for $\mathcal{L}$. 

By differentiating \eqref{E:D3p} with respect to $a$, and evaluating at $c=1$, $a=0$, we obtain
\begin{equation}
\label{E:Da} 
\mathcal{L}Q' = 0
\end{equation}
By differentiating \eqref{E:D3p} with respect to $c$, and evaluating at $c=1$, $a=0$, we obtain
\begin{equation}
\label{E:Dc}
\mathcal{L} (xQ)' = -Q
\end{equation}
From \eqref{E:D3}, we can deduce
\begin{equation}
\label{E:app11}
\mathcal{L}Q = - \tfrac12 Q^2
\end{equation}
By \eqref{E:D4p}, it follows that \eqref{E:app11} converts to
\begin{equation}
\label{E:app11p}
\mathcal{L}Q = -(xQ)'-Q
\end{equation}

We can use \eqref{E:Dc} and \eqref{E:app11p} to locate two important eigenfunctions and eigenvalues of $\mathcal{L}$, although the complete spectral picture is provided by Proposition \ref{P:Lspec} below.   From \eqref{E:Dc} and \eqref{E:app11p}, for any constants $\alpha$ and $\beta$,
$$\mathcal{L}( \alpha Q + \beta (xQ)') = - (\alpha+\beta) Q - \alpha (xQ)'$$
To find eigenfunctions, we find $\alpha$ and $\beta$ such that
$$\frac{\alpha}{\alpha+\beta} = \frac{\beta}{\alpha} \implies \frac54 \alpha^2 = (\beta + \frac\alpha{2})^2 \implies \beta = \frac{\pm\sqrt{5} -1}{2}\alpha$$
Substituting, we obtain
$$\mathcal{L}e_\pm = \lambda_\pm e_\pm$$
where
\begin{equation}
\label{E:epm}
e_\pm = Q + \frac{ \mp \sqrt 5 - 1}{2} (xQ)' \,, \qquad \lambda_\pm = \frac{\pm \sqrt 5 -1}{2}
\end{equation}
Note that both $Q$ and $(xQ)'$ are even functions, so that $e_\pm$ are even as well.

\section{Spectral estimates}
\label{S:spectral}

For the key operator $\mathcal{L}$ defined in \eqref{E:Ldef}, there is a full description of the spectrum and spectral measure, stated as Proposition \ref{P:Lspec} below, taken from the appendix of \cite{Bennett}.  In this section, we state and prove an ``angle lemma'' (Lemma \ref{L:angle}) and give, as an application, spectral estimates (Proposition \ref{P:L2-spec-est} and Proposition \ref{P:tildeLspec}) needed for the proof of Theorem \ref{T:local-virial}.

\begin{proposition}[from Appendix of \cite{Bennett}]
\label{P:Lspec}
The operator $\mathcal{L}$ has exactly four eigenvalues
$$
\lambda_1= 1,  \quad \lambda_+ = \frac{-1+\sqrt{5}}{2}\approx 0.62, \quad \lambda_0 =0, \quad \lambda_- =\frac{-1-\sqrt{5}}{2}\approx -1.62
$$
and a continuous spectrum $[1, +\infty)$.
Moreover, the corresponding eigenspaces are one-dimensional, the (non-normalized) eigenfunction for $\lambda_0 =0$ is $ Q'$, and the (non-normalized) eigenfunctions for $\lambda_\pm$ are $e_\pm$, respectively, given by \eqref{E:epm}.  Note that $e_\pm$ are even functions, and $Q'$ is an odd function.
\end{proposition}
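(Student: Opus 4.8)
This statement is quoted from the appendix of \cite{Bennett}, so the proof here is really a pointer to that reference; the plan one would follow to reprove it is as follows. Write $\mathcal{L}=(1+D)-Q$ with $D=-H\partial_x$ the Fourier multiplier by $|\xi|$. First, $\mathcal{L}$ is self-adjoint on $L^2(\mathbb{R})$ with domain $H^1(\mathbb{R})$, since $D$ is self-adjoint on $H^1(\mathbb{R})$ and $Q$ is a bounded, real, decaying multiplication operator. For the essential spectrum, note that $Q\to 0$ at spatial infinity while $(1+|\xi|)^{-1}\to 0$ at frequency infinity, so $Q(1+D)^{-1}$ is compact on $L^2(\mathbb{R})$; thus multiplication by $Q$ is relatively compact with respect to $1+D$, and Weyl's theorem on stability of the essential spectrum gives $\sigma_{\mathrm{ess}}(\mathcal{L})=\sigma_{\mathrm{ess}}(1+D)=[1,\infty)$. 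Consequently any spectrum of $\mathcal{L}$ in $(-\infty,1)$ consists of isolated eigenvalues of finite multiplicity, and the remaining tasks are to locate those eigenvalues, to analyse the threshold $\lambda=1$, and to describe the spectrum on $[1,\infty)$.

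Three eigenpairs are already available from the soliton identities: $\mathcal{L}Q'=0$ by \eqref{E:Da}, giving $\lambda_0=0$ with the odd eigenfunction $Q'$, and $\mathcal{L}e_\pm=\lambda_\pm e_\pm$ by \eqref{E:epm}, giving the even eigenfunctions $e_\pm$ with $\lambda_-<0<\lambda_+<1$. To see these exhaust the eigenvalues below $1$ and are each simple, I would combine three ingredients: the variational (ground-state) characterization of $Q$, which underlies its orbital stability and forces $\mathcal{L}$ to have exactly one negative eigenvalue, necessarily $\lambda_-$; the nondegeneracy $\ker\mathcal{L}=\spn\{Q'\}$ from Amick \& Toland \cite{AT} and Frank \& Lenzmann \cite{FrLen}; and, on the even part of $\{Q\}^\perp$ where $\mathcal{L}\ge 0$, the explicit spectral resolution of $\mathcal{L}$ — computable in closed form because $Q$ is rational, so the resolvent kernel and the generalized eigenfunctions (distorted plane waves) are explicit — which leaves $\lambda_+$ as the only eigenvalue in $(0,1)$ and shows it is simple. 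That same closed-form analysis produces a one-dimensional $L^2$ eigenspace at the threshold, so $\lambda_1=1$ is a simple eigenvalue, and shows the continuous spectrum equals $[1,\infty)$ with multiplicity one. Throughout one exploits that $Q$ is even, so $\mathcal{L}$ commutes with parity, analysing the odd part ($Q'$, eigenvalue $\lambda_0$) and the even part ($e_\pm$ and the threshold eigenfunction) separately.

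The step that does the real work — and the reason \cite{Bennett} is invoked rather than reproved — is the passage from the soft inputs above (self-adjointness, Weyl's theorem, the three exhibited eigenpairs, the single negative eigenvalue, nondegeneracy) to the exact count: excluding any further eigenvalue in $(0,1)$ and establishing the threshold eigenvalue at $\lambda=1$. This rests on the integrable, rational structure of the Benjamin--Ono soliton rather than on any soft perturbative argument, and it is precisely this computation that is carried out in the appendix of \cite{Bennett}.
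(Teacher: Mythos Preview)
The paper does not prove this proposition; it simply states it and attributes it to the appendix of \cite{Bennett}. Your proposal correctly recognizes this and treats the statement as a citation, then goes beyond the paper by sketching the structure such a proof would have (self-adjointness, Weyl's theorem for the essential spectrum, the three explicit eigenpairs from \S\ref{S:notation}, the single negative eigenvalue from the variational characterization, nondegeneracy from \cite{AT,FrLen}, and the explicit threshold/continuous-spectrum analysis from \cite{Bennett}). This is entirely consistent with the paper's treatment and your outline is sound; there is nothing to correct.
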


\begin{lemma}[angle lemma]
\label{L:angle}
Suppose that $\mathcal{L}$ is a self-adjoint operator with eigenvalue $\mu_1$ and corresponding eigenspace spanned by $e_1$ with $\|e_1\|_{L^2}=1$.  Let $P_1f = \la f,e_1\ra e_1$ be the corresponding orthogonal projection.   Assume that $(I-P_1)\mathcal{L}$ has spectrum bounded below by $\mu_\perp$, with $\mu_\perp>\mu_1$.  Suppose that $f$ is some other function such that $\|f\|_{L^2}=1$ and $0 \leq \beta \leq \pi$ is defined by $\cos \beta = \la f, e_1\ra$.   Then if $v$ is  satisfies $\la v, f \ra =0$, we have
$$\la \mathcal{L}v,v \ra \geq (\mu_\perp - (\mu_\perp - \mu_1)\sin^2\beta)\|v\|_{L^2}^2$$
\end{lemma}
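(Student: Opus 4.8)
The plan is to expand $v$ in the spectral decomposition induced by the eigenfunction $e_1$, write $v = \alpha e_1 + w$ with $w = (I-P_1)v$, and exploit the constraint $\langle v, f\rangle = 0$ to bound $|\alpha|$ in terms of $\beta$ and $\|v\|_{L^2}$. First I would decompose $f = \cos\beta\, e_1 + (\sin\beta)\, f^\perp$ where $f^\perp := (I-P_1)f$ has $\|f^\perp\|_{L^2} = 1$ (assuming $\sin\beta \neq 0$; the case $\sin\beta = 0$ forces $\langle v, e_1\rangle = 0$ and the bound is immediate from the spectral gap). Then the orthogonality $\langle v, f\rangle = 0$ reads $\alpha\cos\beta + \sin\beta\,\langle w, f^\perp\rangle = 0$, so $\alpha\cos\beta = -\sin\beta\,\langle w, f^\perp\rangle$. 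When $\cos\beta \neq 0$ this gives $|\alpha| \leq |\tan\beta|\,\|w\|_{L^2}$, and in particular $\alpha^2 \leq \tan^2\beta\,\|w\|_{L^2}^2$.

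Next I would compute the quadratic form. Since $e_1 \perp w$ and $\mathcal{L}e_1 = \mu_1 e_1$ with $\mathcal{L}$ self-adjoint, we get $\langle \mathcal{L}v, v\rangle = \mu_1\alpha^2 + \langle \mathcal{L}w, w\rangle \geq \mu_1\alpha^2 + \mu_\perp\|w\|_{L^2}^2$ using the hypothesis that $(I-P_1)\mathcal{L}$ is bounded below by $\mu_\perp$ on the range of $I-P_1$. (Here one should note $\mu_1$ could be negative, so one cannot simply drop the $\mu_1\alpha^2$ term.) Now $\|v\|_{L^2}^2 = \alpha^2 + \|w\|_{L^2}^2$, so $\|w\|_{L^2}^2 = \|v\|_{L^2}^2 - \alpha^2$, and I would substitute to get
$$\langle \mathcal{L}v, v\rangle \geq \mu_\perp\|v\|_{L^2}^2 - (\mu_\perp - \mu_1)\alpha^2.$$
It then remains to show $\alpha^2 \leq \sin^2\beta\,\|v\|_{L^2}^2$, which combined with $\mu_\perp - \mu_1 > 0$ yields the claim. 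From $\alpha^2 \leq \tan^2\beta\,\|w\|_{L^2}^2 = \tan^2\beta(\|v\|_{L^2}^2 - \alpha^2)$ we obtain $\alpha^2(1 + \tan^2\beta) \leq \tan^2\beta\,\|v\|_{L^2}^2$, i.e. $\alpha^2 \leq \sin^2\beta\,\|v\|_{L^2}^2$, exactly as needed.

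The only real subtlety is the degenerate case $\cos\beta = 0$ (i.e. $f \perp e_1$), where the relation $\alpha\cos\beta = -\sin\beta\langle w, f^\perp\rangle$ degenerates. In that case $\beta = \pi/2$, so $\sin^2\beta = 1$ and the asserted bound is just $\langle \mathcal{L}v,v\rangle \geq \mu_1\|v\|_{L^2}^2$, which follows from $\langle \mathcal{L}v,v\rangle \geq \mu_1\alpha^2 + \mu_\perp\|w\|^2 \geq \mu_1(\alpha^2 + \|w\|^2)$ since $\mu_\perp \geq \mu_1$. So the estimate holds trivially at that endpoint and by the computation above everywhere else; no real obstacle arises, just the need to handle this boundary case separately. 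I would present the main computation assuming $\cos\beta \neq 0$ and dispatch $\cos\beta = 0$ in a short closing remark.
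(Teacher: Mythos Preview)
Your proof is correct and follows essentially the same approach as the paper's: decompose both $v$ and $f$ along $e_1$ and its orthocomplement, use the constraint $\langle v,f\rangle=0$ together with Cauchy--Schwarz to bound the $e_1$-component of $v$ by $\sin\beta\,\|v\|_{L^2}$, and substitute into the quadratic form $\langle\mathcal{L}v,v\rangle\geq \mu_\perp\|v\|^2-(\mu_\perp-\mu_1)\alpha^2$. The only cosmetic difference is that the paper writes the $e_1$-component of $v$ as $\cos\alpha$ (after normalizing $\|v\|=1$) rather than as a scalar $\alpha$, so your inequality $\alpha^2(1+\tan^2\beta)\leq \tan^2\beta\,\|v\|^2$ is exactly the paper's step $\cos^2\alpha(1-\sin^2\beta)\leq(1-\cos^2\alpha)\sin^2\beta$; the paper also does not separate out the endpoint $\cos\beta=0$ since its formulation absorbs it automatically.
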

\begin{proof}
It suffices to assume that $\|v\|_{L^2}=1$.  Decompose $v$ and $f$ into their orthogonal projection onto $e_1$ and its orthocomplement:
$$v =  (\cos \alpha) e_1 +  v_\perp \,, \qquad \|v_\perp\|_{L^2}=\sin \alpha$$
$$f = (\cos \beta) e_1 + f_\perp \,, \qquad \|f_\perp\|_{L^2} = \sin \beta$$
for $0\leq \alpha, \beta \leq \pi$.  
Then 
$$ 0 = \la v,f \ra = \cos\alpha \cos \beta + \la v_\perp, f_\perp \ra$$
from which it follows that
$$|\cos \alpha \cos \beta| = |\la v_\perp, f_\perp \ra| \leq \|v_\perp\|_{L^2}  \|f_\perp\|_{L^2} \leq \sin \alpha \sin \beta  $$
Taking the square yields
$$\cos^2 \alpha (1- \sin^2 \beta )  \leq  (1-  \cos^2 \alpha ) \sin^2 \beta  $$
and from this it follows that $|\cos \alpha| \leq \sin \beta$.  Now
\begin{align*}
\la \mathcal{L}v,v\ra &= \mu_1 \cos^2\alpha + \la \mathcal{L}v_\perp,v_\perp \ra \\
&\geq \mu_1 \cos^2\alpha + \mu_\perp\sin^2 \alpha \\
&= \mu_\perp - (\mu_\perp-\mu_1) \cos^2\alpha \\
&\geq  \mu_\perp - (\mu_\perp-\mu_1) \sin^2\beta
\end{align*}
\end{proof}

We will prove spectral estimates for $v$ satisfying the orthogonality conditions \eqref{E:D1E}.  For the proof of Theorem \ref{T:local-virial} given in \S \ref{S:local-virial} (in particular for the proof of Proposition \ref{P:local-virial2}, a component of the proof of Theorem \ref{T:local-virial}), we will take $\psi = \mathcal{D}_\gamma^{-1}\mathcal{L}v$.  Now, if $z= \mathcal{L}v$, then
$$\la z, Q' \ra = \la \mathcal{L}v, Q' \ra = \la v, \mathcal{L}Q' \ra = 0$$
since $\mathcal{L}Q'=0$ (by \eqref{E:Da}).  Moreover, the orthogonality condition $\la v, Q \ra=0$ (part of \eqref{E:D1E}) implies
$$\la z, (yQ)' \ra = \la \mathcal{L}v, (yQ)' \ra = \la v, \mathcal{L}(yQ)' \ra = - \la v, Q \ra =0$$
where we have used $\mathcal{L}(yQ)' = -Q$ (which is \eqref{E:Dc}).    Thus, when \eqref{E:D1E} is in place for $v$, and $z=\mathcal{L}v$, then we have
\begin{equation}
\label{E:z-orth}
\la z, Q' \ra =0 \,, \qquad \la z, (yQ)' \ra =0
\end{equation}
As mentioned, for the proof of Proposition \ref{P:local-virial2}, we will take $\psi= \mathcal{D}_\gamma^{-1} \mathcal{L}v$, for $\gamma>0$ small.  Since $\psi = \mathcal{D}_\gamma^{-1}z$, it follows that (using that $(\mathcal{D}_\gamma^*)^{-1} - I = \gamma (\mathcal{D}_\gamma^*)^{-1} \partial_x$) 
\begin{align*}
\la \psi, Q' \ra &= \la \mathcal{D}_\gamma^{-1} z, Q' \ra = \la z, (\mathcal{D}_\gamma^*)^{-1}Q' \ra \\
&= \la z, [(\mathcal{D}^*_\gamma)^{-1}-I] Q' \ra =  \gamma \la z, (\mathcal{D}_\gamma^*)^{-1}Q'' \ra = \gamma \la \psi, Q'' \ra
\end{align*}
and hence $\la \psi, Q' - \gamma Q'' \ra=0$.    Similarly, we have
\begin{align*}
\la \psi, (yQ)' \ra &= \la z, (D_\gamma^*)^{-1} (yQ)' \ra = \la z, [(D_\gamma^*)^{-1}-1] (yQ)' \ra \\
&= \gamma \la z, (D_\gamma^*)^{-1} (yQ)'' \ra = \gamma \la \psi, (yQ)'' \ra
\end{align*}
and hence $\la \psi, (yQ)' - \gamma (yQ)'' \ra=0$.  Collecting these results, we can assert that the orthogonality conditions for $\psi$ are
\begin{equation}
\label{E:v-orth}
\la \psi, Q' - \gamma Q''\ra =0 \,, \qquad \la \psi, (yQ)' - \gamma(yQ)'' \ra =0
\end{equation}
We will also deal with the perturbation $v_\gamma = \mathcal{D}_\gamma^{-1} \la \gamma y \ra^{-1} v$.  Similarly to the above calculations, we can deduce that if $v$ satisfies $\la v, Q'\ra =0$, then $v_\gamma$ satisfies the perturbed orthogonality 
\begin{equation}
\label{E:w-gamma-orth}
\la v_\gamma, Q' + \gamma q_\gamma\ra=0 \,, \quad \text{where} \quad q_\gamma = -Q'' + \mathcal{D}_\gamma^* \left[ \frac{\gamma y^2 Q'}{1+\la \gamma y \ra}  \right]
\end{equation}
We note that $q_\gamma$ has $L^2$ norm bounded uniformly in $\gamma$.  

\begin{proposition}
\label{P:L2-spec-est} \quad
\begin{enumerate}
\item 
For $v$ satisfying the orthogonality condition $\la v, Q' \ra =0$, we have
$$\la \mathcal{L}^2 v, v \ra \gtrsim \|v \|^2_{H^1}$$
\item For $v$ as above in (1), if we denote $v_\gamma = \mathcal{D}^{-1}_\gamma \langle \gamma y \rangle^{-1} v$, then $v_\gamma$ satisfies the orthogonality condition \eqref{E:w-gamma-orth}, and for $\gamma>0$ sufficiently small,
$$\la \mathcal{L}^2 v_\gamma, v_\gamma \ra \gtrsim \|v_\gamma \|^2_{H^1} $$
with constant independent of $\gamma$.
\end{enumerate}
\end{proposition}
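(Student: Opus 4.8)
The plan is to reduce both statements to the spectral description of $\mathcal{L}$ in Proposition \ref{P:Lspec}, first getting an $L^2$-coercivity estimate and then upgrading it to $H^1$ by a soft argument exploiting the principal part $I+D$ of $\mathcal{L}$, where $D=-H\partial_y$ has symbol $|\xi|$. For part (1), I would start from the observation that, $\mathcal{L}$ being self-adjoint, $\mathcal{L}^2$ is self-adjoint with $\ker\mathcal{L}^2=\ker\mathcal{L}=\operatorname{span}(Q')$, and by Proposition \ref{P:Lspec} the spectrum of $\mathcal{L}^2$ restricted to $\operatorname{span}(Q')^\perp$ is contained in $\{\lambda_+^2,1,\lambda_-^2\}\cup[1,\infty)$, whose infimum is $\lambda_+^2=(3-\sqrt5)/2>0$ (via the spectral theorem / functional calculus, integrating $t^2$ against the spectral measure of $v$, which charges no mass at $0$ since $\langle v,Q'\rangle=0$). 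Hence $\langle\mathcal{L}^2 v,v\rangle=\|\mathcal{L}v\|_{L^2}^2\ge\lambda_+^2\|v\|_{L^2}^2$. To promote this to $H^1$, write $\mathcal{L}=(I+D)-Q$ and use the triangle inequality $\|(I+D)v\|_{L^2}\le\|\mathcal{L}v\|_{L^2}+\|Qv\|_{L^2}\le\|\mathcal{L}v\|_{L^2}+4\|v\|_{L^2}$ (since $\|Q\|_{L^\infty}=4$); combining with the lower bound $\|v\|_{L^2}\le\lambda_+^{-1}\|\mathcal{L}v\|_{L^2}$ gives $\|(I+D)v\|_{L^2}\lesssim\|\mathcal{L}v\|_{L^2}$, and since $(1+|\xi|)^2\ge\langle\xi\rangle^2$ we have $\|v\|_{H^1}\le\|(I+D)v\|_{L^2}$, so that $\langle\mathcal{L}^2 v,v\rangle\gtrsim\|v\|_{H^1}^2$.

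For part (2), the only new feature is that $v_\gamma$ is orthogonal not to $Q'$ itself but to the nearby vector $Q'+\gamma q_\gamma$ of \eqref{E:w-gamma-orth}, where $\|q_\gamma\|_{L^2}$ is bounded by some constant $C$ uniformly in $\gamma$. I would apply the angle lemma (Lemma \ref{L:angle}) to the self-adjoint operator $\mathcal{L}^2$ with $\mu_1=0$, $e_1=Q'/\|Q'\|_{L^2}$, $\mu_\perp=\lambda_+^2$, and $f$ the $L^2$-normalization of $Q'+\gamma q_\gamma$; the required hypothesis is exactly the spectral bound $\lambda_+^2>0$ used above, and the required orthogonality $\langle v_\gamma,f\rangle=0$ is \eqref{E:w-gamma-orth}. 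Decomposing $Q'+\gamma q_\gamma$ into its $Q'$-component and orthocomplement, the latter has $L^2$-norm at most $\gamma\|q_\gamma\|_{L^2}\le C\gamma$, while $\|Q'+\gamma q_\gamma\|_{L^2}\ge\|Q'\|_{L^2}-C\gamma\ge\tfrac12\|Q'\|_{L^2}$ once $\gamma$ is small; hence the angle $\beta$ of Lemma \ref{L:angle} obeys $\sin\beta\le 2C\gamma/\|Q'\|_{L^2}$, so $\cos^2\beta\ge\tfrac12$ for $\gamma$ small. The angle lemma then yields $\langle\mathcal{L}^2 v_\gamma,v_\gamma\rangle\ge\lambda_+^2\cos^2\beta\,\|v_\gamma\|_{L^2}^2\ge\tfrac12\lambda_+^2\|v_\gamma\|_{L^2}^2$, and the $H^1$ upgrade is then verbatim the argument of part (1) applied to $v_\gamma$ (write $\mathcal{L}=(I+D)-Q$, bound $\|Qv_\gamma\|_{L^2}\le4\|v_\gamma\|_{L^2}$, and insert the $L^2$-coercivity just obtained), with every constant that appears being independent of $\gamma$.

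I expect the only point requiring care is the $\gamma$-uniformity bookkeeping in part (2): one must check that the lower bound $\|Q'+\gamma q_\gamma\|_{L^2}\ge\tfrac12\|Q'\|_{L^2}$, the smallness $\sin\beta\lesssim\gamma$, and the symbol-level inequality $(1+|\xi|)^2\ge\langle\xi\rangle^2$ all use nothing about $q_\gamma$ beyond the stated uniform bound on $\|q_\gamma\|_{L^2}$ (and, for the last, nothing about $\gamma$ at all). Neither step presents a genuine obstacle; the substance of the proposition is entirely carried by Proposition \ref{P:Lspec} and the angle lemma, and the rest is routine.
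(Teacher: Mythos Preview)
Your proposal is correct and follows essentially the same approach as the paper: both arguments invoke Proposition \ref{P:Lspec} to get $\langle\mathcal{L}^2 v,v\rangle\ge\lambda_+^2\|v\|_{L^2}^2$ for part (1), and both apply the angle lemma (Lemma \ref{L:angle}) with $\mu_1=0$, $\mu_\perp=\lambda_+^2$, $e_1=Q'/\|Q'\|_{L^2}$, and $f$ the normalization of $Q'+\gamma q_\gamma$ for part (2). Your explicit $H^1$ upgrade via $\mathcal{L}=(I+D)-Q$ and the symbol bound $(1+|\xi|)^2\ge\langle\xi\rangle^2$ is exactly what the paper leaves implicit under the phrase ``standard elliptic regularity calculations.''
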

\begin{proof}
For the proof of item (1), we note the spectrum of $\mathcal{L}^2$ is the square of the spectrum of $\mathcal{L}$, and thus it consists of two simple eigenvalues $0$ (with eigenfunction $Q'$) and $\lambda_+^2 \approx 0.38$ (with eigenfunction $e_+$) and essential spectrum in $[1,+\infty)$ (note that $\lambda_-^2>1$).   By the orthogonality condition $\la v, Q'\ra=0$, it is immediate that $\la \mathcal{L}^2w,w \ra \geq \lambda_+^2 \|w\|_{L^2}^2$.  

For the proof of item (2), we use that $v_\gamma$ satisfies orthogonality condition \eqref{E:w-gamma-orth}, and apply the angle lemma with $\mu_1=0$, $\mu_\perp = \lambda_+^2 \approx 0.38$, 
$$e_1 =   \frac{Q'}{\|Q'\|_{L^2}} \,, \qquad f = \frac{Q' + \gamma g_\gamma }{\|Q'+ \gamma g_\gamma\|_{L^2}}$$
and
$$\cos \beta = \la f,e_1 \ra = \frac{\la Q', Q'+\gamma g_\gamma \ra}{\|Q'\|_{L^2} \| Q' + \gamma g_\gamma\|_{L^2}} = 1 - O(\gamma)\neq 0$$ for $\gamma$ sufficiently small, and thus $\sin^2 \beta \neq 1$, so that Lemma \ref{L:angle} furnishes a positive lower bound $\la \mathcal{L}^2 v_\gamma, v_\gamma \ra \gtrsim \|v_\gamma \|_{L^2}^2$.   

In each case, the $H^1$ lower bound (as opposed to $L^2$) follows by standard elliptic regularity calculations. 
\end{proof}

The following lemma will be needed in the proof of Proposition \ref{P:tildeLspec} below.  Recall that if $z$ satisfies orthogonality conditions \eqref{E:z-orth} and $\psi= \mathcal{D}_\gamma^{-1} \mathcal{L}v$, then $\psi$ satisfies orthogonality conditions \eqref{E:v-orth}.

\begin{lemma}
\label{L:L-est}
\quad
\begin{enumerate}
\item For $z$ satisfying the orthogonality conditions \eqref{E:z-orth}, $\la \mathcal{L}z, z \ra \geq 0$. 
\item For $\psi$ satisfying the orthogonality conditions \eqref{E:v-orth}, $\la \mathcal{L}\psi,\psi \ra \gtrsim -\gamma \|\psi\|_{L^2}^2$.
\end{enumerate}
\end{lemma}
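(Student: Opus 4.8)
The plan is to diagonalize $\mathcal{L}$ via the spectral picture of Proposition~\ref{P:Lspec} and to exploit an exact algebraic cancellation encoded in the orthogonality conditions \eqref{E:z-orth}, \eqref{E:v-orth}. First I would record the facts I need about the two‑dimensional $\mathcal{L}$‑invariant subspace $V = \spn\{Q,(yQ)'\} = \spn\{e_+,e_-\}$ (it is $\mathcal{L}$‑invariant, consists of even functions, and contains $e_\pm$). By \eqref{E:app11p} and \eqref{E:Dc}, $\mathcal{L}$ acts on $V$ in the basis $\{Q,(yQ)'\}$ by the matrix $\begin{pmatrix} -1 & -1 \\ -1 & 0 \end{pmatrix}$, so $\lambda_\pm$ are the roots of $\lambda^2+\lambda-1$ and $\lambda_++\lambda_-=-1$, $\lambda_+\lambda_-=-1$ (consistent with \eqref{E:epm}). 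From \eqref{E:epm}, $e_\pm = Q + \lambda_\mp (yQ)'$, hence $(yQ)' = \tfrac{1}{\sqrt5}(e_--e_+)$ and $e_++e_- = 2Q-(yQ)'$. A single integration by parts (using only the decay of $Q$) gives $\la Q,(yQ)'\ra = \tfrac12\|Q\|_{L^2}^2$. Since $e_+\perp e_-$ (distinct eigenvalues), $\|e_+\|_{L^2}^2+\|e_-\|_{L^2}^2 = \|e_+-e_-\|_{L^2}^2 = 5\|(yQ)'\|_{L^2}^2$ and also $= \|e_++e_-\|_{L^2}^2 = 2\|Q\|_{L^2}^2+\|(yQ)'\|_{L^2}^2$, forcing $\|(yQ)'\|_{L^2}^2 = \tfrac12\|Q\|_{L^2}^2$ and $\|e_+\|_{L^2}^2+\|e_-\|_{L^2}^2 = \tfrac52\|Q\|_{L^2}^2$. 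Finally, by $e_+\perp e_-$ and self‑adjointness, $\lambda_+\|e_+\|_{L^2}^2+\lambda_-\|e_-\|_{L^2}^2 = \la\mathcal{L}(e_++e_-),e_++e_-\ra = \la\mathcal{L}(2Q-(yQ)'),2Q-(yQ)'\ra$, which a short computation with \eqref{E:Dc}, \eqref{E:app11p} evaluates to $-\tfrac52\|Q\|_{L^2}^2$. Subtracting,
$$\lambda_-\|e_+\|_{L^2}^2 + \lambda_+\|e_-\|_{L^2}^2 = (\lambda_++\lambda_-)\big(\|e_+\|_{L^2}^2+\|e_-\|_{L^2}^2\big) - \big(\lambda_+\|e_+\|_{L^2}^2+\lambda_-\|e_-\|_{L^2}^2\big) = 0,$$
equivalently $\|e_+\|_{L^2}^2/\|e_-\|_{L^2}^2 = \lambda_+^2$. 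This \emph{crucial identity} is the heart of the lemma.

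For item (1): decompose $z = b\,\hat e_- + c\,\hat e_+ + z_0$, where $\hat e_\pm = e_\pm/\|e_\pm\|_{L^2}$ and $z_0 \perp \spn\{e_-,e_+,Q'\}$; the condition $\la z,Q'\ra=0$ is precisely what lets me discard the kernel direction. The orthocomplement of $\spn\{e_-,e_+,Q'\}$ carries only spectrum in $[1,\infty)$, so $\la\mathcal{L}z_0,z_0\ra\ge 0$, and since all cross terms vanish, $\la\mathcal{L}z,z\ra = \lambda_- b^2+\lambda_+ c^2+\la\mathcal{L}z_0,z_0\ra \ge \lambda_- b^2+\lambda_+ c^2$. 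The remaining condition $\la z,(yQ)'\ra=0$, combined with $(yQ)' = \tfrac1{\sqrt5}\big(\|e_-\|_{L^2}\hat e_- - \|e_+\|_{L^2}\hat e_+\big)$, forces $b\|e_-\|_{L^2} = c\|e_+\|_{L^2}$, whence
$$\lambda_- b^2+\lambda_+ c^2 = \frac{c^2}{\|e_-\|_{L^2}^2}\big(\lambda_-\|e_+\|_{L^2}^2+\lambda_+\|e_-\|_{L^2}^2\big) = 0$$
by the crucial identity, so $\la\mathcal{L}z,z\ra\ge 0$.

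For item (2): decompose $\psi = b\,\hat e_- + c\,\hat e_+ + d\,\widehat{Q'} + \psi_0$ with $\widehat{Q'}=Q'/\|Q'\|_{L^2}$ and $\psi_0\perp\spn\{e_-,e_+,Q'\}$; since the $Q'$‑direction contributes $0$ to the quadratic form, $\la\mathcal{L}\psi,\psi\ra \ge \lambda_- b^2+\lambda_+ c^2$ as before. The conditions \eqref{E:v-orth} read $\la\psi,Q'\ra = \gamma\la\psi,Q''\ra$ and $\la\psi,(yQ)'\ra = \gamma\la\psi,(yQ)''\ra$; since $Q'',(yQ)''\in L^2$, Cauchy--Schwarz yields $|d|\lesssim\gamma\|\psi\|_{L^2}$ and $\big|\,b\|e_-\|_{L^2}-c\|e_+\|_{L^2}\,\big| \lesssim \gamma\|\psi\|_{L^2}$. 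Writing $b = (\|e_+\|_{L^2}/\|e_-\|_{L^2})\,c + O(\gamma\|\psi\|_{L^2})$ and using $|b|,|c|\le\|\psi\|_{L^2}$ together with $\lambda_-<0$, one expands
$$\lambda_- b^2+\lambda_+ c^2 = \frac{c^2}{\|e_-\|_{L^2}^2}\big(\lambda_-\|e_+\|_{L^2}^2+\lambda_+\|e_-\|_{L^2}^2\big) + O\big(\gamma\|\psi\|_{L^2}^2\big) = O\big(\gamma\|\psi\|_{L^2}^2\big)$$
for $\gamma$ small, the leading term vanishing again by the crucial identity; hence $\la\mathcal{L}\psi,\psi\ra\gtrsim-\gamma\|\psi\|_{L^2}^2$.

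The only place where anything genuinely happens is the crucial identity $\lambda_-\|e_+\|_{L^2}^2+\lambda_+\|e_-\|_{L^2}^2=0$; everything else is bookkeeping. Its borderline nature (exactly zero rather than positive) is precisely why item (1) is stated with ``$\ge 0$'' rather than coercivity, and why item (2) loses a full power of $\gamma$. I do not anticipate a real obstacle, only the need to assemble carefully the elementary identities for $\lambda_\pm$, $e_\pm$, $\la Q,(yQ)'\ra$, and $\la\mathcal{L}(2Q-(yQ)'),2Q-(yQ)'\ra$, and to check that the error terms in item (2) are controlled by $\|\psi\|_{L^2}$ through the $L^2$‑membership of $Q''$ and $(yQ)''$.
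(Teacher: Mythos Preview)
Your argument is correct. The core content coincides with the paper's: both proofs hinge on the borderline identity that, in your notation, reads $\lambda_-\|e_+\|_{L^2}^2+\lambda_+\|e_-\|_{L^2}^2=0$, which in the paper's language is the statement that $\mu_\perp-(\mu_\perp-\mu_1)\sin^2\beta=0$ (the two are algebraically identical once one writes $\cos^2\beta=\|e_-\|^2/(\|e_+\|^2+\|e_-\|^2)$).

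The packaging, however, is genuinely different. The paper first splits $z=z_e+z_o$ by parity, disposes of the odd part trivially (since the only nonpositive eigenvalue of $\mathcal{L}$ restricted to odd functions is $0$, killed by $\la z,Q'\ra=0$), and then on the even subspace invokes the abstract ``angle lemma'' (Lemma~\ref{L:angle}) with the single negative direction $e_-$; the verification is carried out via the explicit numerical values $\|Q\|_{L^2}^2=8\pi$, $\|(yQ)'\|_{L^2}^2=4\pi$, etc. You instead work with the full spectral decomposition at once, bypass both the parity split and the angle lemma, and---more notably---derive the crucial identity from soft relations alone ($\lambda_++\lambda_-=-1$, $\lambda_+\lambda_-=-1$, $e_+\perp e_-$, and the integration-by-parts identity $\la Q,(yQ)'\ra=\tfrac12\|Q\|_{L^2}^2$), never using the explicit formula $Q(y)=4/(1+y^2)$. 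Your route is more self-contained and makes transparent that the result depends only on the algebraic structure of $\mathcal{L}$ on $\spn\{Q,(yQ)'\}$; the paper's route has the advantage that the angle lemma is a reusable tool already needed elsewhere (Propositions~\ref{P:L2-spec-est} and~\ref{P:tildeLspec}).
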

We note that the proof does not give strict positivity, only the claimed non-negativity.

\begin{proof}
We begin with item (1).
Decompose $z=z_e+z_o$ into even and odd components, respectively.  Since $\mathcal{L}$ preserves parity, 
$$\la \mathcal{L}z, z \ra = \la \mathcal{L}(z_e+z_o), z_e+z_o \ra = \la \mathcal{L}z_e,z_e \ra + \la \mathcal{L} z_o, z_o \ra$$
and it suffices to show that $\la \mathcal{L}z_e, z_e \ra \geq 0$ and $\la \mathcal{L}z_o,z_o \ra \geq 0$.  

First, consider $\mathcal{L}$ restricted to the odd subspace, which has eigenvalues $\lambda_0=0$ (corresponding to eigenfunction $Q'$) and $\lambda_1=1$ and continuous spectrum $[1,+\infty)$.   Since $\la z_o, Q' \ra =0$, it follows that $\la \mathcal{L}z_o,z_o \ra  \geq \|z_0\|_{L^2}^2 \geq 0$. 

Next, consider $\mathcal{L}$ restricted to the even subspace, which has eigenvalues $\lambda_- = - \frac{\sqrt{5}+1}{2}$ and $\lambda_+ = \frac{\sqrt{5}-1}{2}$, and continuous spectrum $[1,+\infty)$.  Apply Lemma \ref{L:angle} with 
$$\mu_1 = \lambda_- = -\frac{\sqrt{5}+1}{2} \,, \qquad \mu_\perp = \lambda_+ =\frac{\sqrt{5}-1}{2}$$
$$e_1 = \frac{ e_-}{ \|e_-\|_{L^2}} = \frac{ Q + \frac{\sqrt{5}-1}{2}(yQ)'}{ \| Q + \frac{\sqrt{5}-1}{2}(yQ)' \|_{L^2}} \,, \qquad f = \frac{ (yQ)' }{ \| (yQ)' \|_{L^2}} $$
and
$$\cos \beta = \la f, e_1 \ra = \frac{ \la (yQ)', Q + \frac{\sqrt 5 -1}{2} (yQ)' \ra}{ \|(yQ)' \|_{L^2} \| Q + \frac{\sqrt{5}-1}{2} (yQ)' \|_{L^2}}$$
From the explicit formula for $Q(y)$, 
$$\| Q \|_{L^2}^2 = 8\pi\,, \qquad \| (yQ)' \|_{L^2}^2 = 4\pi  \,, \qquad   \| Q + \frac{\sqrt{5}-1}{2} (yQ)' \|_{L^2}^2 = 2(5+\sqrt{5}) \pi $$ 
and hence
$$\la (yQ)', Q \ra = -\la yQ, Q' \ra = \frac12 \|Q\|_{L^2}^2 = 4\pi$$
Substituting above and simplifying, we obtain
$$\cos^2 \beta = \frac12 + \frac{\sqrt{5}}{10}$$
from which it follows that
$$\mu_\perp - (\mu_\perp-\mu_1) \sin^2\beta = 0$$
Hence Lemma \ref{L:angle} yields that $\la\mathcal{L}z_e,z_e \ra \geq 0$.

Item (2) in the lemma statement is addressed similarly with a decomposition $\psi=\psi_o+\psi_e$, although in applying Lemma \ref{L:angle} for $\psi_e$, $f$ is replaced by
$$ f = \frac{ (yQ)' - \gamma(yQ)'' }{ \| (yQ)' - \gamma(yQ)''\|_{L^2}}$$
The case of $z$ corresponds to $\gamma=0$, and in that case, we found $\mu_\perp - (\mu_\perp-\mu_1) \sin^2\beta = 0$.  Thus for $\gamma>0$, we find
$$\mu_\perp - (\mu_\perp-\mu_1) \sin^2\beta \gtrsim  -\gamma$$
In order to address $\psi_o$, we also need to apply Lemma \ref{L:angle}, although in this case we use $\mu_1 = 0$, $\mu_\perp =1$, 
$$e_1 = \frac{Q'}{\|Q'\|_{L^2}} \,, \qquad f = \frac{Q' - \gamma Q''}{\|Q' - \gamma Q''\|_{L^2}}$$
\end{proof}

Now we will apply Lemma \ref{L:L-est} to prove the following proposition.  Recall that if $z$ satisfies orthogonality conditions \eqref{E:z-orth} and $\psi= \mathcal{D}_\gamma^{-1} \mathcal{L}v$, then $\psi$ satisfies orthogonality conditions \eqref{E:v-orth}.

\begin{proposition}
\label{P:tildeLspec}
Let 
\begin{equation}
\label{E:tildeLdef}
\tilde{\mathcal{L}} \defeq -2H\partial_y + 1 - yQ'-Q
\end{equation}
\begin{enumerate}
\item For $z$ satisfying the orthogonality conditions \eqref{E:z-orth}, 
$$\la \tilde{\mathcal{L}} z, z \ra \gtrsim \|z\|^2_{H^{1/2}}$$
\item For $\psi$ satisfying the orthogonality conditions \eqref{E:v-orth}, for $\gamma>0$ sufficiently small,
$$\la \tilde{\mathcal{L}} \psi, \psi \ra \gtrsim \|\psi\|^2_{H^{1/2}}$$
with constant independent of $\gamma$.
\end{enumerate}
\end{proposition}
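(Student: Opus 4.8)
The proof is driven by one algebraic identity. Writing $D=-H\partial_y$, we have $\mathcal L=I+D-Q$ and $\tilde{\mathcal L}=2D+1-yQ'-Q$, so $\tilde{\mathcal L}-\mathcal L=D-yQ'$, and feeding in the profile identity $yQ'=\tfrac12 Q^2-2Q$ from \eqref{E:D4p} yields
$$\tilde{\mathcal L}=\mathcal L+D+m\,,\qquad m\defeq 2Q-\tfrac12 Q^2=\frac{8y^2}{(1+y^2)^2}\ge 0\,.$$
The essential observation is that the dangerous term $-\tfrac12 Q^2$ is exactly cancelled by $2Q$, leaving a \emph{nonnegative} multiplication operator. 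Hence, for any admissible $\phi$,
$$\la\tilde{\mathcal L}\phi,\phi\ra=\la\mathcal L\phi,\phi\ra+\|D^{1/2}\phi\|_{L^2}^2+\int m\,\phi^2\,dy\,,$$
a sum of three quantities whose last two are manifestly nonnegative.

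For item (1), Lemma \ref{L:L-est}(1) gives $\la\mathcal L z,z\ra\ge 0$ whenever $z$ obeys \eqref{E:z-orth}, so $\la\tilde{\mathcal L}z,z\ra\ge\|D^{1/2}z\|_{L^2}^2$, which already controls the homogeneous $\dot H^{1/2}$ part of the norm; it remains to add $L^2$ control. I would do this by a short compactness argument. If $\la\tilde{\mathcal L}z,z\ra\gtrsim\|z\|_{H^{1/2}}^2$ failed, there would be $z_n$ with $\|z_n\|_{H^{1/2}}=1$ satisfying \eqref{E:z-orth} and $\la\tilde{\mathcal L}z_n,z_n\ra\to 0$; by the identity and $\la\mathcal L z_n,z_n\ra\ge 0$, each of $\|D^{1/2}z_n\|_{L^2}$, $\int m\,z_n^2$, and $\la\mathcal L z_n,z_n\ra$ tends to $0$. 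A weak $H^{1/2}$ limit $z_*$ then has vanishing $\dot H^{1/2}$ seminorm, hence $z_*=0$, so $z_n\to 0$ in $L^2_{\loc}$ by Rellich; since $Q$ is bounded and decaying, $\int Q z_n^2\to 0$, and then $\la\mathcal L z_n,z_n\ra=\|D^{1/2}z_n\|_{L^2}^2+\|z_n\|_{L^2}^2-\int Q z_n^2\to 0$ forces $\|z_n\|_{L^2}\to 0$, contradicting $\|z_n\|_{H^{1/2}}=1$. (Alternatively one may split $z$ into parity classes, since $\tilde{\mathcal L}$ preserves parity: on the odd class $\la\mathcal L z_o,z_o\ra\ge\|z_o\|_{L^2}^2$ already, as in the proof of Lemma \ref{L:L-est}, so only the even class, where $\mathcal L$ is merely nonnegative, actually needs the compactness step.)

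For item (2) the argument is the same, with the conditions \eqref{E:v-orth} entering only through Lemma \ref{L:L-est}(2), which gives $\la\mathcal L\psi,\psi\ra\ge -C\gamma\|\psi\|_{L^2}^2$. Since the displayed identity is $\gamma$-independent, $\la\tilde{\mathcal L}\psi,\psi\ra\ge\|D^{1/2}\psi\|_{L^2}^2+\int m\,\psi^2-C\gamma\|\psi\|_{L^2}^2$, and an extremal sequence produced by contradiction automatically comes with $\gamma_n\to 0$, so the $-C\gamma_n\|\psi_n\|_{L^2}^2$ error is harmless; the three quantities tend to $0$ exactly as before and the argument closes with a coercivity constant independent of $\gamma$.

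The point that needs care is precisely this uniformity in $\gamma$ in item (2): one must check that the $O(\gamma)$ spectral loss of Lemma \ref{L:L-est}(2) cannot conspire with the $L^2$ degeneracy of $\mathcal L$ on the even sector (where the angle-lemma bound in the proof of Lemma \ref{L:L-est} is exactly $0$). The compactness formulation dispatches this cleanly because the extremal sequence is forced to have $\gamma_n\to 0$; a quantitative alternative would isolate the one-dimensional even direction on which $\la\mathcal L\cdot,\cdot\ra$ vanishes, note that $\|D^{1/2}\cdot\|_{L^2}^2+\int m(\cdot)^2\,dy$ is strictly positive there, and absorb the cross terms, at the cost of a more computational argument.
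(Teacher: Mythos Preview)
Your proof is correct but takes a softer route than the paper's. You both start from the same decomposition $\tilde{\mathcal L}=\mathcal L+D+(-yQ')$ with $-yQ'\ge 0$ (your computation of $m$ via \eqref{E:D4p} just reproduces $-yQ'$). The difference is that you set the coefficient of $\mathcal L$ equal to $1$, which leaves no $L^2$ control and forces a compactness/contradiction argument; the paper instead writes, for a small parameter $\delta>0$,
$$\tilde{\mathcal L}-(1-\delta)\mathcal L=(1+\delta)D+\delta-\delta Q-yQ',$$
so that after discarding $-yQ'\ge 0$ and using $\int Qz^2\le\tfrac12\|z\|_{L^2}^2+C_2\|D^{1/2}z\|_{L^2}^2$ (Gagliardo--Nirenberg plus Young), one obtains directly
$$\la(\tilde{\mathcal L}-(1-\delta)\mathcal L)z,z\ra\ge(1-C_2\delta)\|D^{1/2}z\|_{L^2}^2+\tfrac{\delta}{2}\|z\|_{L^2}^2\gtrsim\|z\|_{H^{1/2}}^2$$
for $\delta$ small. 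Then $\la\tilde{\mathcal L}z,z\ra\ge(1-\delta)\la\mathcal Lz,z\ra+c\|z\|_{H^{1/2}}^2$ and Lemma \ref{L:L-est} finishes both items at once: in item (2) the $-C\gamma\|\psi\|_{L^2}^2$ loss is simply absorbed into $c\|\psi\|_{H^{1/2}}^2$ for small $\gamma$, with no need to produce an extremal sequence or argue that $\gamma_n\to 0$. Your argument works, but the paper's $\delta$-trick is shorter, fully quantitative, and sidesteps the ``uniformity in $\gamma$'' subtlety you flagged.
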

\begin{proof}
First, we prove item (1).  Note that for any $\delta>0$,
\begin{equation}
\label{E:tildeLspec2}
\tilde{\mathcal{L}} -(1-\delta)\mathcal{L} = -(1+\delta)H\partial_y + \delta - \delta Q -yQ'
\end{equation}
We claim that for $\delta>0$,
\begin{equation}
\label{E:tildeLspec1}
\la (\tilde{\mathcal{L}} -(1-\delta)\mathcal{L}) z, z \ra \geq (1-C_2\delta) \|D^{1/2}z \|_{L^2}^2 + \frac12 \delta \|z\|_{L^2}^2
\end{equation}
Indeed, since $-yQ' \geq 0$, we can discard this term, and we have from \eqref{E:tildeLspec2} 
\begin{equation}
\label{E:tildeLspec3}
\la (\tilde{\mathcal{L}} -(1-\delta)\mathcal{L}) z, z \ra \geq \|D^{1/2} z\|_{L^2}^2 + \delta \|z\|_{L^2}^2 - \delta \int Qz^2
\end{equation}
 By the Gagliardo-Nirenberg and Peter-Paul inequalities, there exists constant $C_1>0$, $C_2>0$ so that
$$\int Qz^2 \leq \|Q\|_{L^2} \|z\|_{L^4}^2 \leq C_1 \|z\|_{L^2} \| D^{1/2}z\|_{L^2} \leq \frac12 \|z\|_{L^2}^2 + C_2 \|D^{1/2} z\|_{L^2}^2$$
Applying this in \eqref{E:tildeLspec3}, we obtain \eqref{E:tildeLspec1}.  Taking $\delta>0$ sufficiently small so that $1-C_2\delta>0$, we obtain from \eqref{E:tildeLspec1} that
$$\la \tilde{\mathcal{L}}z,z\ra \geq (1-\delta)\la \mathcal{L}z,z \ra +C_3 \|z \|_{H^{1/2}}^2$$
Item (1) follows upon applying Lemma \ref{L:L-est}(1) ($\la\mathcal{L}z,z\ra \geq 0$).  

Item (2) is addressed similarly appealing to Lemma \ref{L:L-est}(2).
\end{proof}

\section{Commutator estimates}
\label{S:commutator}

We state and prove as necessary a few commutator estimates that will be needed in the computations in the proof of Theorem \ref{T:local-virial} given in \S \ref{S:local-virial}.

\begin{lemma}
\label{L:weightedD}
For all $0<\gamma \leq 1$, all $\alpha\in \mathbb{R}$, $\la y \ra^\alpha \mathcal{D}_\gamma^{-1} \la y \ra^{-\alpha}$ is $L^2\to L^2$ bounded with operator norm independent of $\gamma$.  
\end{lemma}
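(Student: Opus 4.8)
The plan is to prove the operator bound by exhibiting the kernel of $\mathcal{D}_\gamma^{-1}$ explicitly, conjugating by the weight $\la y \ra^\alpha$, and then applying a Schur test with $\gamma$-uniform constants. First I recall that $\mathcal{D}_\gamma^{-1}$ is the Fourier multiplier with symbol $(1+i\gamma\xi)^{-1}$; since $(1+i\gamma\xi)^{-1} = \int_0^\infty e^{-s}e^{-is\gamma\xi}\,ds$, its convolution kernel is
\[
K_\gamma(y) = \gamma^{-1}e^{-y/\gamma}\mathbf{1}_{\{y>0\}},
\]
so that $(\mathcal{D}_\gamma^{-1}f)(y) = \int_0^\infty e^{-s}f(y-s\gamma)\,ds$. (One should double-check the sign convention against the paper's Fourier normalization, but this is the standard causal exponential kernel.) The operator $T_\gamma := \la y \ra^\alpha \mathcal{D}_\gamma^{-1}\la y\ra^{-\alpha}$ then has kernel
\[
\mathcal{K}_\gamma(y,y') = \gamma^{-1}e^{-(y-y')/\gamma}\,\mathbf{1}_{\{y>y'\}}\,\frac{\la y\ra^\alpha}{\la y'\ra^\alpha}.
\]

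The key step is the Schur test: it suffices to bound $\sup_y \int |\mathcal{K}_\gamma(y,y')|\,dy'$ and $\sup_{y'} \int |\mathcal{K}_\gamma(y,y')|\,dy$ uniformly in $\gamma\in(0,1]$. The only nontrivial factor is the ratio $\la y\ra^\alpha/\la y'\ra^\alpha$ on the region $y>y'$, since $\int \gamma^{-1}e^{-(y-y')/\gamma}\mathbf{1}_{\{y>y'\}}\,dy' = 1$ already. I would split into $|\alpha|$ small/large is irrelevant; the real point is the elementary inequality
\[
\frac{\la y\ra}{\la y'\ra} \le C\bigl(1 + |y-y'|\bigr)
\]
valid for all $y,y'$ (indeed $\la y\ra \le \la y'\ra + |y-y'| \le \la y'\ra(1+|y-y'|)$ since $\la y'\ra \ge 1$), so that $\la y\ra^{|\alpha|}/\la y'\ra^{|\alpha|} \le C^{|\alpha|}(1+|y-y'|)^{|\alpha|}$ when $\alpha>0$, and symmetrically with the roles reversed when $\alpha<0$. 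Plugging this in, both Schur integrals are dominated by
\[
C^{|\alpha|}\int_0^\infty \gamma^{-1}e^{-r/\gamma}(1+r)^{|\alpha|}\,dr = C^{|\alpha|}\int_0^\infty e^{-\rho}(1+\gamma\rho)^{|\alpha|}\,d\rho \le C^{|\alpha|}\int_0^\infty e^{-\rho}(1+\rho)^{|\alpha|}\,d\rho,
\]
using $\gamma\le 1$ in the last inequality; this final integral is a finite constant depending only on $\alpha$. Hence both Schur quantities are bounded by a constant $C(\alpha)$ independent of $\gamma$, and the Schur test gives $\|T_\gamma\|_{L^2\to L^2}\le C(\alpha)$.

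I do not expect any serious obstacle here; the statement is an elementary kernel estimate. The one point requiring a little care is the direction of the weight comparison: on the support $\{y>y'\}$ of the kernel, if $\alpha\ge 0$ the factor $\la y\ra^\alpha/\la y'\ra^\alpha$ can be large (when $y\gg y'$), and this is exactly where the polynomial gain $(1+|y-y'|)^{|\alpha|}$ against the exponential decay $e^{-(y-y')/\gamma}$ must be spent; if $\alpha<0$ one instead has $\la y\ra^\alpha/\la y'\ra^\alpha \le 1$ directly on this support in one Schur direction and needs the comparison only in the other. Either way the same computation closes both estimates. A brief remark: because the estimate is uniform in $\gamma\le 1$ and the kernel has unit mass, one also recovers the (trivial but used elsewhere) fact that $\mathcal{D}_\gamma^{-1}$ preserves real-valuedness and is a contraction on $L^\infty$ and on $L^1$.
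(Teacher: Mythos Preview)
Your proposal is correct and follows essentially the same approach as the paper: both write the explicit exponential kernel of $\mathcal{D}_\gamma^{-1}$, conjugate by the weight, and close via a Schur test after bounding the ratio $\la y\ra^\alpha/\la y'\ra^\alpha$ by a polynomial in $|y-y'|$ that is absorbed by the exponential decay (the paper uses $\la y\ra^\alpha \lesssim \la y-y'\ra^\alpha + \la y'\ra^\alpha$ together with $\la y-y'\ra \le \la \gamma^{-1}(y-y')\ra$, which is equivalent to your $\la y\ra \le \la y'\ra(1+|y-y'|)$ and the substitution $r=\gamma\rho$). One small remark: your parenthetical claim that for $\alpha<0$ the ratio is $\le 1$ ``directly on this support'' is not right, since $y>y'$ does not imply $\la y\ra \ge \la y'\ra$ (take $y=0$, $y'=-10$); but this is irrelevant because your symmetric inequality $\la y'\ra/\la y\ra \le 1+|y-y'|$ already handles both signs of $\alpha$ uniformly.
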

\begin{proof}
Let $k(y) = e^{-y} \mathbf{1}_{y>0}$.  Then $\hat k(\xi) = (1+i\xi)^{-1}$.  Then the kernel of the operator $\la y \ra^\alpha \mathcal{D}_\gamma^{-1} \la y \ra^{-\alpha}$ is 
$$K(y,y') = \la y \ra^\alpha \la y' \ra^{-\alpha}\gamma^{-1}k( \gamma^{-1}(y-y')) $$
By duality, it suffices to restrict to $\alpha \geq 0$.  We apply Schur's test, as follows.  Using that $\la y \ra^\alpha \lesssim \la y-y' \ra^\alpha + \la y' \ra^\alpha$,  
$$|K(y,y')| \lesssim (\la y-y' \ra^\alpha \la y' \ra^{-\alpha}+1) \gamma^{-1}k(\gamma^{-1}(y-y'))$$
Using that $\la y -y' \ra^\alpha \leq \la \gamma^{-1}(y-y') \ra^\alpha$ and $\la y' \ra^{-\alpha}\leq 1$ for the first term,
$$|K(y,y')| \lesssim \la \gamma^{-1}(y-y') \ra^\alpha \gamma^{-1} k(\gamma^{-1} (y-y') \ra$$
From this it follows that
$$\int_{y'} |K(y,y')| \, dy' \lesssim \int_z \la z\ra^\alpha |k(z)| \, dz <\infty$$
and similarly for $\int_y |K(y,y')| \, dy$.
\end{proof}

\begin{lemma}[fractional Leibniz rule]
\label{L:frac-Leib}
Suppose $0<\alpha<1$, $0\leq \alpha_1,\alpha_2 \leq \alpha$ with $\alpha_1+\alpha_2=\alpha$, and $1<p,p_1,p_2<\infty$ with $\frac{1}{p}=\frac{1}{p_1}+\frac{1}{p_2}$, then
$$\| D^\alpha(fh) - f D^\alpha h - h D^\alpha f \|_{L^p} \lesssim \|D^{\alpha_1} f \|_{L^{p_1}}\|D^{\alpha_2} h \|_{L^{p_2}}$$
\end{lemma}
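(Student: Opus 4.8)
The plan is to recognize this as the fractional Leibniz (Kato--Ponce--type) commutator estimate of Kenig, Ponce \& Vega and to run the standard bilinear Fourier multiplier argument. Write $R(f,h) \defeq D^\alpha(fh) - fD^\alpha h - hD^\alpha f$. On the Fourier side $R$ is the bilinear operator with symbol
$$m(\xi,\eta) = |\xi|^\alpha - |\xi-\eta|^\alpha - |\eta|^\alpha,$$
i.e. $\widehat{R(f,h)}(\xi) = \frac{1}{2\pi}\int m(\xi,\eta)\hat f(\xi-\eta)\hat h(\eta)\,d\eta$, so $f$ sits at frequency $\xi-\eta$, $h$ at frequency $\eta$, and the output at frequency $\xi$. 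Since $0<\alpha<1$ there is also the transparent physical-space identity $R(f,h)(x) = -c_\alpha\int_{\mathbb{R}}|y|^{-1-\alpha}(f(x)-f(x-y))(h(x)-h(x-y))\,dy$, which displays both cancellations at once; but the crude bound $|f(x)-f(x-y)|\lesssim|y|^{\alpha_1}(\cdots)$ together with its analogue for $h$ leaves behind $\int|y|^{\alpha-1-\alpha}\,dy=\int|y|^{-1}\,dy$, which diverges, so one is forced back to a frequency-space decomposition and this physical-space formula is best viewed only as motivation.

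The first substantive step is the pointwise symbol bound $|m(\xi,\eta)|\lesssim|\xi-\eta|^{\alpha_1}|\eta|^{\alpha_2}$, which I would prove by cases. When $|\xi-\eta|\le\frac{1}{10}|\eta|$ one has $|\xi|\sim|\eta|$, and the mean value theorem for $t\mapsto|t|^\alpha$ gives $\big||\xi|^\alpha-|\eta|^\alpha\big|\lesssim|\xi-\eta|\,|\eta|^{\alpha-1}\lesssim|\xi-\eta|^{\alpha_1}|\eta|^{\alpha_2}$ (using $|\xi-\eta|^{1-\alpha_1}\le|\eta|^{1-\alpha_1}$ and $\alpha_1+\alpha_2=\alpha$), while the leftover $|\xi-\eta|^\alpha\le|\xi-\eta|^{\alpha_1}|\eta|^{\alpha_2}$; the case $|\eta|\le\frac{1}{10}|\xi-\eta|$ is symmetric with the roles of $\xi-\eta$ and $\eta$ exchanged; and in the remaining diagonal regime $|\xi-\eta|\sim|\eta|\gtrsim|\xi|$, so each of the three terms of $m$ is $\lesssim|\eta|^\alpha\sim|\xi-\eta|^{\alpha_1}|\eta|^{\alpha_2}$. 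Differentiating the same case analysis yields the matching bounds for $\partial_\xi^a\partial_\eta^b m$ on each dyadic frequency block, which is exactly what the paraproduct step consumes.

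I would then run a Littlewood--Paley / paraproduct decomposition in the two frequency variables: inserting $1=\sum_{j,k}\phi_j(\xi-\eta)\phi_k(\eta)$ for a dyadic partition of unity, write $R=\sum_{j,k}R_{j,k}$ and split the sum into the low--high part $j\le k-5$, the high--low part $k\le j-5$, and the diagonal part $|j-k|\le4$. In each group factor out $|\xi-\eta|^{\alpha_1}$ and $|\eta|^{\alpha_2}$, so that $R_{j,k}(f,h)=\widetilde R_{j,k}(\Delta_j D^{\alpha_1}f,\ \Delta_k D^{\alpha_2}h)$ where $\widetilde R_{j,k}$ carries the normalized symbol $m(\xi,\eta)|\xi-\eta|^{-\alpha_1}|\eta|^{-\alpha_2}$ times slightly fattened cutoffs; by the previous step these normalized symbols, rescaled to unit frequency, are Coifman--Meyer symbols bounded uniformly in $(j,k)$. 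For the low--high part the output is frequency-localized near $2^k$, so this piece is a classical paraproduct $\sum_k(S_{k-5}g_1)(\Delta_k g_2)$ with $g_1\approx D^{\alpha_1}f$ and $g_2=D^{\alpha_2}h$, hence bounded $L^{p_1}\times L^{p_2}\to L^p$ by the Coifman--Meyer theorem (equivalently by the square function estimate plus H\"older); the high--low part is symmetric. For the diagonal part the output is not localized, so I would insert an additional decomposition $\sum_\ell\phi_\ell(\xi)$, retain only $\ell\le\min(j,k)+O(1)$ by the support constraint, sum in $\ell$ via the Littlewood--Paley square function and in $j\sim k$ via the Fefferman--Stein maximal inequality and near-orthogonality; this is again the Coifman--Meyer bound for the high--high paraproduct. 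Collecting the three pieces gives the claim.

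The main obstacle is the bookkeeping in the last two steps rather than any isolated inequality: because $|\xi-\eta|^{\alpha_1}$ is not smooth at the origin, the frequency cutoffs $\phi_j$ cannot be removed, and one must verify that the normalized symbols genuinely satisfy uniform Coifman--Meyer estimates; moreover the diagonal sum does not converge in $L^p$ without the extra output decomposition and a vector-valued maximal inequality. Since all of this is classical, in the write-up I would most likely just state the lemma and cite the fractional Leibniz rule of Kenig, Ponce \& Vega (or the sharper refinements of Grafakos \& Oh and of Bourgain \& Li) rather than reproduce the paraproduct argument in full.
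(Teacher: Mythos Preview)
Your proposal is correct and in fact goes further than the paper: the paper's entire proof is the single line ``See, for example, \cite[Theorem A.8]{KPV}'', which is precisely what you suggest doing in your final paragraph. Your outlined argument (symbol bound $|m(\xi,\eta)|\lesssim|\xi-\eta|^{\alpha_1}|\eta|^{\alpha_2}$ by cases, then a three-piece paraproduct decomposition reduced to Coifman--Meyer) is the standard route to this estimate and is sound as sketched.
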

\begin{proof}
See, for example, \cite[Theorem A.8]{KPV}.
\end{proof}

\begin{corollary}
\label{C:com3}
For each $0<\epsilon<\frac12$, 
\begin{equation}
\label{E:Leib5}
\| D^{1/2}(fh)  - f D^{1/2} h \|_{L^2} \lesssim_\epsilon \| f\|_{L^2}^{\epsilon} \|\partial_y f\|_{L^2}^{1-\epsilon} \| \la D \ra^{1/2} h\|_{L^2}
\end{equation}
Moreover,
\begin{equation}
\label{E:Leib1}
\| D^{1/2}(fh) \|_{L^2} \lesssim (\|f\|_{L^2}^{1/2}\|\partial_y f\|_{L^2}^{1/2} + \| f\|_{L^2}^{\epsilon} \|\partial_y f\|_{L^2}^{1-\epsilon})\| \la D \ra^{1/2} h\|_{L^2}
\end{equation}
The implicit constant diverges as $\epsilon \searrow 0$ or as $\epsilon \nearrow \frac12$.   
\end{corollary}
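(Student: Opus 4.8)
The plan is to deduce both inequalities from the fractional Leibniz rule of Lemma~\ref{L:frac-Leib} (applied with $\alpha=\tfrac12$ and $p=2$), combined with the one-dimensional Sobolev embeddings $\dot H^{s}(\mathbb{R})\hookrightarrow L^{q}(\mathbb{R})$ valid for $0<s<\tfrac12$, $\tfrac1q=\tfrac12-s$, and the elementary interpolation bound $\|D^{\sigma}f\|_{L^2}\lesssim \|f\|_{L^2}^{1-\sigma}\|\partial_y f\|_{L^2}^{\sigma}$ for $0\le\sigma\le1$ (H\"older on the Fourier side). Throughout, $\epsilon\in(0,\tfrac12)$ is fixed.

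To prove \eqref{E:Leib5}, choose $\alpha_1=\tfrac12-\tfrac\epsilon2$, $\alpha_2=\tfrac\epsilon2$ (so $\alpha_1+\alpha_2=\tfrac12$ and $\alpha_1,\alpha_2\in[0,\tfrac12]$), and $q_1,q_2\in(2,\infty)$ with $\tfrac1{q_1}=\tfrac\epsilon2$, $\tfrac1{q_2}=\tfrac12-\tfrac\epsilon2$, which satisfy $\tfrac1{q_1}+\tfrac1{q_2}=\tfrac12$. Lemma~\ref{L:frac-Leib} then gives $D^{1/2}(fh)-f D^{1/2}h-h D^{1/2}f=R$ with $\|R\|_{L^2}\lesssim \|D^{\alpha_1}f\|_{L^{q_1}}\|D^{\alpha_2}h\|_{L^{q_2}}$. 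Sobolev embedding upgrades $\|D^{\alpha_1}f\|_{L^{q_1}}\lesssim \|D^{\alpha_1+\frac12-\frac1{q_1}}f\|_{L^2}=\|D^{1-\epsilon}f\|_{L^2}$ and $\|D^{\alpha_2}h\|_{L^{q_2}}\lesssim \|D^{\alpha_2+\frac12-\frac1{q_2}}h\|_{L^2}=\|D^{\epsilon}h\|_{L^2}$; then interpolation gives $\|D^{1-\epsilon}f\|_{L^2}\lesssim\|f\|_{L^2}^{\epsilon}\|\partial_y f\|_{L^2}^{1-\epsilon}$, while $\|D^{\epsilon}h\|_{L^2}\le\|\la D\ra^{1/2}h\|_{L^2}$ since $\epsilon\le\tfrac12$. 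This bounds $\|R\|_{L^2}$ by the right side of \eqref{E:Leib5}. The remaining term $h D^{1/2}f$ is handled directly: by H\"older with exponents $p_1,p_2$ given by $\tfrac1{p_1}=\epsilon$, $\tfrac1{p_2}=\tfrac12-\epsilon$, followed by $\|D^{1/2}f\|_{L^{p_1}}\lesssim\|D^{1-\epsilon}f\|_{L^2}\lesssim\|f\|_{L^2}^{\epsilon}\|\partial_y f\|_{L^2}^{1-\epsilon}$ and $\|h\|_{L^{p_2}}\lesssim\|D^{\epsilon}h\|_{L^2}\le\|\la D\ra^{1/2}h\|_{L^2}$, which is again of the required form. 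Summing the two contributions yields \eqref{E:Leib5}.

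For \eqref{E:Leib1}, write $D^{1/2}(fh)=\bigl(D^{1/2}(fh)-f D^{1/2}h\bigr)+f D^{1/2}h$. The first summand is controlled by \eqref{E:Leib5}. For the second, the one-dimensional Gagliardo--Nirenberg (Agmon) inequality $\|f\|_{L^\infty}\lesssim\|f\|_{L^2}^{1/2}\|\partial_y f\|_{L^2}^{1/2}$ gives $\|f D^{1/2}h\|_{L^2}\le\|f\|_{L^\infty}\|D^{1/2}h\|_{L^2}\lesssim\|f\|_{L^2}^{1/2}\|\partial_y f\|_{L^2}^{1/2}\|\la D\ra^{1/2}h\|_{L^2}$, which produces the first term on the right side of \eqref{E:Leib1}. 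Adding the two contributions finishes \eqref{E:Leib1}.

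The argument is routine; there is no genuine obstacle, only the bookkeeping of making all parameters compatible for every $\epsilon\in(0,\tfrac12)$: the Leibniz exponents $\alpha_1,\alpha_2$ stay in $[0,\tfrac12]$, the integrability exponents $q_1,q_2,p_1,p_2$ stay in $(2,\infty)$, and the Sobolev exponents $1-\epsilon$ and $\epsilon$ land in the admissible ranges $(0,1)$ and $(0,\tfrac12)$. This flexibility degenerates exactly at the endpoints: as $\epsilon\searrow0$ one has $p_1,q_1\to\infty$, so the Sobolev embeddings applied to the $f$-factors approach the forbidden $\dot H^{1/2}\hookrightarrow L^\infty$ and their constants blow up; as $\epsilon\nearrow\tfrac12$ one has $p_2\to\infty$, so the Sobolev embedding applied to the $h$-factor of $h D^{1/2}f$ likewise blows up. This is the source of the asserted divergence of the implicit constant.
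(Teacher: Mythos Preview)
Your proof is correct and follows essentially the same route as the paper's: apply the fractional Leibniz rule (Lemma~\ref{L:frac-Leib}) with $\alpha=\tfrac12$, bound the extra term $hD^{1/2}f$ separately by H\"older, then use Sobolev embedding and interpolation to reach $\|D^{1-\epsilon}f\|_{L^2}\|D^\epsilon h\|_{L^2}$, and deduce \eqref{E:Leib1} via $\|f\|_{L^\infty}\lesssim\|f\|_{L^2}^{1/2}\|\partial_y f\|_{L^2}^{1/2}$. The only cosmetic difference is your choice of $\alpha_1=\tfrac12-\tfrac\epsilon2$, $\alpha_2=\tfrac\epsilon2$ in the Leibniz step versus the paper's $\alpha_1=\tfrac12$, $\alpha_2=0$; both choices land on the same intermediate bound after Sobolev embedding.
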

\begin{proof}
By applying Lemma \ref{L:frac-Leib} with $\alpha=\frac12$, $\alpha_1=\frac12$, $\alpha_2=0$, $p=2$, $p_1=\frac1\epsilon$, $p_2=\frac{2}{1-2\epsilon}$, and applying the H\"older inequality on the term $hD^{1/2}f$, we obtain 
\begin{equation}
\label{E:Leib2}
\| D^{1/2}(fh) - fD^{1/2} h \|_{L^2} \lesssim_\epsilon \|D^{1/2} f\|_{L^{1/\epsilon}} \| h\|_{L^{2/(1-2\epsilon)}}
\end{equation}
Since Lemma \ref{L:frac-Leib} is not available for $\epsilon=0$ (where $p_1=\infty$) or $\epsilon=\frac12$ (where $p_2=\infty$), the above estimate has a constant that diverges as $\epsilon \searrow 0$ or $\epsilon \nearrow \frac12$.  By Sobolev embedding
$$\| D^{1/2}(fh) - fD^{1/2} h \|_{L^2} \lesssim_\epsilon  \| D^{1-\epsilon} f \|_{L^2} \| D^\epsilon h \|_{L^2} $$ 
Since Sobolev embedding for the second term fails for $\epsilon = \frac12$, the above estimate has a constant that diverges as $\epsilon \nearrow \frac12$.  
Gagliardo-Nirenberg (Cauchy-Schwarz on the Fourier side) then yields \eqref{E:Leib5}, and \eqref{E:Leib1} follows from \eqref{E:Leib5} by the Gagliardo-Nirenberg estimate $\|f\|_{L^\infty} \lesssim \|f\|_{L^2}^{1/2}\|\partial_y f\|_{L^2}^{1/2}$.
\end{proof}

\begin{lemma}
\label{L:gamma-comm}
For $0<\gamma\leq 1$, the operator
\begin{equation}
\label{E:cc0}
 \la \gamma y \ra \Big( D \la \gamma y \ra^{-1} - \la \gamma y \ra^{-1} D \Big) 
 \end{equation}
 is $H^{1/4}\to L^2$ bounded with operator norm $\lesssim \gamma^{3/4}$.
Here $D$ is the Fourier multiplier with symbol $|\xi|$.  
\end{lemma}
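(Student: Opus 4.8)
The plan is to compute and estimate the integral kernel of the operator directly. Since $\langle\gamma y\rangle\langle\gamma y\rangle^{-1}=1$ as a multiplication operator, the operator in \eqref{E:cc0} equals $T_\gamma:=\langle\gamma y\rangle\,[D,\langle\gamma y\rangle^{-1}]$. Writing $m(y)=\langle\gamma y\rangle^{-1}$ and using that $D$ acts by convolution against $-\tfrac1\pi|y|^{-2}$ (in the finite-part sense), the commutator $[D,m]$ has kernel $-\tfrac1\pi(m(y')-m(y))(y-y')^{-2}$; since $m$ is Lipschitz this is a genuine Calder\'on--Zygmund kernel, so $T_\gamma$ has kernel
$$
\mathcal{K}_\gamma(y,y')=-\frac1\pi\,\frac{\langle\gamma y\rangle-\langle\gamma y'\rangle}{\langle\gamma y'\rangle\,(y-y')^2}.
$$
I would first record the elementary bounds $|\langle\gamma y\rangle-\langle\gamma y'\rangle|\le\gamma|y-y'|$, $|m'(z)|\lesssim\gamma\langle\gamma z\rangle^{-2}$, $|m''(z)|\lesssim\gamma^2\langle\gamma z\rangle^{-3}$, and the comparability $\langle\gamma y\rangle\sim\langle\gamma y'\rangle$ whenever $\gamma|y-y'|\le1$; then split $T_\gamma=T_\gamma^{\mathrm{loc}}+T_\gamma^{\mathrm{far}}$ according to the cutoffs $\mathbf{1}_{|y-y'|\le1}$ and $\mathbf{1}_{|y-y'|>1}$.

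For the local part I would peel off the leading singular piece: near the diagonal $m(y')-m(y)=m'(y)(y'-y)+\tfrac12 m''(\xi)(y'-y)^2$, so that
$$
\mathcal{K}_\gamma(y,y')\mathbf{1}_{|y-y'|\le1}=c(y)\Bigl(-\frac1\pi\Bigr)\frac{\mathbf{1}_{|y-y'|\le1}}{y-y'}+R_\gamma(y,y'),\qquad c(y)=-\langle\gamma y\rangle m'(y),
$$
with $\|c\|_{L^\infty}\lesssim\gamma$. The first term is $c(y)$ times a truncated Hilbert transform and is thus $L^2\to L^2$ bounded with norm $\lesssim\gamma$; by the Taylor remainder and the bound on $m''$ one gets $|R_\gamma(y,y')|\lesssim\gamma^2\langle\gamma y\rangle^{-2}\mathbf{1}_{|y-y'|\le1}$, handled by a Schur test on both sides (using $\langle\gamma y\rangle\sim\langle\gamma y'\rangle$ on the support) with bound $\lesssim\gamma^2$. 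Hence $\|T_\gamma^{\mathrm{loc}}\|_{L^2\to L^2}\lesssim\gamma$.

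For the far part the estimate $|\langle\gamma y\rangle-\langle\gamma y'\rangle|\le\gamma|y-y'|$ yields the pointwise bound $|\mathcal{K}_\gamma(y,y')|\lesssim\gamma\langle\gamma y'\rangle^{-1}|y-y'|^{-1}$ on $|y-y'|>1$, so that $|T_\gamma^{\mathrm{far}}f|\le\gamma\bigl(|{\cdot}|^{-1}\mathbf{1}_{|{\cdot}|>1}\bigr)*\bigl(\langle\gamma\,{\cdot}\,\rangle^{-1}|f|\bigr)$ pointwise. Applying Young's inequality $L^{4/3}*L^{4/3}\to L^2$ together with H\"older's inequality $\|\langle\gamma\,{\cdot}\,\rangle^{-1}f\|_{L^{4/3}}\le\|\langle\gamma\,{\cdot}\,\rangle^{-1}\|_{L^4}\|f\|_{L^2}$, and using that $|{\cdot}|^{-1}\mathbf{1}_{|{\cdot}|>1}\in L^{4/3}$ with a universal norm while $\|\langle\gamma\,{\cdot}\,\rangle^{-1}\|_{L^4}\sim\gamma^{-1/4}$ by scaling, gives $\|T_\gamma^{\mathrm{far}}f\|_{L^2}\lesssim\gamma\cdot\gamma^{-1/4}\|f\|_{L^2}=\gamma^{3/4}\|f\|_{L^2}$. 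Combining the two parts, $\|T_\gamma f\|_{L^2}\lesssim\gamma^{3/4}\|f\|_{L^2}\le\gamma^{3/4}\|f\|_{H^{1/4}}$, which proves the lemma (in fact with $H^{1/4}$ replaced by $L^2$).

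The main obstacle is the far part. There a crude Schur test fails --- the kernel $\gamma\langle\gamma y'\rangle^{-1}|y-y'|^{-1}$ is not integrable in the output variable $y$ uniformly in $y'$ --- so one must instead exploit the convolution structure and Young's inequality; the crucial point is that the weight $\langle\gamma\,{\cdot}\,\rangle^{-1}$ enters only through its $L^4$ norm, costing a factor $\gamma^{-1/4}$ rather than the $\gamma^{-1/2}$ an $L^2$ pairing would incur, and this exactly matches the $\gamma$ coming out of the commutator to produce the stated power $\gamma^{3/4}$. Two further points require attention: the kernel decays like $\langle\gamma y'\rangle^{-1}$ in the variable being integrated (not like $\langle\gamma y\rangle^{-1}$), and principal-value cancellation is needed only in the local region, where the kernel is genuinely of Calder\'on--Zygmund type; everywhere else the bounds are by absolute values.
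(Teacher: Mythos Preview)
Your proof is correct and in fact proves more than the paper does: you obtain an $L^2\to L^2$ bound with norm $\lesssim\gamma^{3/4}$, whereas the paper only claims $H^{1/4}\to L^2$.

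The approaches differ substantially. The paper avoids the awkward weight $\langle\gamma y\rangle^{-1}$ by first replacing it with $(1+i\gamma y)^{-1}$, whose Fourier transform is an explicit exponential; this lets the authors analyze the commutators \eqref{E:cc1}--\eqref{E:cc2} on the frequency side via Schur's test on concrete kernels, obtaining $L^2\to L^2$ bounds of size $\gamma$. The conversion back to $\langle\gamma y\rangle^{-1}$ produces the extra commutator \eqref{E:cc7}, for which they invoke a Calder\'on estimate in the form $\|D(fg)-gDf\|_{L^2}\lesssim\|Dg\|_{L^4}\|f\|_{L^4}$ together with Sobolev embedding $L^4\hookleftarrow H^{1/4}$; this is where both the $\gamma^{3/4}$ power and the $H^{1/4}$ requirement enter.

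You instead work entirely on the physical side with the explicit kernel $\mathcal{K}_\gamma$, splitting at unit distance from the diagonal. The local piece reduces, after one Taylor step, to a multiple of the truncated Hilbert transform plus a bounded Schur kernel; the far piece is handled by the neat observation that Young's inequality $L^{4/3}\ast L^{4/3}\to L^2$ combined with H\"older costs only the $L^4$ norm of $\langle\gamma\,\cdot\,\rangle^{-1}$, i.e.\ $\gamma^{-1/4}$, rather than its $L^2$ norm. Your route is more elementary in that it never passes to the Fourier side and never invokes a black-box commutator theorem; it also yields the stronger $L^2\to L^2$ conclusion. The paper's detour through $(1+i\gamma y)^{-1}$ buys very clean Fourier-side kernels but introduces the correction step that weakens the final statement.
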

\begin{proof}
First, we claim that it suffices to show that 
\begin{equation}
\label{E:cc1}
D (1+i\gamma y)^{-1} - (1+i\gamma y)^{-1} D
\end{equation}
and
\begin{equation}
\label{E:cc2}
\gamma y \Big( D (1+i\gamma y)^{-1} - (1+i\gamma y)^{-1} D\Big)
\end{equation}
are both $L^2\to L^2$ bounded with operator norm $\lesssim \gamma$.  To show this, first note that \eqref{E:cc1} and \eqref{E:cc2} combined give that 
\begin{equation}
\label{E:cc3}
\la \gamma y \ra \Big( D (1+i\gamma y)^{-1} - (1+i\gamma y)^{-1} D \Big)
\end{equation}
is $L^2\to L^2$ bounded with operator norm $\lesssim \gamma$, and it remains to show that the $(1+i\gamma y)^{-1}$ term can be replaced by $\la \gamma y \ra^{-1}$.  Since the operator of multiplication by $\frac{1+i\gamma y}{\la \gamma y\ra}$ is $L^2\to L^2$ unitary operator we can compose \eqref{E:cc3} on the right by $\frac{1+i\gamma y}{\la \gamma y\ra}$ to obtain that
\begin{equation}
\label{E:cc4}
\la \gamma y \ra \Big( D \la \gamma y\ra^{-1}  - (1+i\gamma y)^{-1} D \frac{1+i\gamma y}{\la \gamma y\ra} \Big)
\end{equation}
is $L^2\to L^2$ bounded with operator norm $\lesssim \gamma$.  Rewrite \eqref{E:cc4} as
\begin{equation}
\label{E:cc5}
\la \gamma y \ra \Big( D \la \gamma y\ra^{-1}  - \la \gamma y\ra^{-1} D \Big) + \frac{\la \gamma y \ra}{1+i\gamma y} \Big( \frac{1+i\gamma y}{\la \gamma y\ra} D - D \frac{1+i\gamma y}{\la \gamma y\ra} \Big)
\end{equation}
To establish \eqref{E:cc0}, it suffices to show that the second half of \eqref{E:cc5}, i.e. 
\begin{equation}
\label{E:cc6}
\frac{\la \gamma y \ra}{1+i\gamma y} \Big( \frac{1+i\gamma y}{\la \gamma y\ra} D - D \frac{1+i\gamma y}{\la \gamma y\ra} \Big)
\end{equation}
is $H^{1/4} \to L^2$ bounded with operator norm $\lesssim \gamma$.  Since the operator of multiplication by $\frac{\la \gamma y \ra}{1+i\gamma y}$ is $L^2\to L^2$ unitary, it suffices to show 
\begin{equation}
\label{E:cc7}
 \frac{1+i\gamma y}{\la \gamma y\ra} D - D \frac{1+i\gamma y}{\la \gamma y\ra}
\end{equation}
is $H^{1/4} \to L^2$ bounded with operator norm $\lesssim \gamma^{3/4}$.  This follows from the estimate of Calder\'on \cite{Calderon}
$$\| D(fg) - gDf \|_{L^2} \lesssim \|Dg\|_{L^4} \|f\|_{L^4}$$
by taking $g(y) = \frac{1+i\gamma y}{\la \gamma y\ra}$.  Then by the $L^4\to L^4$ boundedness of the Hilbert transform,
$$\| Dg\|_{L^4} \lesssim \| \partial_y g\|_{L^4} =  \gamma^{3/4}$$
This completes the proof of \eqref{E:cc0} assuming \eqref{E:cc1} and \eqref{E:cc2}.

We prove \eqref{E:cc1} and \eqref{E:cc2} by passing to the Fourier side, in which they become the assertions that the operators 
\begin{equation}
\label{E:cc8}
|\xi| T_\gamma - T_\gamma |\xi|
\end{equation}
\begin{equation}
\label{E:cc9}
\gamma \partial_\xi (|\xi| T_\gamma - T_\gamma |\xi|)
\end{equation}
are $L^2\to L^2$ bounded with operator norm $\lesssim \gamma$, where $T_\gamma$ is the operator of convolution with kernel $k_\gamma$, where
$$k_\gamma(\xi) = \gamma^{-1} k_1(\gamma^{-1} \xi) \,, \qquad k_1(\alpha) = e^{-\alpha} \mathbf{1}_{\alpha >0}$$
and $|\xi|$ is the operator of multiplication by $|\xi|$.  These correspond to the operators with distributional kernels
$$k_\gamma(\xi-\eta) (|\xi|-|\eta|)$$
$$\gamma \partial_\xi [ k_\gamma(\xi-\eta) (|\xi|-|\eta|)]$$
Schur's test can be applied to these explicit kernels.  To see this, let
$$K_\gamma(\xi,\eta) = \gamma^{-1} K_1(\gamma^{-1}\xi, \gamma^{-1}\eta) \,, \qquad K_1(\xi,\eta) = e^{-(\xi-\eta)} (|\xi|-|\eta|) \mathbf{1}_{\xi-\eta>0}$$
Let
$$L_\gamma(\xi,\eta) = \gamma^{-1} L_1(\gamma^{-1}\xi,\gamma^{-1}\eta) \,, \qquad L_1(\xi,\eta) = e^{-(\xi-\eta)} (-|\xi|+|\eta|+\sgn\xi) \mathbf{1}_{\xi-\eta>0}$$

Schur's test implies that the operators corresponding to $K_1$ and $L_1$ are $L^2\to L^2$ bounded, and thus the operators corresponding to kernels $K_\gamma$ and $L_\gamma$ are bounded with norms independent of $\gamma$.

Note that $\partial_\xi K_1  = L_1$ in the distributional sense (this uses, importantly, the fact that the factor $|\xi|-|\eta|$ in the definition of $K_1(\xi,\eta)$ vanishes at the line of discontinity of $\mathbf{1}_{\xi-\eta>0}$).  It follows that $\partial_\xi K_\gamma = \gamma^{-1} L_\gamma$.  

The kernel corresponding to the operator \eqref{E:cc8} can be expressed as
$$k_\gamma(\xi-\eta) (|\xi|-|\eta|) = \gamma K_\gamma(\xi,\eta)$$
and the kernel corresponding to the operator \eqref{E:cc9} can be expressed as
$$\gamma \partial_\xi [ k_\gamma(\xi-\eta) (|\xi|-|\eta|)]= \gamma L_\gamma(\xi,\eta)$$
Hence both operators are $L^2\to L^2$ bounded with operator norm $\sim \gamma$.  This completes the proof that the operators \eqref{E:cc8} and \eqref{E:cc9} are $L^2\to L^2$ bounded with operator norm $\sim  \gamma$, and thus that the same statement applies to the operators \eqref{E:cc1} and \eqref{E:cc2}, completing the proof.
\end{proof}

\begin{lemma}
\label{L:com5}
For $\chi \in C_c^\infty(\mathbb{R})$ and $0<h \leq 1$, we have
\begin{equation}
\label{E:co19}
\left| \int_y  \chi(hy)  \cdot  H \partial_y w \cdot  \partial_y w \, dy \right| \lesssim h^2 \|w\|_{L_y^2}^2 
\end{equation}
where the implicit constant depends on $\chi$ but is uniform in $h$.

For any $0< \gamma\leq 1$, all $y_0\in \mathbb{R}$,
\begin{equation}
\label{E:cc10}
\left| \int g_{\gamma,y_0} (Hw_y) w_y \right| \lesssim \gamma \int g_{\gamma,y_0}' \, w^2 \, dx
\end{equation}
with implicit constant independent of $\gamma$ and $y_0$, where $g_{\gamma,y_0}$ is defined in \eqref{E:g-def}.
\end{lemma}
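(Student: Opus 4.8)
The plan is to prove both inequalities by a single Plancherel computation that makes the sign-cancellation built into the Hilbert transform explicit. Since $w$ is real, $\overline{\hat w(\xi)}=\hat w(-\xi)$; using $\widehat{Hw_y}(\xi)=-|\xi|\hat w(\xi)$ and $\widehat{w_y}(\xi)=i\xi\hat w(\xi)$, Parseval's identity gives, for any real weight $g$,
$$\int g\,(Hw_y)\,w_y\,dy=\frac{-i}{(2\pi)^2}\iint \hat g(-\xi-\eta)\,|\xi|\,\eta\,\hat w(\xi)\,\hat w(\eta)\,d\xi\,d\eta .$$
Symmetrizing the integrand in $\xi\leftrightarrow\eta$ turns $|\xi|\eta$ into $|\xi|\eta+|\eta|\xi$, and the crucial point is that this symmetrized symbol \emph{vanishes identically when $\xi$ and $\eta$ have opposite signs}. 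On the resonant set $\{\xi\eta>0\}$ one has $|\xi+\eta|=|\xi|+|\eta|$, so that $\big|\,|\xi|\eta+|\eta|\xi\,\big|=2|\xi||\eta|\le\tfrac12(\xi+\eta)^2$ and also $\max(|\xi|,|\eta|)\le|\xi+\eta|$. Hence
$$\Big|\int g\,(Hw_y)\,w_y\,dy\Big|\;\lesssim\;\iint_{\xi\eta>0}|\hat g(-\xi-\eta)|\;|\xi|\,|\eta|\;|\hat w(\xi)|\,|\hat w(\eta)|\,d\xi\,d\eta .$$

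For \eqref{E:co19} I would take $g=\chi(h\,\cdot)$, so that $\hat g(\mu)=h^{-1}\hat\chi(\mu/h)$, bound $|\xi||\eta|\le\tfrac14(\xi+\eta)^2$, and set $\mu=\xi+\eta$: the right-hand side above is then at most $\tfrac14\big(\int\mu^2|\hat g(-\mu)|\,d\mu\big)\,\|\hat w\|_{L^2}^2$, since $\int|\hat w(\xi)||\hat w(\mu-\xi)|\,d\xi\le\|\hat w\|_{L^2}^2$, and $\int\mu^2|\hat g(-\mu)|\,d\mu=h^2\int\nu^2|\hat\chi(\nu)|\,d\nu\lesssim_\chi h^2$ because $\hat\chi$ is Schwartz. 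This proves \eqref{E:co19}; note the resonance restriction is not even needed here.

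For \eqref{E:cc10} the restriction to $\{\xi\eta>0\}$ is what carries the spatial localization, so one must keep the sharp factor $2|\xi||\eta|$. Using $\widehat{g'_{\gamma,y_0}}(\mu)=\tfrac{\pi}{\gamma}e^{-i\mu y_0}e^{-|\mu|/\gamma}$, hence $\widehat{g_{\gamma,y_0}}(\mu)=\tfrac{\pi}{i\mu\gamma}e^{-i\mu y_0}e^{-|\mu|/\gamma}$, together with the elementary identity $\tfrac{\xi\eta}{\xi+\eta}=\int_0^\infty\xi\eta\,e^{-t(\xi+\eta)}\,dt$ (valid when $\xi+\eta>0$), I would compute that the contribution of the quadrant $\{\xi,\eta>0\}$ (the quadrant $\{\xi,\eta<0\}$ contributing its complex conjugate) is exactly
$$\frac{1}{4\pi\gamma}\int_0^\infty\Big(\int_{\mathbb R}\frac{w(y)\,dy}{\big(t+\gamma^{-1}+i(y-y_0)\big)^2}\Big)^{\!2}\,dt ,$$
after noting that the inner $\xi$- and $\eta$-integrals are one and the same linear functional of $w$, namely integration against the function whose Fourier transform is $\xi\,e^{-(t+\gamma^{-1})\xi}e^{i\xi y_0}\mathbf{1}_{\xi>0}$ and whose spatial kernel is $\big(t+\gamma^{-1}+i(y-y_0)\big)^{-2}$. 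Already at this stage the crude bound $|F(t)|\le\int_0^\infty\xi e^{-(t+\gamma^{-1})\xi}|\hat w(\xi)|\,d\xi$ followed by Cauchy--Schwarz (using $\tfrac{\xi\xi'}{\xi+\xi'}\le\sqrt{\xi\xi'}$) gives $\int_0^\infty|F(t)|^2\,dt\lesssim\gamma^2\|w\|_{L^2}^2$, hence $\big|\int g_{\gamma,y_0}(Hw_y)w_y\big|\lesssim\gamma\|w\|_{L^2}^2$.

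The \textbf{main obstacle} is to upgrade this to the \emph{localized} estimate \eqref{E:cc10}, with $\int g'_{\gamma,y_0}w^2$ in place of $\|w\|_{L^2}^2$. The natural attempt --- Cauchy--Schwarz against the weight $(g'_{\gamma,y_0})^{1/2}=\langle\gamma(y-y_0)\rangle^{-1}$ --- produces a factor $(t+\gamma^{-1})^{-1}$ inside the $t$-integral, which is not integrable, so one loses a logarithm at large $t$. To repair this I would exploit the mean-zero property $\int_{\mathbb R}\big(t+\gamma^{-1}+i(y-y_0)\big)^{-2}\,dy=0$: subtracting from $w(y)$ a suitable local average of $w$ near $y_0$ at scale $t+\gamma^{-1}$ gains an extra decaying factor in $t$ and restores convergence, with the weighted Cauchy--Schwarz then delivering $\int g'_{\gamma,y_0}w^2$ alongside the $\gamma^{-1}$ prefactor, i.e. the claimed bound. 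An alternative, Fourier-free route would be to integrate by parts twice, reducing $\int g_{\gamma,y_0}(Hw_y)w_y$ modulo $L^2$-bounded commutators to expressions of the form $\int g''_{\gamma,y_0}(Hw)w$, then apply the Cotlar identity $2f\,Hf=H(f^2-(Hf)^2)$ together with the pointwise bounds $|g''_{\gamma,y_0}|\lesssim\gamma\,g'_{\gamma,y_0}$ and $|Hg''_{\gamma,y_0}|\lesssim\gamma\,g'_{\gamma,y_0}$, the latter following from the explicit identity $Hg''_{\gamma,y_0}(y)=\gamma\big(-1+\gamma^2(y-y_0)^2\big)\big(1+\gamma^2(y-y_0)^2\big)^{-2}$; this route makes the localization transparent, at the cost of an auxiliary commutator estimate comparing $\int g'_{\gamma,y_0}(Hw)^2$ with $\int g'_{\gamma,y_0}w^2$.
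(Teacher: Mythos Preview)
Your Fourier argument for \eqref{E:co19} is correct and complete: after symmetrizing the bilinear symbol to $|\xi|\eta+|\eta|\xi$, the bound $2|\xi||\eta|\mathbf 1_{\xi\eta>0}\le\tfrac12(\xi+\eta)^2$ together with $\int\mu^2|\hat g(-\mu)|\,d\mu=h^2\int\nu^2|\hat\chi(\nu)|\,d\nu$ gives exactly the claimed $h^2\|w\|_{L^2}^2$. This is a clean alternative to the proof in \cite{Z}.

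For \eqref{E:cc10} there is a genuine gap. Your Fourier computation correctly yields the \emph{global} bound $\gamma\|w\|_{L^2}^2$, but neither of your proposed upgrades to the \emph{localized} bound $\gamma\int g'_{\gamma,y_0}w^2$ goes through. The mean-zero subtraction idea is not carried far enough to evaluate. The Cotlar route fails for a concrete reason: after reaching $|\int g''(Hw)w|\le\tfrac{\gamma}{2}\int g'\big(w^2+(Hw)^2\big)$ you need $\int g'_{\gamma,y_0}(Hw)^2\lesssim\int g'_{\gamma,y_0}w^2$, and this is \emph{false}. The weight $g'_{\gamma,y_0}(y)=\langle\gamma(y-y_0)\rangle^{-2}$ is not in $A_2$ (on $I=[-R,R]$ one computes $\langle g'\rangle_I\langle (g')^{-1}\rangle_I\sim\gamma R\to\infty$), so by Hunt--Muckenhoupt--Wheeden the Hilbert transform is unbounded on $L^2(g'_{\gamma,y_0}\,dy)$. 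Separately, ``modulo $L^2$-bounded commutators'' is the wrong error budget here: every remainder must already be controlled by $\gamma\int g'_{\gamma,y_0}w^2$, not by $\|w\|_{L^2}^2$.

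The paper itself defers to \cite[Lemma~3]{KM}, whose argument is purely in physical space: one writes
\[
\int g\,(Hw_y)\,w_y=-\frac{1}{2\pi}\iint\frac{g(x)-g(y)}{x-y}\,w_y(x)\,w_y(y)\,dx\,dy,
\]
integrates by parts in $x$ and in $y$ to land on the \emph{smooth} kernel $L(x,y)=\partial_x\partial_y\big[\tfrac{g(x)-g(y)}{x-y}\big]=\tfrac{g'(x)+g'(y)}{(x-y)^2}-\tfrac{2(g(x)-g(y))}{(x-y)^3}$, and then estimates $L$ directly using the explicit form $g'=\langle\gamma(\cdot-y_0)\rangle^{-2}$; a Schur-type bound on $L$ against the weight $\sqrt{g'(x)g'(y)}$ produces the localized right-hand side. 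If you want to stay on the Fourier side, the missing ingredient is precisely a pointwise (in $x,y$) control of this kernel, not an $L^2$-operator bound for $H$ in the weighted space.
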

\begin{proof}
See \cite[Lemma 4.2]{Z} and \cite[Lemma 3]{KM}.
\end{proof}

\begin{lemma}
\label{L:com6}
\begin{equation}
\label{E:com1}
\|  (\la \gamma y\ra^{-1}  \mathcal{D}_\gamma^{-1} \mathcal{L} - \mathcal{L} \mathcal{D}_\gamma^{-1} \la \gamma y \ra^{-1} ) w\|_{L_y^2} \lesssim \gamma \ln \gamma^{-1}  \| \la \gamma y \ra^{-1} w\|_{L_y^2}
\end{equation}
\end{lemma}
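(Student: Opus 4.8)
The plan is to recast \eqref{E:com1} as an $L^2$ operator-norm bound and establish it by a weighted Schur test on explicit integral kernels. Write $m := \la \gamma y \ra^{-1}$ for the associated multiplication operator and let $A := m\,\mathcal{D}_\gamma^{-1}\mathcal{L} - \mathcal{L}\,\mathcal{D}_\gamma^{-1}m$ denote the operator appearing in \eqref{E:com1}. Substituting $\tilde w = mw$, the claim \eqref{E:com1} is equivalent to the assertion that $Am^{-1}$ is bounded $L_y^2 \to L_y^2$ with operator norm $\lesssim \gamma \ln \gamma^{-1}$. Since $\mathcal{L} = 1 + D - Q$ with $D = -\partial_y H$, and $\mathcal{D}_\gamma^{-1}$ commutes with $1$ and with $D$, one has the exact identity
$$
A = [m,\mathcal{D}_\gamma^{-1}] + [m,\mathcal{D}_\gamma^{-1}D] - m[\mathcal{D}_\gamma^{-1},Q] - Q[m,\mathcal{D}_\gamma^{-1}].
$$
Each of the four summands is an integral operator with a computable kernel, hence so is $Am^{-1}$, and the strategy is to verify the Schur condition for $Am^{-1}$ with a slowly varying weight of the form $\omega(y) = \la \gamma y \ra^{-\theta}$, $0 < \theta < 1$ (one may take $\theta = \tfrac12$).

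Three of the four terms are routine. The kernel of $[m,\mathcal{D}_\gamma^{-1}]$ is $(m(y)-m(y'))\,\gamma^{-1}e^{-(y-y')/\gamma}\mathbf{1}_{y>y'}$; using $\|m'\|_{L^\infty}\lesssim\gamma$ and $|(m(y)-m(y'))\la\gamma y'\ra| = |\la\gamma y'\ra - \la\gamma y\ra|/\la\gamma y\ra \le \gamma|y-y'|$, the kernel of $[m,\mathcal{D}_\gamma^{-1}]m^{-1}$ is pointwise $\lesssim |y-y'|\,e^{-(y-y')/\gamma}\mathbf{1}_{y>y'}$, with $L^1$-norm $O(\gamma^2)$ in the variable $y-y'$; since $\omega$ is essentially constant on this exponentially thin support, Schur gives $\|[m,\mathcal{D}_\gamma^{-1}]w\|_{L^2}\lesssim\gamma^2\|mw\|_{L^2}$ and hence also $\|Q[m,\mathcal{D}_\gamma^{-1}]w\|_{L^2}\lesssim\gamma^2\|mw\|_{L^2}$. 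For $m[\mathcal{D}_\gamma^{-1},Q]$, a similar computation — now using that $Q$ is bounded with $|Q'|\lesssim\la\cdot\ra^{-3}$, so that $|Q(y')-Q(y)|\,\la\gamma y'\ra\lesssim|y-y'|$ on the effective support $|y-y'|\lesssim 1$ — shows that the kernel of $[\mathcal{D}_\gamma^{-1},Q]m^{-1}$ has $L^1$-norm $O(\gamma)$ up to exponentially small tails, giving $\|m[\mathcal{D}_\gamma^{-1},Q]w\|_{L^2}\lesssim\gamma\|mw\|_{L^2}$.

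The term $[m,\mathcal{D}_\gamma^{-1}D]$ carries the logarithm and is the main obstacle. Here $\mathcal{D}_\gamma^{-1}D$ is the Fourier multiplier $|\xi|(1+i\gamma\xi)^{-1}$, i.e.\ convolution with a kernel $\kappa_\gamma(z) = \gamma^{-2}\kappa_1(z/\gamma)$; via the identity $\mathcal{D}_\gamma^{-1}D = \gamma^{-1}H(\mathcal{D}_\gamma^{-1}-I)$ (checked on the Fourier side) and the explicit kernel of $H(\mathcal{D}_\gamma^{-1}-I)$, one obtains the pointwise bound $|\kappa_\gamma(z)|\lesssim |z|^{-1}(|z|+\gamma)^{-1}$, so that $\kappa_\gamma$ agrees with the kernel of $D$ on scales $|z|\gg\gamma$ and is regularized at scale $\gamma$. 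This $\kappa_\gamma$ is not locally integrable, but in the commutator the vanishing factor $|m(y)-m(y')|\lesssim\min(1,\gamma|y-y'|)$ restores integrability near the diagonal. The kernel of $[m,\mathcal{D}_\gamma^{-1}D]m^{-1}$ equals $\tfrac{m(y)-m(y')}{m(y')}\,\kappa_\gamma(y-y')$ with $\big|\tfrac{m(y)-m(y')}{m(y')}\big|\le\gamma|y-y'|\la\gamma y\ra^{-1}$, hence is $\lesssim\gamma\,\la\gamma y\ra^{-1}(|y-y'|+\gamma)^{-1}$. Running the Schur test with $\omega(y)=\la\gamma y\ra^{-1/2}$, I would split the $y'$-integral into the range $|y-y'|<\gamma$ (which contributes $O(\gamma\,\omega(y))$), the range $\gamma\le|y-y'|\lesssim\la\gamma y\ra/\gamma$ (where $|\kappa_\gamma|\sim|y-y'|^{-2}$ and $\int dr/r$ over this range produces the factor $\ln\gamma^{-1}$, yielding $O(\gamma\ln\gamma^{-1}\,\omega(y))$), and the far range $|y-y'|\gtrsim\la\gamma y\ra/\gamma$ (where $\omega(y')\sim(\gamma|y-y'|)^{-1/2}$ makes the tail converge with an extra power of $\gamma$ and no further log); the dual (column) bound follows from the same decomposition. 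The essential subtlety — the step I expect to be hardest to get right — is precisely this weight bookkeeping: a Schur test that merely bounds the $L^1$ mass of the kernel of $[m,\mathcal{D}_\gamma^{-1}D]m^{-1}$ diverges logarithmically in the far field, because of the competition between the $|y-y'|^{-2}$ tail of $\kappa_\gamma$ and the linear growth of $\tfrac{m(y)-m(y')}{m(y')}$; it is the strictly weaker weight $\la\gamma y\ra^{-\theta}$ that renders the far field summable, while the intermediate scales $\gamma\lesssim|y-y'|\lesssim\gamma^{-1}$ generate the $\ln\gamma^{-1}$. Collecting the four estimates gives $\|Aw\|_{L^2}\lesssim(\gamma^2+\gamma+\gamma\ln\gamma^{-1})\|mw\|_{L^2}\lesssim\gamma\ln\gamma^{-1}\|mw\|_{L^2}$, which is \eqref{E:com1}.
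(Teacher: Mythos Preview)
Your approach is essentially the same as the paper's: both reduce \eqref{E:com1} to an $L^2\to L^2$ bound for $Am^{-1}$, split $\mathcal{L}=1+D-Q$, dispatch the identity and $Q$ pieces by an ordinary Schur test on the explicit exponential kernel of $\mathcal{D}_\gamma^{-1}$, and handle the $D=-H\partial_y$ piece by a \emph{weighted} Schur test on a kernel bounded by $\la\gamma y\ra^{-1}\la\gamma^{-1}(y-y')\ra^{-1}$ (equivalently your $\gamma\la\gamma y\ra^{-1}(|y-y'|+\gamma)^{-1}$), which is exactly where the $\ln\gamma^{-1}$ appears. The only cosmetic differences are that the paper keeps the $Q$ contribution as a single piece (your two $Q$ commutators sum to it) and runs the weighted Schur test with weight $\la\gamma y'\ra^{-1}$ rather than your $\la\gamma y'\ra^{-1/2}$.
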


\begin{proof}
By splitting $\mathcal{L} = (-H\partial_y +1) - Q$ and taking $f = \la \gamma y  \ra^{-1}w$, it suffices to prove the three estimates

\begin{equation}
\label{E:com3}
\|  (\la \gamma y\ra^{-1}  \mathcal{D}_\gamma^{-1}  \la \gamma y \ra - \mathcal{D}_\gamma^{-1}  ) f\|_{L_y^2} \lesssim \gamma \| f\|_{L_y^2}
\end{equation}
\begin{equation}
\label{E:com4}
\|  (\la \gamma y\ra^{-1}  \mathcal{D}_\gamma^{-1} (-H\partial_y) \la \gamma y \ra - (-H\partial_y)\mathcal{D}_\gamma^{-1}  ) f\|_{L_y^2} \lesssim \gamma(\ln \gamma^{-1}) \| f\|_{L_y^2}
\end{equation}
\begin{equation}
\label{E:com5}
\|  (\la \gamma y\ra^{-1}  \mathcal{D}_\gamma^{-1} Q  \la \gamma y\ra -  Q \mathcal{D}_\gamma^{-1}  ) f\|_{L_y^2} \lesssim \gamma \| f\|_{L_y^2}
\end{equation}

First, we prove the estimate \eqref{E:com3}. Since $\frac{\la \gamma y' \ra}{\la \gamma y \ra} - 1 = \frac{\la \gamma y' \ra - \la \gamma y \ra}{\la \gamma y\ra} = \frac{\gamma (y'-y)}{\la \gamma y \ra} \frac{ \gamma y' + \gamma y}{\la \gamma y' \ra + \la \gamma y \ra}$, the kernel of the operator is 
\begin{align*}
K(y,y') &= \left( \frac{ \langle \gamma y' \rangle}{ \langle \gamma y \rangle} -1  \right) \gamma^{-1} e^{-\frac{|y-y'|}{\gamma}} \mathbf{1}_{y'<y}   \\ 
& = \frac{ \gamma^2 (y'+y) (y'-y)}{ \langle \gamma y \rangle  (\langle \gamma y' \rangle + \langle \gamma y \rangle)}   \gamma^{-1} e^{-\frac{|y-y'|}{\gamma}} \mathbf{1}_{y'<y}   \\
& = \frac{ \gamma^2 (y'+y) }{ \langle \gamma y \rangle  (\langle \gamma y' \rangle + \langle \gamma y \rangle)}    q(\gamma^{-1} |y-y'| ) 
\end{align*}
where $q(z) = z e^{-z} \mathbf{1}_{z>0}$.  Since $\int_{y'} q(\gamma^{-1} |y-y'|) dy' = \gamma$  and $\int_y q(\gamma^{-1} |y-y'|) dy = \gamma$, and the prefactor is uniformly bounded by $\gamma$,  the Schur test implies that this operator is $L^2_y \rightarrow L^2_y$ bounded with $O(\gamma^2)$ operator norm.

Next, we consider the estimate \eqref{E:com4}.  The kernel of the operator is
$$K(y,y')= \left( \frac{\la \gamma y' \ra}{\la \gamma y \ra} - 1\right) \gamma^{-2} k (\gamma^{-1}(y-y'))$$
where 
$$\hat k(\xi) = \frac{|\xi|}{1+i\xi}$$
Again since $\frac{\la \gamma y' \ra}{\la \gamma y \ra} - 1 = \frac{\la \gamma y' \ra - \la \gamma y \ra}{\la \gamma y\ra} = \frac{\gamma (y'-y)}{\la \gamma y \ra} \frac{ \gamma y' + \gamma y}{\la \gamma y' \ra + \la \gamma y \ra}$, we can rewrite 
$$K(y,y') =  \frac{1}{\la \gamma y \ra} \frac{ \gamma y' + \gamma y}{\la \gamma y' \ra + \la \gamma y \ra} \tilde k (\gamma^{-1}(y-y'))$$
where
$$\tilde k(z) = z k(z)$$
Now
$$\hat{\tilde k}(\xi) = i \partial_\xi \frac{|\xi|}{1+i\xi} = \frac{i \sgn \xi}{(1+i\xi)^2}$$
Since $\hat{\tilde k}(\xi)$ is in $L^1$, it follows that $\tilde k(z)$ is continuous and $|\tilde k(z)| \leq \|\hat{\tilde k} \|_{L^1}<\infty$ for all $z\in \mathbb{R}$.      Moreover, integration by parts in the inverse transform gives that $|k(z)| \lesssim |z|^{-1}$ for all $z\in \mathbb{R}$.  Combining, we obtain
$$|\tilde k(z)| \lesssim \la z \ra^{-1}$$
and this is the optimal decay estimate as $|z|\to \infty$.  Hence we have
$$|K(y,y')| \lesssim \frac{1}{\la \gamma y \ra \la \gamma^{-1}(y-y')\ra}$$

We apply the following ``weighted Schur test'' (see \cite[Chapter 4, Exercise 26, p.199]{SS}):  If $Tf(y) = \int K(y,y') f(y') \, dy'$ and $w$ is any measurable function such that $0<w(y')<\infty$ for all $y'$ and
$$M_{y'} = \sup_y w(y)^{-1} \int_{y'} |K(y,y')| w(y') \, dy' \,, \quad M_y = \sup_{y'} w(y')^{-1} \int_y |K(y,y')| w(y) \, dy$$
Then 
$$\|T\|_{L^2\to L^2} \leq \sqrt{M_yM_{y'}}$$
We apply this with $w(y') = \la \gamma y' \ra^{-1}$.  We have, for $0< \gamma \leq \frac12$,
\begin{equation}
\label{E:My-bound}
M_{y'}  \lesssim \sup_y  \int \frac{1}{\la \gamma^{-1}(y-y') \ra \la \gamma y' \ra} \, dy' \lesssim \gamma \ln \gamma^{-1}
\end{equation}
\begin{equation}
\label{E:Mx-bound}
M_y \lesssim \sup_{y'} \;  \la \gamma y' \ra \int \frac{1}{\la \gamma y\ra^2 \la \gamma^{-1}(y-y') \ra} \, dy \lesssim \gamma \ln \gamma^{-1}
\end{equation}
The estimate \eqref{E:com4} then follows, but we will give an outline of \eqref{E:My-bound} and \eqref{E:Mx-bound}.  

For \eqref{E:My-bound}, decompose the $y'$ integration into the regions $|y'|\sim |y|$, $|y'|\gg |y|$ and $|y'| \ll |y|$, and we label the corresponding pieces $M_{y',\sim}$, $M_{y',+}$, and $M_{y',-}$.

For $|y'|\sim |y|$, we have
$$M_{y',\sim} \lesssim  \sup_y \frac{1}{\la \gamma y \ra} \int_{|y'|\sim |y|}  \frac{1}{\la \gamma^{-1}(y-y') \ra} \, dy'$$
We can then change variable $z=y'-y$ (and still have $|z|\lesssim |y|$), and split into $|z|\leq \gamma$ and $|z|\geq \gamma$, to obtain 
\begin{align*}
M_{y',\sim} &\lesssim  \sup_y \frac{1}{\la \gamma y \ra} \left( \int_{|z|\leq \gamma} \, dz +  \int_{\gamma \leq |z| \lesssim |y|} \frac{1}{\la \gamma^{-1} z\ra } \, dz \right) \\
&\lesssim \sup_y \frac{1}{\la \gamma y \ra} \left( \gamma + \int_{\gamma \leq |z| \lesssim |y|} \frac{dz}{\gamma^{-1} z} \right) \\
&\lesssim \sup_{|y|\lesssim \gamma} \frac{\gamma}{\la \gamma y \ra}  + \sup_{\gamma \lesssim |y| \leq \gamma^{-1}} \frac{\gamma}{\la \gamma y \ra} \left(1 + \int_{\gamma \leq |z| \lesssim |y|} \frac{dz}{ z} \right) + \sup_{ |y| \gtrsim \gamma^{-1}} \frac{\gamma}{\la \gamma y \ra} \left(1 + \int_{\gamma \leq |z| \lesssim |y|} \frac{dz}{ z} \right)
\end{align*}
where in the last step, we used that $|y|\lesssim \gamma$ implies the integral on $\gamma \leq |z| \lesssim |y|$ is over the empty set.  The first two terms are bounded by $\gamma\ln \gamma^{-1}$, and in the third we use that $\la \gamma y \ra \sim \gamma y$,
$$M_{y',\sim} \lesssim \gamma\ln\gamma^{-1} + \sup_{|y| \gtrsim \gamma^{-1}} \frac{\ln |y|}{y} \lesssim \gamma \ln \gamma^{-1}$$

Now consider the case $|y'| \gg |y|$ in \eqref{E:My-bound}.    We have
$$M_{y',+} \lesssim \int \frac{dy'}{\la \gamma^{-1} y' \ra \la \gamma y' \ra}$$
Breaking the $y'$ integration into the regions $|y'|\leq \gamma$, $\gamma \leq |y'| \leq \gamma^{-1}$ and $|y'| \geq \gamma^{-1}$ and using the appropriate reductions for $\la \gamma y' \ra$ and $\la \gamma^{-1}y' \ra$ in each subregion,
$$M_{y',+} \lesssim \int_{|y'|\leq \gamma} \, dy' + \int_{\gamma \leq |y'| \leq \gamma^{-1}} \frac{dy'}{\gamma^{-1} y'} + \int_{|y'|\geq \gamma^{-1}} \frac{dy'}{y'^2} \lesssim \gamma$$

Finally consider the case $|y'| \ll |y|$ in \eqref{E:My-bound}.  Then
$$M_{y',-} \lesssim \sup_y  \frac{1}{\la \gamma^{-1} y\ra} \int_{|y'|\ll |y|} \frac{dy'}{\la \gamma y' \ra}$$
Breaking the supremum in $y$ into $|y|\leq \gamma$ and $|y|\geq \gamma$,  we find
\begin{align*}
M_{y',-} &\lesssim \sup_{|y|\leq \gamma}  \frac{1}{\la \gamma^{-1} y\ra} \int_{|y'|\ll |y|} \frac{dy'}{\la \gamma y' \ra }+ \sup_{|y|\geq \gamma}  \frac{1}{\la \gamma^{-1} y\ra} \int_{|y'|\ll |y|} \frac{dy'}{\la \gamma y' \ra} \\
&\lesssim \gamma + \sup_{|y|\geq \gamma} \frac{\gamma}{y} \int_{|y'| \leq |y|} \frac1{\la \gamma y' \ra} \, dy' \lesssim \gamma
\end{align*}
where the first term results from the fact that $\la \gamma^{-1}y\ra \sim 1$
 but the $y'$ integration is carried over the small set $|y'|\leq \gamma$, and in the second term we used that $\la \gamma^{-1} y \ra \sim \gamma^{-1} y$.   For this second term, we do not use the $\la \gamma y' \ra$ denominator and just bound the integral by $|y|$ obtaining the upper bound of $\gamma$.  This completes the proof of \eqref{E:My-bound}.
 
Now we prove \eqref{E:Mx-bound} by decomposing the $x$ integral into the regions $|y|\sim |y'|$, $|y|\ll |y'|$, and $|y| \gg |y'|$, and label the bounds on each piece by $M_{y,\sim}$, $M_{y,-}$, and $M_{y,+}$ respectively.  First we consider the case $|y|\sim |y'|$ in \eqref{E:Mx-bound}
$$M_{y,\sim} \leq \sup_{y'} \frac{1}{\la \gamma y' \ra} \int_{|y|\sim |y'|} \frac{dy}{\la \gamma^{-1}(y-y') \ra}$$
From here, it is completely analogous to the proof of the bound $M_{y',\sim}$ given above, so we conclude  $M_{y,\sim} \lesssim \gamma \ln \gamma^{-1}$.   Next, we consider the case $|y|\ll |y'|$ in \eqref{E:Mx-bound},
$$M_{y, -} \lesssim \sup_{y'} \frac{\la \gamma y' \ra}{\la \gamma^{-1} y' \ra} \int_{|y|\ll |y'|} \frac{dy}{\la \gamma y \ra^2 } $$
Splitting the supremum in $y'$ into the regions $|y'| \leq \gamma$, $\gamma \leq |y'| \leq \gamma^{-1}$, and $|y'| \geq \gamma^{-1}$, we obtain
$$M_{y,-} \lesssim  
\begin{aligned}[t]
&\sup_{|y'|\leq \gamma} \frac{\la \gamma y' \ra}{\la \gamma^{-1} y' \ra} \int_{|y|\ll |y'|} \frac{dy}{\la \gamma y \ra^2 }  +  \sup_{ \gamma \leq |y'| \leq \gamma^{-1} } \frac{\la \gamma y' \ra}{\la \gamma^{-1} y' \ra} \int_{|y|\ll |y'|} \frac{dy}{\la \gamma y \ra^2 } \\
&+  \sup_{ |y'| \gtrsim \gamma^{-1} } \frac{\la \gamma y' \ra}{\la \gamma^{-1} y' \ra} \int_{|y|\ll |y'|} \frac{dx}{\la \gamma y \ra^2 }
\end{aligned}$$
Making the appropriate reductions in each case
$$M_{y,-} \lesssim  \sup_{|y'|\leq \gamma} \int_{|y|\ll \gamma } dy  +  \sup_{ \gamma \leq |y'| \leq \gamma^{-1} } \frac{1}{ \gamma^{-1} y' } \int_{|y|\ll |y'|} dy +  \sup_{ |y'| \gtrsim \gamma^{-1} } \gamma^2 \int  \frac{dy}{\la \gamma y \ra^2 }$$
Each term is bounded by $\gamma$ (for the last, we use the substitution $z=\gamma y$ to evaluate the integral).  Finally, we consider the region $|y| \gg |y'|$ in \eqref{E:Mx-bound}.  We have
$$M_{y,+} \lesssim \sup_{y'} \la \gamma y' \ra \int_{|y| \gg |y'|} \int \frac{dy}{\la \gamma y \ra^2 \la \gamma^{-1} y \ra} \lesssim \int \frac{dy}{ \la \gamma y \ra \la \gamma^{-1} y\ra} $$
The integral on the right is analogous to that obtained in the estimate of $M_{y',+}$ in the estimate of \eqref{E:My-bound}, so a bound of $\gamma$ is obtained.  This completes the proof of \eqref{E:Mx-bound}.

Now that we have completed the proof of \eqref{E:My-bound}, \eqref{E:Mx-bound}, the proof of \eqref{E:com4} is complete.

Finally, we prove the estimate \eqref{E:com5}.  The kernel of the operator is 
$$K(y,y') = \mu_\gamma(y,y') \gamma^{-1} e^{-\frac{|y-y'|}{\gamma}} \mathbf{1}_{y'<y} \quad \text{where} \quad \mu_\gamma(y,y') = \frac{ Q(y') \langle \gamma y' \rangle}{ \langle \gamma y \rangle} -Q(y)$$
It suffices to show that 
\begin{equation}
\label{E:mu-bound}
|\mu_\gamma(y,y')| \lesssim |y-y'|
\end{equation}
Indeed, \eqref{E:mu-bound} implies that 
$$|K(y,y')| \lesssim  q(\gamma^{-1}|y-y'|)$$
where $q(z) = z e^{-z} \mathbf{1}_{z>0}$, so that by Schur's test, the operator in \eqref{E:com5} is $L^2\to L^2$ bounded with operator norm $\lesssim \gamma$.    To prove \eqref{E:mu-bound}, note that
\begin{equation}
\label{E:mu-bound2}
\mu_\gamma(y,y') = \frac{\la \gamma y' \ra}{\la \gamma y \ra}( Q(y') - Q(y)) + Q(y) \left( \frac{\la \gamma y' \ra}{\la \gamma y \ra} - 1 \right)
\end{equation}
For the second term in \eqref{E:mu-bound2},
$$Q(y) \left( \frac{\la \gamma y' \ra}{\la \gamma y \ra} - 1 \right) =Q(y) \frac{ (\la \gamma y' \ra - \la \gamma y \ra)(\la \gamma y' \ra + \la \gamma y \ra)}{\la \gamma y \ra( \la \gamma y' \ra + \la \gamma y \ra)} = Q(x)\frac{\gamma(y+y')}{\la \gamma y \ra( \la \gamma y\ra + \la \gamma y' \ra)} \gamma(y'-y)$$
from which it is clear that this quantity is bounded by $ |y-y'|$

For the first term in \eqref{E:mu-bound2}, applying the explicit formula  $Q(y)=4/(1+y^2)$,
$$ \frac{\la \gamma y' \ra}{\la \gamma y \ra}( Q(y') - Q(y))  = \frac{ 4 (y+y') \la \gamma y' \ra}{(1+y^2)(1+y'^2) \la \gamma y \ra} (y-y')$$ 
To see that this quantity is bounded by $|y-y'|$ (uniformly in $\gamma$), we investigate the prefactor
$$\nu_\gamma (y,y') = \frac{ 4 (y+y') \la \gamma y' \ra}{(1+y^2)(1+y'^2) \la \gamma y \ra}$$
and show that $|\nu_\gamma(y,y')| \lesssim 1$ independently of $\gamma>0$.  This is handled in three cases as follows:
$$|y|\sim |y'| \implies |\nu_\gamma(y,y')| \lesssim \frac{ |y| \la \gamma y \ra}{ \la y \ra^4 \la \gamma y \ra} \lesssim 1$$
$$|y| \ll |y'| \implies  |\nu_\gamma(y,y')| \lesssim \frac{ |y'| \la \gamma y' \ra}{\la y' \ra^2} \lesssim 1$$
and finally, when $|y| \gg |y'|$, we use that $\frac{\la \gamma y' \ra}{\la \gamma y\ra} \lesssim 1$ and thus
$$|y| \gg |y'| \implies |\nu_\gamma(y,y')| \lesssim \frac{ |y|}{\la y \ra^2} \lesssim 1$$
This completes the proof of \eqref{E:mu-bound} and thus the proof of \eqref{E:com5}.
\end{proof}

\begin{lemma}
\label{L:co3}
For $\chi \in C_c^\infty(\mathbb{R})$, then the commutator
$[ H \partial_y, \chi(hy) ]$ is $L_y^2\to L_y^2$ bounded with operator norm  $\lesssim h$,  with the implicit constant depending on $\chi$ but uniform in $h$.  
\end{lemma}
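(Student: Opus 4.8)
The plan is to identify $[H\partial_y, \chi(hy)]$ as (up to a harmless constant) the first Calder\'on commutator associated to the Lipschitz function $y \mapsto \chi(hy)$, and then to invoke Calder\'on's $L^2$ boundedness theorem. Using the identity $D = -H\partial_y$ from \S\ref{S:notation}, I would first write $[H\partial_y, \chi(hy)] = -[D, \chi(hy)]$. Setting $b(y) = \chi(hy)$ and writing out $([D,b]f)(y) = (D(bf))(y) - b(y)(Df)(y)$ via the (Hadamard finite-part) kernel representation of $D$, the $b(y)f(y)$ diagonal contributions cancel exactly, leaving
$$([D,b]f)(y) = -\tfrac{1}{\pi}\,\pv\!\int \frac{b(y')-b(y)}{(y-y')^2}\,f(y')\,dy'.$$
This is a genuine principal-value singular integral, since $b(y')-b(y) = O(|y-y'|)$ renders the kernel only $O(|y-y'|^{-1})$ near the diagonal; it is precisely the first Calder\'on commutator with symbol $b$.

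Next I would apply Calder\'on's theorem \cite{Calderon}, which gives that this operator is bounded on $L^2$ with operator norm $\lesssim \|b'\|_{L^\infty}$. Since $b'(y) = h\chi'(hy)$, we have $\|b'\|_{L^\infty} = h\|\chi'\|_{L^\infty}$, and therefore $\|[H\partial_y,\chi(hy)]\|_{L^2\to L^2} \lesssim h\|\chi'\|_{L^\infty}$, which is exactly the asserted bound: the implicit constant depends on $\chi$ only through $\|\chi'\|_{L^\infty}$ (in particular the compact support of $\chi$ plays no role, and Lipschitz regularity would suffice), and it is uniform in $h$.

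The only substantive point is the $L^2$ boundedness of the Calder\'on commutator, which is a classical but genuinely nontrivial Calder\'on--Zygmund estimate; a naive Schur test fails here, because the pointwise bound $|K(y,y')| \lesssim h/|y-y'|$ is not integrable at the diagonal, so the cancellation encoded in the principal value is essential. If one prefers to avoid citing \cite{Calderon} and instead argue from tools closer to those already used in \S\ref{S:commutator}, the same conclusion follows from the $T(1)$ theorem: the kernel $K(y,y') = -\tfrac1\pi \tfrac{b(y')-b(y)}{(y-y')^2}$ obeys the standard size and regularity bounds with constant $\lesssim h$; the operator is antisymmetric, so the weak boundedness property is immediate from these bounds; and $[D,b](1) = Db = -H(b') \in \mathrm{BMO}$ with $\|Db\|_{\mathrm{BMO}} \lesssim \|b'\|_{L^\infty} = h\|\chi'\|_{L^\infty}$ (while $T^*(1) = -T(1)$ by antisymmetry). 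Either route produces the uniform-in-$h$ bound with a $\chi$-dependent constant, and the main obstacle is simply that one must appeal to genuine singular integral theory rather than a bare kernel estimate.
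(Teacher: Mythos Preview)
Your proof is correct and takes a genuinely different route from the paper. Both arguments arrive at the same kernel representation
\[
[H\partial_y,\chi(hy)]f(y) = -\tfrac{1}{\pi}\,\pv\!\int \frac{\chi(hy')-\chi(hy)}{(y-y')^2}\,f(y')\,dy',
\]
but then diverge. You invoke Calder\'on's commutator theorem (or $T(1)$) as a black box, obtaining the bound $\lesssim h\|\chi'\|_{L^\infty}$ under only a Lipschitz hypothesis on $\chi$; the compact support plays no role. The paper instead argues elementarily: it adds and subtracts the linear Taylor term $h\chi'(hy)(y-y')$ in the numerator, which splits the commutator as $Af - h\chi'(hy)Hf$. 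The second piece is handled by $L^2$ boundedness of $H$, and for $A$ the second-order Taylor remainder gives a kernel bounded pointwise by $h^2$, while the compact support of $\chi$ confines the integration to $|y-y'|\lesssim h^{-1}$; Young's inequality then yields the $O(h)$ bound. So the paper trades generality for self-containedness: it needs $\chi\in C_c^2$ rather than merely Lipschitz, but avoids citing deep singular integral theory. Your approach is cleaner and stronger; the paper's is more elementary and shows explicitly why a naive Schur test, which you correctly note fails on the raw kernel, succeeds once the principal-value cancellation is made manifest via the Taylor subtraction.
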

\begin{proof}
We compute
$$H  \partial_y \chi(hy) f(y) = \frac{1}{\pi} \pv \int \left( \frac{h \chi ' (hy') f(y')}{y - y'} + \frac{\chi (hy') f' (y')}{y-y'} \right) dy' $$
and
$$\chi(hy) H  \partial_y  f(y) =  \frac{1}{\pi} \pv \int \frac{\chi (hy)}{y - y'}  f'(y') dy'  $$
Subtracting,
\begin{align*}
\indentalign [ H \partial_y, \chi(hy) ] f(y) \\
& = \frac{1}{\pi} \pv \int  \frac{h \chi' (hy')}{y-y'}  f(y') dy' + \frac{1}{\pi} \pv \int \frac{\chi (hy') - \chi (hy)}{y- y'} f'(y') dy' \\
& =  \frac{1}{\pi} \pv \int \frac{h \chi' (hy')}{y-y'}  f(y') dy' - \frac{1}{\pi} \pv \int \partial_{y'} \Big( \frac{\chi (hy') - \chi (hy)}{y- y'} \Big) f(y') dy' \\
& =  - \frac{1}{\pi} \pv \int   \frac{\chi (hy') - \chi (hy)}{(y- y')^2 }   f(y') dy' \\
& = - \frac{1}{\pi} \pv \int   \frac{\chi (hy') - \chi (hy) - h\chi' (hy) (y-y')}{(y- y')^2 }   f(y') dy' \\
& \qquad  - \frac{1}{\pi} h \chi'(hy) \pv \int   \frac{f(y')}{y- y' } dy'   \\
&= Af(y) - h \chi'(hy) Hf(y)
\end{align*}
where the operator $A$ is defined by 
$$Af(y) = - \frac{1}{\pi} \pv \int   \frac{\chi (hy') - \chi (hy) - h\chi' (hy) (y-y')}{(y- y')^2 }   f(y') dy'  $$
The second term is $L^2\to L^2$ bounded with operator norm $h$ by the $L^2\to L^2$ boundedness of the Hilbert transform and thus it suffices to prove that the operator $A$ is $L^2\to L^2$ bounded with operator norm $h$.  We observe
$$ |\chi (hy') - \chi (hy) - h\chi' (hy) (y-y')| \lesssim h^2 |y-y'|^2  \ .$$
and note that the $\chi$ factors restrict both $|y'| \lesssim h^{-1}$ and $|y| \lesssim h^{-1}$ (with constant depending on the size of the $\chi$ support), and hence we can add the restriction $|y-y'|\lesssim h^{-1}$ to the integrand.  
\begin{equation}
| Af(y)  | \lesssim  h^2 \int_{|y-y'| \lesssim h^{-1}}    | f(y') | dy'  \ ,
\end{equation}
We conclude by applying Young's inequality (or the Schur test).
\end{proof}

\begin{lemma}
\label{L:co4}
For $0\leq \alpha\leq 2$,
$$\| \la y \ra^\alpha H \partial_y \la y \ra^{-\alpha} f \|_{L_y^\infty} \lesssim \| f\|_{L_y^\infty} + \| f'(y) \la y \ra^{-1} \|_{L_y^\infty} + \|f''\|_{L_y^\infty}$$
Consequently, $\mathcal{L}_c$ preserves decay up to quadratic order.
\end{lemma}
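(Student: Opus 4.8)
The plan is to prove the pointwise estimate $|\la y\ra^\alpha (H\partial_y(\la\cdot\ra^{-\alpha}f))(y)| \lesssim \|f\|_{L^\infty} + \|f'\la\cdot\ra^{-1}\|_{L^\infty} + \|f''\|_{L^\infty}$ for each fixed $y$, with the implicit constant independent of $y$ (it suffices to treat $f$ smooth). Since the Hilbert transform has Fourier symbol $i\sgn\xi$, we have $H\partial_y = -D$ with $D$ the operator of symbol $|\xi|$, and hence the singular-integral representation
$$H\partial_y g(y) = -\frac{1}{\pi}\,\pv\int \frac{g(y)-g(y')}{(y-y')^2}\,dy'.$$
Taking $g = \la\cdot\ra^{-\alpha}f$ and multiplying by $\la y\ra^\alpha$ gives
$$\la y\ra^\alpha\,\bigl(H\partial_y(\la\cdot\ra^{-\alpha}f)\bigr)(y) = -\frac{1}{\pi}\,\pv\int \frac{f(y)-\rho(y,y')\,f(y')}{(y-y')^2}\,dy', \qquad \rho(y,y'):=\frac{\la y\ra^\alpha}{\la y'\ra^\alpha},$$
and since $\rho(y,y)=1$ we decompose the numerator as $f(y)-\rho f(y') = \rho\,(f(y)-f(y')) + (1-\rho)\,f(y)$.

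The term $(1-\rho)f(y)$ contributes $f(y)\,c_\alpha(y)$, where $c_\alpha(y) = -\tfrac1\pi\pv\int (1-\rho(y,y'))(y-y')^{-2}\,dy'$, and the first step is to check $\sup_y|c_\alpha(y)|\lesssim 1$. I would split the $y'$-integral into $|y-y'|\le 1$, $1<|y-y'|\le\tfrac12\la y\ra$, and $|y-y'|>\tfrac12\la y\ra$. On the first, $1-\rho$ vanishes to first order at $y'=y$, the linear term is annihilated by the principal value (the region being symmetric about $y$), and the quadratic remainder is $O(\la y\ra^{-2})$; on the second, the mean value theorem gives $|1-\rho|\lesssim |y-y'|/\la y\ra$, so $\int_1^{\la y\ra/2}\frac{|z|/\la y\ra}{z^2}\,dz \lesssim \la y\ra^{-1}\ln\la y\ra$; on the third, $(y-y')^{-2}\le 4\la y\ra^{-2}$ and a short case analysis comparing the sizes of $|y|$ and $|y'|$ bounds $\int \la y\ra^\alpha\la y'\ra^{-\alpha}(y-y')^{-2}\,dy'$ by $O(1)$ \emph{precisely} when $\alpha\le 2$ (for $\alpha=2$ the critical sub-case $|y'|\lesssim|y|$ gives $\int_{|y'|\lesssim|y|}\la y'\ra^{-2}\,dy'\lesssim 1$). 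Thus this term is $\lesssim \|f\|_{L^\infty}$.

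For the term $\rho\,(f(y)-f(y'))$ I would use the same three-region split. On $|y-y'|\le 1$, where $\rho(y,y')\sim 1$ and $\la y'\ra\sim\la y\ra$, write $\rho = 1 + (\rho-1)$: the piece with coefficient $1$ becomes, after a second-order Taylor expansion of $f$ about $y$ whose odd first-order term is killed by the principal value over the symmetric region, a quantity bounded by $\|f''\|_{L^\infty}$; the piece with coefficient $\rho-1$ is controlled using $|\rho-1|\lesssim |y-y'|/\la y\ra$ together with $|f(y)-f(y')|\lesssim \la y\ra\,\|f'\la\cdot\ra^{-1}\|_{L^\infty}\,|y-y'|$ (valid since $\la\xi\ra\sim\la y\ra$ for $\xi$ between $y$ and $y'$), giving $\lesssim \|f'\la\cdot\ra^{-1}\|_{L^\infty}$. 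On $1<|y-y'|\le\tfrac12\la y\ra$, still $\rho\sim 1$, and $|f(y)-f(y')|\le 2\|f\|_{L^\infty}$ with $\int_{|z|>1}z^{-2}\,dz<\infty$ gives $\lesssim \|f\|_{L^\infty}$. On $|y-y'|>\tfrac12\la y\ra$, bounding $|f(y)-f(y')|\le 2\|f\|_{L^\infty}$ and $(y-y')^{-2}\le 4\la y\ra^{-2}$ reduces to the same integral $\int \la y\ra^\alpha\la y'\ra^{-\alpha}(y-y')^{-2}\,dy'\lesssim 1$ (for $\alpha\le 2$) as above, giving $\lesssim \|f\|_{L^\infty}$. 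Summing the three regions together with the $c_\alpha$ term yields the stated inequality.

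The final assertion follows since $\mathcal{L}_c = c - H\partial_y - cQ(cy)$: multiplication by the constant $c$ or by $cQ(cy)$ preserves (in fact improves, as $Q$ decays quadratically) the relevant weighted bounds, and the $H\partial_y$ term is handled by the inequality just proved; thus if $u$ together with $u'$, $u''$ is controlled at weight $\la y\ra^{-\alpha}$ with $\alpha\le 2$, then so is $\mathcal{L}_c u$. The main obstacle is the far region $|y-y'|>\tfrac12\la y\ra$, where the weight ratio $\rho(y,y')$ is \emph{not} bounded and one must exploit the gain $(y-y')^{-2}\lesssim\la y\ra^{-2}$ through a careful comparison of $|y|$ and $|y'|$; this is exactly where $\alpha\le 2$ (``up to quadratic order'') is forced. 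A secondary point is to arrange the region decomposition so that no spurious factor of $\la y\ra$ appears — in particular the Taylor/principal-value cancellation must be applied only on the unit-scale region $|y-y'|\le 1$, never on the whole near region $|y-y'|\le\tfrac12\la y\ra$.
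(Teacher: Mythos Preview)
Your proof is correct and follows essentially the same near/far strategy as the paper, though organized differently. The paper keeps the derivative on $g=\la\cdot\ra^{-\alpha}f$ and writes
\[
I=\lim_{\epsilon\searrow 0}\int_{|y'|>\epsilon}\frac{1}{y'}\,\partial_{y'}\!\left[\frac{\la y\ra^\alpha}{\la y-y'\ra^\alpha}f(y-y')\right]dy',
\]
then splits with a smooth cutoff $\chi(y')$ at unit scale: on the inner piece it distributes $\partial_{y'}$, exploits oddness of $1/y'$ to pair $g(y+y')-g(y-y')$, and applies the mean value theorem to get the $\|f''\|_{L^\infty}$ and $\|\la\cdot\ra^{-1}f'\|_{L^\infty}$ terms in one stroke; on the outer piece it integrates by parts, producing a kernel $K(y,y')=\la y\ra^\alpha\la y'\ra^{-2}\la y-y'\ra^{-\alpha}$ whose $L^1_{y'}$-norm is bounded precisely for $\alpha\le 2$. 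You instead start from the already-integrated-by-parts kernel $(y-y')^{-2}$, split the numerator algebraically as $\rho(f(y)-f(y'))+(1-\rho)f(y)$, and use a three-region decomposition with an explicit intermediate zone $1<|y-y'|\le\tfrac12\la y\ra$. Your intermediate region is not really needed as a separate case (it is absorbed into the paper's outer piece), but it does no harm. Both arguments pivot on the same two facts: Taylor/oddness cancellation at unit scale near the diagonal, and the integrability of $\la y\ra^\alpha\la y'\ra^{-\alpha}\la y-y'\ra^{-2}$ in the far region forcing $\alpha\le 2$.
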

\begin{proof}
The operator has the representation
$$I = ( \la y \ra^\alpha H \partial_y \la y \ra^{-\alpha} f ) (y) =\lim_{\epsilon \searrow 0} \int_{|y'|>\epsilon} \frac{1}{y'} \partial_{y'} \left[ \frac{\la y \ra^\alpha}{\la y-y' \ra^{\alpha}} f(y-y') \right] \, dy'$$
Let $\chi(y') \in C_c^\infty(\mathbb{R})$ be an even nonnegative smooth compactly supported function with $\chi(y')=1$ on $|y'|\leq 1$.  Then we break $I=I_-+I_+$ into an inner piece $I_-$ and outer piece $I_+$ by inserting $\chi(y')$ and $1-\chi(y')$ respectively.   For the inner piece $I_-$, we distribute $\partial_{y'}$ to obtain
$$I_- = -\lim_{\epsilon \searrow 0} \int_{|y'|>\epsilon} \frac{ \la y \ra^\alpha \chi(y')}{y'} g(y-y') \, dy'$$
where
$$g(z) = \partial_z  [ \la z \ra^{-\alpha} f(z) ]$$
By the oddness of the inner kernel, we can reexpress as
$$I_- = \int_{y'=0}^\infty \frac{\la y \ra^\alpha \chi(y')}{y'} [ g(y+y')-g(y-y')] \, dy'$$
By the mean-value theorem, for each $y$ there exists $z_0=z_0(y')$ such that $-y' < z_0 < y'$ with
$$g(y+y') - g(y-y') = 2y' g'(y+z_0(y'))$$
Substituting,
$$I_- = 2\int_{y'=0}^\infty \la y \ra^\alpha \chi(y') g'(y+z_0(y'))\, dy'$$
Note that
$$|g'(z)| \lesssim \la z \ra^{-\alpha}[ \la z \ra^{-2} |f(z)| + \la z \ra^{-1} |f'(z)| + |f''(z)|]$$
Since $y'$ is confined to the compact support of $\chi$, it follows that $\la y + z_0(y')\ra^{-\alpha} \sim \la y \ra^{-\alpha}$, and hence 
$$|I_-| \lesssim \| \la z \ra^{-2} f\|_{L^\infty} + \| \la z \ra^{-1} f' \|_{L^\infty} + \| f'' \|_{L^\infty}$$

For the outer piece $I_+$, we have, by integration by parts
$$I_+ = \int_{y'} \zeta(y')\frac{\la y \ra^\alpha}{\la y-y' \ra^{\alpha}} f(y-y') \, dy'$$
with 
$$\zeta(y') = \left( \frac{ \chi(y')-1}{y'}  \right)' = \frac{\chi'(y')}{y'} + \frac{1-\chi(y')}{(y')^2}$$
which satisfies $|\zeta(y')| \lesssim \la y' \ra^{-2}$.   Thus
$$|I_+| \lesssim \int_{y'} K(y,y') |f(y-y')| \, dy'$$
where
$$K(y,y')= \frac{ \la y \ra^\alpha}{\la y' \ra^2 \la y-y'\ra^\alpha}$$
For $0\leq \alpha \leq 2$, we have $\int K(y,y') \, dy' \lesssim 1$, so 
$$|I_+| \lesssim \|f\|_{L^\infty}$$
\end{proof}


\begin{lemma}
\label{L:H-commutator}
For any functions $g$ and $F$, and any $k\geq 0$,
\begin{equation}
\label{E:H-commutator}
\| H(gF) - gHF \|_{L_y^2} \lesssim_k \|g \|_{H^{k+1}} \|F\|_{H^{-k}}
\end{equation}
\end{lemma}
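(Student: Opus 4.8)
The plan is to pass to the Fourier side and exploit the sign structure of the Hilbert transform multiplier $i\sgn\xi$; by a density argument it suffices to take $g$ and $F$ Schwartz. Using Plancherel and the convolution identity $\widehat{gF}=\tfrac{1}{2\pi}\hat g\ast\hat F$ (the explicit constant is harmless for a $\lesssim$ bound), one computes
\begin{equation*}
\widehat{\big(H(gF)-gHF\big)}(\xi)=\frac{i}{2\pi}\int \hat g(\xi-\eta)\,\sigma(\xi,\eta)\,\hat F(\eta)\,d\eta,\qquad \sigma(\xi,\eta)\defeq \sgn\xi-\sgn\eta .
\end{equation*}
The key observation is that $|\sigma(\xi,\eta)|\le 2$ and that $\sigma(\xi,\eta)=0$ unless $\xi$ and $\eta$ have opposite signs (with at most one of them zero); on that set $|\xi-\eta|=|\xi|+|\eta|\ge\max(|\xi|,|\eta|)$, hence $\la\eta\ra\le\la\xi-\eta\ra$ there. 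This frequency localization is exactly what lets the commutator absorb the non-decay of the symbol $\sgn$.

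Next I would trade a power of $\la\eta\ra$ for a power of $\la\xi-\eta\ra$ to convert $\hat F$ into an $L^2$ function. Set $G(\eta)\defeq\la\eta\ra^{-k}\hat F(\eta)$, so $\|G\|_{L^2}\sim\|F\|_{H^{-k}}$ and $\hat F=\la\cdot\ra^{k}G$. Combining $|\sigma|\le 2$ with $\la\eta\ra^{k}\le\la\xi-\eta\ra^{k}$ on $\supp\sigma$ gives the pointwise domination
\begin{equation*}
\big|\widehat{\big(H(gF)-gHF\big)}(\xi)\big|\lesssim \big(\Phi\ast|G|\big)(\xi),\qquad \Phi(\zeta)\defeq |\hat g(\zeta)|\,\la\zeta\ra^{k}.
\end{equation*}
Young's inequality $\|\Phi\ast|G|\|_{L^2}\le\|\Phi\|_{L^1}\|G\|_{L^2}$ and Plancherel then yield $\|H(gF)-gHF\|_{L^2}\lesssim\|\Phi\|_{L^1}\|F\|_{H^{-k}}$, so it remains only to estimate $\|\Phi\|_{L^1}$. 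For that, insert a decaying weight and apply Cauchy--Schwarz:
\begin{equation*}
\|\Phi\|_{L^1}=\int |\hat g(\zeta)|\,\la\zeta\ra^{k+1}\la\zeta\ra^{-1}\,d\zeta\le\Big(\int\la\zeta\ra^{-2}\,d\zeta\Big)^{1/2}\Big(\int|\hat g(\zeta)|^{2}\la\zeta\ra^{2(k+1)}\,d\zeta\Big)^{1/2}\lesssim_k\|g\|_{H^{k+1}},
\end{equation*}
using that $\la\zeta\ra^{-2}$ is integrable on $\mathbb{R}$. Chaining the last three displays gives \eqref{E:H-commutator}.

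I do not expect a genuine obstacle here: this is a Calder\'on-type commutator estimate and, for the Hilbert transform specifically, the argument is elementary. The only point that requires any thought is the support observation for $\sigma(\xi,\eta)$ and the resulting inequality $\la\eta\ra\le\la\xi-\eta\ra$; this is what produces the (apparent) regularity asymmetry between $H^{k+1}$ for $g$ and $H^{-k}$ for $F$, the one extra derivative on $g$ being spent precisely to place $\la\cdot\ra^{k}\hat g$ in $L^1$ via Cauchy--Schwarz. One should also double-check, routinely, that the zero-frequency edge cases ($\xi=0$ or $\eta=0$) do not affect the bound, which they do not since they form a null set and in any case satisfy $\la\eta\ra\le\la\xi-\eta\ra$.
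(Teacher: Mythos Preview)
Your proof is correct and considerably more streamlined than the paper's. Both arguments rest on the same structural fact---that the commutator symbol $\sgn\xi-\sgn\eta$ vanishes unless $\xi$ and $\eta$ have opposite signs, forcing $|\xi-\eta|\ge|\eta|$---but you exploit it directly on the Fourier side via the pointwise domination $\la\eta\ra^{k}\le\la\xi-\eta\ra^{k}$, then close with Young's inequality and a single Cauchy--Schwarz to bound $\|\la\cdot\ra^{k}\hat g\|_{L^1}\lesssim\|g\|_{H^{k+1}}$. The paper instead performs two preliminary frequency reductions (truncating $F$ to $|\xi|\ge 4$ and $g$ to $|\xi|\ge 1$), then runs a full Littlewood--Paley decomposition and uses Bernstein to transfer derivatives across the surviving dyadic pairs $(M,N)$ with $|N|\ge|M|/4$. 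Your route avoids all of that machinery; the paper's approach, while heavier here, has the advantage of being a template that adapts to multipliers more general than $\sgn\xi$, where the clean support structure of $\sigma$ may not be available.
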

\begin{proof}
First, we observe that it suffices to assume that $\hat F(\xi)$ is supported in $|\xi| \geq 4$.  Let $\chi(\xi)$ be a smooth function so that $\chi(\xi) = 1$ on $-1\leq |\xi|\leq 1$ and $\supp \chi \subset (-2,2)$.  Let $\widehat{P_{\lo}F}(\xi) = \chi(\xi/4) \hat F(\xi)$ and $P_\hi = I-P_\lo$.  Decompose
$$F= F_\lo+F_\hi \,, \qquad \text{where } F_\lo = P_\lo F \,, \quad F_\hi = P_\hi F$$
Then
$$H(gF) - gHF = H(gF_\lo) - gHF_\lo + [H(gF_\hi) - gHF_\hi]$$
We note that it is straightforward to obtain the bound \eqref{E:H-commutator} for the first two terms:
$$\| H(gF_\lo) \|_{L_y^2} \lesssim \| gF_\lo\|_{L_y^2} \lesssim \|g\|_{L_y^\infty} \|F_\lo\|_{L_y^2} \lesssim \|g\|_{H_y^1} \|F\|_{H_y^{-k}}$$
and very similarly for $gHF_\lo$.  Thus it suffices to prove the bound for $H(gF_\hi) - gHF_\hi$, i.e. it suffices to assume that  $\hat F(\xi)$ is supported in $|\xi| \geq 4$. 

Next, we observe that it suffices to assume that $\hat g(\xi)$ is supported in $|\xi| \geq 1$.  To see this, \emph{redefine} $\widehat{P_{\lo}g}(\xi) = \chi(\xi) \hat g(\xi)$ and $P_\hi = I-P_\lo$ (recall that in the argument above, $\chi(\xi)$ was replaced with $\chi(\xi/4)$).  Decompose
$$g= g_\lo+g_\hi \,, \qquad \text{where } g_\lo = P_\lo g \,, \quad g_\hi = P_\hi g$$
Then
$$H(gF) - gHF = [H(g_\lo F) - g_\lo HF] + [H(g_\hi F) - g_\hi HF]$$
where we can assume that $\hat F(\xi)$ is supported in $|\xi| \geq 4$, and we know that $\hat g_\lo(\xi)$ is supported in $|\xi|\leq 2$.  In the first term, decompose $F = F_-+F_+$ 
where $F_-$ is the projection of $F$ onto negative frequencies, and  $F_+$ is the projection of $F$ onto positive frequencies.  Then
$$H(gF) - gHF = [H(g_\lo F_-) - g_\lo HF_-] + [H(g_\lo F_+) - g_\lo HF_+] +  [H(g_\hi F) - g_\hi HF]$$
Noting that $HF_- = F_-$, and moreover due to the frequency supports, $H(g_\lo F_-)= g_\lo F_-$, the first term is zero.  Likewise, the second term is zero, leaving only to estimate $H(g_\hi F) - g_\hi HF$.  Thus we have shown that it suffices to assume that $\hat g(\xi)$ is supported in $|\xi| \geq 1$.

Now we complete the proof assuming that $\hat g(\xi)$ is supported $|\xi| \geq 1$ and $\hat F(\xi)$ is supported in $|\xi|\geq 1$.  
Apply a Littlewood-Paley decomposition
$$g = \sum_N P_N g \,, \qquad F = \sum_M P_M F$$
where the sums are taken over dyads $|N|\geq 1$ and $|M|\geq 1$, respectively.  Then
\begin{equation}
\label{E:freq01}
H(gF) - gHF = \sum_{M,N} [ H(P_Ng \, P_M F) - P_N g \, HP_MF ]
\end{equation}
Split the set of all $(M,N)$ into two subclasses.  The first subclass $S$ consists of those $(M,N)$ for which $|N| \geq |M|/4$, and the second subclass $S^c$ consists of those $(M,N)$ for which $|N| < |M|/4$.   Note that for any $(M,N) \in S^c$, the sign of $M$ is the same as the sign of $M+N$, and thus
$$H(P_Ng \, P_M F) - P_N g \, HP_MF =0$$
since either $H$ reduces to $+I$ in both terms, or $H$ reduces to $-I$ in both terms.
It follows that the sum in \eqref{E:freq01} is only over $(M,N) \in S$.  For $(M,N) \in S$ we can transfer any number of derivatives from $F$ to $g$:
\begin{align*}
\| H(P_Ng \, P_M F) \|_{L_y^2} &\lesssim \|P_Ng \, P_M F \|_{L_y^2}  \\
&\lesssim N^{k+\frac14} \|P_Ng \|_{L_y^\infty}   M^{-k-\frac14} \|P_MF \|_{L_y^2} \\
&\lesssim N^{k+\frac34} \|P_N g \|_{L_y^2} M^{-k-\frac14} \|P_MF \|_{L_y^2}  && \text{by Bernstein}\\
&\lesssim N^{-\frac14}M^{-\frac14} \|g\|_{H^{k+1}} \|F\|_{H^k}
\end{align*}
Similarly, for $(M,N) \in S$, we have
$$\| P_Ng \, HP_M F \|_{L_y^2} \lesssim N^{-\frac14}M^{-\frac14} \|g\|_{H^{k+1}} \|F\|_{H^k}$$
Thus, returning to \eqref{E:freq01}, we have
\begin{align*}
\|H(gF) - gHF \|_{L_y^2} &\lesssim \sum_{(M,N)\in S_1} \| H(P_Ng \, P_M F) - P_N g \, HP_MF\|_{L_y^2} \\
& \lesssim \left( \sum_{M,N} N^{-\frac14}M^{-\frac14} \right)\|g\|_{H^{k+1}} \|F\|_{H^k} \lesssim \|g\|_{H^{k+1}} \|F\|_{H^k} 
\end{align*}
as claimed.

\end{proof}

\section{The local virial inequality}
\label{S:local-virial}


In this section, we will carry out the proof of Theorem \ref{T:local-virial}. 

\begin{proof}
The proof combines two key steps covered in Proposition \ref{P:step1} and Proposition \ref{P:local-virial2}, which are each stated and proved after this proof (at the end of the section).    The proof uses commutator estimates Lemma \ref{L:com6} and the spectral estimate Proposition \ref{P:L2-spec-est}(2).

By Proposition \ref{P:step1}, we have available estimate \eqref{E:cc15}, and it suffices to prove the estimate for $y_0=0$, that is, to control the term $\gamma^{-1} \| (g_{\gamma,0}')^{1/2} v\|_{L_{[0,T]}^2 L_y^2}^2$ appearing on the right side in \eqref{E:cc15}. 
We follow the strategy of Kenig \& Martel \cite{KM} of passing from $v$ to $\psi$ solving an adjoint problem, although we will conjugate with a different operator. Let $v$ satisfy
\begin{equation}
\partial_t v = \mathbb{P} v + \partial_y \mathcal{L} v + \partial_y  f ,
\end{equation}
with 
\begin{equation}  
\mathbb{P} v = \frac{\la v , \mathcal{L} \partial_y^2 Q \ra }{\| \partial_y Q \|_{L^2}^2} \partial_y Q . 
\end{equation}
Let
$$\psi = \mathcal{D}_\gamma^{-1}\mathcal{L} v$$
Then
$$\partial_t \psi =  \mathcal{D}_\gamma^{-1} \mathcal{L}  \mathbb{P}  v +  \mathcal{D}_\gamma^{-1} \mathcal{L} \partial_y \mathcal{L} v + \mathcal{D}_\gamma^{-1} \mathcal{L} \partial_y f$$
Since
$$ \mathcal{D}_\gamma^{-1} \mathcal{L}  \mathbb{P}  v =  \mathcal{D}_\gamma^{-1} \mathcal{L}   \frac{\la v , \mathcal{L} \partial_y^2 Q \ra }{\| \partial_y Q \|_{L^2}^2} \partial_y Q =  \mathcal{D}_\gamma^{-1} \frac{\la v , \mathcal{L} \partial_y^2 Q \ra }{\| \partial_y Q \|_{L^2}^2}  (\mathcal{L}   \partial_y Q ) = 0 , $$
due to the fact that $\mathcal{L}   \partial_y Q =0$, we have 
$$\partial_t \psi =   \mathcal{D}_\gamma^{-1} \mathcal{L} \partial_y \mathcal{L} v + \mathcal{D}_\gamma^{-1} \mathcal{L} \partial_y f$$
Plugging in $\mathcal{L} v = \mathcal{D}_\gamma \psi$, we obtain
\begin{equation}
\label{E:1}
\partial_t \psi =  \mathcal{D}_\gamma^{-1} \mathcal{L} \mathcal{D}_\gamma \partial_y \psi + \mathcal{D}_\gamma^{-1} \mathcal{L} \partial_y f
\end{equation}
The chain rule easily gives
$$\mathcal{D}_\gamma \mathcal{L} = -\gamma Q' + \mathcal{L}\mathcal{D}_\gamma$$
Applying $\mathcal{D}_\gamma^{-1}$ to the left side, we obtain
$$\mathcal{D}_\gamma^{-1} \mathcal{L} \mathcal{D}_\gamma = \mathcal{L} + \gamma\mathcal{D}_\gamma^{-1} Q'$$
(in the last term, it is meant the composition of operators, where $Q'$ is a multiplication operator.)  Plugging into \eqref{E:1},
$$\partial_t \psi =   \mathcal{L}\partial_y \psi + \gamma \mathcal{D}_\gamma^{-1}( Q' \partial_y \psi) + \mathcal{D}_\gamma^{-1} \mathcal{L}\partial_y f$$
Using that $Q'\psi_y = \partial_y (Q'\psi) - Q''\psi$, we obtain
\begin{equation}
\label{E:veqn}
\partial_t \psi =   \mathcal{L}\partial_y \psi + \gamma\partial_y \mathcal{D}_\gamma^{-1} ( Q' \psi) - \gamma \mathcal{D}_\gamma^{-1} (Q'' \psi)   + \mathcal{D}_\gamma^{-1} \mathcal{L}\partial_y f
\end{equation}
As discussed in \S\ref{S:spectral}, $\psi$ satisfies the orthogonality conditions \eqref{E:v-orth}, inherited from the orthogonality conditions \eqref{E:D1E} or \eqref{E:D1F} imposed on $v$.

Now we can appeal to Proposition \ref{P:local-virial2} to obtain \eqref{E:loc-vir-v}. 
To complete the proof, we claim that for $\gamma$ sufficiently small, we have
\begin{equation}
\label{E:cc16}
 \|\langle \gamma y \rangle^{-1}v \|_{L^2_x} \lesssim \| \langle \gamma y \rangle^{-1} \mathcal{D}^{-1}_\gamma \mathcal{L} v \|_{L^2_y} 
\end{equation}
The implicit constant is independent of $\gamma$.  

We will prove \eqref{E:cc16}  as a consequence of  the commutator estimate  \eqref{E:com1} (Lemma \ref{L:com6}) as follows.  From \eqref{E:com1}, there exists $C>0$ so that 
\begin{equation}
\label{E:com2}
\| \mathcal{L} \mathcal{D}_\gamma^{-1} \la \gamma y \ra^{-1} v \|_{L_y^2} \leq \| \la \gamma y\ra^{-1} \mathcal{D}_\gamma^{-1} \mathcal{L} v \|_{L_y^2} + C\gamma\ln \gamma^{-1} \| \la \gamma y \ra^{-1} v \|_{L_y^2}
\end{equation}
By the spectral estimate Proposition \ref{P:L2-spec-est}(2), we can take $C>0$ larger if necessary so that
$$C^{-1} \| \la D \ra  \mathcal{D}_\gamma^{-1}  \la \gamma y \ra^{-1} v \|_{L_y^2} \leq \| \mathcal{L} \mathcal{D}_\gamma^{-1} \la \gamma y \ra^{-1} v \|_{L_y^2}$$
Combining this with the uniform in $0<\gamma \leq 1$ lower bound $1\leq \la D \ra \mathcal{D}_\gamma^{-1}$, we obtain
$$C^{-1} \|  \la \gamma y \ra^{-1} v \|_{L_y^2} \leq \| \mathcal{L} \mathcal{D}_\gamma^{-1} \la \gamma y \ra^{-1} v \|_{L_y^2}$$
Appending this inequality on left of \eqref{E:com2}, we obtain obtain for $\gamma$ sufficiently small,
$$(C^{-1}-C\gamma\ln \gamma^{-1}) \|  \la \gamma y \ra^{-1} v \|_{L_y^2} \leq  \| \la \gamma y\ra^{-1} \mathcal{D}_\gamma^{-1} \mathcal{L} v \|_{L_y^2}$$
which implies \eqref{E:cc16} for $\gamma$ sufficiently small.
\end{proof}

\begin{proposition}[reduction to $y_0=0$] 
\label{P:step1}
There exists $0< \gamma_0 \ll 1$ such that for all $0<\gamma\leq \gamma_0$, for any time length $T>0$,  for any spatial center $y_0\in \mathbb{R}$, and for any solution $v$ to \eqref{E:D1C}, we have 
\begin{equation}
\label{E:cc15}
\| \la D\ra^{1/2} ((g_{\gamma,y_0}')^{1/2} v) \|_{L_{[0,T]}^2 L_y^2}^2 \lesssim 
\begin{aligned}[t]
&\gamma^{-1} \| v\|_{L_{[0,T]}^\infty L_y^2}^2 + \gamma^{-1} \| (g_{\gamma,0}')^{1/2} v\|_{L_{[0,T]}^2 L_y^2}^2 \\
&+ \int_0^T  \int g_{\gamma,y_0} \, v \, \partial_y f \, dy  \, dt
\end{aligned}
\end{equation}
where  the implicit constant is independent of $T$, $y_0$ and $\gamma$.
\end{proposition}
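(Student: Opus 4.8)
The plan is a weighted virial argument. I differentiate $\int g_{\gamma,y_0}v^2\,dy$ in $t$ using \eqref{E:D1C}, integrate over $t\in[0,T]$, and sort the resulting terms into four groups: the smoothing quantity reproducing the left side of \eqref{E:cc15}; the endpoint contributions $\int g_{\gamma,y_0}v^2$ at $t=0$ and $t=T$, which are $\lesssim\gamma^{-1}\|v\|_{L^\infty_{[0,T]}L^2_y}^2$ since $\|g_{\gamma,y_0}\|_{L^\infty}\le\tfrac{\pi}{2}\gamma^{-1}$; the forcing term $2\int_0^T\!\!\int g_{\gamma,y_0}v\,\partial_yf$, already present on the right of \eqref{E:cc15}; and error terms. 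The mechanism behind the proposition is that every error is either genuinely \emph{localized at the origin} --- hence dominated by $\gamma^{-1}\|(g_{\gamma,0}')^{1/2}v\|_{L^2_{[0,T]}L^2_y}^2$ --- or localized near $y_0$ with a coefficient $O(\gamma)$ --- hence absorbed into the left side once $\gamma$ is small. The one delicate point, which dictates all the bookkeeping, is that \textbf{no factor of $T$ may appear on the right}: an error of the shape $\gamma^{k}\|v(t)\|_{L^2_y}^2$ pointwise in $t$ would integrate to $\gamma^{k}T\|v\|_{L^\infty_{[0,T]}L^2_y}^2$, which is not permitted. So I must route every error either into the endpoint terms or into a truly localized ($g_{\gamma,0}'$- or $g_{\gamma,y_0}'$-weighted) quantity; nothing may be left merely controlled by the global $\|v\|_{L^2_y}^2$. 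Note that the orthogonality conditions \eqref{E:D1E} are not needed for this proposition and will not be used.

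Concretely, write $\mathcal L=D+1-Q$. The ``$+1$'' piece gives $2\int g_{\gamma,y_0}v\,\partial_yv=-\int g_{\gamma,y_0}'v^2$, which has the favorable sign and becomes part of the left side. The potential piece, after two integrations by parts, reduces to $\int g_{\gamma,y_0}'Qv^2-\int g_{\gamma,y_0}Q'v^2$; with the elementary pointwise bounds $Q\le4g_{\gamma,0}'$ and $|Q'|\le4g_{\gamma,0}'$ (valid for $0<\gamma\le1$), together with $g_{\gamma,y_0}'\le1$ and $|g_{\gamma,y_0}|\le\tfrac{\pi}{2}\gamma^{-1}$, this is $\lesssim\gamma^{-1}\int g_{\gamma,0}'v^2$. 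For the $\mathbb Pv$ term I use the identity $\mathcal L\partial_y^2Q=(Q')^2$, which follows from $[\mathcal L,\partial_y]=Q'$ and $\mathcal LQ'=0$ (see \eqref{E:Da}); hence $\mathbb Pv=c(v)\,\partial_yQ$ with $c(v)=\la v,(Q')^2\ra/\|\partial_yQ\|_{L^2}^2$, and because $(Q')^2$ and $\partial_yQ$ are rapidly decaying, Cauchy--Schwarz against $(g_{\gamma,0}')^{-1/2}$ (which is $\le\la y\ra$ when $\gamma\le1$) yields both $|c(v)|\lesssim\|(g_{\gamma,0}')^{1/2}v\|_{L^2}$ and $|\int g_{\gamma,y_0}v\,\partial_yQ|\lesssim\gamma^{-1}\|(g_{\gamma,0}')^{1/2}v\|_{L^2}$, so $|2\int g_{\gamma,y_0}v\,\mathbb Pv|\lesssim\gamma^{-1}\int g_{\gamma,0}'v^2$. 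All of these feed into the $\gamma^{-1}\int_0^T\!\!\int g_{\gamma,0}'v^2$ bucket.

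The dispersive piece is where the $T$-uniformity constraint bites, and this is the step I expect to be the main obstacle. Integrating by parts, $2\int g_{\gamma,y_0}v\,\partial_yDv=-2\int g_{\gamma,y_0}'v\,Dv+2\int g_{\gamma,y_0}(Hv_y)v_y$, the last term being $\lesssim\gamma\int g_{\gamma,y_0}'v^2$ by \eqref{E:cc10}. It remains to turn $\int g_{\gamma,y_0}'v\,Dv$ into the smoothing quantity \emph{without} losing $T$-uniformity. I do this by substituting $w=(g_{\gamma,y_0}')^{1/2}v=\la\gamma(y-y_0)\ra^{-1}v$ and commuting only afterwards: letting $M$ denote multiplication by $\la\gamma(y-y_0)\ra$ (so $v=Mw$), the relation $Dv=MDw+[D,M]w$ gives
\[\int g_{\gamma,y_0}'v\,Dv=\|D^{1/2}w\|_{L^2}^2+\la M^{-1}w,\,[D,M]w\ra.\]
Although $M$ is multiplication by an unbounded function, the commutator $[D,M]$ only feels its derivative $\partial_y\la\gamma(y-y_0)\ra$, which is bounded by $\gamma$; by Calder\'on's commutator theorem $[D,M]$ is therefore $L^2\to L^2$ bounded with norm $\lesssim\gamma$, and since $\|M^{-1}\|_{L^2\to L^2}\le1$ the error term above is $\lesssim\gamma\|w\|_{L^2}^2=\gamma\int g_{\gamma,y_0}'v^2$, once more localized near $y_0$ with small coefficient. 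Performing the $w$-substitution \emph{before} commuting is exactly what places $(g_{\gamma,y_0}')^{1/2}$ on both factors of the error, so that it is controlled by the localized $\int g_{\gamma,y_0}'v^2$ rather than by the global $\|v\|_{L^2}^2$.

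To conclude, I integrate in $t$, move $2\|D^{1/2}w\|_{L^2}^2$ and $\int g_{\gamma,y_0}'v^2$ to the left, absorb all $O(\gamma)$-localized errors into the left side for $\gamma\le\gamma_0$ small, bound the endpoint terms by $\gamma^{-1}\|v\|_{L^\infty_{[0,T]}L^2_y}^2$ and the origin-localized errors by $\gamma^{-1}\int_0^T\!\!\int g_{\gamma,0}'v^2$, and use $\|\la D\ra^{1/2}w\|_{L^2}^2=\|w\|_{L^2}^2+\|D^{1/2}w\|_{L^2}^2$ to recognize the left side of \eqref{E:cc15}; this gives the estimate with a constant independent of $T$, $y_0$ and $\gamma$. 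The virial manipulations (differentiating $\int g_{\gamma,y_0}v^2$ and the integrations by parts) are legitimate for the regular solutions to which the proposition is applied, the general case following by an approximation argument.
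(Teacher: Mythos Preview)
Your proof is correct and follows essentially the same virial argument as the paper's proof: differentiate $\int g_{\gamma,y_0}v^2$, integrate by parts, separate the smoothing piece $\|D^{1/2}((g_{\gamma,y_0}')^{1/2}v)\|_{L^2}^2+\tfrac12\|(g_{\gamma,y_0}')^{1/2}v\|_{L^2}^2$ from the commutator, bound the $\int g_{\gamma,y_0}v_yHv_y$ term by \eqref{E:cc10}, and route the $Q$-potential and $\mathbb Pv$ contributions into the $\gamma^{-1}\|(g_{\gamma,0}')^{1/2}v\|^2$ bucket. The organization and the $T$-uniformity bookkeeping match the paper exactly.

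The one genuine difference is in the commutator step. The paper writes the commutator as $\int f_0^{-1}(D(f_0z)-f_0Dz)\,z$ with $f_0=\langle\gamma(y-y_0)\rangle^{-1}$ and invokes its Lemma~\ref{L:gamma-comm}, an $H^{1/4}\to L^2$ bound with norm $\lesssim\gamma^{3/4}$ whose proof requires a separate Fourier-side argument. You instead write the same term as $\langle M^{-1}w,[D,M]w\rangle$ with $M=\langle\gamma(y-y_0)\rangle$ and appeal directly to the classical Calder\'on commutator (using $|M'|\le\gamma$ and $\|M^{-1}\|_{L^\infty}\le1$) to get an $O(\gamma)\|w\|_{L^2}^2$ bound. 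This is cleaner: it bypasses Lemma~\ref{L:gamma-comm} entirely for the purposes of this proposition, and the resulting error $\gamma\|w\|_{L^2}^2$ is already localized at $y_0$, so absorption is immediate. (The paper's route gives $\gamma^{3/4}\|\langle D\rangle^{1/2}z\|_{L^2}^2$, which is also absorbable but costs a fractional derivative.) Your computation of $\mathcal L\partial_y^2Q=(Q')^2$ via $[\mathcal L,\partial_y]=Q'$ and $\mathcal LQ'=0$ is a nice touch that makes the $\mathbb Pv$ estimate transparent; the paper simply uses decay of $\partial_yQ$ and $\mathcal L\partial_y^2Q$ without identifying the latter explicitly.
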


\begin{proof}
The proof is a direct virial type (positive commutator) calculation that does not use a spectral estimate (this is employed in Proposition \ref{P:local-virial2} below). In place of the spectral estimate, the term C below is crudely estimated and becomes the right-side term $ \gamma^{-1} \| (g_{\gamma,0}')^{1/2} v\|_{L_{[0,T]}^2 L_y^2}^2$ in \eqref{E:cc15}.   As technical tools, we do use the commutator estimates in Lemma \ref{L:gamma-comm} and Lemma \ref{L:com5}.

For arbitrary $y_0\in \mathbb{R}$,
$$\frac12 \partial_t  \int g_{\gamma,y_0} v^2 \, dy = \int g_{\gamma,y_0} \, v \, [ \mathbb{P} v + \partial_y (\mathcal{L} v + f) ]  \, dy$$
Expanding $\mathcal{L} = -H\partial_y + 1 - Q$, and integrating by parts, we obtain
$$
\frac12 \partial_t  \int g_{\gamma,y_0} v^2 \, dy = 
\begin{aligned}[t]
&\int ( g_{\gamma,y_0} \, v \, \mathbb{P} v  + g_{\gamma,y_0} \, v_y \, H v_y + g_{\gamma,y_0}' \, v \, Hv_y - \frac12 g_{\gamma,y_0}' \, v^2 \\
&\qquad + \frac12 g_{\gamma,y_0}'Q \, v^2 - \frac12 g_{\gamma,y_0}Q' \, v^2 + g_{\gamma,y_0} \, v \, \partial_y f) \, dy
\end{aligned}
$$

We rearrange the terms as
\begin{equation}
\label{E:cc11}
\begin{aligned}
\indentalign -\int g_{\gamma,y_0}' \, v \, Hv_y \, dy + \frac12 \int g_{\gamma,y_0}' v^2 \, dy \\
&= - \frac12 \partial_t \int g_{\gamma,y_0} v^2 \, dy  + \int  g_{\gamma,y_0} \, v \, \mathbb{P} v \, dy + \int g_{\gamma,y_0} \, v_y \, Hv_y \, dy \\
&\qquad + \frac12 \int ( g_{\gamma,y_0}' Q - g_{\gamma,y_0}Q') \, v^2 \, dy + \int g_{\gamma,y_0} \, v \, \partial_y f \, dy
\end{aligned}
\end{equation}

Let us examine the first term on the left in \eqref{E:cc11}.  Taking $z = (g_{\gamma,y_0}')^{1/2} v$ and $f_0 = (g'_{\gamma,y_0})^{1/2}$.  Then
\begin{align*}
-\int g_{\gamma,y_0}' v\, Hv_y \, dy &= \int f_0^2 v \, Dv \, dy = \int D(f_0^2 v) \, v \, dy = \int f_0^{-1}D(f_0 z) \, z \, dy \\
&= \int Dz \,z \, dy + \int f_0^{-1}( D(f_0 z) - f_0 Dz) \, z \, dy \\
&= \int (D^{1/2}z)^2\, dy + \int f_0^{-1}( D(f_0 z) - f_0 Dz) \, z \, dy
\end{align*}
Substituting into \eqref{E:cc11},
\begin{equation}
\label{E:cc12}
 \|D^{1/2}z\|_{L^2}^2 + \frac12 \|z\|_{L^2}^2 = - \frac12 \partial_t \int g_{\gamma,y_0} v^2 \, dy +  \text{A} + \text{B} + \text{C} + \text{D}  + \int g_{\gamma,y_0} \, v \, \partial_y f \, dy
\end{equation}
where
\begin{align*}
&\text{A} = \int g_{\gamma,y_0} \, v_y \, Hv_y \, dy \\
&\text{B} = - \int f_0^{-1}(D(f_0 z)-f_0 Dz) \, z \, dy \\
&\text{C} = \frac12 \int ( g_{\gamma,y_0}' Q - g_{\gamma,y_0}Q') \, v^2 \, dy \\
&\text{D} =  \int  g_{\gamma,y_0} \, v \, \mathbb{P} v \, dy 
\end{align*}
By \eqref{E:cc10} (Lemma \ref{L:com5}), we obtain $|\text{A}| \lesssim \gamma \|z\|_{L^2}^2$.  
By \eqref{E:cc0} (Lemma \ref{L:gamma-comm}), we obtain
$$|\text{B}| \leq \| f_0^{-1}(D(f_0 z) - f_0 Dz) \|_{L^2} \|z\|_{L^2} \lesssim \gamma^{3/4} \|\la D\ra^{1/2} z\|_{L^2}^2$$
Thus the terms A and B can be absorbed back on the left in \eqref{E:cc12} provided $\gamma$ is taken sufficiently small.
Using the pointwise bound, 
$$g_{\gamma,y_0}' Q - g_{\gamma,y_0}Q' \lesssim \gamma^{-1} \la y \ra^{-2}  \lesssim \gamma^{-1} g_{\gamma,0}'$$
we can bound
$$|\text{C}| \lesssim \gamma^{-1} \| (g_{\gamma,0}')^{1/2} v \|_{L^2}^2$$
Moreover, recalling \eqref{def:P} as well as the definition of $g_{\gamma,y_0}$, we can bound $\text{D}$ by 
\begin{equation}
\begin{split}
|\text{D}|
& \lesssim \gamma^{-1}   \int  | v | \, | \mathbb{P} v | \, dy  \\
& \lesssim \gamma^{-1}   \int  | v | \, |  \la v , \mathcal{L} \partial_y^2 Q \ra   \partial_y Q| \, dy  \\
& \lesssim \gamma^{-1}   \int  | \la y \ra^{-1} v | \, |  \la v , \mathcal{L} \partial_y^2 Q \ra |  \la y \ra^{-1}  \, dy  \\
& \lesssim \gamma^{-1} \| \la y \ra^{-1} v \|_{L^2} \| |  \la v , \mathcal{L} \partial_y^2 Q \ra |  \la y \ra^{-1} \|_{L^2} \\
& \lesssim \gamma^{-1} \| (g_{\gamma,0}')^{1/2} v \|_{L^2}^2 \\
\end{split}
\end{equation}

We integrate \eqref{E:cc12} in time to complete the proof.
\end{proof}

\begin{proposition}[estimate for $\psi$ with $y_0=0$]
\label{P:local-virial2}
Suppose $\psi$ solves \eqref{E:veqn} and satisfies the orthogonality conditions \eqref{E:v-orth}.  Then there exists $\gamma_0>0$ such that for all $0<\gamma\leq \gamma_0$,
\begin{equation}
\label{E:loc-vir-v}
\|\langle D_y\rangle^{1/2} ((g'_{\gamma,0})^{1/2} \psi) \|^2_{L^2_{[0,T]} L^2_y} \lesssim \gamma^{-1} \|\psi \|^2_{L^{\infty}_{[0,T]} L^2_y} +  \int^T_0 \int_y g_{\gamma,0} \, \psi \, \mathcal{D}^{-1}_{\gamma}  \mathcal{L}\partial_y f \, dy dt 
\end{equation}
\end{proposition}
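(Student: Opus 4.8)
The plan is to run a localized virial (positive commutator) computation for $\psi$ with the \emph{bounded} weight $g := g_{\gamma,0}$, whose derivative $g' = \langle\gamma y\rangle^{-2}$ is exactly the localization factor on the left of \eqref{E:loc-vir-v}. Differentiating $\int g\psi^2$ in time and substituting \eqref{E:veqn} (written with the $\gamma$-terms recombined as $\gamma\mathcal D_\gamma^{-1}(Q'\partial_y\psi)$),
$$
\tfrac12\partial_t\int g\,\psi^2\,dy = \underbrace{\int g\,\psi\,\mathcal L\partial_y\psi\,dy}_{\mathrm I} + \underbrace{\gamma\int g\,\psi\,\mathcal D_\gamma^{-1}(Q'\partial_y\psi)\,dy}_{\mathrm{II}} + \int g\,\psi\,\mathcal D_\gamma^{-1}\mathcal L\partial_y f\,dy,
$$
the last integral being precisely the forcing integrand in \eqref{E:loc-vir-v}. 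Integrating in $t$, the left side yields $\tfrac12\int g\psi^2\big|_0^T$, which — since $0\le g_{\gamma,0}\le\tfrac\pi2\gamma^{-1}$ — is bounded by $\tfrac\pi2\gamma^{-1}\|\psi\|_{L^\infty_{[0,T]}L^2_y}^2$. So the task is to show $-\int_0^T(\mathrm I+\mathrm{II})\,dt$ controls $\|\langle D_y\rangle^{1/2}((g')^{1/2}\psi)\|_{L^2_{[0,T]}L^2_y}^2$ from below, up to quantities absorbable for $\gamma$ small.

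For the dispersive term $\mathrm I$, I would expand $\mathcal L\partial_y\psi = D\partial_y\psi + \partial_y\psi - Q\partial_y\psi$ (using $-H\partial_y=D$) and integrate by parts. Setting $z := (g')^{1/2}\psi = \langle\gamma y\rangle^{-1}\psi$: the $\int g\,(H\psi_y)\psi_y$ piece is $O(\gamma)\|z\|_{L^2}^2$ by \eqref{E:cc10} of Lemma \ref{L:com5}; the $\int g'\psi\,D\psi = \int (g')^{-1/2}D((g')^{1/2}z)z$ piece (self-adjointness of $D$) equals $\|D^{1/2}z\|_{L^2}^2$ plus a commutator error $\lesssim\gamma^{3/4}\|\langle D\rangle^{1/2}z\|_{L^2}^2$, exactly like ``term $\mathrm B$'' in the proof of Proposition \ref{P:step1} (via Lemma \ref{L:gamma-comm}); and since $g_{\gamma,0}(y)=y+O(\gamma^2 y^3)$, $g'_{\gamma,0}(y)=1+O(\gamma^2 y^2)$ on the polynomially decaying supports of $Q,Q'$, the terms $\tfrac12 g'Q\psi^2$ and $\tfrac12 gQ'\psi^2$ produced by the integration by parts collapse to $\tfrac12 Qz^2$ and $\tfrac12 yQ'z^2$ up to an $O(\gamma^2)\|z\|_{L^2}^2$ error. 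The outcome is $\mathrm I = -\tfrac12\langle\tilde{\mathcal L}z,z\rangle + E_{\mathrm I}$ with $|E_{\mathrm I}|\lesssim\gamma^{3/4}\|\langle D\rangle^{1/2}z\|_{L^2}^2$. For the nonlocal term $\mathrm{II}$ — the one spot where a bare derivative on $\psi$ appears — I would use $\gamma\partial_y\mathcal D_\gamma^{-1} = 1-\mathcal D_\gamma^{-1} = \gamma\mathcal D_\gamma^{-1}\partial_y$ and the identity $\mathcal L v = \mathcal D_\gamma\psi$ (so $\gamma\partial_y\psi = \mathcal L v - \psi$) to trade the derivative and recast $\mathrm{II}$ as a pairing of soliton-localized pieces of $v$ and $\psi$ against one another through $\mathcal D_\gamma^{-1}$; the explicit kernel $\gamma^{-1}e^{-(y-y')/\gamma}\mathbf 1_{y>y'}$ of $\mathcal D_\gamma^{-1}$, the weighted bound of Lemma \ref{L:weightedD}, Lemma \ref{L:com5}, and the equivalence $\|\langle\gamma y\rangle^{-1}v\|_{L^2}\lesssim\|z\|_{L^2}$ (from \eqref{E:cc16}) should then give $|\mathrm{II}|\le\varepsilon(\gamma)\|z\|_{L^2}^2$ with $\varepsilon(\gamma)\to0$ as $\gamma\to0$.

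Combining, $-\int_0^T(\mathrm I+\mathrm{II})\,dt = \tfrac12\int_0^T\langle\tilde{\mathcal L}z,z\rangle\,dt - \int_0^T(E_{\mathrm I}+\mathrm{II})\,dt$, the last integral a $\gamma\to0$ vanishing multiple of $\int_0^T\|\langle D\rangle^{1/2}z\|_{L^2}^2\,dt$. Now I invoke the coercivity $\langle\tilde{\mathcal L}z,z\rangle\gtrsim\|z\|_{H^{1/2}}^2$ uniformly in $\gamma$: since $\psi$ satisfies \eqref{E:v-orth} and $\langle\gamma y\rangle^{-1}-1 = O(\gamma^2 y^2)$ on the rapidly decaying orthogonality directions, $z = \langle\gamma y\rangle^{-1}\psi$ satisfies \eqref{E:v-orth} up to $O(\gamma^2)$ errors in the two pairings, and the angle-lemma argument behind Proposition \ref{P:tildeLspec}(2) is insensitive to such perturbations, hence yields $\langle\tilde{\mathcal L}z,z\rangle\gtrsim\|z\|_{H^{1/2}}^2 = \|\langle D_y\rangle^{1/2}z\|_{L^2}^2$ with a $\gamma$-uniform constant. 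Taking $\gamma_0$ small enough to absorb $E_{\mathrm I}$ and $\mathrm{II}$ into this lower bound, integrating in time and inserting the boundary estimate of the first paragraph, one obtains \eqref{E:loc-vir-v}.

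The main obstacle — and the whole point of working with the regularized transformation $\psi = \mathcal D_\gamma^{-1}\mathcal L v$ — is the control of the errors, chiefly the nonlocal term $\mathrm{II}$: it nominally carries a full derivative on $\psi$, which is bounded only in $L^2$ (not in $H^1$ uniformly in $\gamma$), tested against the slowly growing weight $g_{\gamma,0}$, so it cannot be estimated directly; it is the identity $\mathcal D_\gamma\psi = \mathcal L v$ together with the $\mathcal D_\gamma^{-1}$-kernel and weight-commutator estimates and the soliton localization that make it subordinate. A secondary technicality is the bookkeeping verifying that the weighted field $z = (g'_{\gamma,0})^{1/2}\psi$ retains the orthogonality structure well enough for the spectral positivity of $\tilde{\mathcal L}$ (Proposition \ref{P:tildeLspec}(2)) to apply.
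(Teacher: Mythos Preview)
Your overall architecture — compute $\partial_t\int g_{\gamma,0}\psi^2$, extract $-\tfrac12\langle\tilde{\mathcal L}z,z\rangle$ from the main term, absorb the $\gamma$-perturbation terms, then invoke the coercivity of $\tilde{\mathcal L}$ — matches the paper exactly, and your treatment of term~$\mathrm I$ (via Lemma~\ref{L:com5}, Lemma~\ref{L:gamma-comm}, and the approximation $g_{\gamma,0}(y)\approx y$, $g'_{\gamma,0}(y)\approx 1$ on the support of $Q,Q'$) reproduces the paper's decomposition $\mathrm A = -\tfrac12\langle\tilde{\mathcal L}z,z\rangle + A_1+A_2+A_3$ essentially verbatim. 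Your remark about the ``secondary technicality'' (that the spectral estimate is literally stated for $\psi$ but applied to $z=(g')^{1/2}\psi$) is also well taken.

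The gap is in term~$\mathrm{II}$. Your proposed substitution $\gamma\partial_y\psi = \mathcal L v - \psi$ is algebraically correct but circular: it absorbs the explicit $\gamma$ prefactor, and since $\mathcal L v = \mathcal D_\gamma\psi = \psi + \gamma\partial_y\psi$, writing things in terms of $\mathcal L v$ just reintroduces the derivative you removed. Concretely, after your substitution you are left with $\int g\psi\,\mathcal D_\gamma^{-1}(Q'\psi)\,dy$ (and a similar piece with $\mathcal L v$), and this term is \emph{not} $o_\gamma(1)\|z\|_{L^2}^2$: on $|y|\lesssim 1$ one has $gQ'\sim y$ of size $O(1)$ and $\psi\sim z$, so the pairing is genuinely $O(1)\|z\|_{L^2}^2$. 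Invoking \eqref{E:cc16} does not help, since bounding $Q'\mathcal L v$ in $L^2$ still requires control of a localized derivative of $v$, which is precisely the quantity the proposition is meant to produce.

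What the paper does instead is keep the $\gamma$ factor explicit and split $\mathrm{II}$ back into the two pieces $\mathrm B=\gamma\int g\psi\,\partial_y\mathcal D_\gamma^{-1}(Q'\psi)$ and $\mathrm C=-\gamma\int g\psi\,\mathcal D_\gamma^{-1}(Q''\psi)$. For $\mathrm C$ the smallness comes from the pointwise bound $\|(g')^{-1/2}\gamma g_\gamma\langle y\rangle^{-2}\|_{L^\infty}\le\gamma$ together with Lemma~\ref{L:weightedD}. For $\mathrm B$ one uses the commutator identity $\partial_y\mathcal D_\gamma^{-1}f = f\partial_y\mathcal D_\gamma^{-1}+\mathcal D_\gamma^{-1}f'\mathcal D_\gamma^{-1}$ with $f=Q'(g')^{-1/2}$, writes the surviving derivative as $\partial_y=D^{1/2}HD^{1/2}$, and applies the fractional Leibniz estimate (Corollary~\ref{C:com3}) to the multiplier $\gamma g_\gamma Q'(g')^{-1}$, whose $L^2$ norm is $\lesssim\gamma^{1/3}$; this yields $|\mathrm B|\lesssim\gamma^{1/3}\|\langle D\rangle^{1/2}z\|_{L^2}^2$ — note the $H^{1/2}$ norm, not $L^2$, on the right. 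The fractional Leibniz step is the missing ingredient in your sketch.
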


\begin{proof}
The proof will employ the spectral estimate for $\tilde{\mathcal{L}}$ (Proposition \ref{P:tildeLspec}(2)), and as technical tools, we will use commutator estimates from Lemma \ref{L:weightedD}, Corollary \ref{C:com3}, and Lemma \ref{L:gamma-comm}.  

For this proof, we will take $g_\gamma = g_{\gamma,0}$, i.e. we set $y_0=0$.  
Let $I(t) = \int g_{\gamma} \psi^2$.
Then, substituting \eqref{E:veqn},
\begin{equation}
\label{E:I-prime}
\begin{aligned}
\frac12 I'(t) &= \int g_{\gamma} \psi \partial_t \psi \\
&= \int \psi g_\gamma \mathcal{L}(\psi_y) + \int \psi g_\gamma \gamma \partial_y \mathcal{D}^{-1}_\gamma (Q' \psi) \\
& \qquad - \int \psi g_\gamma \gamma \mathcal{D}^{-1}_\gamma (Q'' \psi) + \int \psi g_\gamma \mathcal{D}^{-1}_\gamma  \mathcal{L}(f_x) \\
&= \text{A}+\text{B}+\text{C}+\text{D}
\end{aligned}
\end{equation}

The term $\text{A} = \int \psi g_\gamma  \mathcal{L}(\psi_y)$ can be controlled by 
$$
\text{A} \leq -\frac{1}{2} (\tilde{ \mathcal{L}} ( (g'_\gamma)^{1/2} \psi), (g'_\gamma)^{1/2} \psi) + \gamma^\theta \| (g'_\gamma)^{1/2} \psi \|_{H^{1/2}_y}
$$
where $\theta >0$, and $\tilde{\mathcal{L}}$ is defined in \eqref{E:tildeLdef} and satisfies
$$ (\tilde{ \mathcal{L}}z,z) := 2\int |D^{1/2} z| ^2 +\int z^2 -\int (yQ'+Q)z^2, \qquad \forall z $$ 
Indeed, let $z = (g'_\gamma)^{1/2} \psi$, we have
\begin{equation}
\begin{split}
\int \psi g_\gamma  \mathcal{L}(\psi_y) 
& = \int  \psi g_\gamma (-H \psi_{yy} + \psi_y -Q \psi_y) \\
& = - \int ( g'_\gamma \psi +g_\gamma \psi_y   ) (- H \psi_y +v\psi ) + \frac{1}{2} \int ( g'_\gamma Q + g_\gamma Q'  ) \psi^2  \\
& = - \int |D^{1/2} z|^2  + \frac{1}{2} \int (y Q' + Q ) z^2  - \frac{1}{2} \int z^2 \\
& - \int \psi ( D ( z (g'_\gamma)^{1/2}) - (D z) (g'_\gamma)^{1/2}  )   +  \int H \psi_y \psi_y g_\gamma      + \frac{1}{2} \int (g_\gamma - y g'_\gamma) Q' \psi^2 \\
& := -\frac{1}{2} (\tilde{ \mathcal{L}}  z, z)  + A_1 + A_2 +A_3 
\end{split}
\end{equation}

For $A_1$, we can use Lemma \ref{L:gamma-comm} and obtain
\begin{equation}
\begin{split}
|A_1| 
& =  | \int z (g'_\gamma)^{-1/2} ( D (z (g'_\gamma)^{1/2}) - (g'_\gamma)^{1/2} (Dz)  )    | \\
& \leq \|z\|_{L^2}  \|  (g'_\gamma)^{-1/2} ( D (z (g'_\gamma)^{1/2}) - (g'_\gamma)^{1/2} (Dz)  ) \|_{L^2}  \\
& \leq \|z\|_{L^2} \gamma^{3/4} \|z\|_{H^{1/4}} \\
& \leq \gamma^{3/4}  \|z\|_{L^2} \|z\|_{H^{1/2}} \\
\end{split}
\end{equation}

For $A_2$, we can follow the approach in the proof of \cite[Lemma 4]{KM} and estimate
\begin{equation}
|A_2| \leq C \gamma  \| z \|^2_{L^2} 
\end{equation}

For $A_3$, we compute (using $ | \arctan y - \frac{y}{1+y^2} | \leq C y^3 $ in the case $|\gamma y|  \leq 1$, and $\la y \ra^{-2} \leq \gamma^2$ in the case  $|\gamma y|  > 1$)
\begin{equation}
\begin{split}
2 |A_3| 
& = | \int_{|\gamma y|  \leq 1} (g_\gamma - y g'_\gamma) Q' (g'_\gamma)^{-1} z^2 + \int_{\gamma y| > 1 } (g_\gamma - y g'_\gamma) Q'  (g'_\gamma)^{-1}  z^2 | \\
& \leq ( \sup_{|\gamma y| \leq 1} |(g_\gamma - y g'_\gamma) Q' (g'_\gamma)^{-1}| + \sup_{|\gamma y|>1} |(g_\gamma - y g'_\gamma) Q' (g'_\gamma)^{-1}| ) \|z\|^2_{L^2} \\
& \leq ( \sup_{|\gamma y|  \leq 1} | \gamma^{-1} \gamma^3 y^3 \la y \ra^{-3} \la \gamma y \ra^2  | + \sup_{|\gamma y| > 1} |( \gamma^{-1} \arctan (\gamma y) \la \gamma y \ra^2 -y  ) \la y \ra^{-3}| ) \|z\|^2_{L^2} \\
& \leq \gamma^2 \|z\|^2_{L^2} 
\end{split}
\end{equation}

Combining the above we obtain
\begin{equation}
|\int \psi g_\gamma  \mathcal{L}(\psi_y)  | \leq -\frac{1}{2} (\tilde{ \mathcal{L}}  z, z)  +  \gamma^\theta \| z \|^2_{H^{1/2}_y}
\end{equation}

We now estimate the term $\text{B} = \int \psi g_\gamma \gamma \partial_y \mathcal{D}^{-1}_\gamma (Q' \psi)$ in \eqref{E:I-prime}.   By applying $\mathcal{D}_\gamma^{-1}$ to both sides of the identity $\mathcal{D}_\gamma f = f \mathcal{D}_\gamma + \gamma f'$, we obtain the commutator identity $f \mathcal{D}_\gamma^{-1} = \mathcal{D}_\gamma^{-1} f + \gamma \mathcal{D}_\gamma^{-1} f' \mathcal{D}_\gamma^{-1}$.  Applying $\partial_y$ to the left side, we obtain $\partial_y \mathcal{D}_\gamma^{-1} f = \partial_y f \mathcal{D}_\gamma^{-1} - \gamma \partial_y \mathcal{D}_\gamma^{-1} f' \mathcal{D}_\gamma^{-1}$.  In the first term, we use $\partial_y f = f\partial_y + f'$ and in the second term, we use $\gamma \partial_y \mathcal{D}_\gamma^{-1} = 1 - \mathcal{D}_\gamma^{-1}$.  Substituting yields $\partial_y \mathcal{D}_\gamma^{-1} f = f \partial_y \mathcal{D}_\gamma^{-1} + \mathcal{D}_\gamma^{-1} f' \mathcal{D}_\gamma^{-1}$.  Applying this with $f= Q'(g_\gamma')^{-1/2}$, 
\begin{align*}
\psi \, \gamma g_\gamma \, \partial_y \mathcal{D}^{-1}_\gamma Q' \psi 
&=  \psi  \, \gamma g_\gamma \, Q'(g_\gamma')^{-1/2} \, \partial_y \mathcal{D}_\gamma^{-1} \, (g_\gamma')^{1/2} \psi  \\
& \qquad +  \psi \, \gamma g_\gamma \, \mathcal{D}_\gamma^{-1} \, [Q'(g_\gamma')^{-1/2}]' \, \mathcal{D}_\gamma^{-1} (g_\gamma')^{1/2} \psi
\end{align*}
On the left, in both terms, we replace $\psi= \psi (g_\gamma')^{1/2} (g_\gamma')^{-1/2}$ to obtain 
\begin{equation}
\label{E:com20}
\begin{aligned}
\psi \, \gamma g_\gamma \, \partial_y \mathcal{D}^{-1}_\gamma Q' \psi 
&=  \psi (g_\gamma')^{1/2} \, \gamma g_\gamma \, Q'(g_\gamma')^{-1} \, \partial_y \mathcal{D}_\gamma^{-1} \, (g_\gamma')^{1/2} \psi  \\
& \qquad +  \psi (g_\gamma')^{1/2} \, (g_\gamma')^{-1/2}  \gamma g_\gamma \, \mathcal{D}_\gamma^{-1} \, [Q'(g_\gamma')^{-1/2}]' \, \mathcal{D}_\gamma^{-1} (g_\gamma')^{1/2} \psi
\end{aligned}
\end{equation}

After integration, we estimate the second term as follows
$$\underbrace{\psi (g_\gamma')^{1/2} }_{L^2} \, \underbrace{(g_\gamma')^{-1/2} \gamma g_\gamma \la y \ra^{-2}}_{L^\infty} \underbrace{\la y \ra^2 \mathcal{D}_\gamma^{-1} \la y \ra^{-2}}_{L^2\to L^2}  \underbrace{\la y \ra^2 [Q'(g_\gamma')^{-1/2}]'}_{L^\infty} \, \underbrace{\mathcal{D}_\gamma^{-1}}_{L^2\to L^2} \underbrace{(g_\gamma')^{1/2} v}_{L^2}$$
where, importantly, $\| (g_\gamma')^{-1/2} \gamma g_\gamma \la y \ra^{-2}\|_{L^\infty} \leq \gamma$, from the estimate $|\gamma g_\gamma(y)| \leq \min(\gamma |y|, \frac{\pi}{2})$.  The $L^2\to L^2$ boundedness of $\la y\ra^2 \mathcal{D}_\gamma^{-1} \la y \ra^{-2}$ (uniformly in $\gamma$) was established in Lemma \ref{L:weightedD}.  This produces the bound $\gamma \| \psi (g_\gamma')^{1/2} \|_{L^2}^2$.

Returning to \eqref{E:com20}, this leaves us to estimate
$$\int \psi (g_\gamma')^{1/2} \, \gamma g_\gamma \, Q'(g_\gamma')^{-1} \, \partial_y \mathcal{D}_\gamma^{-1} \, (g_\gamma')^{1/2} \psi  \, dx$$
Replacing $\partial_y = D^{1/2} H D^{1/2}$, we obtain
$$=\int \psi (g_\gamma')^{1/2} \, \gamma g_\gamma \, Q'(g_\gamma')^{-1} \, D^{1/2} H \mathcal{D}_\gamma^{-1} D^{1/2} \, (g_\gamma')^{1/2} \psi  \, dx$$
We view this integral as the inner product of  $\psi (g_\gamma')^{1/2} \, \gamma g_\gamma \, Q'(g_\gamma')^{-1}$ and $D^{1/2} H \mathcal{D}_\gamma^{-1} D^{1/2} \, (g_\gamma')^{1/2} \psi$, and use that $D^{1/2}$ is self adjoint to obtain
$$= \int [D^{1/2}  \psi (g_\gamma')^{1/2} \, \gamma g_\gamma \, Q'(g_\gamma')^{-1}] \cdot  [H \mathcal{D}_\gamma^{-1} D^{1/2} \, (g_\gamma')^{1/2} \psi ]  \, dx$$
By Cauchy-Schwarz,
$$\leq \| D^{1/2} [ \psi (g_\gamma')^{1/2} \, \gamma g_\gamma \, Q'(g_\gamma')^{-1}]\|_{L^2} \| H \mathcal{D}_\gamma^{-1} D^{1/2}[ (g_\gamma')^{1/2} \psi ]\|_{L^2}$$
For the first of these terms, we apply \eqref{E:Leib1} (Corollary \ref{C:com3}) with $f= \gamma g_\gamma \, Q'(g_\gamma')^{-1}$ and 
 $h=\psi (g_\gamma')^{1/2}$, and for the second of these terms, we just use that $H$ and $\mathcal{D}_\gamma^{-1}$ are $L^2\to L^2$ bounded with operator norm independent of $\gamma$, to obtain
 $$\lesssim \gamma^{1/3}(\| \psi (g_\gamma')^{1/2}\|_{L^2} + \| D^{1/2}[ \psi (g_\gamma')^{1/2}] \|_{L^2})\|D^{1/2}[ \psi (g_\gamma')^{1/2}] \|_{L^2} $$
Here, we have use that with $f= \gamma g_\gamma \, Q'(g_\gamma')^{-1}$, we have
 $$\|f\|_{L^2} \lesssim \gamma^{1/3}\,, \qquad   \|\partial_x f\|_{L^2}\lesssim \gamma$$
These estimates come from the bound $|\gamma g_\gamma(y)| \leq \min(\gamma |y|, \frac{\pi}{2})$, which implies $|f(y)|\lesssim \min(\gamma|y|, 1) \la y \ra^{-1}$ and $|f'(y)| \lesssim \gamma \la y \ra^{-1}$.  To see that $\|f\|_{L^2} \lesssim \gamma^{1/3}$, we divide the integration into $|y|<\gamma^{-2/3}$ and $|y|> \gamma^{-2/3}$.  For the region $|y|<\gamma^{-2/3}$, we use that $|f(y)|\leq \gamma^{1/3}\la y \ra^{-1}$ and for the region $|y|>\gamma^{-2/3}$, we use that $|f(y)| \leq \la y \ra^{-1}$.  
 
 In summary, we have obtained that
 $$|\text{B}| \lesssim \gamma^{1/3}  \| \la D_y\ra ^{1/2}[\psi (g_\gamma')^{1/2}] \|_{L^2}^2$$

Finally, we turn to term $C = - \int \psi \, \gamma g_\gamma \, \mathcal{D}_\gamma^{-1} Q'' \psi  \, dy$ in \eqref{E:I-prime}.  Rewrite the integrand as follows
$$ \psi \, \gamma g_\gamma \, \mathcal{D}_\gamma^{-1} \, Q'' \psi 
= \psi (g_\gamma')^{1/2} \, (g_\gamma')^{-1/2} \gamma g_\gamma \la y \ra^{-2} \, \la y \ra^2 \mathcal{D}_\gamma^{-1} \la y \ra^{-2} \, \la y \ra^2 Q'' (g_\gamma')^{-1/2} \, (g_\gamma')^{1/2} \psi $$
In the integral, we estimate follows
$$\underbrace{ \psi (g_\gamma')^{1/2}}_{L^2} \, \underbrace{(g_\gamma')^{-1/2} \gamma g_\gamma \la y \ra^{-2}}_{L^\infty} \, \underbrace{\la y \ra^2 \mathcal{D}_\gamma^{-1} \la y \ra^{-2}}_{L^2\to L^2} \, \underbrace{\la y \ra^2 Q'' (g_\gamma')^{-1/2}}_{L^\infty} \, \underbrace{(g_\gamma')^{1/2} \psi }_{L^2}$$
Since $\| (g_\gamma')^{-1/2} \gamma g_\gamma \la y \ra^{-2}\|_{L^\infty} \leq \gamma$, we obtain
$$|\text{C}| \lesssim \gamma \| (g_\gamma')^{1/2} \psi \|_{L^2}^2$$

Combining the above upper bounds for $\text{A}$, $\text{B}$, and $\text{C}$, we obtain from \eqref{E:I-prime} that there exists $C>0$ independent of $\gamma$ such that
$$I'(t) \leq -\frac12 \la \tilde{\mathcal{L}} (g_\gamma')^{1/2} \psi , (g_\gamma')^{1/2} \psi \ra + C\gamma^\theta \| \la D_y \ra^{1/2} (g_\gamma')^{1/2} \psi \|_{L_y^2}^2 + \int_y \, g_\gamma \, \psi \, \mathcal{D}_\gamma^{-1} \mathcal{L} \partial_y f \, dx$$
Rearranging terms and applying the spectral estimate Proposition \ref{P:tildeLspec}(2), and possibly making $C$ larger (but still independent of $\gamma>0$)
$$C^{-1} \| \la D_y \ra^{1/2} (g_\gamma')^{1/2} \psi \|_{L_y^2}^2 \leq -I'(t) + C \gamma^\theta \| \la D_y \ra^{1/2} (g_\gamma')^{1/2} \psi \|_{L_y^2}^2 + \int_y \, g_\gamma \, \psi \, \mathcal{D}_\gamma^{-1} \mathcal{L} \partial_y f \, dy$$
Taking $0<\gamma\leq \gamma_0$, where $\gamma_0$ is defined by $C\gamma_0^\theta \leq \frac12 C^{-1}$, integrating on $0\leq t \leq T$ and using that $|I(t)| \leq \gamma^{-1} \| \psi \|_{L_T^\infty L_y^2}^2$ for all $0\leq t\leq T$, we obtain \eqref{E:loc-vir-v}, completing the proof. 
\end{proof}

\section{Application of the local virial inequality to (pBO)} 
\label{S:pBO}

Now suppose that $u(x,t)$ satisfies (pBO).  Define the remainder $\zeta$ according to 
\begin{equation}
\label{E:decomp} 
u = Q_{\am, \cm} + \zeta
\end{equation}
imposing orthogonality conditions
\begin{equation}
\label{E:orth}
 \la \zeta, Q_{\am,\cm} \ra =0 \,, \qquad \la \zeta, \partial_x Q_{\am,\cm} \ra = 0
\end{equation}
An implicit function theorem argument shows that there exists a unique choice of $(\am,\cm)$ so that these orthogonality conditions hold.  This is the \emph{definition} of the parameters $(\am (t),\cm(t))$ and of the remainder $\zeta$.  The goal of this section is to prove the following:

\begin{proposition}[nonsymplectic decomposition estimates for (pBO)]
\label{P:nonsymp-estimates}
There exists $\kappa \geq 1$, $\mu>0$, and $0<h_0\ll 1$ such that the following holds.
Let $0< h \leq h_0$ and suppose the initial data $u_0\in H_x^1$ satisfies
$$\| u_0(x) - Q_{0,1}(x) \|_{H_x^{1/2}} \leq h^{3/2}$$
Suppose that $u$ satisfying (pBO) with initial condition $u(x,0)=u_0(x)$ is decomposed as \eqref{E:decomp} with remainder $\zeta$ satisfying orthogonality conditions \eqref{E:orth}.    For every $T>0$ such that $\frac12 \leq \cm(t) \leq 2$ for all $0\leq t \leq T$, 
we have that the recentered remainder $v(y,t) = \zeta(y+\am(t),t)$ satisfies
\begin{equation} \label{E:nonsymp-estimates-eq1}
\|v\|_{L_{[0,T]}^\infty H_y^{1/2}} + \sup_n \|v\|_{L_{[0,T]}^2L_{y\in(n,n+1)}^2} \leq \kappa h^{3/2}e^{\mu hT}
\end{equation}
and the parameters $\am(t)$, $\cm(t)$ satisfy the bounds \eqref{E:par2} below.
\end{proposition}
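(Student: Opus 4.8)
The plan is to run a modulation--plus--bootstrap argument in which the local virial estimate (Theorem~\ref{T:local-virial}) supplies the control of the local-in-space quantity $\sup_n\|v\|_{L^2_{[0,T]}L^2_{y\in(n,n+1)}}$ that cannot be obtained from the conservation laws alone. First I would insert the decomposition \eqref{E:decomp} into (pBO), recenter by $x=y+\am(t)$, and rescale so that the soliton is normalized to $Q=Q_{0,1}$ (using the scaling covariance of (BO); equivalently one keeps $\mathcal{L}_\cm$ throughout and absorbs its $\cm$--dependence as a lower order term, since $\dot\cm=O(h)$). The outcome is that $v$ solves an equation of the form \eqref{E:D1C}, $\partial_t v = \mathbb{P} v + \partial_y\mathcal{L}v + \partial_y f$, in which the term $\mathbb{P}v$ is precisely the correction demanded by the nonsymplectic orthogonality conditions \eqref{E:orth} (it makes the flow compatible with \eqref{E:orth}), and in which the forcing splits as $f=f_{\textnormal{pot}}+f_{\textnormal{quad}}+f_{\textnormal{mod}}$: here $f_{\textnormal{pot}}$ is linear in $v$ with coefficients built from $V$ and the soliton, hence of size $O(h)$ in the relevant norms because $V(x)=W(hx)$ and each derivative of $V$ produces a factor $h$; $f_{\textnormal{quad}}=-\tfrac12 v^2$ up to harmless rescaling corrections; and $f_{\textnormal{mod}}$ gathers the modulation errors $(\dot\am-\cm+\dots)\,\partial_aQ+(\dot\cm-\dots)\,\partial_cQ$.

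The second ingredient is the modulation system: differentiating \eqref{E:orth} in $t$ and substituting the equation for $v$ gives a $2\times2$ linear system for $(\dot\am-\cm,\dot\cm)$ whose matrix is a small perturbation of an explicit invertible one, so it can be inverted to yield the parameter bounds \eqref{E:par2}, schematically $|\dot\cm-h\cm W'(h\am)|\lesssim h\|v\|_{L^2_{\loc}}+\|v\|_{L^2}^2$ and likewise for $\dot\am$; this in turn controls $f_{\textnormal{mod}}$. The third ingredient is the energy estimate: using the functional $\mathcal{E}(u)=E(u)-\cm(t)M_0(u)$, which fails to be conserved precisely because of $V$, and Taylor expanding around $Q_{\am,\cm}$ (whose quadratic form is $\tfrac12\la\mathcal{L}_\cm v,v\ra$, coercive on the subspace \eqref{E:orth} up to an $O(h)$ error from the potential), one gets $\|v(t)\|_{H^{1/2}}^2\lesssim\|v(0)\|_{H^{1/2}}^2+\bigl|\int_0^t\partial_s(\textnormal{potential part})\,ds\bigr|+(\textnormal{cubic in }v)$, where the non-conserved term is $\lesssim h\int_0^t(\|v\|_{H^{1/2}}^2+\|v\|_{L^2_{\loc}}^2)\,ds$ plus lower-order boundary terms; Gr\"onwall then produces the factor $e^{\mu hT}$ and the bound $\|v\|_{L^\infty_{[0,T]}H^{1/2}}\lesssim h^{3/2}e^{\mu hT}$ modulo the local-norm contributions cleaned up in the next step.

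To close the argument I would apply Theorem~\ref{T:local-virial} with $y_0=n$ for every $n\in\mathbb{Z}$: since $(g_{\gamma,n}')^{1/2}(y)\gtrsim_\gamma 1$ on $y\in(n,n+1)$ and the implicit constant in \eqref{E:D1D} is \emph{independent of $y_0$}, one obtains $\sup_n\|v\|_{L^2_{[0,T]}L^2_{y\in(n,n+1)}}^2\lesssim_\gamma\|v\|_{L^\infty_{[0,T]}L^2}^2+\sup_nG_\gamma(f,v)$, the first term being handled by the energy estimate. For $G_\gamma(f,v)$ one uses the structure of $f$ from the first step: $f_{\textnormal{pot}}$ contributes $\lesssim_\gamma h^2\|v\|_{L^2_{[0,T]}L^2_{\loc}}^2$, $f_{\textnormal{quad}}$ contributes a cubic term $\lesssim_\gamma\|v\|_{L^\infty_{[0,T]}H^{1/2}}\|v\|_{L^2_{[0,T]}L^2_{\loc}}^2$ (the second summand of \eqref{E:G} being handled via the fact that $\mathcal{D}_\gamma^{-1}\mathcal{L}$ is an operator of order zero, together with the commutator estimates of \S\ref{S:commutator}), and $f_{\textnormal{mod}}$ similarly via \eqref{E:par2}; all of these are $\ll h^3$ once $\|v\|\lesssim h^{3/2}$. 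Finally I would set up the standard continuity argument: let $T^\star$ be the supremum of times $T\le T_0$ for which $\|v\|_{L^\infty_{[0,T]}H^{1/2}}+\sup_n\|v\|_{L^2_{[0,T]}L^2_{y\in(n,n+1)}}\le 2\kappa h^{3/2}e^{\mu hT}$; feeding that hypothesis into the energy and local virial bounds upgrades the right-hand side to $\kappa h^{3/2}e^{\mu hT}$ provided $\kappa$ is large and $h\le h_0$ is small, so $T^\star=T_0$ and the proposition follows.

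The main obstacle I anticipate is the combination of the first step and the $G_\gamma(f,v)$ estimate in the last step: one must verify that the forcing generated by the potential, the quadratic nonlinearity, and the modulation parameters genuinely fits the template of \eqref{E:D1C}--\eqref{E:G}, in particular that the high-frequency-in-$v$ part of $f_{\textnormal{quad}}$ interacts favorably with the conjugation by $\mathcal{D}_\gamma^{-1}\mathcal{L}$ in the second term of \eqref{E:G}, and that the reduction to the normalized scale $c=1$ (so that Theorem~\ref{T:local-virial} applies verbatim) costs only $O(h)$ errors. The remaining pieces --- modulation equations, energy estimate, Gr\"onwall --- are close variants of those already in \cite{Z}.
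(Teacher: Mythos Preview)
Your proposal is correct and follows essentially the same approach as the paper: derive the equation for $v$ in the form \eqref{E:D1C} with forcing \eqref{E:f-def}, obtain the parameter estimates \eqref{E:par2} by differentiating the orthogonality conditions, combine the energy estimate (via $\tfrac12\la\mathcal{L}_\cm v,v\ra-\tfrac16\int v^3$, which is the quadratic part of your Lyapunov functional) with the local virial estimate (after bounding $G_\gamma(f,v)$ as in your last step), and close with a continuity argument. The only cosmetic difference is that the paper, rather than applying Gr\"onwall directly over $[0,T]$, works on sub-intervals of length $\delta h^{-1}$, proves $\|v\|_{L^\infty_I H^{1/2}}+\sup_n\|v\|_{L^2_I L^2_{y\in(n,n+1)}}\le C(h^{3/2}+\|v(T_*)\|_{H^{1/2}})$ on each, and then iterates to produce the factor $e^{\mu hT}$; this sidesteps the need to track the precise $t$-dependence in the $G_\gamma$ bound (the paper's Lemma~\ref{L:G-est} has terms like $hT\|v\|_{L^\infty L^2}^2$ and $T\|v\|_{L^\infty H^{1/2}}^3$, which are $O(1)$ on length-$h^{-1}$ intervals but would require the bootstrap hypothesis if taken over all of $[0,T]$).
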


Starting with $\partial_t u = JE'(u)$, we substitute \eqref{E:decomp} to obtain
$$\partial_t (Q_{\am,\cm} + \zeta) = J E'( Q_{\am,\cm} + \zeta)$$
Using expansions
\begin{itemize}
\item $\partial_t Q_{\am,\cm} = \dot \am \partial_\am Q_{\am,\cm} + \dot \cm \partial_\cm Q_{\am,\cm}$
\item $E'(u) = -H\partial_x u - \frac12 u^2 + Vu$
\item $E''(u) = -H\partial_x  - u +V$
\end{itemize}
we obtain the \emph{equation for the remainder} $\zeta$
\begin{equation}
\label{E:eta}
\partial_t \zeta = - \dot \am \partial_\am Q_{\am,\cm} - \dot \cm \partial_\cm Q_{\am,\cm} + J E'(Q_{\am,\cm}) + JE''(Q_{\am,\cm}) \zeta  - \frac12 \partial_x (\zeta^2)
\end{equation}
The soliton part on the right side is simplified as
\begin{align*}
J E'(Q_{\am,\cm}) &= \partial_x (- H\partial_x Q_{\am,\cm} - \frac12Q_{\am,\cm}^2 + W(hx) Q_{\am,\cm}) \\
&= \partial_x ( -\cm Q_{\am,\cm} + W(hx) Q_{\am,\cm})
\end{align*}
We Taylor expand $W(hx)$ around $x=\am$ to obtain
\begin{align*}
W(hx) &= W(h\am) + hW'(h\am)(x-\am) + e_2(x,\am)
\end{align*}
Recall 
$$\partial_\am Q_{\am,\cm} = - \partial_x Q_{\am,\cm} \,, \qquad \partial_\cm Q_{\am,\cm} = \cm^{-1} \partial_x[ (x-\am) Q_{\am,\cm} ]$$
Substituting into \eqref{E:eta},
$$\partial_t \zeta = 
\begin{aligned}[t]
&(\dot \am - \cm + W(h\am) )\partial_x Q_{\am,\cm}   + (-\dot \cm \cm^{-1} + hW'(h\am) )\partial_x[(x-\am)Q_{\am,\cm}]   \\
& + \partial_x(e_2 Q_{\am,\cm}) + JE''(Q_{\am,\cm}) \zeta  - \frac12 \partial_x (\zeta^2)  
\end{aligned}
$$
We re-center the equation for $\zeta$ by letting 
$$v (y) = \zeta (y+\am) \quad \iff \quad \zeta (x) = v(x-\am)$$ 
Notice that
$$\partial_t \zeta =  - \dot \am  \partial_y v  + \partial_t v \,, \qquad E''(Q_{\am,\cm}) \zeta = (\mathcal{L}_\cm - \cm + W(hx)) v $$
The orthogonality conditions on $v$ read
\begin{equation}
\label{E:v-ortho}
\la v, Q_\cm \ra =0 \,, \qquad \la v, \partial_y Q_\cm \ra =0 .
\end{equation}
The equation for $v$ is
\begin{equation}
\label{E:v}
\begin{aligned}
\partial_t v = \;
&(\dot\am-\cm+W(h\am))\partial_yQ_\cm +(-\dot\cm \cm^{-1} + hW'(h\am))\partial_y(yQ_\cm) + \partial_y(e_2Q_\cm) \\
&+\partial_y \mathcal{L}_\cm v + \partial_y(\dot\am - \cm + W(hx)) v - \frac12 \partial_y v^2
\end{aligned}
\end{equation}

\begin{lemma}[nonsymplectic parameter control]
\label{L:nonsymp-ODE-control}
For all $t$, if $\frac12 \leq \cm \leq 2$ and $\|v \|_{L_y^2} \ll 1$, then 
\begin{equation}
\label{E:par2}
\begin{aligned}
& |\dot\am - \cm + W(h\am) - \tfrac12 h^2W''(h\am) \cm^{-1} - \frac{1}{4\pi} \cm^{-3} \la v, \mathcal{L}_c \partial_y^2Q_\cm\ra | \lesssim h^4 + \sup_{n\in \mathbb{Z}} \|v\|_{L_{n<y < n+1}^2}^2 \\
&|\dot\cm  - hW'(h\am)\cm - \tfrac12 h^3 W''(h\am)\cm^{-1}| \lesssim h^4 + h^2(\ln h^{-1})\sup_{n\in \mathbb{Z}} \|v\|_{L_{n<y < n+1}^2} + \|v\la y \ra^{-1} \|_{L_y^2}^2
\end{aligned}
\end{equation}
Moreover, for any time interval $I$,
\begin{equation}
\label{E:par2t}
\begin{aligned}
& \int_I |\dot\am - \cm + W(h\am) - \tfrac12 h^2W''(h\am) \cm^{-1} - \frac{1}{4\pi} \cm^{-3} \la v, \mathcal{L}_c \partial_y^2Q_\cm\ra |\, dt \lesssim h^4|I| + \sup_{n\in \mathbb{Z}} \|v\|_{L_I^2L_{n<y < n+1}^2}^2 \\
&\int_I |\dot\cm  - hW'(h\am)\cm - \tfrac12 h^3 W''(h\am)\cm^{-1}|\, dt \lesssim h^4|I| + h^2(\ln h^{-1})|I|^{1/2} \sup_{n\in \mathbb{Z}} \|v\|_{L_I^2 L_{n<y < n+1}^2} + \|v\la y \ra^{-1} \|_{L_I^2 L_y^2}^2
\end{aligned}
\end{equation}
In particular, we have the following weaker formulation, needed in subsequent lemmas.  Let $E_\am$ and $E_\cm$ denote the following trajectory equation remainders:
\begin{equation}
\begin{aligned}
E_\am &= \dot\am - \cm + W(h\am)  - \frac{1}{4\pi} \cm^{-3} \la v, \mathcal{L}_c \partial_y^2Q_\cm\ra \\
E_\cm &=  \dot\cm  - hW'(h\am)\cm
\end{aligned}
\end{equation}
Then the following estimates for $E_\am$ and $E_\cm$ hold:
\begin{equation}
\label{E:est-52}
|E_\am| \lesssim h^2 + \|v\la y \ra^{-1}\|_{L_y^2}^2 \,, \qquad |E_\cm| \lesssim h^3 + \|v\la y \ra^{-1} \|_{L_y^2}^2
\end{equation}
\end{lemma}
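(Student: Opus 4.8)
The plan is to derive exact modulation equations for $(\am,\cm)$ by differentiating the orthogonality conditions \eqref{E:v-ortho} in time, substituting the evolution equation \eqref{E:v} for $\partial_t v$, and solving the resulting near-diagonal $2\times2$ linear system for the modulation quantities $X := \dot\am - \cm + W(h\am)$ and $Y := -\dot\cm\,\cm^{-1} + hW'(h\am)$, which are precisely the coefficients of $\partial_y Q_\cm$ and $\partial_y(yQ_\cm)$ in \eqref{E:v}. Differentiating $\la v, \partial_y Q_\cm\ra = 0$ yields, after using $\partial_t Q_\cm = \dot\cm\,\partial_\cm Q_\cm$ and \eqref{E:v}, an equation whose $X$-coefficient is $\la \partial_y Q_\cm, \partial_y Q_\cm\ra = 4\pi\cm^3$ and whose $Y$-coefficient is $\la\partial_y(yQ_\cm),\partial_y Q_\cm\ra = 0$ (by parity: $\partial_y(yQ_\cm)$ even, $\partial_y Q_\cm$ odd); differentiating $\la v, Q_\cm\ra = 0$ yields one whose $Y$-coefficient is $\la\partial_y(yQ_\cm),Q_\cm\ra = \tfrac12\|Q_\cm\|_{L^2}^2 = 4\pi\cm$ and whose $X$-coefficient vanishes ($\partial_y Q_\cm$ odd, $Q_\cm$ even). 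All remaining contributions to the coefficient matrix are $O(\|v\|_{L^2_y})$, so on the set $\tfrac12\le\cm\le2$, where the diagonal entries are bounded away from $0$ and $\infty$, the system is invertible with $O(1)$ inverse as soon as $\|v\|_{L^2_y}\ll1$; thus $X$ and $Y$ equal the corresponding diagonal right-hand side divided by $4\pi\cm^3$ (resp.\ $4\pi\cm$), up to errors bounded by $\|v\|_{L^2_y}$ times the size of the right-hand sides.

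The substance is the term-by-term estimate of those right-hand sides, using \eqref{E:v}. \emph{Soliton forcing.} The $\partial_y(e_2 Q_\cm)$ term is treated by Taylor expanding $W(hx)$ about $x=\am$ to fourth order; by parity of $Q_\cm$, $\partial_y Q_\cm$, $\partial_y^2 Q_\cm$, the quadratic-in-$(x-\am)$ part of $e_2$ contributes only to the $\dot\am$-equation and the cubic part only to the $\dot\cm$-equation, producing exactly the explicit $h^2$- and $h^3$-corrections recorded in \eqref{E:par2} (consistently with the exact trajectory \eqref{E:exact-traj-t}), while the degree-$\ge4$ Taylor remainder is $O(h^4)$ since $\la y\ra^k Q_\cm$ and $\la y\ra^k\partial_y Q_\cm$ lie in $L^2_y$ for every $k$. \emph{Linearized term.} For $\partial_y\mathcal{L}_\cm v$, self-adjointness of $\mathcal{L}_\cm$ together with $\mathcal{L}_\cm\partial_y Q_\cm = 0$ forces the pairing against $Q_\cm$ to vanish identically and the pairing against $\partial_y Q_\cm$ to equal $-\la v,\mathcal{L}_\cm\partial_y^2 Q_\cm\ra$; dividing by $4\pi\cm^3$ this is precisely the $\tfrac{1}{4\pi}\cm^{-3}\la v,\mathcal{L}_c\partial_y^2 Q_\cm\ra$ term kept in the $\dot\am$-equation. \emph{Nonlinear and coupling terms.} The terms $-\tfrac12\partial_y(v^2)$ and $\partial_y[(X + W(hx)-W(h\am))v]$, after integration by parts, pair $v^2$ or $v$ against $\partial_y Q_\cm$ or $\partial_y^2 Q_\cm$; since those weights decay like $\la y\ra^{-2}$ they are bounded by $\|v\la y\ra^{-1}\|_{L^2_y}^2$, $|X|\,\|v\la y\ra^{-1}\|_{L^2_y}$, and similar quantities, all absorbable. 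The single \emph{linear}-in-$v$ contribution — responsible for the $h^2(\ln h^{-1})\sup_n\|v\|_{L^2_{n<y<n+1}}$ term in the $\dot\cm$-bound — comes from pairing the $O(h^2)$, effectively $\{|y|\lesssim h^{-1}\}$-supported part of the coupling weight against $v$; splitting into unit spatial windows and applying Cauchy--Schwarz yields the logarithm $\sum_{|n|\lesssim h^{-1}}\la n\ra^{-1}\sim\ln h^{-1}$. \emph{Time-differentiation of $Q_\cm$.} The corrections $\dot\cm\,\la v,\partial_\cm Q_\cm\ra$ and $\dot\cm\,\la v,\partial_\cm\partial_y Q_\cm\ra$ are $O(|\dot\cm|\,\|v\la y\ra^{-1}\|_{L^2_y})$, hence subordinate once a bootstrap gives $|\dot\cm| = O(h)$. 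Assembling these estimates and inverting the $2\times2$ system gives \eqref{E:par2}, after weakening $\|v\la y\ra^{-1}\|_{L^2_y}^2\le C\sup_n\|v\|_{L^2_{n<y<n+1}}^2$ wherever convenient.

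The time-integrated bounds \eqref{E:par2t} follow by integrating the pointwise estimates over $I$; the only nonroutine point is again the linear-in-$v$ contribution, where Cauchy--Schwarz in $t$ produces the factor $|I|^{1/2}$ and the mixed norm $\sup_n\|v\|_{L^2_I L^2_{n<y<n+1}}$. The weak formulation \eqref{E:est-52} is then immediate: one moves the explicit $O(h^2)$ and $O(h^3)$ correction terms into the right-hand side and records the $v$-dependence in the weighted form $\|v\la y\ra^{-1}\|_{L^2_y}^2$, which is what the estimates above actually produce.

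I expect the main obstacle to be exactly this bookkeeping: pinning down the \emph{exact} powers of $h$ — especially the $O(h^4)$ errors — requires carrying the Taylor expansion to fourth order and invoking the even/odd symmetry of $Q_\cm$ and its derivatives to cancel all cross contributions, while simultaneously every $v$-dependent remainder must be arranged in the sharp \emph{local} form ($\sup_n\|v\|_{L^2_{n<y<n+1}}$, or $\|v\la y\ra^{-1}\|_{L^2_y}$) rather than the global $L^2_y$ norm — a localization driven entirely by the spatial decay of the soliton weights and Cauchy--Schwarz on unit windows. It is this localization that makes the estimate strong enough to feed into the later ODE-comparison argument.
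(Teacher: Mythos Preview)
Your approach matches the paper's, and the term-by-term breakdown you outline is correct. There is, however, one point where the justification you give fails.

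In treating the soliton-forcing term $\partial_y(e_2 Q_\cm)$, you assert that the degree-$\ge4$ Taylor remainder contributes $O(h^4)$ ``since $\la y\ra^k Q_\cm$ and $\la y\ra^k\partial_y Q_\cm$ lie in $L^2_y$ for every $k$.'' This is false: $Q_\cm(y)\sim\la y\ra^{-2}$, so $\la y\ra^k Q_\cm\in L^2_y$ only for $k<3/2$. For the $\dot\am$-equation this happens not to matter, because the pairing is against $Q_\cm\,\partial_y^2 Q_\cm\sim\la y\ra^{-6}$ and a quartic remainder still gives a convergent integral. But in the $\dot\cm$-equation the pairing becomes $\tfrac12\int(\partial_y e_2)\,Q_\cm^2\,dy$, and the highest-order remainder in $\partial_y e_2$, of size $h^5|y|^4$, is integrated against $Q_\cm^2\sim\la y\ra^{-4}$: the integrand is merely bounded at infinity, not decaying, and the integral diverges. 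The paper repairs this by splitting into $|y|<h^{-1}$ and $|y|>h^{-1}$. On the inner region the would-be-divergent integral is truncated to $O(h^{-1})$, so the contribution is $h^5\cdot h^{-1}=O(h^4)$; on the outer region one abandons the high-order expansion, uses instead the mean-value form $\partial_y e_2=h^2W''(h(y_*+\am))\,y$, and the $\la y\ra^{-3}$ tail of $|y|\,Q_\cm^2$ over $|y|>h^{-1}$ again yields $O(h^4)$. With this correction the rest of your argument goes through exactly as in the paper.
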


\begin{proof}
Taking $\partial_t$ of the orthogonality condition $\la v, Q_\cm \ra =0 $, we obtain
$$ 0 = \la \partial_t v, Q_\cm \ra + \dot\cm \cm^{-1} \la v, \partial_y(yQ_\cm) \ra$$
where we have used that $\partial_t Q_\cm = \dot \cm \partial_\cm Q_\cm = \dot \cm \cm^{-1} \partial_y(yQ_\cm)$.  Substituting \eqref{E:v}, we obtain
\begin{equation}
\label{E:par1}
\begin{aligned}
0 = \;
& (\dot\am-\cm+W(h\am)) \la \partial_yQ_\cm, Q_\cm \ra && \leftarrow \text{I} \\
& +(-\dot\cm \cm^{-1} + hW'(h\am)) \la \partial_y(yQ_\cm) , Q_\cm \ra && \leftarrow \text{II}\\
& + \la \partial_y(e_2Q_\cm) , Q_\cm \ra && \leftarrow \text{III} \\
& + \la \partial_y \mathcal{L}_\cm v , Q_\cm \ra && \leftarrow \text{IV} \\
& + \la \partial_y(\dot\am - \cm + V) v , Q_\cm \ra && \leftarrow \text{V}\\
&- \tfrac12 \la \partial_y v^2, Q_\cm \ra && \leftarrow \text{VI} \\
& + \dot\cm\cm^{-1}\la v, \partial_y(yQ_\cm) \ra && \leftarrow \text{VII}
\end{aligned}
\end{equation}

Since $\la \partial_yQ_\cm, Q_\cm\ra =0$, we conclude that $\text{I}=0$.  Using that $\la \partial_y(yQ_\cm) , Q_\cm \ra = - \la yQ_\cm, \partial_y Q_\cm \ra = - \frac12 \int y \partial_y Q_\cm^2 = \frac12 \int Q_\cm^2 = 4\pi \cm$, we obtain that
$$\text{II} = 4\pi (-\dot\cm  + hW'(h\am)\cm)$$
Via integration by parts, $\text{III}$ simplifies to
\begin{equation}
\label{E:term3a}
\text{III} = \frac12 \int (\partial_y e_2) Q_\cm^2
\end{equation}
Since
$$e_2(y,\am) = W(h(y+\am)) - W(h\am) - hW'(h\am)y$$
we have
\begin{equation}
\label{E:term3b}
\begin{aligned}
\partial_y e_2(y,\am) &= h W'(h(y+\am)) - hW'(h\am) \\
&=h^2W''(h(y_*+\am))y 
\end{aligned}
\end{equation}
for some $y_*$ between $0$ and $y$ by the mean-value theorem.   We could also carry the expansion out to fifth order
\begin{equation}
\label{E:term3c}
\begin{aligned}
\partial_y e_2(y,\am)
&=h^2W''(h\am)y + \frac12h^3W'''(h\am)y^2 \\
& \qquad + \frac16 h^4 W^{(4)}(h\am) y^3 + \frac1{24} h^5 W^{(5)}(h(y_*+\am)) y^4
\end{aligned}
\end{equation}
for some $y_*$ between $0$ and $y$, by the Lagrange form of the remainder in Taylor's theorem. 

Divide the integration in \eqref{E:term3a} into the two regions $|y|<h^{-1}$ and $|y|>h^{-1}$, producing the two terms $\text{III}_{\text{in}}$ and $\text{III}_{\text{out}}$.  Plugging \eqref{E:term3c} into \eqref{E:term3a} to compute $\text{III}_{\text{in}}$, we obtain
\begin{align*}
\text{III}_{\text{in}} &= \frac12h^2W''(h\am) \int_{|y|< h^{-1}} y Q_\cm^2 \, dy + \frac14h^3 W'''(h\am)\int_{|y|<h^{-1}}  y^2 Q_\cm^2 \, dy \\
&\qquad + \frac{1}{12} h^4 W^{(4)}(h\am) \int_{|y|<h^{-1}}  y^3 Q_\cm^2 \, dy + \frac{1}{48} h^5 \int_{|y|<h^{-1}} W^{(5)}(h(y_*+\am)) y^4 Q_\cm^2 \, dy
\end{align*}
The first and third integrals are zero (they are integrals of odd functions) and the fifth integral returns $O(h^{-1})$ since the integrand is uniformly bounded.  Thus
$$
\text{III}_{\text{in}} =  \frac14h^3 W'''(h\am)\int_{|y|<h^{-1}}  y^2 Q_\cm^2 \, dy +O(h^4)$$
But
\begin{align*}
\int_{|y|<h^{-1}} y^2Q_\cm^2 \, dy &= \int y^2 Q_\cm^2 \,dy - \int_{|y|>h^{-1}} y^2 Q_\cm^2 \, dy \\
& = c^{-1} \int y^2 Q(y)^2 \, dy + O(h^{-1}) = 8\pi c^{-1} + O(h^{-1})
\end{align*}
Thus 
$$
\text{III}_{\text{in}} =  2\pi h^3 W'''(h\am)c^{-1} +O(h^4)$$

On the other hand, plugging \eqref{E:term3b} into \eqref{E:term3a} to compute $\text{III}_{\text{out}}$, we obtain
$$\text{III}_{\text{out}} = \frac12 h^2 \int_{|y|>h^{-1}} W''(h(y_*+\am)) y Q_\cm^2 \, dy = O(h^4)$$
where we used that $W''$ is bounded.  Consequently
$$\text{III} = \text{III}_{\text{in}} + \text{III}_{\text{out}} = 2\pi h^3 W'''(h\am)c^{-1} +O(h^4)$$

Since $\mathcal{L}_\cm \partial_y Q_\cm =0$, we conclude that $\text{IV}=0$.  

Using the expansion $W(h(y+\am)) = W(h\am) + hW'(h\am)y + e_2(y,a)$, we have
\begin{align*}
\text{V} &= \la \partial_y(\dot \am - \cm +W(h(y+\am)))v, Q_\cm \ra \\
&= (\dot \am - \cm + W(h\am)) \la \partial_y v, Q_\cm \ra + hW'(h\am) \la  v, Q_\cm \ra  \\
& \qquad + hW'(h\am) \la y \partial_y v, Q_\cm \ra + \la \partial_y(e_2v), Q_\cm \ra
\end{align*}
By the orthogonality conditions \eqref{E:v-ortho}, the first two terms drop away, leaving
$$\text{V} = hW'(h\am) \la y \partial_y v, Q_\cm \ra + \la \partial_y(e_2v), Q_\cm \ra $$
Combining with Term VII,
$$\text{V} + \text{VII} = (\dot\cm\cm^{-1} - hW'(h\am)) \la v, \partial_y(yQ_\cm) \ra + \la \partial_y(e_2v), Q_\cm \ra $$
Once again, by Taylor's theorem with the Lagrange form of the remainder
$$e_2(y,\am) = \frac12 h^2 W''(h(y_*+\am))y^2$$
Let $R>0$ such that $\supp W \subset [-R,R]$.  Then $-\am - Rh^{-1} \leq y \leq -\am + Rh^{-1}$.  This gives
\begin{align*}
 \la \partial_y(e_2v), Q_\cm \ra &= - \int_{-\am-Rh^{-1}}^{\am+Rh^{-1}} v e_2\partial_y Q_\cm \, dy = - \frac12 h^2 \int_{-\am-Rh^{-1}}^{\am+Rh^{-1}} v W''(h(y_*+\am)) y^2 \partial_y Q_\cm \, dy \\
 &= - \frac12 h^2 \sum_{n\in \mathbb{Z}} \int_{-\am-Rh^{-1}}^{\am+Rh^{-1}} \mathbf{1}_{[n,n+1]} \, v \, W''(h(y_*+\am)) \,  \, y^2 \partial_y Q_\cm \, dy
\end{align*}
Thus
\begin{equation}
\label{E:e2est01}
\begin{aligned}
|\la \partial_y(e_2v), Q_\cm \ra| &\lesssim h^2 \sum_{n\in \mathbb{Z}} \left( \int_n^{n+1} v^2 \, dy \right)^{1/2} \left( \int_n^{n+1} \mathbf{1}_{[-\am-Rh^{-1}, \am+Rh^{-1}]}(y) \, y^2 \partial_y Q_\cm(y) \, dy \right)^{1/2} \\
&\lesssim h^2 \sum_{n\in \mathbb{Z}} \left( \int_n^{n+1} v^2 \, dy \right)^{1/2}  \frac{1}{\la n \ra} \mathbf{1}_{[-\am - Rh^{-1}-1, -\am + Rh^{-1}+1]}(n) \\
&\lesssim h^2 (\ln h^{-1}) \sup_n \|v\|_{L_{n\leq y\leq n+1}^2}
\end{aligned}
\end{equation}
Moreover, integrating over a time interval $I$,
\begin{align*}
\int_I |\la \partial_y(e_2v), Q_\cm \ra|  \, dt &\lesssim h^2 \sum_{n\in \mathbb{Z}} \int_I \left( \int_n^{n+1} v^2 \, dy \right)^{1/2} \left( \int_n^{n+1} \mathbf{1}_{[-\am-Rh^{-1}, \am+Rh^{-1}]}(y) \, y^2 \partial_y Q_\cm(y) \, dy \right)^{1/2} \,d t
\end{align*}
Applying Cauchy-Schwarz in $t$,
\begin{equation}
\label{E:e2est02}
\begin{aligned}
&\lesssim h^2 \sum_{n\in \mathbb{Z}} |I|^{1/2} \left( \int_I \int_n^{n+1} v^2 \, dy \right)^{1/2}  \frac{1}{\la n \ra} \mathbf{1}_{[-\am - Rh^{-1}-1, -\am + Rh^{-1}+1]}(n) \\
&\lesssim h^2|I|^{1/2} (\ln h^{-1}) \sup_n \|v\|_{L_I^2L_{n\leq y\leq n+1}^2}
\end{aligned}
\end{equation}

And finally 
$$\text{VI} = -\tfrac12 \la \partial_y v^2, Q_\cm\ra = \tfrac12 \la v^2, \partial_y Q_\cm \ra \lesssim \|\la y \ra^{-1} v\|_{L_y^2}^2$$
Collecting the estimates and identities above, we obtain that \eqref{E:par1} yields 
$$|\dot\cm  - hW'(h\am)\cm  - \frac12 h^3 W''(h\am)\cm^{-1}| (4\pi - \cm^{-1}\la v, \partial_y(yQ_\cm)\ra) $$
$$\lesssim h^4 +  h^2 (\ln h^{-1}) \sup_n \|v\|_{L_{n\leq y\leq n+1}^2}+ \|\la y \ra^{-1} v\|_{L_y^2}^2$$
from which the second inequality in \eqref{E:par2} follows.  The second inequality in \eqref{E:par2t} follows in the same way but using \eqref{E:e2est02} in place of \eqref{E:e2est01}.

Now, by similar methods, we prove the first inequality in \eqref{E:par2}.
Taking $\partial_t$ of the orthogonality condition $0=\la v, \partial_y Q_\cm \ra$, we obtain
$$0 = \la \partial_t v, \partial_y Q_\cm \ra + \la v, \partial_t \partial_yQ_\cm \ra$$
For the first term, we substitute \eqref{E:v}, and for second term, we use that $\partial_t\partial_yQ_\cm = \dot\cm \cm^{-1} \partial_y^2(y Q_\cm)$, to obtain
\begin{align*}
0 &= ( \dot\am - \cm + W(h\am)) \la \partial_yQ_\cm, \partial_yQ_\cm\ra && \leftarrow \text{I}\\
&\quad +(-\dot \cm \cm^{-1} +hW'(h\am)) \la \partial_y(yQ_\cm),\partial_yQ_\cm \ra  && \leftarrow \text{II}\\
&\quad + \la \partial_y(e_2Q_\cm), \partial_yQ_\cm \ra && \leftarrow \text{III}\\
&\quad + \la \partial_y\mathcal{L}_\cm v, \partial_yQ_\cm \ra && \leftarrow \text{IV}\\
&\quad + \la \partial_y(\dot \am - \cm + W(h(y+\am)))v, \partial_y Q_\cm \ra && \leftarrow \text{V}\\
&\quad - \tfrac12 \la \partial_y v^2, \partial_y Q_\cm \ra && \leftarrow \text{VI}\\
&\quad + \dot \cm \cm^{-1} \la v, \partial_y^2(yQ_\cm) \ra && \leftarrow \text{VII}
\end{align*}

Given that $\|\partial_yQ_\cm\|_{L^2}^2 = 4\pi \cm^3$, we have
$$\text{I} = 4\pi \cm^3( \dot\am - \cm + W(h\am))$$
Also, given that $\partial_y(yQ_\cm)$ is even and $\partial_yQ_\cm$ is odd, we have $\la \partial_y(yQ_\cm),\partial_yQ_\cm \ra =0$ and thus $\text{II}=0$.  

To address term III, we carry out the Taylor expansion 
\begin{align*}
e_2 &= W(h(y+\am)) - W(h\am) - hW'(h\am) y \\
&= \frac12 h^2W''(h\am)y^2 + \frac16 h^3W'''(h\am) y^3 + \frac1{24}h^4W''''(h(y_*+\am))y^4
\end{align*}
 for some $y_*$ between $0$ and $y$, by the Lagrange form of the remainder.  Substituting
\begin{align*}
\text{III} &= -\la e_2, Q_\cm \partial_y^2Q_\cm \ra \\
&= - \frac12 h^2 W''(h\am) \int y^2 Q_\cm \partial_y^2Q_\cm \, dy - \frac16h^3 W'''(h\am) \int y^3 Q_\cm \partial_y^2Q_\cm \, dy \\
& \qquad - \frac1{24}h^4W''''(h\am) \int y^4 Q_\cm \partial_y^2Q_\cm 
\end{align*}
Since $\int z^2 Q(z) Q''(z) \, dz = 4\pi$,
$$\text{III} = - 2\pi h^2W''(h\am)\cm + O(h^4)$$

Now, unlike the previous calculation, the contribution from Term IV does not drop out:
$$\text{IV} = - \la v, \mathcal{L}_\cm \partial_y^2Q_\cm \ra$$

In Term V, we expand 
$$W(h(y+\am))= W(h\am) + hW'(h\am)y + e_2(y,\am)$$
to yield
$$\text{V} = (\dot \am - \cm + W(h\am))\la \partial_y v, \partial_yQ_\cm\ra - hW'(h\am) \la v, y \partial_y^2 Q_\cm \ra - \la e_2v, \partial_y^2 Q_\cm \ra$$
In the second (middle) of these term, we use the operator commutator identity $y\partial_y^2 = \partial_y^2 y - 2\partial_y$ and the orthogonality condition $\la v, \partial_yQ_\cm \ra =0$ to obtain
$$\text{V} = (\dot \am - \cm + W(h\am))\la \partial_y v, \partial_yQ_\cm\ra - hW'(h\am) \la v, \partial_y^2( y Q_\cm) \ra - \la e_2v, \partial_y^2 Q_\cm \ra$$
This allows a combination with Term VII
$$\text{V}+\text{VII} = (\dot \am - \cm + W(h\am))\la \partial_y v, \partial_yQ_\cm\ra +(\dot\cm \cm^{-1} - hW'(h\am)) \la v, \partial_y^2( y Q_\cm) \ra - \la e_2v, \partial_y^2 Q_\cm \ra$$
By Taylor's theorem with the Lagrange form of the remainder
$$e_2(y,\am) = \frac12 h^2 W''(h(y_*+\am))y^2$$
Let $R>0$ such that $\supp W \subset [-R,R]$.  Then $-\am - Rh^{-1} \leq y \leq -\am + Rh^{-1}$.  This gives
\begin{align*}
 \la e_2v, \partial_y^2 Q_\cm \ra &=  \int_{-\am-Rh^{-1}}^{\am+Rh^{-1}} v e_2\partial_y^2 Q_\cm \, dy =  \frac12 h^2 \int_{-\am-Rh^{-1}}^{\am+Rh^{-1}} v W''(h(y_*+\am)) y^2 \partial_y^2 Q_\cm \, dy \\
  &= \frac12 h^2 \sum_{n\in \mathbb{Z}} \int_{-\am-Rh^{-1}}^{\am+Rh^{-1}} \mathbf{1}_{[n,n+1]} \, v \, W''(h(y_*+\am)) \,  \, y^2 \partial_y^2 Q_\cm \, dy
\end{align*}
Thus
\begin{equation}
\label{E:e2est03}
\begin{aligned}
|\la \partial_y(e_2v), Q_\cm \ra| &\lesssim h^2 \sum_{n\in \mathbb{Z}} \left( \int_n^{n+1} v^2 \, dy \right)^{1/2} \left( \int_n^{n+1} \mathbf{1}_{[-\am-Rh^{-1}, \am+Rh^{-1}]}(y) \, y^2 \partial_y^2 Q_\cm(y) \, dy \right)^{1/2} \\
&\lesssim h^2 \sum_{n\in \mathbb{Z}} \left( \int_n^{n+1} v^2 \, dy \right)^{1/2}  \frac{1}{\la n \ra^2} \mathbf{1}_{[-\am - Rh^{-1}-1, -\am + Rh^{-1}+1]}(n) \\
&\lesssim h^2  \sup_n \|v\|_{L_{n\leq y\leq n+1}^2}
\end{aligned}
\end{equation}
Also,
\begin{align*}
\int_I |\la \partial_y(e_2v), Q_\cm \ra|\, dt &\lesssim h^2 \sum_{n\in \mathbb{Z}} \int_I \left( \int_n^{n+1} v^2 \, dy \right)^{1/2} \left( \int_n^{n+1} \mathbf{1}_{[-\am-Rh^{-1}, \am+Rh^{-1}]}(y) \, y^2 \partial_y^2 Q_\cm(y) \, dy \right)^{1/2} \\
&\lesssim h^2 \sum_{n\in \mathbb{Z}} \int_I \left( \int_n^{n+1} v^2 \, dy \right)^{1/2}\, dt  \frac{1}{\la n \ra^2} \mathbf{1}_{[-\am - Rh^{-1}-1, -\am + Rh^{-1}+1]}(n)
\end{align*}
By Cauchy-Schwarz in $t$,
\begin{equation}
\label{E:e2est04}
\int_I |\la \partial_y(e_2v), Q_\cm \ra|\, dt
\lesssim h^2 |I|^{1/2} \sup_n \|v\|_{L_I^2 L_{n\leq y\leq n+1}^2}
\end{equation}

Finally, we have
$$|\text{VI}| = \frac12|\la v^2, \partial_y^2Q_c\ra| \lesssim \| v\la y \ra^{-1} \|_{L_y^2}^2$$
Combining the estimates above,
\begin{align*}
& |(\dot\am - \cm + W(h\am))\Big(1 - \frac{1}{4\pi \cm^3} \la v, \partial_y^2Q_\cm\ra\Big) - \frac12 h^2W''(h\am)\cm^{-2} - \frac{1}{4\pi\cm^3}\la v, \mathcal{L}_\cm \partial_y^2Q_\cm\ra| \\
& \qquad \lesssim h^4 + \sup_n \|v\|_{L_{y\in(n,n+1)}^2}^2
\end{align*}
This implies
\begin{align*}
& |(\dot\am - \cm + W(h\am)) - \frac12 h^2W''(h\am)\cm^{-2} - \frac{1}{4\pi\cm^3}\la v, \mathcal{L}_\cm \partial_y^2Q_\cm\ra|\Big(1 - \frac{1}{4\pi \cm^3}\la v, \partial_y^2Q_\cm\ra\Big) \\
& \qquad \lesssim h^4 + \sup_n \|v\|_{L_{y\in(n,n+1)}^2}^2
\end{align*}
which implies the first inequality in \eqref{E:par2}.  Similarly, the first inequality in \eqref{E:par2t} follows by using \eqref{E:e2est04} in place of \eqref{E:e2est03}.
\end{proof}

Now we apply the result of Lemma \ref{L:nonsymp-ODE-control} to reformulate the equation for $v$. Plugging \eqref{E:est-52} into \eqref{E:v}, the equation for $v$ is now
\begin{equation}
\label{E:v2}
\begin{aligned}
\partial_t v = \;
&\frac{1}{4\pi} \cm^{-3} \la v, \mathcal{L}_c \partial_y^2Q_\cm\ra \partial_yQ_\cm + E_\am \partial_yQ_\cm +E_\cm \partial_y(yQ_\cm) + \partial_y(e_2Q_\cm) \\
&+\partial_y \mathcal{L}_\cm v + \partial_y(\dot\am - \cm + W(hx)) v - \frac12 \partial_y v^2
\end{aligned}
\end{equation}
This takes the form
\begin{equation} \label{E:v-eqn}
\partial_t v = \mathbb{P} v + \partial_y \mathcal{L}_\cm v + \partial_y f ,
\end{equation}
where $\mathbb{P}$ is the rank one operator
$$\mathbb{P}v = \frac{1}{4\pi} \cm^{-3} \la v, \mathcal{L}_\cm \partial_y^2 Q_\cm \ra \partial_y Q_\cm$$
and
\begin{equation}
\label{E:f-def}
 f(y,\am,\cm) = E_\am \, Q_\cm + E_\cm \, y Q_\cm + e_2 Q_\cm + (\dot\am - \cm + W(h(y+\am))v - \frac12 v^2
 \end{equation}

\begin{lemma}[energy estimate]
\label{L:en}
Consider a time interval $I= [T_*,T^*]$ of length 
$$|I|=T^*-T_* \lesssim h^{-1}$$  
on which $\|v\|_{L_I^\infty H_y^{1/2}} \leq h^{4/3}$ and $\frac12 \leq \cm(t) \leq 2$ holds for all $t\in I$.   Then
$$\| v\|_{L_I^\infty H_y^{1/2}}^2 \lesssim \|v(T_*) \|_{H_y^{1/2}}^2 + h^2|I|^{1/2} \| \la y \ra^{-1} v \|_{L_I^2 L_y^2}  + h^4 |I| $$
\end{lemma}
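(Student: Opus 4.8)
The plan is to run a standard coercive-energy argument adapted to (BO), using the conserved (or almost-conserved) quantities $M_0$, $E_0$, $E_1$ of the unperturbed equation together with the perturbation terms. Concretely, I would build a functional $\mathcal{E}(t)$ quadratic in $v$ that is equivalent, under the orthogonality conditions \eqref{E:v-ortho} and the spectral properties of $\mathcal{L}_\cm$, to $\|v\|_{H^{1/2}_y}^2$, and then estimate $\frac{d}{dt}\mathcal{E}(t)$ using the equation \eqref{E:v-eqn}--\eqref{E:f-def} for $v$. The natural choice is the second variation of the conserved energy functional around the soliton, localized appropriately; schematically $\mathcal{E}(t) = \tfrac12\la \mathcal{L}_\cm v, v\ra$ plus lower-order corrections designed so that the time derivative of the leading term cancels. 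Coercivity $\mathcal{E}(t)\gtrsim \|v\|_{H^{1/2}_y}^2$ is exactly the content of the spectral estimates in \S\ref{S:spectral} (Proposition \ref{P:tildeLspec} / Lemma \ref{L:L-est} type bounds applied to $\mathcal{L}_\cm$ on the orthogonal complement of $\{Q_\cm, \partial_y Q_\cm\}$); the small correction terms from modulation are absorbed since $\|v\|_{L^2_y}\ll 1$.

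First I would compute $\frac{d}{dt}\mathcal{E}(t)$, substituting \eqref{E:v-eqn}. The terms $\partial_y\mathcal{L}_\cm v$ and the cubic term $-\tfrac12\partial_y v^2$ are handled as in the unperturbed BO energy identities: the skew-adjointness of $\partial_y$ and the structure of the conserved quantities make the quadratic contribution of $\partial_y\mathcal{L}_\cm v$ vanish (or reduce to the $\dot\cm$-modulation term, which is $O(h)$ times $\|v\|_{H^{1/2}}^2$), while the cubic term is controlled by $\|v\|_{L^\infty_y}\|v\|_{H^{1/2}_y}^2 \lesssim h^{4/3}\|v\|_{H^{1/2}_y}^2$ using the hypothesis $\|v\|_{L^\infty_y}\lesssim \|v\|_{H^{1/2}_y}\lesssim h^{4/3}$ (Sobolev embedding $H^{1/2+}\hookrightarrow L^\infty$, or the Gagliardo–Nirenberg bound already invoked in Corollary \ref{C:com3}). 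The rank-one term $\mathbb{P}v$ contributes $\lesssim \|\la y\ra^{-1}v\|_{L^2_y}^2$ via Cauchy–Schwarz and the rapid decay of $\mathcal{L}_\cm\partial_y^2 Q_\cm$ and $\partial_y Q_\cm$. The forcing $\partial_y f$ in \eqref{E:f-def} splits: the $e_2 Q_\cm$ piece is pointwise $O(h^2)$ on the support and contributes $\lesssim h^2\|\la y\ra^{-1}v\|_{L^2_y}$ after pairing against the $H^{1/2}$-coercive quantity (using decay of $Q_\cm$); the $E_\am Q_\cm$, $E_\cm yQ_\cm$ pieces, using \eqref{E:est-52}, give $\lesssim (h^2 + \|\la y\ra^{-1}v\|_{L^2_y}^2)\|\la y\ra^{-1}v\|_{L^2_y}$; the $(\dot\am-\cm+W)v$ term is quadratic in $v$ with coefficient bounded using Lemma \ref{L:nonsymp-ODE-control} and Lemma \ref{L:co3} (the commutator $[H\partial_y,W(hy)]$ is $O(h)$), giving $\lesssim h\|v\|_{H^{1/2}_y}^2$; and the $-\tfrac12 v^2$ piece of $f$ gives another cubic term bounded as above.

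Collecting, I obtain a differential inequality of the form
\begin{equation}
\label{E:en-diffineq}
\frac{d}{dt}\mathcal{E}(t) \lesssim h\,\mathcal{E}(t) + h^2\|\la y\ra^{-1}v\|_{L^2_y} + \|\la y\ra^{-1}v\|_{L^2_y}^2 + h^4,
\end{equation}
where I have also used $\|\la y\ra^{-1}v\|_{L^2_y}^3 \lesssim \|v\|_{L^2_y}\|\la y\ra^{-1}v\|_{L^2_y}^2 \ll \|\la y\ra^{-1}v\|_{L^2_y}^2$ and absorbed the $h\mathcal{E}$ term by Grönwall over $|I|\lesssim h^{-1}$ (so $e^{ChI}\lesssim 1$), and bounded $\|\la y\ra^{-1}v\|_{L^2_y}^2$ in time by $\|\la y\ra^{-1}v\|_{L^2_I L^2_y}^2$ while treating $\|\la y\ra^{-1}v\|_{L^2_y}^2$ also via $\int_I \lesssim$ its $L^2_I$ norm; integrating over $I$ and using Cauchy–Schwarz in $t$ on the $h^2\|\la y\ra^{-1}v\|$ term ($\int_I h^2\|\la y\ra^{-1}v\|_{L^2_y} \lesssim h^2|I|^{1/2}\|\la y\ra^{-1}v\|_{L^2_I L^2_y}$) yields the claimed bound after invoking coercivity $\mathcal{E}\sim\|v\|_{H^{1/2}_y}^2$. \textbf{The main obstacle} is bookkeeping: making the leading quadratic term in $\frac{d}{dt}\mathcal{E}$ genuinely cancel (choosing the correction terms in $\mathcal{E}$ and exploiting the modulation equations so that the only surviving quadratic-in-$v$ contributions are $O(h)\|v\|_{H^{1/2}}^2$ or decay-weighted), and then ensuring the $H^{1/2}$ (rather than merely $L^2$) regularity is propagated — i.e. that the term $\partial_y\mathcal{L}_\cm v$ paired against the $D^{1/2}$-part of $\mathcal{E}$ produces no uncontrolled derivative loss. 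This is exactly where the Kato-smoothing structure of BO and the coercivity in Proposition \ref{P:tildeLspec} (and the commutator Lemma \ref{L:com5}, which kills the would-be top-order $\int g (H\partial_y v)\partial_y v$ type term) are used; everything else is routine. I would also note, as in \cite{Z}, that this is essentially the same energy estimate as there, reproduced for self-containedness.
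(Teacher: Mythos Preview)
Your overall strategy is the paper's: differentiate a Lyapunov functional built on $\tfrac12\langle\mathcal{L}_\cm v,v\rangle$, substitute \eqref{E:v-eqn}, and integrate. Two concrete points in your plan would not go through as written, however.

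First, the cubic term. You propose to bound the contribution of $-\tfrac12\partial_y v^2$ by $\|v\|_{L^\infty_y}\|v\|_{H^{1/2}_y}^2$, appealing to $H^{1/2+}\hookrightarrow L^\infty$. But $H^{1/2}(\mathbb{R})$ does \emph{not} embed in $L^\infty$, and the dangerous piece $\langle H\partial_y v,\partial_y(v^2)\rangle$ carries one derivative too many to be controlled by $\|v\|_{H^{1/2}}^3$. The paper instead takes the explicit functional
\[
\mathcal{E}(t)=\tfrac12\langle\mathcal{L}_\cm v,v\rangle-\tfrac16\int v^3,
\]
the second variation of $E_0+\cm M_0$ around $Q_\cm$. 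With this choice the $-\tfrac12 v^2$ part of $f$ in $\langle\mathcal{L}_\cm v,\partial_y f\rangle$ combines with $-\tfrac12\langle v^2,\partial_y\mathcal{L}_\cm v\rangle$ (the term generated by $\partial_t(-\tfrac16\int v^3)$) into a total $y$-derivative and vanishes identically. This exact cancellation cannot be replaced by a perturbative bound at $H^{1/2}$ regularity; your vague ``lower-order corrections'' must be precisely this cubic term.

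Second, the $\mathbb{P}v$ contribution. You estimate $\langle\mathcal{L}_\cm v,\mathbb{P}v\rangle\lesssim\|\langle y\rangle^{-1}v\|_{L^2_y}^2$, and this is the source of the bare $\|\langle y\rangle^{-1}v\|_{L^2_y}^2$ term in your differential inequality \eqref{E:en-diffineq}, which after time integration gives $\|\langle y\rangle^{-1}v\|_{L^2_IL^2_y}^2$ --- a quantity not present in the lemma's conclusion and which you never absorb. In fact this term is exactly zero: $\mathbb{P}v$ is a scalar multiple of $\partial_y Q_\cm$ and $\mathcal{L}_\cm\partial_y Q_\cm=0$, so $\langle\mathcal{L}_\cm v,\mathbb{P}v\rangle=\langle v,\mathcal{L}_\cm\mathbb{P}v\rangle=0$. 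With this vanishing and the cubic cancellation above, the super-quadratic residuals (e.g.\ $\|\langle y\rangle^{-1}v\|^3$, $h\|v\|_{H^{1/2}}^3$) are all dominated by $h^2\|\langle y\rangle^{-1}v\|_{L^2_y}$ via the bootstrap $\|v\|_{H^{1/2}}\le h^{4/3}$, and the differential inequality closes as stated. (The paper handles the $\dot\cm\,\partial_\cm\mathcal{L}_\cm$ modulation term with an integrating factor $e^{-\kappa\int|\dot\cm|}$ rather than Gr\"onwall, but that difference is cosmetic.)
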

\begin{proof}
\newcommand{\rum}{\mathfrak{r}}
Let $\rum(t) = \int_{T_*}^t |\dot \cm(s)| \, ds$.  Since $|\dot \cm(t)| \lesssim h$ and $T^*-T_* \lesssim h^{-1}$, it follows that $\rum(t) = O(1)$ on $T_*\leq t \leq T^*$.  For a sufficiently large constant $\kappa$ (to be selected below), we have
\begin{align*}
\indentalign e^{\kappa \rum} \, \partial_t \,  e^{-\kappa \rum} \left( \frac12 \la \mathcal{L}_\cm v, v \ra - \frac16 \int v^3 \right) \\
&= - \kappa |\dot \cm| \left( \frac12 \la \mathcal{L}_\cm v, v \ra - \frac16 \int v^3 \right) + \tfrac12 \dot \cm \la v,v\ra + \la \mathcal{L}_\cm v, \partial_t v \ra  - \tfrac12 \la v^2, \partial_t v \ra
\end{align*}
By the spectral bounds, there exists a constant $\kappa>0$ sufficiently large so that the first term dominates the second, giving  
$$e^{\kappa \rum} \, \partial_t \,  e^{-\kappa \rum} \left( \frac12 \la \mathcal{L}_\cm v, v \ra - \frac16 \int v^3 \right) \leq \la \mathcal{L}_\cm v, \partial_t v \ra  - \tfrac12 \la v^2, \partial_t v \ra 
$$
By substituting \eqref{E:v-eqn},
\begin{align*}
\indentalign e^{\kappa \rum} \, \partial_t \,  e^{-\kappa \rum} \left( \frac12 \la \mathcal{L}_\cm v, v \ra - \frac16 \int v^3 \right) \\
&= \la \mathcal{L}_\cm v, \mathbb{P} v \ra +  \la \mathcal{L}_\cm v, \partial_y \mathcal{L}_\cm v \ra +  \la \mathcal{L}_\cm v, \partial_y f \ra \\
& \qquad - \tfrac12 \la v^2, \mathbb{P} v \ra - \tfrac12 \la v^2, \partial_y \mathcal{L}_\cm v \ra - \tfrac12 \la v^2, \partial_y f \ra \\
&= \text{A} + \text{B} + \text{C} - \text{D} - \text{E} - \text{F}
\end{align*}
Term A drops away since $\mathcal{L}_\cm \mathbb{P} v = 0$ and Term B drops away by skew-symmetry.  It is fairly straightforward to obtain suitable bounds on $|\text{D}|$ and $|\text{F}|$, specifically
$$|\text{D}| \lesssim \|\la y \ra^{-1} v \|_{L_y^2}^3$$
$$|\text{F}| \lesssim h^2 \| \la y \ra^{-1} v \|_{L_y^2}^2 + h \|v\|_{H_y^{1/2}}^3$$
which more than suffice.  The main task is to prove the following estimate for $|\text{C} -\text{E}|$:
$$ \left|  \la \partial_yf, \mathcal{L}_\cm v \ra - \frac12 \la v^2, \partial_y \mathcal{L}_\cm v \ra \right| \lesssim h^2 \| \la y \ra^{-1} v\|_{L_y^2} + h\|v\|_{H_y^{1/2}}^2 +h^4$$
Substituting \eqref{E:f-def},
\begin{align*}
\la \partial_yf, \mathcal{L}_\cm v \ra - \tfrac12 \la v^2, \partial_y \mathcal{L}_\cm v \ra &= 
E_\am \la \partial_y Q_\cm, \mathcal{L}_\cm v \ra && \leftarrow \text{I} \\
& \qquad + E_\cm \la \partial_y(yQ_\cm) , \mathcal{L}_\cm v \ra && \leftarrow \text{II} \\
& \qquad + \la \partial_y(e_2 Q_\cm), \mathcal{L}_\cm v \ra && \leftarrow \text{III} \\
& \qquad + h \la W'(h(y+\am)) v, \mathcal{L}_\cm v \ra && \leftarrow \text{IV} \\
& \qquad + (\dot \am - \cm + W(h\am)) \la  \partial_y v, \mathcal{L}_\cm v \ra && \leftarrow \text{V} \\
& \qquad + \la (W(h(y+\am))-W(h\am)) \partial_y v, \mathcal{L}_\cm v \ra && \leftarrow \text{VI} \end{align*}
Each of these six terms is estimated separately, as follows.  

In Term I, we break up the terms of $\mathcal{L}_\cm = \cm - H\partial_y - Q_\cm$, and for the middle term, integrate by parts: $\la \partial_y Q_\cm, H\partial_y v \ra = \la H\partial_y^2 Q_\cm,  v \ra$, and note that $|H\partial_y^2 Q_\cm(y) | \lesssim \la y \ra^{-3}$.    Then each of these terms is estimated via Cauchy Schwarz:
$$|\la  \partial_yQ_\cm, \mathcal{L}_\cm v \ra | \lesssim \| \la y \ra^{-1} v \|_{L_y^2}$$
Combining with \eqref{E:est-52} completes the estimate for Term I.  Term II is similar -- $yQ_\cm$ has weaker decay, but still sufficient to obtain the same bound as for Term I.    In particular, $|H\partial_y^2 [yQ_\cm(y)] | \lesssim \la y \ra^{-2}$.  

For Term III, we refer to the estimate of Term III in \cite[Lemma 8.1]{Z}, where the  estimate  $\|  \mathcal{L}_\cm \partial_y (e_2Q_\cm) \|_{L_y^2} \lesssim h^{5/2}$ is proved.  Cauchy-Schwarz then yields
$$| \text{III} | \lesssim h^{5/2} \|v\|_{L_y^2} \lesssim h^4 + h\|v\|_{L_y^2}^2$$

For Term IV, we estimate the contribution of each term of $\mathcal{L}_\cm = \cm - H\partial_y - Q_\cm$ separately.  The  nontrivial term is
$$h \la W'(h(y+\am)v, H\partial_y v \ra = h \la D_y^{1/2}[W'(h(y+\am))v],D_y^{1/2} v \ra$$
After Cauchy-Schwarz, appeal to the fractional Leibniz estimate \eqref{E:Leib1}, noting that $\|W'(h(y+\am)) \|_{L_y^2}\sim h^{-1/2}$ while $\|\partial_y[W'(h(y+\am))]\|_{L_y^2} \sim h^{1/2}$.  This yields
$$h | \la W'(h(y+\am)v, H\partial_y v \ra | \lesssim h \|v \|_{H_y^{1/2}}^2$$
and thus the same estimate for Term IV.
For Term V, we use that $\la \partial_y v, \mathcal{L}_\cm v \ra = \frac12 \la \partial_y Q_\cm, v^2 \ra$ and thus
$$| \la \partial_y v, \mathcal{L}_\cm v \ra| \lesssim \| \la y \ra^{-1} v\|_{L_y^2}^2$$
Also the coefficient $\dot \am - \cm +W(h\am) = E_{\am} + \frac{1}{4\pi} \cm^{-3} \la v, \mathcal{L}_\cm \partial_y^2 Q_\cm \ra$ and thus  by \eqref{E:est-52}, $|\dot \am - \cm + W(h\am)|\lesssim h$.  Combining gives
$$|\text{V}| \lesssim h \| v \|_{H_y^{1/2}}^2 $$
For Term VI,  we substitute $\mathcal{L}_\cm = \cm - H\partial_y - Q_\cm$ and integrate by parts to obtain
\begin{align*}
\text{VI} &= - \frac12 h \la W'(h(y+\am)), v^2 \ra - \la [W(h(y+\am))- W(h\am)]\partial_y v, H\partial_y v \ra \\
&\qquad + \frac12 \la \partial_y ( [W(h(y+\am))-W(h\am)]Q_\cm(y)), v^2 \ra
\end{align*}
The first and third of these terms is easily estimated with Cauchy-Schwarz, and for the middle term we use Lemma \ref{L:com5} to obtain
$$|\text{VI}| \lesssim h \|v\|_{L_y^2}^2$$

\end{proof}

Recall from the local virial estimate (Theorem \ref{T:local-virial}) the form of the remainder $G$ in \eqref{E:G}, 
$$
G_\gamma (f, v) = \int_0^T \int g_{\gamma,y_0} \, v \, \partial_y f \, dy dt +  \int_0^T \int_y g_{\gamma,0} (\mathcal{D}_\gamma^{-1} \mathcal{L}_\cm v) ( \mathcal{D}_\gamma^{-1} \mathcal{L}_\cm \partial_y f) \, dy \, dt 
$$
where $f$ is given in \eqref{E:f-def}.

\begin{lemma}[estimate on $G$ remainder in local virial estimate]
\label{L:G-est}
$$|G_\gamma(f,  v)|  \lesssim_\gamma  h^2T^{1/2} \|\la y \ra^{-1} v\|_{L_T^2L_y^2} + hT  \|v\|_{L^\infty_{t \in [0, T]} L_y^2}^2 + T\|v\|_{L^\infty_{t \in [0, T]} H_y^{1/2}}^3$$
\end{lemma}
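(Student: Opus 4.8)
The plan is to split $f$ into the five summands appearing in \eqref{E:f-def}, namely $f_1=E_\am Q_\cm$, $f_2=E_\cm\,yQ_\cm$, $f_3=e_2 Q_\cm$, $f_4=(\dot\am-\cm+W(h(y+\am)))v$ and $f_5=-\tfrac12 v^2$, and --- since $G_\gamma(f,v)$ depends linearly on $f$ through $\partial_y f$ --- to control the contribution of each $f_j$ to the two integrals in \eqref{E:G}: the \emph{direct} integral $\int_0^T\!\!\int g_{\gamma,y_0}\,v\,\partial_y f\,dy\,dt$ and the \emph{conjugated} integral $\int_0^T\!\!\int g_{\gamma,0}\,(\mathcal{D}_\gamma^{-1}\mathcal{L}_\cm v)(\mathcal{D}_\gamma^{-1}\mathcal{L}_\cm\partial_y f)\,dy\,dt$. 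Throughout one uses $\|g_{\gamma,y_0}\|_{L^\infty_y}\le\tfrac\pi2\gamma^{-1}$, $0\le g'_{\gamma,y_0}=\la\gamma(y-y_0)\ra^{-2}\le1$, the bounds $\|\mathcal{D}_\gamma^{-1}\|_{L^2\to L^2}\le1$ and $\|\mathcal{D}_\gamma^{-1}\mathcal{L}_\cm v\|_{L^2_y}\lesssim_\gamma\|v\|_{L^2_y}$ ($\mathcal{D}_\gamma^{-1}\mathcal{L}_\cm$ being a pseudodifferential operator of order $0$) together with its weighted refinement $\|\la\gamma y\ra^{-1}\mathcal{D}_\gamma^{-1}\mathcal{L}_\cm v\|_{L^2_y}\lesssim_\gamma\|\la y\ra^{-1}v\|_{L^2_y}$ (obtained in the same way that \eqref{E:cc16} is obtained from Lemma \ref{L:com6}), the parameter bounds \eqref{E:est-52}, and the algebraic identities $\mathcal{D}_\gamma^{-1}\partial_y=\gamma^{-1}(I-\mathcal{D}_\gamma^{-1})$ and $[\mathcal{L}_\cm,\partial_y]=Q_\cm'$ for moving derivatives past the regularizing operator.

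For the modulation pieces $f_1,f_2$ one exploits $\mathcal{L}_\cm\partial_yQ_\cm=0$ --- which makes $f_1$ contribute nothing at all to the conjugated integral --- and the rescaled form of \eqref{E:Dc}, $\mathcal{L}_\cm\partial_y(yQ_\cm)=-\cm Q_\cm$; since $\partial_yf_1$, $\partial_yf_2$ and $\mathcal{L}_\cm\partial_y f_2$ are all rapidly decaying, Cauchy--Schwarz against the weight $\la y\ra^{-1}$ bounds both integrals by $\lesssim_\gamma(|E_\am|+|E_\cm|)\,\|\la y\ra^{-1}v\|_{L^2_y}$. Inserting \eqref{E:est-52} and integrating in $t$ (Cauchy--Schwarz in $t$ on the $h^2,h^3$ parts and $\|\la y\ra^{-1}v\|_{L^2_y}\le\|v\|_{L^2_y}\le\|v\|_{H^{1/2}_y}$ on the cubic remainders) produces terms of the form $h^2T^{1/2}\|\la y\ra^{-1}v\|_{L^2_{[0,T]}L^2_y}$ and $T\|v\|^3_{L^\infty_{[0,T]}H^{1/2}_y}$. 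The curvature piece $f_3=e_2Q_\cm$ is similar: in the direct integral one integrates by parts to move $\partial_y$ onto $g_{\gamma,y_0}v$, and the Lagrange-remainder bounds $|e_2(y)|\lesssim\min(h^2y^2,\,1+h|y|)$, $|\partial_ye_2(y)|\lesssim\min(h^2|y|,\,h)$ (splitting the $y$-integral at $|y|\sim h^{-1}$) supply the needed size of $e_2Q_\cm$; in the conjugated integral the cancellation $\mathcal{L}_\cm\partial_yQ_\cm=0$ annihilates the leading (non-small, but $\mathcal{L}_\cm$-null) part of $\partial_y(e_2Q_\cm)$, leaving $\|\mathcal{L}_\cm\partial_y(e_2Q_\cm)\|_{L^2_y}\lesssim h^{5/2}$ (the estimate of \cite[Lemma 8.1]{Z}), so that this contribution is $\lesssim_\gamma h^{5/2}\|v\|_{L^2_y}$ and is absorbed after integrating in time. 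For the transport piece $f_4=\beta v$, $\beta=\dot\am-\cm+W(h(y+\am))$, the key is again to integrate by parts so that no derivative lands on $v$: $\int g_{\gamma,y_0}v\,\partial_y(\beta v)=\tfrac12\int g_{\gamma,y_0}\beta' v^2-\tfrac12\int g'_{\gamma,y_0}\beta v^2$; using $\beta'=hW'(h(y+\am))$ (so $\|\beta'\|_{L^\infty}\lesssim h$) and the splitting $\beta=[\dot\am-\cm+W(h\am)]+[W(h(y+\am))-W(h\am)]$, whose first bracket is $O(h)$ by \eqref{E:est-52} and second bracket is $O(h\la y\ra)$, this contributes $\lesssim_\gamma h\|v\|^2_{L^2_y}$ and hence $hT\|v\|^2_{L^\infty_{[0,T]}L^2_y}$; the conjugated integral for $f_4$ is handled the same way after first commuting $\mathcal{D}_\gamma^{-1}\mathcal{L}_\cm$ past $\partial_y$ with the identities above and invoking Lemmas \ref{L:weightedD} and \ref{L:com5} for the resulting weighted and Hilbert-transform commutator errors.

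The quadratic piece $f_5=-\tfrac12v^2$ is what produces the cubic term $T\|v\|^3_{L^\infty_{[0,T]}H^{1/2}_y}$. In the direct integral a single integration by parts collapses everything to $\tfrac13\int g'_{\gamma,y_0}v^3$ (the leftover $\int g_{\gamma,y_0}\partial_y(v^3)$ being a perfect derivative), which is $\lesssim\|v\|^3_{L^3_y}\lesssim\|v\|^3_{H^{1/2}_y}$ by Gagliardo--Nirenberg. In the conjugated integral one rewrites $\mathcal{D}_\gamma^{-1}\mathcal{L}_\cm\partial_y(v^2)=\gamma^{-1}(I-\mathcal{D}_\gamma^{-1})\mathcal{L}_\cm(v^2)+\mathcal{D}_\gamma^{-1}(Q_\cm' v^2)$, splits $\mathcal{L}_\cm=\cm-H\partial_y-Q_\cm$, writes $H\partial_y=D^{1/2}(HD^{1/2})$, and applies the fractional Leibniz bounds of Corollary \ref{C:com3} to the resulting $D^{1/2}(v^2)$-type factors; everything is thereby bounded by $\|v\|^2_{H^{1/2}_y}$ times an $L^2$-bounded operator acting on $\mathcal{D}_\gamma^{-1}\mathcal{L}_\cm v$, and Cauchy--Schwarz and time integration finish the piece.

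The main obstacle is the conjugated integral, especially its $f_3$ and $f_5$ contributions: one must pass the composite $\mathcal{D}_\gamma^{-1}\mathcal{L}_\cm$ through $\partial_y$ and past the localization $g_{\gamma,0}$ without losing powers of $h$, which requires keeping track of the cancellation $\mathcal{L}_\cm\partial_yQ_\cm=0$ and of several weighted commutator errors. This is precisely where the machinery of \S\ref{S:commutator} (Lemmas \ref{L:weightedD}, \ref{L:gamma-comm}, \ref{L:com5}, \ref{L:com6} and Corollary \ref{C:com3}) and the explicit exponential kernel of $\mathcal{D}_\gamma^{-1}$ are needed; once these are available, the rest is routine bookkeeping in Cauchy--Schwarz (in space and in time) together with the already-established bounds \eqref{E:est-52}.
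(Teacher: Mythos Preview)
Your overall strategy --- splitting $f=\sum_{j=1}^5 f_j$ and treating each contribution to the two integrals in \eqref{E:G} separately --- is sound and matches what the paper does; the treatment of $f_1,\dots,f_4$ and of the \emph{direct} integral for $f_5$ is essentially correct. The gap is in the \emph{conjugated} integral for $f_5=-\tfrac12 v^2$.

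After your (correct) rewriting
\[
\mathcal{D}_\gamma^{-1}\mathcal{L}_\cm\partial_y(v^2)=\gamma^{-1}(I-\mathcal{D}_\gamma^{-1})\mathcal{L}_\cm(v^2)+\mathcal{D}_\gamma^{-1}(Q_\cm' v^2)
\]
and the split $\mathcal{L}_\cm=\cm-H\partial_y-Q_\cm$, the dangerous piece is $\gamma^{-1}\!\int g_{\gamma,0}\,\psi\,(I-\mathcal{D}_\gamma^{-1})H\partial_y(v^2)$ with $\psi=\mathcal{D}_\gamma^{-1}\mathcal{L}_\cm v$. Writing $H\partial_y=D^{1/2}HD^{1/2}$ and transferring one $D^{1/2}$ to $g_{\gamma,0}\psi$ still leaves $\|D^{1/2}(v^2)\|_{L^2}$ to be bounded. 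But Corollary~\ref{C:com3} cannot give $\|D^{1/2}(v^2)\|_{L^2}\lesssim\|v\|_{H^{1/2}}^2$: both estimates there carry a factor $\|\partial_y f\|_{L^2}^{1-\epsilon}$ on one input, so they require $v\in H^1$. In fact $\|v^2\|_{\dot H^{1/2}}\lesssim\|v\|_{H^{1/2}}^2$ is \emph{false} in general --- it is precisely the (failing) endpoint algebra property of $H^{1/2}(\mathbb{R})$.

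The paper circumvents this by a different decomposition. It writes $\mathcal{D}_\gamma^{-1}\mathcal{L}_\cm=-\gamma^{-1}H+\mathcal{D}_\gamma^{-1}A$ with $A$ of order zero and expands \emph{both} factors of the conjugated integral simultaneously into $I_0,\dots,I_3$. The only delicate piece is $I_0=\int g_\gamma\,(Hv)\,H\partial_y(v^2)$; one integration by parts turns this into $\int\partial_y H(g_\gamma Hv)\,v^2$, after which the term $\int g_\gamma H^2\partial_y v\cdot v^2$ collapses to $\tfrac13\int g_\gamma' v^3$ and the commutator $H(g_\gamma H\partial_y v)-g_\gamma H^2\partial_y v$ is bounded via Lemma~\ref{L:H-commutator}, which gains a full derivative and lets $H\partial_y v$ sit in $H^{-1}$. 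This Hilbert-transform commutator mechanism, not fractional Leibniz on $v^2$, is what closes the estimate at $H^{1/2}$ regularity.
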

\begin{proof}
There are several terms to estimate, but one of primary interest is:
$$ I = \int_y g_\gamma [\mathcal{D}_\gamma^{-1}\mathcal{L}_\cm v][ \mathcal{D}_\gamma^{-1}\mathcal{L}_\cm \partial_y (v^2) ] \, dy $$
We will now show
\begin{equation}
\label{E:est-50}
| I | \lesssim_\gamma \|v \|_{H_y^{1/2}}^3
\end{equation}
In the composition
$$\mathcal{D}_\gamma^{-1} \mathcal{L}_\cm = \mathcal{D}_\gamma^{-1}(I-H\partial_y - Q)$$
the term $\mathcal{D}_\gamma^{-1} H \partial_y$ is somewhat delicate.  Since  $\partial_y \mathcal{D}_\gamma^{-1}  = \gamma^{-1}(I - \mathcal{D}_\gamma^{-1})$, it follows that
$$\mathcal{D}_\gamma^{-1} \mathcal{L}_\cm = -\gamma^{-1}H + \mathcal{D}_\gamma^{-1}A \,, \qquad \text{where }A = I+ \gamma^{-1}H - Q$$
Substituting,
\begin{equation}
\label{E:I-decomp}
I = \gamma^{-2} I_0+\gamma^{-1} I_1+\gamma^{-1} I_2+I_3
\end{equation}
where
$$I_0 =  \int_y g_\gamma \,  H v \;  H \partial_y (v^2)  \, dy\,, \qquad I_1 = -\int_y g_\gamma \, Hv \; \mathcal{D}_\gamma^{-1} A\partial_y (v^2) \, dy $$
$$I_2 = -\int_y g_\gamma \, \mathcal{D}_\gamma^{-1} Av \; H \partial_y (v^2) \, dy \,, \qquad I_3 = \int_y g_\gamma \, \mathcal{D}_\gamma^{-1} Av \; \mathcal{D}_\gamma^{-1} A\partial_y (v^2) \, dy$$
First we address term $I_0$.  Note that
\begin{align*}
 I_0 &=  \int \partial_y H(gHv) \, v^2 \, dy \\
&=  \int H(g' Hv) \, v^2 \, dy +  \int H(g \, H\partial_y v) \, v^2 \, dy \\
&=   \int H(g' Hv) \, v^2 \, dy +  \int [H(g \, H\partial_y v) - g \, H^2\partial_y v] \, v^2 \, dy +  \int g \, \partial_y v \, v^2 \, dy \\
&= \text{I} + \text{II} + \text{III}
\end{align*}
In term III, we use integration by parts
$$ \text{III} = - \frac13  \int g' v^3 \, dy \lesssim \|v\|_{L^3}^3 \lesssim \|v\|_{H_y^{1/2}}^3$$
For term I, we use the $L^3\to L^3$ boundedness of $H$ to deduce
$$
|\text{I}|  \lesssim \|H(g'Hv) \|_{L^3} \|v\|_{L^3}^2 \lesssim \|g' Hv \|_{L^3} \|v\|_{L^3}^2 \lesssim \|Hv \|_{L^3} \| v\|_{L^3}^2  \lesssim \|v\|_{L^3}^3 \lesssim \|v\|_{H_y^{1/2}}^3$$
To address term II, we apply Lemma \ref{L:H-commutator}, as follows:
$$|\text{II}| \lesssim \| H(g \, H\partial_y v) - g \, H^2\partial_y v \|_{L_y^2}  \| v \|_{L_y^4}^2 \lesssim \|g\|_{H^2} \| \partial_y v\|_{H^{-1}} \|v\|_{L_4}^2 \lesssim \|v\|_{H_y^{1/2}}^3$$
This completes term $I_0$.  Returning to \eqref{E:I-decomp}, we need to address terms $I_1$, $I_2$ and $I_3$.  For terms $I_1$ and $I_3$, we will use
$$A\partial_y = \partial_y A + Q'$$
together with the fact that $\mathcal{D}_\gamma^{-1} \partial_y$ is $L^2\to L^2$ bounded with operator norm $\lesssim \gamma^{-1}$.  These observations, together with H\"older and Sobolev yeild the needed bounds for $I_1$ and $I_3$ .  After integrating by parts, term $I_2$ is 
\begin{align*}
I_2 &= \int_y \partial_y [ g_\gamma \, \mathcal{D}_\gamma^{-1} Av] \; H  (v^2) \, dy \\
&=  \int_y [ g_\gamma' \, \mathcal{D}_\gamma^{-1} Av] \; H  (v^2) \, dy +  \int_y [ g_\gamma \, \partial_y \mathcal{D}_\gamma^{-1} Av] \; H  (v^2) \, dy
\end{align*}
The estimate for $I_2$ is now completed with H\"older, Sobolev, the fact that $\mathcal{D}_\gamma^{-1} \partial_y$ is $L^2\to L^2$ bounded with operator norm $\lesssim \gamma^{-1}$, and the $L^2\to L^2$ boundedness of $H$.  This completes the proof of \eqref{E:est-50}.
\end{proof}

Now we can insert the bound from Lemma \ref{L:G-est} into Theorem \ref{T:local-virial} to obtain the following
\begin{corollary}[local virial estimate]
\label{C:lv-pBO}
Consider a time interval $I= [T_*,T^*]$ of length 
$$|I|=T^*-T_* \lesssim h^{-1}$$  
on which $\|v\|_{L_I^\infty H_y^{1/2}} \leq h^{4/3}$ and $\frac12 \leq \cm(t) \leq 2$ holds for all $t\in I$.  Then
$$ \sup_n \|  v \|_{L_I^2 L_{ y \in [n, n+1] }^2  }^2 \lesssim  h^3 + \|v\|_{L_I^\infty L_y^2}^2 $$
\end{corollary}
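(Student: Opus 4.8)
The plan is the one signposted just before the statement: feed the bound of Lemma \ref{L:G-est} into the local virial estimate of Theorem \ref{T:local-virial}. Recall that $v$ solves \eqref{E:v-eqn}, $\partial_t v = \mathbb{P}v + \partial_y\mathcal{L}_\cm v + \partial_y f$, with $f$ as in \eqref{E:f-def}, and satisfies the orthogonality conditions \eqref{E:v-ortho}, which are the $\cm$-analogues of \eqref{E:D1E}; under the rescaling $y\mapsto\cm y$ these are carried exactly to the setup of Theorem \ref{T:local-virial} (with $\mathcal{L}_\cm, Q_\cm$ going to $\mathcal{L}, Q$), and the a priori bound $\tfrac12\leq\cm\leq2$ keeps all constants uniform, so the $\mathcal{L}_\cm$-version of Theorem \ref{T:local-virial} is available on the interval $I$ (the estimate is stated for any time length and is insensitive to time translation), for every spatial center $y_0=n\in\mathbb{Z}$, with implicit constant independent of $n$. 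Since $g_{\gamma,n}'(y)=\la\gamma(y-n)\ra^{-2}\gtrsim_\gamma 1$ for $y\in[n,n+1]$ and $\la D_y\ra^{1/2}$ has symbol $\geq 1$, this gives, for each $n$,
\[
\| v\|_{L_I^2 L_{y\in[n,n+1]}^2}^2 \;\lesssim_\gamma\; \| \la D_y\ra^{1/2}((g_{\gamma,n}')^{1/2} v)\|_{L_I^2 L_y^2}^2 \;\lesssim_\gamma\; \|v\|_{L_I^\infty L_y^2}^2 + |G_\gamma(f,v)| .
\]

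The next step is to estimate $G_\gamma(f,v)$ using Lemma \ref{L:G-est} (with $T$ and $[0,T]$ there replaced by $|I|$ and $I$). Writing $N=\sup_n\|v\|_{L_I^2 L_{y\in[n,n+1]}^2}$ and using that $\la y\ra^{-2}$ is summable over unit intervals, so $\|\la y\ra^{-1}v\|_{L_I^2 L_y^2}\lesssim N$, the three terms of Lemma \ref{L:G-est} become, after inserting the hypotheses $|I|\lesssim h^{-1}$ and $\|v\|_{L_I^\infty H_y^{1/2}}\leq h^{4/3}$: the first is $\lesssim h^2|I|^{1/2}N\lesssim h^{3/2}N$, the second is $\lesssim h|I|\,\|v\|_{L_I^\infty L_y^2}^2\lesssim\|v\|_{L_I^\infty L_y^2}^2$, and the third is $\lesssim|I|\,\|v\|_{L_I^\infty H_y^{1/2}}^3\lesssim h^{-1}h^4=h^3$. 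Hence $|G_\gamma(f,v)|\lesssim_\gamma h^{3/2}N+\|v\|_{L_I^\infty L_y^2}^2+h^3$.

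Finally I would fix $\gamma$ to an admissible value $\gamma_0$ (so the $\gamma$-dependent constants become absolute), combine the two bounds above, and take the supremum over $n$ of the left-hand side to obtain $N^2\lesssim\|v\|_{L_I^\infty L_y^2}^2+h^{3/2}N+h^3$. Since $h^{3/2}N$ is only linear in $N$ while the left side is quadratic, Young's inequality ($Ch^{3/2}N\leq\tfrac12 N^2+\tfrac12 C^2 h^3$) absorbs it into the left side, yielding $N^2\lesssim\|v\|_{L_I^\infty L_y^2}^2+h^3$, which is the claim. I expect the work to be bookkeeping rather than a genuine obstacle; the points requiring care are the uniformity of the local virial constant in the center $n$ (already built into Theorem \ref{T:local-virial}), the transfer of Theorem \ref{T:local-virial} from $\mathcal{L}$ to $\mathcal{L}_\cm$ (handled by rescaling together with $\tfrac12\leq\cm\leq2$, the mild $\dot\cm$-terms being absorbed into $f$), and — most importantly — verifying that the forcing $G_\gamma$ enters only at size $h^3$ plus a term subcritical in $N$, which is exactly where the hypotheses $|I|\lesssim h^{-1}$ and the $h^{4/3}$-smallness of $\|v\|_{L_I^\infty H_y^{1/2}}$ are consumed.
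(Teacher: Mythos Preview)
Your proposal is correct and follows essentially the same route as the paper's proof: plug Lemma~\ref{L:G-est} into Theorem~\ref{T:local-virial}, bound the left side below by $\sup_n\|v\|_{L_I^2 L_{y\in[n,n+1]}^2}^2$, reduce the three $G_\gamma$-terms using $|I|\lesssim h^{-1}$ and $\|v\|_{L_I^\infty H_y^{1/2}}\leq h^{4/3}$, and absorb the cross term $h^{3/2}N$ via Young's inequality. If anything, you spell out the absorption step more transparently than the paper does, and you correctly flag the (routine) passage from $\mathcal{L}$ to $\mathcal{L}_\cm$ by rescaling.
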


\begin{proof}
Plugging the bound in the statement of Lemma \ref{L:G-est} into \eqref{E:D1D} gives 
\begin{equation}
\begin{split}
\|  \la D_y \ra^{1/2} ((g_{\gamma,y_0}')^{1/2} v ) \|_{L_{I}^2L_y^2}^2 
& \lesssim_\gamma \|v\|_{L_{I}^\infty L_y^2}^2 + h^2 |I|^{1/2} \|\la y \ra^{-1} v\|_{L_I^2 L_y^2} + h |I| \|v\|_{L^\infty_{ I} L_y^2}^2 + |I| \|v\|_{L^\infty_{ I} H_y^{1/2}}^3  \\
\end{split}
\end{equation}
The left hand side satisfies 
\begin{equation*}
\sup_{y_0 \in \mathbb{R}} \|  \la D_y \ra^{1/2} ((g_{\gamma,y_0}')^{1/2} v ) \|_{L_{[0,T]}^2L_y^2}^2  \gtrsim \sup_n \|  v \|_{L_I^2 L_{ y \in [n, n+1] }^2  }^2 ,
\end{equation*}
and the right hand side is controlled as 
\begin{equation}
\begin{split}
& \quad \|v\|_{L_{I}^\infty L_y^2}^2 + h^2 |I|^{1/2} \|\la y \ra^{-1} v\|_{L_I^2 L_y^2} + h |I| \|v\|_{L^\infty_{ I} L_y^2}^2 + |I| \|v\|_{L^\infty_{ I} H_y^{1/2}}^3  \\
& \lesssim \|v\|_{L_{I}^\infty L_y^2}^2 + h^{3/2} \sup_n \|  v \|_{L_I^2 L_{ y \in [n, n+1] }^2  }^2 +   h^{3}  \\
\end{split}
\end{equation}
since $|I|=T^*-T_* \lesssim h^{-1}$ and $\|v\|_{L_I^\infty H_y^{1/2}} \leq h^{4/3}$. Combining these together we obtain 
$$ \sup_n \|  v \|_{L_I^2 L_{ y \in [n, n+1] }^2  }^2 \lesssim  h^3 + \|v\|_{L_I^\infty L_y^2}^2 . $$

\end{proof}

The proof of Proposition \ref{P:nonsymp-estimates} can now be completed by combining Lemma \ref{L:nonsymp-ODE-control} on the $\am(t)$, $\cm(t)$ parameter trajectories, Lemma \ref{L:en} on the energy estimate, and Corollary \ref{C:lv-pBO} on the local virial estimate.

\begin{proof}[Proof of Prop. \ref{P:nonsymp-estimates}]
It suffices to show the bound \eqref{E:nonsymp-estimates-eq1}. Plugging the local virial estimate in Corollary \ref{C:lv-pBO} into the estimate in Lemma \ref{L:en} gives
\begin{equation}
\begin{split}
\| v\|_{L_I^\infty H_y^{1/2}}^2  
& \lesssim \|v(T_*) \|_{H_y^{1/2}}^2 + h^2|I|^{1/2} (h^3 + \|v\|_{L_I^\infty L_y^2}^2)^{1/2}  \\
& \lesssim \|v(T_*) \|_{H_y^{1/2}}^2 + h^{3/2} (h^3 + \|v\|_{L_I^\infty L_y^2}^2)^{1/2}  \\ 
& \lesssim \|v(T_*) \|_{H_y^{1/2}}^2 + h^3+ h^{3/2}   \|v\|_{L_I^\infty L_y^2} .  \\ 
\end{split}
\end{equation}
This yields
$$\|v\|_{L_{[0,T]}^\infty H_y^{1/2}}  \lesssim \|v(T_*) \|_{H_y^{1/2}} +  h^{3/2}  . $$
Plugging this into the local virial estimate in Corollary \ref{C:lv-pBO}, we obtain
\begin{equation}
\begin{split}
 \sup_n \|v\|_{L_{I}^2L_{y\in(n,n+1)}^2}^2 
& \lesssim   h^3 + \|v\|_{L_I^\infty L_y^2}^2  \lesssim  \|v(T_*) \|_{H_y^{1/2}}^2 + h^3 . \\
\end{split}
\end{equation}
Combining the results above, we have, for a time interval $I= [T_*,T^*]$ of length $|I|=T^*-T_* \lesssim h^{-1}$,  
\begin{equation} \label{E:nonsymp-estimates-eq2}
\|v\|_{L_{I}^\infty H_y^{1/2}} + \sup_n \|v\|_{L_{I}^2L_{y\in(n,n+1)}^2} \leq  C h^{3/2} + C \|v(T_*) \|_{H_y^{1/2}} 
\end{equation}
for some universal constant $C> 1$ (which only depends on the initial data).

Now, we consider the time interval $[0, T]$, and split it into sub-intervals of length $\delta h^{-1}$ (here $\delta >0$ is a small constant): $I_1 = [0, T_1]$, $I_2 = [T_1, T_2]$, $\cdots$, $I_J = [T_{J-1}, T]$, with $J = [ Th/\delta ]$ ($[*]$ means the ceiling function), and $|T_j - T_{j-1}| = \delta h^{-1}$ for all $j =1, 2, \cdots, J-1$. We iterate the estimate \eqref{E:nonsymp-estimates-eq2} on $I_1$, $I_2$, $\cdots$, $I_J$ and obtain
\begin{equation*}
\begin{split}
\|v\|_{L_{[0,T]}^\infty H_y^{1/2}} + \sup_n \|v\|_{L_{[0,T]}^2L_{y\in(n,n+1)}^2} 
& \leq  (C^J+ C^{J-1} + \cdots + C) h^{3/2} + C^J \|v(T_*) \|_{H_y^{1/2}}  \\
& \leq \frac{C(C^J-1)}{C-1} (h^{3/2} + \|v(T_*) \|_{H_y^{1/2}}  )  \\
& = \frac{C(C^{[Th/\delta]}-1)}{C-1} (h^{3/2} + \|v(T_*) \|_{H_y^{1/2}}  )  \\
\end{split}
\end{equation*}
Taking $\kappa = 10$ and $\mu = \frac{\ln C}{\delta}$ completes the proof for \eqref{E:nonsymp-estimates-eq1}.
\end{proof}

\bigskip

\section{Exact dynamics for (pBO)}
\label{S:exact}

In this section, we prove Theorem \ref{T:main}.  To start, we will describe how to convert from the nonsymplectic orthogonality condition \eqref{E:D1E} to the symplectic orthogonality condition \eqref{E:D1F}. 

We introduce the following codimension $2$ (closed) subspaces of $H_x^{1/2}$:  for given $(\am,\cm)$
$$X_{\am,\cm} = \{ \; \zeta\in H_x^{1/2} \; | \; \la \zeta, Q_{\am,\cm} \ra =0 \,, \; \la \zeta, Q'_{\am,\cm} \ra =0 \; \}$$
Also, for given $( a,  c)$, we define
$$Y_{ a, c} = \{ \; \eta\in H_x^{1/2} \; | \; \la \eta, Q_{ a, c} \ra =0 \,, \; \la \eta, (x- a)Q_{ a, c} \ra =0 \; \}$$
Within $H_x^{1/2}$, for a fixed small $\epsilon>0$, we consider the tubular neighborhood of the two-dimensional soliton manifold
$$M = \{ \; u\in H_x^{1/2} \; | \; \text{ there exists $a\in \mathbb{R}$, $\frac12<c<2$ such that }\|u - Q_{a,c} \|_{H_x^{1/2}} < \epsilon \; \}$$
By an argument appealing to the implicit function theorem (the $\epsilon>0$ is chosen so that this argument is valid), there is a well-defined map
$$\Lambda: M \to  \mathbb{R}^2\times H^{1/2}$$
that sends
$$u \mapsto (\am,\cm, \zeta)$$
where $\zeta \in X_{\am,\cm}$ and $ \zeta = u - Q_{\am,\cm}$.  Similarly there is a well-defined map
$$\Gamma:  M \to  \mathbb{R}^2\times H^{1/2}$$
that sends
$$u \mapsto ( a, c,  \eta)$$
where $ \eta \in Y_{a,c}$ and $  \eta = u - Q_{ a, c}$.

Here, we investigate a feature of the composition
$$\Gamma \circ \Lambda^{-1}:  \Lambda(M) \to \Gamma(M)$$
that sends
$$(\am,\cm,\zeta) \mapsto ( a,  c, \eta)$$
It follows from the construction of $\Lambda$ (via the implicit function theorem) that $|a-\am| \lesssim \epsilon$ and $|c-\cm|\lesssim \epsilon$.

Let $\tilde X_{\am,\cm}$ be the $\epsilon$-ball in $X_{\am,\cm}$ around the origin.  If $\|\zeta\|_{H^{1/2}} <\epsilon$, then $u = \zeta+Q_{\am,\cm} \in M$, so that $(\am,\cm,\zeta) \in \Lambda(M)$.  Thus, for fixed $\am$, $\cm$,  one has the restricted map
$$ \tilde X_{\am,\cm} \to \mathbb{R}^2 \times H_x^{1/2}$$
given by
$$\zeta \mapsto ( a,  c, \eta)$$
We will use the notation $(a(\zeta), c(\zeta))$ to emphasize the dependence of $a$, $c$ upon $\zeta$ through this mapping.  After composing this mapping with the projection onto the third component, we obtain the mapping, for fixed $\am$, $\cm$:
$$\Omega_{\am, \cm}: \tilde X_{\am,\cm} \to H^{1/2}_x$$
that sends
$$ \zeta \mapsto  \eta$$

\begin{lemma}
\label{L:coord-conv}
For fixed $\am$, $\cm$, under the mapping $\eta = \Omega_{\am,\cm}(\zeta)$ defined above,
\begin{equation}
\label{E:coord-conv01}
\begin{aligned}
\eta(x) &= \zeta(x) + \int_{s=0}^1 Q_{a(s\zeta),c(s\zeta)}'(x) \frac{\partial a}{\partial\zeta} \Big|_{s\zeta}(\zeta) \, ds \\
& \qquad - \int_{s=0}^1 [(\bullet-a)Q_{a,c}]\Big|_{a(s\zeta),c(s\zeta)}'(x)\frac{\partial c}{\partial\zeta} \Big|_{s\zeta}(\zeta)  \, ds
\end{aligned}
\end{equation}
where $\frac{\partial  a}{\partial \zeta}\Big|_{s\zeta}(\zeta)$ and $ \frac{\partial  c}{\partial \zeta}\Big|_{s\zeta} (\zeta)$ are given by
$$
\begin{bmatrix} \frac{\partial  a}{\partial \zeta}\Big|_{s\zeta}(\zeta) \\ \frac{\partial  c}{\partial \zeta}\Big|_{s\zeta} (\zeta) \end{bmatrix}
=\begin{bmatrix} 
a_{11} & a_{12} \\ 
a_{21} & a_{22}
\end{bmatrix}^{-1}
\begin{bmatrix}
\la \zeta , Q_{ a(s\zeta),  c(s\zeta)} \ra \\ 
\la \zeta , (x- a(s\zeta))Q_{ a(s\zeta),  c(s\zeta)} \ra
\end{bmatrix}
$$
and, with $\eta_s = \Omega_{\am,\cm}(s\zeta)$,
$$
\begin{bmatrix}
a_{11} & a_{12} \\ a_{21} & a_{22} 
\end{bmatrix}^{-1}
= 2\|Q\|_{L_x^2}^{-2} \begin{bmatrix} 0 & 1 \\ 1 & 0 \end{bmatrix} + O( \|\eta_s\|_{L_x^2})$$
\end{lemma}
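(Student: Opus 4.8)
The plan is to reconstruct $\eta = \Omega_{\am,\cm}(\zeta)$ from its derivative along the straight ray $s\mapsto s\zeta$, $s\in[0,1]$, exploiting that $\Omega_{\am,\cm}$ fixes the origin. First I record the base point: when $\zeta = 0$ one has $u = Q_{\am,\cm}$, and the triple $(\am,\cm,0)$ already realizes $\Gamma(Q_{\am,\cm})$ since $0\in Y_{\am,\cm}$; by the uniqueness built into the implicit-function-theorem construction of $\Gamma$ this is the only possibility, so $\Omega_{\am,\cm}(0) = 0$, $a(0) = \am$, $c(0) = \cm$. Writing $a_s = a(s\zeta)$, $c_s = c(s\zeta)$, $\eta_s = \Omega_{\am,\cm}(s\zeta) = s\zeta + Q_{\am,\cm} - Q_{a_s,c_s}$, the same implicit function theorem argument shows $s\mapsto(a_s,c_s)$ is $C^1$ on $[0,1]$ (for $\zeta$ in the $\epsilon$-ball), so $\tfrac{d}{ds}a_s = \tfrac{\partial a}{\partial\zeta}\big|_{s\zeta}(\zeta)$ and $\tfrac{d}{ds}c_s = \tfrac{\partial c}{\partial\zeta}\big|_{s\zeta}(\zeta)$ by the chain rule.

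Since $\eta_0 = 0$ and $\eta_1 = \eta$, the fundamental theorem of calculus gives $\eta = \int_0^1 \tfrac{d}{ds}\eta_s\,ds$. Differentiating $\eta_s = s\zeta + Q_{\am,\cm} - Q_{a_s,c_s}$ in $s$ and invoking the soliton parameter identities $\partial_a Q_{a,c} = -Q_{a,c}'$ and $\partial_c Q_{a,c} = c^{-1}[(\bullet - a)Q_{a,c}]'$ recorded in \S\ref{S:pBO}, the chain rule gives $\tfrac{d}{ds}\eta_s = \zeta + Q_{a_s,c_s}'\,\tfrac{\partial a}{\partial\zeta}\big|_{s\zeta}(\zeta) - [(\bullet - a_s)Q_{a_s,c_s}]'\,\tfrac{\partial c}{\partial\zeta}\big|_{s\zeta}(\zeta)$, and integrating in $s$ yields \eqref{E:coord-conv01}.

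It then remains to pin down $\tfrac{\partial a}{\partial\zeta}$ and $\tfrac{\partial c}{\partial\zeta}$, for which I differentiate in $s$ the two orthogonality conditions $\la\eta_s,Q_{a_s,c_s}\ra = 0$ and $\la\eta_s,(x - a_s)Q_{a_s,c_s}\ra = 0$ that define $(a_s,c_s)$. Substituting $\tfrac{d}{ds}\eta_s$ from the previous step, moving the terms involving $\la\zeta,\cdot\ra$ to the right-hand side, and evaluating the remaining fixed pairings (using $\la Q_{a,c}',Q_{a,c}\ra = 0$ and $\la\partial_c Q_{a,c},Q_{a,c}\ra = \tfrac12\partial_c\|Q_{a,c}\|_{L^2}^2 = \tfrac12\|Q\|_{L^2}^2$, with the analogous evaluations for the second condition) produces exactly the $2\times2$ system in the statement. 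In that system the diagonal entries $a_{11},a_{22}$ are pairings of $\eta_s$ against fixed soliton-derived profiles, hence $O(\|\eta_s\|_{L^2})$, while $a_{12},a_{21}$ equal $\tfrac12\|Q\|_{L^2}^2$ up to $O(\|\eta_s\|_{L^2})$. Because $\|\eta_s\|_{L^2}\lesssim\epsilon$ throughout the tubular neighborhood, the coefficient matrix is a small perturbation of the invertible anti-diagonal matrix with off-diagonal entries $\tfrac12\|Q\|_{L^2}^2$; inverting by Neumann series yields the stated expansion $2\|Q\|_{L^2}^{-2}\begin{bmatrix}0&1\\1&0\end{bmatrix} + O(\|\eta_s\|_{L^2})$, and applying this inverse to the column vector with entries $\la\zeta,Q_{a_s,c_s}\ra$ and $\la\zeta,(x-a_s)Q_{a_s,c_s}\ra$ completes the proof.

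The one substantive point, everything else being bookkeeping with the explicit profile $Q(x) = 4/(1+x^2)$, is verifying that the differentiated orthogonality conditions have vanishing diagonal and non-degenerate anti-diagonal leading parts — this is exactly where $\la Q_{a,c}',Q_{a,c}\ra = 0$ and $\partial_c\|Q_{a,c}\|_{L^2}^2 \neq 0$ enter — and that every remaining term genuinely carries a factor of $\eta_s$, so that the smallness of the tubular neighborhood controls the matrix inverse. A secondary care point is carrying the scaling factor $c_s$ coming from $\partial_c Q_{a,c} = c_s^{-1}[(\bullet-a_s)Q_{a_s,c_s}]'$ consistently through the chain rule.
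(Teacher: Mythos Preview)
Your proof is correct and follows essentially the same route as the paper: both reconstruct $\eta$ via the fundamental theorem of calculus along the ray $s\mapsto s\zeta$ using $\Omega_{\am,\cm}(0)=0$, compute the derivative by the chain rule through $(a,c)\mapsto Q_{a,c}$, and obtain the $2\times 2$ system by implicitly differentiating the two orthogonality conditions defining $(a_s,c_s)$, with the matrix entries evaluated exactly as you describe. Your closing remark about tracking the $c_s^{-1}$ factor from $\partial_c Q_{a,c}=c_s^{-1}[(\bullet-a_s)Q_{a_s,c_s}]'$ is well taken; the paper's own statement and proof are slightly loose on this point, but since $\tfrac12\le c_s\le 2$ it does not affect the form of the matrix inverse or the downstream application in Corollary~\ref{C:convert}.
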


So as not to get lost in the complexity of the formula, note the following approximation, which basically suffices for our purposes:  $a(s\zeta) \approx a(0) \approx \am$ and $c(s\zeta) \approx c(0) \approx \cm$ (all accurate within $O(\epsilon)$) and therefore
\begin{equation}
\label{E:coord-conv02}
\eta(x) \approx \zeta(x) + 2 \|Q\|_{L^2}^{-2} Q_{a,c}'(x) \la \zeta, (\bullet -a)Q_{a,c}\ra + 2\|Q\|_{L^2}^{-2} [(x-a)Q_{a,c}(x)]' \la \zeta, Q_{a,c} \ra
\end{equation}

\begin{proof}
The derivative of the map $\Omega_{\am,\cm}:X_{\am,\cm} \to H_x^{1/2}$ is of the form
$$D\Omega_{\am,\cm}: X_{\am,\cm} \to \mathcal{L}(X_{\am,\cm}; H_x^{1/2})$$
Using that $\Omega_{\am,\cm}(0)=0$, we obtain
\begin{align}
\notag \eta &= \Omega_{\am,\cm}(\zeta) - \Omega_{\am,\cm}(0) \\
\notag &= \int_0^1 \frac{d}{ds} \Omega_{\am,\cm}(s\zeta) \, ds \\
\label{E:est-57} &= \int_0^1 \underbrace{D\Omega_{\am,\cm}(s\zeta)}_{\in \mathcal{L}(X_{\am,\cm};H_x^{1/2})}(\zeta) \, ds 
\end{align}

We will compute  bounds on $D\Omega_{\am,\cm}(\zeta_0)(\delta \zeta)$ and apply them to \eqref{E:est-57}.  A workable expression can be obtained for the derivative $D\Omega_{\am,\cm}$ by taking an implicit derivative of the defining equations.  Indeed, note that 
$$\eta = \Omega_{\am,\cm}(\zeta) = \zeta +Q_{\am,\cm} - Q_{ a(\zeta),  c(\zeta)}$$
so that at a reference point $\zeta_0\in X_{\am,\cm}$,
\begin{equation}
\label{E:est-61}
D\Omega_{\am,\cm}(\zeta_0) = I - D[ Q_{ a(\bullet),  c(\bullet)} ] (\zeta_0)
\end{equation}
where $I: X_{\am,\cm} \to H_x^{1/2}$ is the identity map and $D[ Q_{ a(\bullet),  c(\bullet)} ] (\zeta_0)$ refers to the derivative at $\zeta_0$ of the composite map 
$$\zeta \mapsto (  a(\zeta),  c(\zeta)) \mapsto Q_{ a(\zeta),  c(\zeta)}$$
This composition is a map
$$X_{\am,\cm} \to \mathbb{R}^2 \to H^{1/2}$$
and we take the derivative of this composite map by the chain rule:  
\begin{equation}
\label{E:est-59}
D[ Q_{ a(\bullet),  c(\bullet)} ] (\zeta_0) = DQ_{ a,  c}(  a(\zeta_0),  c(\zeta_0)) \circ D( a,  c)(\zeta_0)
\end{equation}
Here
\begin{equation}
\label{E:est-58}
D( a,  c)(\zeta_0) \in \mathcal{L}( X_{\am,\cm}; \mathbb{R}^2) \,, \qquad DQ_{ a,  c}(  a(\zeta_0),  c(\zeta_0)) \in \mathcal{L}( \mathbb{R}^2; H^{1/2})
\end{equation}
The right map in \eqref{E:est-58} is simply represented as a $1\times 2$ matrix (row vector) of functions
$$
DQ_{ a,  c}(  a(\zeta_0),  c(\zeta_0)) =  \begin{bmatrix} -Q'_{ a,  c} & [(x- a)Q_{ a,  c}]' \end{bmatrix} \Big|_{(a(\zeta_0),c(\zeta_0))}
$$
that acts on a $2\times 1$ matrix of real number increments:
$$\begin{bmatrix} \delta  a \\ \delta  c \end{bmatrix}$$
to yield an element of $H^{1/2}$ by the usual multiplication.  Thus \eqref{E:est-59} becomes, when evaluated at an ``increment function'' $\delta \zeta$, is the function
\begin{equation}
\label{E:est-60}
\begin{aligned}
\Big(D[ Q_{ a(\bullet),  c(\bullet)} ] (\zeta_0)(\delta \zeta)\Big)(x) &= -Q'_{a(\zeta_0),c(\zeta_0)} (x) \, \frac{\partial a}{\partial \zeta}\Big|_{\zeta_0}(\delta \zeta) \\
& \qquad + [(x-a)Q_{a,c}(x)]'\Big|_{(a(\zeta_0),c(\zeta_0))} \, \frac{\partial c}{\partial \zeta} \Big|_{\zeta_0}(\delta \zeta)
\end{aligned}
\end{equation}
where $D( a,  c)(\zeta_0) \in \mathcal{L}( X_{\am,\cm}; \mathbb{R}^2)$ in \eqref{E:est-58} is represented as the 2-vector with real number entries
$$D( a,  c)(\zeta_0) = \begin{bmatrix} \frac{\partial a}{\partial \zeta} \Big|_{\zeta_0}(\delta \zeta) \\ \frac{\partial c}{\partial \zeta} \Big|_{\zeta_0}(\delta \zeta) \end{bmatrix}$$
This must be understood by returning to the defining condition for $ \eta$ and applying implicit differentiation. 
Starting with 
$$0 = \left\la \zeta+ Q_{\am,\cm} - Q_{ a(\zeta),  c(\zeta)}, Q_{ a(\zeta),  c(\zeta)} \right\ra $$
take the derivative with respect to $\zeta$ at $\zeta_0$ in the direction $\delta \zeta$ to obtain
\begin{equation}
\label{E:1stdeta}
\begin{aligned}
0 &= \left\la \delta \zeta , Q_{ a(\zeta_0),  c(\zeta_0)} \right\ra \\
& - \left\la \frac{\partial Q_{ a,  c}}{\partial  a} \Big|_{( a(\zeta_0),  c(\zeta_0))} 
\frac{\partial  a}{\partial \zeta}(\zeta_0)(\delta \zeta), Q_{ a(\zeta_0),  c(\zeta_0)} \right\ra \\
&- \left\la \frac{\partial Q_{ a,  c}}{\partial  c} \Big|_{( a(\zeta_0),  c(\zeta_0))} 
\frac{\partial  c}{\partial \zeta}(\zeta_0)(\delta \zeta), Q_{ a(\zeta_0),  c(\zeta_0)} \right\ra \\
& -\left\la \eta_0, \frac{ \partial Q_{ a,  c}}{\partial  a}\Big|_{( a(\zeta_0),  c(\zeta_0))} \frac{\partial  a}{\partial \zeta}(\zeta_0) (\delta \zeta) \right\ra \\
& -\left\la \eta_0  , \frac{ \partial Q_{ a,  c}}{\partial  c}\Big|_{( a(\zeta_0),  c(\zeta_0))} \frac{\partial  c}{\partial \zeta}(\zeta_0) (\delta \zeta) \right\ra
\end{aligned}
\end{equation}
Since $\frac{\partial  a}{\partial \zeta}(\zeta_0)(\delta \zeta)$ and $\frac{\partial  c}{\partial \zeta}(\zeta_0)(\delta \zeta)$ are just real numbers, they pull out of the inner products.

Similarly, starting with 
$$0 = \left\la \zeta+ Q_{\am,\cm} - Q_{ a(\zeta),  c(\zeta)}, (x- a(\zeta)) Q_{ a(\zeta),  c(\zeta)} \right\ra $$
take the derivative with respect to $\zeta$ at $\zeta_0$ in the direction $\delta \zeta$, we can obtain another equation
\begin{equation}
\label{E:2nddeta}
\begin{aligned}
0 &= \left\la \delta \zeta , (x- a(\zeta_0)) Q_{ a(\zeta_0),  c(\zeta_0)} \right\ra \\
&- \left\la \frac{\partial Q_{ a,  c}}{\partial  a} \Big|_{( a(\zeta_0),  c(\zeta_0))} 
\frac{\partial  a}{\partial \zeta}(\zeta_0)(\delta \zeta), (x- a(\zeta)) Q_{ a(\zeta_0),  c(\zeta_0)} \right\ra \\
&- \left\la \frac{\partial Q_{ a,  c}}{\partial  c} \Big|_{( a(\zeta_0),  c(\zeta_0))} 
\frac{\partial  c}{\partial \zeta}(\zeta_0)(\delta \zeta), (x- a(\zeta)) Q_{ a(\zeta_0),  c(\zeta_0)} \right\ra \\
& -\left\la \eta_0, (x- a(\zeta)) \frac{ \partial Q_{ a,  c}}{\partial  a}\Big|_{( a(\zeta_0),  c(\zeta_0))} \frac{\partial  a}{\partial \zeta}(\zeta_0) (\delta \zeta) \right\ra \\
& -\left\la \eta_0, (x- a(\zeta)) \frac{ \partial Q_{ a,  c}}{\partial  c}\Big|_{( a(\zeta_0),  c(\zeta_0))} \frac{\partial  c}{\partial \zeta}(\zeta_0) (\delta \zeta) \right\ra \\
& +\left\la \eta_0, \frac{\partial  a}{\partial \zeta}(\zeta_0) (\delta \zeta) Q_{ a(\zeta_0),  c(\zeta_0)} \right\ra  \\
\end{aligned}
\end{equation}
Note that by moving the terms that involve $\frac{\partial  a}{\partial \zeta}(\zeta_0)(\delta \zeta)$ or $\frac{\partial  c}{\partial \zeta}(\zeta_0)(\delta \zeta)$ in \eqref{E:1stdeta} and \eqref{E:2nddeta} to the left hand side, \eqref{E:1stdeta} and \eqref{E:2nddeta} can be combined into a vector equation
\begin{equation}
\label{E:daceta}
\begin{bmatrix}
\la \delta \zeta , Q_{ a(\zeta_0),  c(\zeta_0)} \ra \\ 
\la \delta \zeta , (x- a(\zeta_0))Q_{ a(\zeta_0),  c(\zeta_0)} \ra
\end{bmatrix}
=
\begin{bmatrix} 
a_{11} & a_{12} \\ 
a_{21} & a_{22}
\end{bmatrix}
\begin{bmatrix} \frac{\partial  a}{\partial \zeta}(\zeta_0)(\delta \zeta) \\ \frac{\partial  c}{\partial \zeta}(\zeta_0)(\delta \zeta) \end{bmatrix}
\end{equation}
where the coefficient matrix has the following components:  $a_{11} = a_{11}^0+b_{11}$, where
$$
a_{11}^0 =  \left\la \frac{\partial Q_{ a,  c}}{\partial  a} \Big|_{( a(\zeta_0),  c(\zeta_0))} , Q_{ a(\zeta_0),  c(\zeta_0)} \right\ra  = \frac{\partial}{\partial a}\Big|_{a(\zeta_0)} \|Q_{a,c(\zeta_0)} \|_{L_x^2}^2 =0
$$
and
$$b_{11}=\left\la \frac{\partial Q_{ a,  c}}{\partial  a} \Big|_{( a(\zeta_0),  c(\zeta_0))} , \eta_0 \right\ra \quad \implies \quad |b_{11}| \leq \|\eta_0\|_{L_x^2}$$  Next, $a_{12}=a_{12}^0+b_{12}$, where
\begin{align*}
a_{12}^0 &=  \left\la \frac{\partial Q_{ a,  c}}{\partial  c} \Big|_{( a(\zeta_0),  c(\zeta_0))} , Q_{ a(\zeta_0),  c(\zeta_0)} \right\ra \\
&= \frac12 \frac{\partial}{\partial c}\Big|_{c(\zeta)} \| Q_{a,c} \|_{L_x^2}^2 = \frac12 \frac{\partial}{\partial c}\Big|_{c(\zeta)} \left( c \|Q\|_{L_x^2}^2 \right) =\frac12 \|Q\|_{L_x^2}^2 
\end{align*}
and
$$b_{12} =  \left\la \frac{\partial Q_{ a,  c}}{\partial  c} \Big|_{( a(\zeta_0),  c(\zeta_0))} , \eta_0\right\ra
 \quad \implies \quad |b_{12}| \leq \|\eta_0\|_{L_x^2}$$
Next, $a_{21} = a_{21}^0 + b_{21}$, where 
\begin{align*}
a_{21}^0 &= \left\la (x- a(\zeta)) \frac{\partial Q_{ a,  c}}{\partial  a} \Big|_{( a(\zeta_0),  c(\zeta_0))} ,  Q_{ a(\zeta_0),  c(\zeta_0)} \right\ra \\
&= \frac12\int (x-a) \frac{\partial}{\partial a} [ Q_{a,c}(x)^2 ]\, dx  \Big|_{(a(\zeta_0),c(\zeta_0)}
= -\frac12\int (x-a) \frac{\partial}{\partial x} [ Q_{a,c}(x)^2 ]\, dx \Big|_{(a(\zeta_0),c(\zeta_0)} \\
&= \frac12 \| Q_{a(\zeta_0),c(\zeta_0)} \|_{L_x^2}^2
\end{align*}
by integration by parts,
and
$$b_{21} = \left\la \frac{\partial}{\partial a}\Big|_{(a(\zeta_0),c(\zeta_0))} [(x-a(\zeta)) Q_{a,c}], \eta_0 \right\ra \qquad \implies \quad |b_{12}|\lesssim \|\eta_0\|_{L_x^2}$$
Finally $a_{22} = a_{22}^0+b_{22}$, where
\begin{align*}
a_{22}^0 &= 
 \left\la (x- a(\zeta))\frac{\partial Q_{ a,  c}}{\partial  c} \Big|_{( a(\zeta_0),  c(\zeta_0))} ,  Q_{ a(\zeta_0),  c(\zeta_0)}\right\ra \\
 & = \frac12 \frac{\partial}{\partial c} \Big|_{(a(\zeta_0),c(\zeta_0))} \la (x-a) Q_{a,c}, Q_{a,c} \ra = 0
 \end{align*}
 and
 $$
b_{22} = 
 \left\la (x- a(\zeta))\frac{\partial Q_{ a,  c}}{\partial  c} \Big|_{( a(\zeta_0),  c(\zeta_0))} , \eta_0 \right\ra \qquad \implies \quad |b_{22}| \lesssim \|\eta_0\|_{L_x^2}$$
 Thus
 $$
\begin{bmatrix}
a_{11} & a_{12} \\ a_{21} & a_{22} 
\end{bmatrix}
= \frac12 \|Q\|_{L_x^2}^2 \begin{bmatrix} 0 & 1 \\ 1 & 0 \end{bmatrix} + O( \|\eta_0\|_{L_x^2})$$
from which it follows that
$$
\begin{bmatrix}
a_{11} & a_{12} \\ a_{21} & a_{22} 
\end{bmatrix}^{-1}
= 2\|Q\|_{L_x^2}^{-2} \begin{bmatrix} 0 & 1 \\ 1 & 0 \end{bmatrix} + O( \|\eta_0\|_{L_x^2})$$

Solving \eqref{E:daceta} by inverting this $2\times 2$ matrix 
\begin{equation}
\label{E:est-62}
\begin{bmatrix} \frac{\partial  a}{\partial \zeta}(\zeta_0)(\delta \zeta) \\ \frac{\partial  c}{\partial \zeta}(\zeta_0) (\delta \zeta) \end{bmatrix}
=\begin{bmatrix} 
a_{11} & a_{12} \\ 
a_{21} & a_{22}
\end{bmatrix}^{-1}
\begin{bmatrix}
\la \delta \zeta , Q_{ a(\zeta_0),  c(\zeta_0)} \ra \\ 
\la \delta \zeta , (x- a(\zeta_0))Q_{ a(\zeta_0),  c(\zeta_0)} \ra
\end{bmatrix}
\end{equation}
which  gives the needed components of \eqref{E:est-60}.  Combining \eqref{E:est-57}, \eqref{E:est-61}, \eqref{E:est-60}, and \eqref{E:est-62}, we obtain
$$\eta(x) = \zeta(x) + \int_{s=0}^1 Q_{a(s\zeta),c(s\zeta)}'(x) \frac{\partial a}{\partial\zeta} \Big|_{s\zeta}(\zeta) \, ds - \int_{s=0}^1 [(x-a)Q_{a,c}]\Big|_{a(s\zeta),c(s\zeta)}'(x)\frac{\partial c}{\partial\zeta} \Big|_{s\zeta}(\zeta)  \, ds$$
\end{proof}

\begin{corollary}
\label{C:convert}
For each $\am$, $\cm$, and corresponding $a$, $c$,
\begin{equation}
\label{E:est-55}
\| \eta\|_{H_x^{1/2}} \lesssim \| \zeta\|_{H_x^{1/2}}
\end{equation}
Taking $\am(t)$, $\cm(t)$ and correspondingly $a(t)$, $c(t)$, along the flow\footnote{For this, we need only assume that $a(t)\sim t$ and $\frac12<c(t)<2$},
\begin{equation}
\label{E:coeff-conv03}
\sup_n \| \eta \|_{L_I^2 L_{x\in (n,n+1)}^2} \lesssim  (\ln h^{-1})\sup_n \| \zeta \|_{L_I^2L_{x\in (n,n+1)}^2}  + h^{1/2}\| \zeta\|_{L_I^2L_x^2}
\end{equation}
\end{corollary}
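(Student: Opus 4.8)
The plan is to read everything off the explicit formula \eqref{E:coord-conv01} of Lemma \ref{L:coord-conv}, which writes $\eta = \zeta + R_1 + R_2$ with
$$R_1 = \int_0^1 Q'_{a(s\zeta),c(s\zeta)}(x)\,\tfrac{\partial a}{\partial\zeta}\big|_{s\zeta}(\zeta)\,ds\,, \qquad R_2 = -\int_0^1 [(\bullet-a)Q_{a,c}]'\big|_{a(s\zeta),c(s\zeta)}(x)\,\tfrac{\partial c}{\partial\zeta}\big|_{s\zeta}(\zeta)\,ds\,,$$
and where, by the matrix identity in that lemma,
$$\tfrac{\partial a}{\partial\zeta}\big|_{s\zeta}(\zeta) = 2\|Q\|_{L^2}^{-2}\la\zeta,(\bullet-a(s\zeta))Q_{a(s\zeta),c(s\zeta)}\ra + O(\|\eta_s\|_{L^2}\|\zeta\|_{L^2})\,, \quad \tfrac{\partial c}{\partial\zeta}\big|_{s\zeta}(\zeta) = 2\|Q\|_{L^2}^{-2}\la\zeta,Q_{a(s\zeta),c(s\zeta)}\ra + O(\|\eta_s\|_{L^2}\|\zeta\|_{L^2})\,,$$
with $\eta_s = \Omega_{\am,\cm}(s\zeta)$. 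I would rely on two structural facts. First, the two profiles are spatially localized: for $\tfrac12<c'<2$ one has $\|Q'_{a',c'}\|_{L^2_{x\in(n,n+1)}} + \|[(\bullet-a')Q_{a',c'}]'\|_{L^2_{x\in(n,n+1)}} \lesssim \la n-a'\ra^{-2}$, and both have $H^{1/2}_x$ norm $O(1)$. Second, along the flow ($\zeta=\zeta(\cdot,t)$) one has $\tfrac{d}{ds}a(s\zeta)=\tfrac{\partial a}{\partial\zeta}|_{s\zeta}(\zeta) = O(\|\zeta\|_{L^2})$ (Cauchy--Schwarz and $\|\eta_s\|_{L^2}\lesssim1$) and likewise for $c$, so $|a(s\zeta(\cdot,t))-\am(t)| + |c(s\zeta(\cdot,t))-\cm(t)| \lesssim \|\zeta\|_{L^2}\lesssim\epsilon$ uniformly in $s\in[0,1]$; hence $\la n-a(s\zeta(\cdot,t))\ra \sim \la n-\am(t)\ra \sim \la n-a(t)\ra$, and, invoking $a(t)\sim t$ and $|I|\lesssim h^{-1}$, the range $\am(I)$ is an interval of length $\lesssim h^{-1}$.

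For \eqref{E:est-55}: Cauchy--Schwarz together with $Q,\ (\bullet)Q(\bullet)\in L^2$ (norms $O(1)$ for $\tfrac12<c'<2$) gives $|\la\zeta,Q_{a',c'}\ra| + |\la\zeta,(\bullet-a')Q_{a',c'}\ra| \lesssim \|\zeta\|_{L^2}$; since the matrix inverse is $O(1)$ and $\|\eta_s\|_{L^2}$ is bounded, both coefficients are $\lesssim \|\zeta\|_{L^2}$, so multiplying by the $H^{1/2}_x$ profiles and integrating in $s$ yields $\|R_1\|_{H^{1/2}_x}+\|R_2\|_{H^{1/2}_x}\lesssim\|\zeta\|_{L^2_x}$, whence $\|\eta\|_{H^{1/2}_x}\le\|\zeta\|_{H^{1/2}_x}+\|R_1\|_{H^{1/2}_x}+\|R_2\|_{H^{1/2}_x}\lesssim\|\zeta\|_{H^{1/2}_x}$.

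For \eqref{E:coeff-conv03}: the $\zeta$ term gives $\sup_n\|\zeta\|_{L^2_I L^2_{x\in(n,n+1)}}=:S$, absorbed into the first term on the right. The $O(\|\eta_s\|_{L^2}\|\zeta\|_{L^2})$ parts of the coefficients, and the whole coefficient $\la\zeta,Q_{a(s\zeta),c(s\zeta)}\ra$ of $R_2$ — which equals $\la\zeta,Q_{a(s\zeta),c(s\zeta)}-Q_{\am,\cm}\ra$ since $\la\zeta,Q_{\am,\cm}\ra=0$, hence is $O(\|\zeta\|_{L^2}^2)$ by the second structural fact and \eqref{E:est-55} (giving $\|\eta_s\|_{L^2}\lesssim\|\zeta\|_{L^2}$) — each contribute, after multiplying by the localized profile and taking the $L^2_I L^2_{x\in(n,n+1)}$ norm, at most $\|\la n-\am(t)\ra^{-2}\|\zeta(\cdot,t)\|_{L^2_x}^2\|_{L^2_t(I)}\le\|\zeta\|_{L^\infty_I L^2_x}\|\zeta\|_{L^2_I L^2_x}$, which is subordinate in the regime $\|\zeta\|_{L^\infty_I L^2_x}\lesssim h^{3/2}$ of Proposition \ref{P:nonsymp-estimates}. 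The remaining genuinely linear piece is the main part of $R_1$, estimated by $\big\|\la n-\am(t)\ra^{-2}|\la\zeta(\cdot,t),(\bullet-\am(t))Q_{\am(t),\cm(t)}\ra|\big\|_{L^2_t(I)}$. I would split the inner pairing at the spatial scale $h^{-1}$ about $x=\am(t)$. On $|x-\am(t)|>h^{-1}$, Cauchy--Schwarz with $\int_{|y|>h^{-1}}|yQ(y)|^2\,dy\lesssim h$ gives the pointwise bound $h^{1/2}\|\zeta(\cdot,t)\|_{L^2_x}$, so that part is $\le h^{1/2}\|\zeta\|_{L^2_I L^2_x}$ (using $\la n-\am(t)\ra^{-2}\le1$), uniformly in $n$. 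On $|x-\am(t)|\le h^{-1}$, decompose into the unit intervals $(m,m+1)$ with $|m-\am(t)|\lesssim h^{-1}$, use $|\la\zeta\,\mathbf{1}_{(m,m+1)},(\bullet-\am)Q_{\am,\cm}\ra|\le\|\zeta(\cdot,t)\|_{L^2_{x\in(m,m+1)}}\la m-\am(t)\ra^{-1}$ and the pointwise inequality $\la n-\am(t)\ra^{-2}\la m-\am(t)\ra^{-1}\lesssim\la n-m\ra^{-1}$, and sum by Minkowski in $L^2_t$ to get $\lesssim S\sum_{m:\,\mathrm{dist}(m,\am(I))\lesssim h^{-1}}\la n-m\ra^{-1}\lesssim S\ln h^{-1}$, the index set having $\lesssim h^{-1}$ elements. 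Taking $\sup_n$ yields \eqref{E:coeff-conv03}.

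I expect the hard part to be precisely this inner-region estimate: one must cut off at scale $h^{-1}$ so that over the time interval $I$ the moving soliton center $\am(t)$ visits only $\lesssim h^{-1}$ unit intervals, converting the otherwise logarithmically divergent sum $\sum_m\la n-m\ra^{-1}$ into the stated $\ln h^{-1}$ factor, while the portion of $\zeta$ living further than $h^{-1}$ from the soliton has to be paid for by $h^{1/2}\|\zeta\|_{L^2_I L^2_x}$ using the $L^2$-decay of the testing profile $(\bullet)Q(\bullet)$; getting these two mechanisms to interlock cleanly, and correctly bookkeeping the quadratic corrections (which use the already-proven \eqref{E:est-55} and the smallness of $\|\zeta\|$), is where the care goes.
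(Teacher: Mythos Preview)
Your argument is essentially correct but takes a more circuitous route than the paper and, as written, proves a slightly weaker statement: you invoke the extra hypotheses $|I|\lesssim h^{-1}$ (to bound the size of your index set $M$) and $\|\zeta\|_{L^\infty_I L^2_x}\lesssim h^{3/2}$ (to absorb the quadratic pieces of $R_2$). Both are available in the application to Proposition~\ref{P:symp-estimates}, but neither appears in the corollary as stated.

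The paper avoids both complications by two simple moves. First, it treats $R_1$ and $R_2$ on the same footing, bounding \emph{both} coefficients linearly in $\zeta$ via $|\la\zeta,Q_{a,c}\ra|+|\la\zeta,(\bullet-a)Q_{a,c}\ra|\le\int_x|\zeta(x,t)|\,\la x-a(t)\ra^{-1}\,dx$; there is no need to exploit the orthogonality $\la\zeta,Q_{\am,\cm}\ra=0$ or any smallness of $\|\zeta\|$. Second, instead of splitting at the time-dependent scale $|x-\am(t)|=h^{-1}$, it takes $\sup_{t\in I}$ of the product $\la n-a(t)\ra^{-2}\la x-a(t)\ra^{-1}$ \emph{before} the spatial split, using the elementary fact
\[
\sup_t\,\la t-\alpha\ra^{-a}\la t-\beta\ra^{-b}\lesssim\la\alpha-\beta\ra^{-\min(a,b)}
\]
(together with $a(t)\sim t$) to replace it by the time-independent kernel $\la n-x\ra^{-1}$. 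One then splits at $|x-n|=h^{-1}$: the near part gives the $\ln h^{-1}$ factor, the far part gives $h^{1/2}\|\zeta\|_{L^2_IL^2_x}$ by Cauchy--Schwarz. This removes any dependence on the length of $I$. (Your own pointwise inequality $\la n-\am(t)\ra^{-2}\la m-\am(t)\ra^{-1}\lesssim\la n-m\ra^{-1}$ is exactly this mechanism; had you dropped the indicator and split in $m$ around $n$ rather than around $\am(t)$, you would have recovered the paper's argument verbatim.)
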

\begin{proof}
\eqref{E:est-55} follows directly from \eqref{E:coord-conv01} and the two equations after \eqref{E:coord-conv01}.  To prove \eqref{E:coeff-conv03}, we will use that for $a,b >0$ and $\alpha,\beta \in \mathbb{R}$,
$$\sup_t \la t - \alpha \ra^{-a} \la t - \beta \ra^{-b}  \lesssim \la \alpha - \beta \ra^{-\min(a,b)}$$
We know that $a(t) \sim t$.  Starting with \eqref{E:coord-conv01} (see the approximation \eqref{E:coord-conv02} to help with conceptualization), apply the $L_I^2L_{x\in (n,n+1)}^2$ norm for fixed $n$, and estimate as:
\begin{align*}
\| \eta \|_{L_I^2 L_{x\in (n,n+1)}^2} &\lesssim \| \zeta \|_{L_I^2 L_{x\in (n,n+1)}^2} + \int_x \| \zeta(x,t) \la n - a(t) \ra^{-2} \la x- a(t) \ra^{-1} \|_{L_{t\in I}^2} \, dx \\
&\lesssim \| \zeta \|_{L_I^2 L_{x\in (n,n+1)}^2} + \int_x \| \zeta(x,t) \|_{L_{t\in I}^2} \sup_{t\in I} \la n - a(t) \ra^{-2} \la x- a(t) \ra^{-1}  \, dx \\
&\lesssim \| \zeta \|_{L_I^2 L_{x\in (n,n+1)}^2} + \int_x \| \zeta(x,t) \|_{L_{t\in I}^2} \la n -x \ra^{-1}  \, dx
\end{align*}
Split the $x$-integral into $|x-n|< h^{-1}$ and $|x-n|>h^{-1}$.  The region $|x-n|<h^{-1}$ is divided into unit-sized $x$-pieces producing the factor $\sum_{|m|<h^{-1}} \la m\ra^{-1} \lesssim \ln h^{-1}$.  In the region $|x-n|>h^{-1}$, we apply Cauchy-Schwarz and use $\| \la x\ra^{-1} \|_{L_{|x|>h^{-1}}^2} \leq h^{1/2}$.  Together, this yields
$$\| \eta \|_{L_I^2 L_{x\in (n,n+1)}^2} \lesssim \| \zeta \|_{L_I^2 L_{x\in (n,n+1)}^2} + (\ln h^{-1})\sup_m \| \zeta \|_{L_I^2L_{x\in (m,m+1)}^2}  + h^{1/2}\| \zeta\|_{L_I^2L_x^2}
$$
From this, \eqref{E:coeff-conv03} follows.
\end{proof}

 Define the remainder $\eta$ according to 
\begin{equation}
\label{E:decomp2} 
u = Q_{a, c} + \eta
\end{equation}
imposing orthogonality conditions
\begin{equation}
\label{E:orth2}
 \la \eta, Q_{a,c} \ra =0 \,, \qquad \la \eta, (x-a) Q_{a,c} \ra = 0
\end{equation}
An implicit function theorem argument shows that there exists a unique choice of $(a,c)$ so that these orthogonality conditions hold.  This is the \emph{definition} of the parameters $(a(t),c(t))$ and of the remainder $\eta$. 

Starting with $\partial_t u = JE'(u)$, we substitute \eqref{E:decomp2} to obtain
$$\partial_t (Q_{a,c} + \eta) = J E'( Q_{a,c} + \eta)$$
Analogously to the derivation of \eqref{E:eta}, we find
\begin{equation}
\label{E:eta2}
\partial_t \eta = - \dot a \partial_a Q_{a,c} - \dot c \partial_c Q_{a,c} + J E'(Q_{a,c}) + JE''(Q_{a,c}) \eta  - \frac12 \partial_x (\eta^2)
\end{equation}
We re-center the equation for $\eta$ by letting 
$$w (y) = \eta (y+a) \quad \iff \quad \eta (x) = w(x-a)$$ 
The orthogonality conditions on $w$ read
\begin{equation}
\label{E:w-ortho}
\la w, Q_c \ra =0 \,, \qquad \la w, yQ_c \ra =0 .
\end{equation}
The equation for $w$ is
\begin{equation}
\label{E:w}
\begin{aligned}
\partial_t w = \;
&(\dot a-c+W(ha))\partial_yQ_c +(-\dot c c^{-1} + hW'(ha))\partial_y(yQ_c) + \partial_y(e_2Q_c) \\
&+\partial_y \mathcal{L}_c v + \partial_y(\dot a - c + W(hx)) w - \frac12 \partial_y w^2
\end{aligned}
\end{equation}
Here, \eqref{E:w} is analogous to \eqref{E:v}.

\begin{lemma}[symplectic parameter control]
\label{L:symp-ODE-control}
For all $t$, if $\frac12 \leq  c  \leq 2$ and $\|w \|_{L_y^2} \ll 1$, then 
\begin{equation}
\label{E:par2b}
\begin{aligned}
& |\dot a  -  c  + W(h a ) - \tfrac12 h^2W''(h a )  c ^{-1}  | \lesssim h^3 + \|w\la y \ra^{-1} \|_{L_y^2}^2 \\
&|\dot c   - hW'(h a ) c  - \tfrac12 h^3 W''(h a ) c ^{-1}| \lesssim h^4 + h^2(\ln h^{-1})\sup_{n\in \mathbb{Z}} \|w\|_{L_{n<y < n+1}^2} + \|w\la y \ra^{-1} \|_{L_y^2}^2
\end{aligned}
\end{equation}
Also, for a time interval $I$,
\begin{equation}
\label{E:par2bt}
\begin{aligned}
& \int_I |\dot a  -  c  + W(h a ) - \tfrac12 h^2W''(h a )  c ^{-1}  |\, dt \lesssim h^3|I| + \|w\la y \ra^{-1} \|_{L_I^2L_y^2}^2 \\
&\int_I |\dot c   - hW'(h a ) c  - \tfrac12 h^3 W''(h a ) c ^{-1}|\, dt \\
& \qquad \qquad \lesssim h^4|I| + h^2(\ln h^{-1})|I|^{1/2}\sup_{n\in \mathbb{Z}} \|w\|_{L_I^2L_{n<y < n+1}^2} + \|w\la y \ra^{-1} \|_{L_I^2L_y^2}^2
\end{aligned}
\end{equation}
\end{lemma}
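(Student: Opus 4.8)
The plan is to follow the proof of Lemma~\ref{L:nonsymp-ODE-control} essentially line by line, with the nonsymplectic orthogonality conditions \eqref{E:v-ortho} replaced by the symplectic ones \eqref{E:w-ortho} and the equation \eqref{E:v} replaced by \eqref{E:w}. For the second estimates in \eqref{E:par2b} and \eqref{E:par2bt} there is nothing new to do: differentiating $\la w, Q_c\ra = 0$ in $t$ and using $\partial_t Q_c = \dot c\, c^{-1}\partial_y(yQ_c)$ gives $0 = \la \partial_t w, Q_c\ra + \dot c\, c^{-1}\la w, \partial_y(yQ_c)\ra$, and substituting \eqref{E:w} produces the same seven-term identity as the one coming from the condition $\la v, Q_\cm\ra = 0$ in Lemma~\ref{L:nonsymp-ODE-control} --- the term $\la \partial_y\mathcal{L}_c w, Q_c\ra = -\la w, \mathcal{L}_c\partial_y Q_c\ra$ vanishes by \eqref{E:Da}, the $e_2 Q_c$ contribution is expanded as in \eqref{E:term3c} with the odd integrals dropping, and the $e_2 w$ contribution is estimated by the unit-interval decomposition of \eqref{E:e2est01}--\eqref{E:e2est02}. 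So the $\dot c$ estimates come out verbatim, up to relabeling $v,\am,\cm\mapsto w,a,c$.

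The new content is the first estimate, which controls $\dot a$. I would differentiate $\la w, yQ_c\ra = 0$ and use $\partial_t(yQ_c) = \dot c\, c^{-1}\big(\partial_y(y^2Q_c) - yQ_c\big)$ together with $\la w, yQ_c\ra = 0$ to get $0 = \la\partial_t w, yQ_c\ra + \dot c\, c^{-1}\la w, \partial_y(y^2Q_c)\ra$, then substitute \eqref{E:w}. The coefficient of $\dot a - c + W(ha)$ is $\la\partial_y Q_c, yQ_c\ra = -\tfrac12\|Q_c\|_{L^2}^2 = -4\pi c$; the coefficient of $-\dot c\, c^{-1} + hW'(ha)$ is $\la\partial_y(yQ_c), yQ_c\ra = 0$; and, crucially, the linear term drops:
\[
\la\partial_y\mathcal{L}_c w, yQ_c\ra = -\la\mathcal{L}_c w, \partial_y(yQ_c)\ra = -\la w, \mathcal{L}_c\partial_y(yQ_c)\ra = c\la w, Q_c\ra = 0,
\]
using the rescaled version $\mathcal{L}_c\partial_y(yQ_c) = -cQ_c$ of \eqref{E:Dc}. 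This vanishing is exactly the advantage of the symplectic decomposition: in Lemma~\ref{L:nonsymp-ODE-control} the analogue $\la\partial_y\mathcal{L}_\cm v, \partial_y Q_\cm\ra = -\la v, \mathcal{L}_\cm\partial_y^2 Q_\cm\ra$ is nonzero, which is why the correction $\tfrac{1}{4\pi}\cm^{-3}\la v, \mathcal{L}_\cm\partial_y^2 Q_\cm\ra$ survives there but has no counterpart here. The explicit $h^2$ term would be read off from $\la\partial_y(e_2Q_c), yQ_c\ra = -\la e_2, Q_c\partial_y(yQ_c)\ra$ by Taylor expanding $e_2$ about $y=0$ with Lagrange remainder, splitting the integral into $|y|<h^{-1}$ and $|y|>h^{-1}$ as in the treatment of Term~III in Lemma~\ref{L:nonsymp-ODE-control}, and using $\int y^2 Q_c\,\partial_y(yQ_c)\,dy = -4\pi c^{-1}$; because $yQ_c$ decays only like $|y|^{-1}$, the next-order contributions are of size $h^3$ rather than $h^4$, which is the origin of the $h^3$ in \eqref{E:par2b}.

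The remaining terms would be absorbed as errors. Those coming from the $W(ha)$-part of $\partial_y\big((\dot a - c + W(h(y+a)))w\big)$ are all multiples of $\dot a - c + W(ha)$ and get moved to the left side, where they perturb the $-4\pi c$ coefficient by $O(\|w\langle y\rangle^{-1}\|_{L^2})$; since $\|w\|_{L^2_y}\ll1$ and $c\ge\tfrac12$, the net coefficient stays bounded away from $0$, so I can divide through. Those coming from the $hW'(ha)y$-part reduce, via $y\,\partial_y(yQ_c) = \partial_y(y^2Q_c) - yQ_c$ and $\la w, yQ_c\ra = 0$, to $-hW'(ha)\la w, \partial_y(y^2Q_c)\ra$, which combines with the term $\dot c\, c^{-1}\la w, \partial_y(y^2Q_c)\ra$ coming from $\partial_t(yQ_c)$ into $(\dot c\, c^{-1} - hW'(ha))\la w, \partial_y(y^2Q_c)\ra$; the prefactor is controlled by the $\dot c$ estimate already in hand. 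The nonlinear term is $-\tfrac12\la\partial_y w^2, yQ_c\ra = \tfrac12\la w^2, \partial_y(yQ_c)\ra\lesssim\|w\langle y\rangle^{-1}\|_{L^2}^2$. The $e_2 w$ terms would be handled, writing $e_2 = \tfrac12 h^2 W''(h(y_*+a))y^2$, by the unit-interval/decaying-weight bookkeeping of \eqref{E:e2est03}--\eqref{E:e2est04}, absorbing their contribution using the smallness of $\|w\|_{L^2_y}$. Collecting and dividing by the net coefficient gives the first inequality of \eqref{E:par2b}; integrating in $t$ and applying Cauchy--Schwarz in $t$ on the $e_2w$ terms (as in \eqref{E:e2est02}, \eqref{E:e2est04}) gives \eqref{E:par2bt}.

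The hard part will be the bookkeeping forced by the weak spatial decay of $yQ_c\sim|y|^{-1}$ and $\partial_y(yQ_c)\sim|y|^{-2}$: the soliton-forcing pairing $\la\partial_y(e_2Q_c), yQ_c\ra$ and the $e_2w$ error terms are only marginally convergent, so the inner/outer and local-in-space splittings have to be done carefully and the resulting error only reaches order $h^3$. A secondary point is ordering: the $\dot c$ estimate must be proved first, since it is what controls the $(\dot c\, c^{-1} - hW'(ha))$ prefactor when the $\dot a$ estimate is closed, and a crude a~priori bound $|\dot a - c + W(ha)|\lesssim h$ is needed before the cross terms proportional to $\dot a - c + W(ha)$ can be transferred to the left side.
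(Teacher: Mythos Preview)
Your proposal is correct and follows essentially the same approach as the paper's proof: differentiate the two orthogonality conditions, substitute \eqref{E:w}, and estimate the resulting seven-term identities exactly as in Lemma~\ref{L:nonsymp-ODE-control}, with the key simplification that $\la \partial_y\mathcal{L}_c w, yQ_c\ra = 0$ via \eqref{E:Dc} and $\la w, Q_c\ra = 0$. Two small remarks: your rewriting of Term~VII as $\dot c\,c^{-1}\la w,\partial_y(y^2Q_c)\ra$ is in fact preferable to the paper's equivalent $\dot c\,c^{-1}\la w, y\partial_y(yQ_c)\ra$, since $\partial_y(y^2Q_c)\in L^2$ while $y\partial_y(yQ_c)\notin L^2$, so the Cauchy--Schwarz bound on the combined V+VII term is cleaner in your formulation; and for the $e_2w$ pairing with $\partial_y(yQ_c)$, the weight $y^2\partial_y(yQ_c)$ is only $O(1)$ (not $\la y\ra^{-2}$ as in \eqref{E:e2est03}), so the unit-interval sum does not converge --- the paper instead uses a direct Cauchy--Schwarz over $|y|\lesssim h^{-1}$ to get $h^{3/2}\|w\|_{L^2}$, which is then absorbed using smallness of $\|w\|_{L^2}$ exactly as you anticipate.
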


\begin{proof}
Taking $\partial_t$ of the orthogonality condition $\la w, Q_ c  \ra =0 $, then exactly as in the proof of Lemma \ref{L:nonsymp-ODE-control}, we obtain 
$$|\dot c   - hW'(h a ) c   - \frac12 h^3 W''(h a ) c ^{-1}| (4\pi -  c ^{-1}\la w, \partial_y(yQ_ c )\ra) $$
$$\lesssim h^4 +  h^2 (\ln h^{-1}) \sup_n \|w\|_{L_{n\leq y\leq n+1}^2}+ \|\la y \ra^{-1} w\|_{L_y^2}^2$$
from which the second inequality in \eqref{E:par2b} follows.  The second inequality in \eqref{E:par2bt} also follows as in the proof of Lemma \ref{L:nonsymp-ODE-control}.

Now we prove the first inequality in \eqref{E:par2b}.
Taking $\partial_t$ of the orthogonality condition $0=\la w, y Q_ c  \ra$, we obtain

$$0 = \la \partial_t w, y Q_ c  \ra + \la w, y\partial_t Q_ c  \ra$$
For the first term, we substitute \eqref{E:w}, and for second term, we use that $\partial_tQ_ c  = \dot c   c ^{-1} \partial_y (y Q_ c) $, to obtain
\begin{align*}
0 &= ( \dot a  -  c  + W(h a )) \la \partial_yQ_ c , yQ_ c \ra && \leftarrow \text{I}\\
&\quad +(-\dot  c   c ^{-1} +hW'(h a )) \la \partial_y(yQ_ c ),yQ_ c  \ra  && \leftarrow \text{II}\\
&\quad + \la \partial_y(e_2Q_ c ), yQ_ c  \ra && \leftarrow \text{III}\\
&\quad + \la \partial_y\mathcal{L}_ c  w, yQ_ c  \ra && \leftarrow \text{IV}\\
&\quad + \la \partial_y(\dot  a  -  c  + W(h(y+ a )))w, y Q_ c  \ra && \leftarrow \text{V}\\
&\quad - \tfrac12 \la \partial_y w^2, y Q_ c  \ra && \leftarrow \text{VI}\\
&\quad + \dot  c   c ^{-1} \la w, y\partial_y (yQ_ c ) \ra && \leftarrow \text{VII}
\end{align*}

Given that $\la \partial_y Q_c, y Q_c \ra = - \frac12\|Q_c\|_{L_y^2}^2$ and 
$\|Q_ c \|_{L^2}^2 = 8\pi  c$, we have
$$\text{I} = -4\pi  c( \dot a  -  c  + W(h a ))$$
Also, given that $\partial_y(yQ_ c )$ is even and $yQ_ c $ is odd, we have $\la \partial_y(yQ_ c ),yQ_ c  \ra =0$ and thus $\text{II}=0$.  

To address term III, we carry out the Taylor expansion 
\begin{align*}
e_2 &= W(h(y+ a )) - W(h a ) - hW'(h a ) y \\
&= \frac12 h^2W''(h a )y^2 + \frac16 h^3W'''(h a ) y^3 + \frac1{24}h^4W''''(h(y_*+ a ))y^4
\end{align*}
 for some $y_*$ between $0$ and $y$, by the Lagrange form of the remainder.  Substituting
\begin{align*}
\text{III} &= -\la e_2, Q_ c  \partial_y (yQ_ c)  \ra \\
&= - \frac12 h^2 W''(h a ) \int y^2 Q_ c  \partial_y (yQ_ c)  \, dy - \frac16h^3 W'''(h a ) \int y^3 Q_ c  \partial_y (yQ_ c)  \, dy \\
& \qquad - \frac1{24}h^4W''''(h a ) \int y^4 Q_ c  \partial_y (yQ_ c)  
\end{align*}
Since $\int y^2 Q_ c  \partial_y (yQ_ c)  \, dy= -\frac12 \int y^2Q_c^2 \, dy = -\frac12 c^{-1} \int z^2Q^2 \, dz = -4\pi c^{-1}$,
$$\text{III} =  2\pi h^2W''(h a ) c^{-1}  + O(h^4)$$
To address Term IV, we use \eqref{E:Dc} and orthogonality condition $\la w, Q_c \ra =0$:
$$\text{IV} = - \la w, \mathcal{L}_ c  \partial_y(yQ_c)  \ra = - c \la w, \mathcal{L}_c \partial_c Q_c \ra =  c\la w, Q_c \ra = 0$$
In Term V, we expand 
$$W(h(y+ a ))= W(h a ) + hW'(h a )y + e_2(y, a )$$
to yield
$$\text{V} = (\dot  a  -  c  + W(h a ))\la \partial_y w, yQ_ c \ra - hW'(h a ) \la w, y \partial_y(y Q_ c)  \ra - \la e_2w, \partial_y(y Q_ c)  \ra$$
Note that the middle term combines with Term VII
$$\text{V}+\text{VII} = (\dot  a  -  c  + W(h a ))\la \partial_y w,  yQ_ c  \ra +(\dot c   c ^{-1} - hW'(h a )) \la w, y \partial_y(y Q_ c)  \ra - \la e_2w, \partial_y(y Q_ c) \ra$$
By Taylor's theorem with the Lagrange form of the remainder
$$e_2(y,a) = \frac12 h^2 W''(h(y_*+a))y^2$$
Let $R>0$ such that $\supp W \subset [-R,R]$.  Then $-a - Rh^{-1} \leq y \leq -a + Rh^{-1}$.  This gives
$$\la e_2 w, \partial_y(yQ_c) \ra = \frac12 h^2 \int_{-a-Rh^{-1}}^{-a+Rh^{-1}} W''(h(y_*+a) w y^2\partial_y(yQ_c) \, dy$$
Since $\| y^2\partial_y(yQ_c)\|_{L^\infty_y} \lesssim 1$ and $\|W''\|_{L^\infty_y}\lesssim 1$ Cauchy-Schwarz gives
$$|\la e_2 w, \partial_y(yQ_c) \ra| \lesssim h^2 \|w\|_{L^2} (Rh^{-1})^{1/2} \lesssim h^{3/2} \|w\|_{L^2_y}$$
Next,
$$\text{VI} = \frac12 \la w^2, \partial_y(yQ_c) \ra \lesssim \|w\la y \ra^{-1} \|_{L_y^2}^2$$
Combining the estimates for Terms I-VII, we obtain
\begin{align*}
\indentalign |(\dot a -c + W(ha)(1+ \frac{1}{4\pi c} \la w, \partial_y(yQ_c)\ra) - \frac12 h^2 W''(ha) c^{-2}| \\
&\lesssim h^3 + \|w\|_{L_y^2}^2 + |\dot c c^{-1} - hW'(ha)| \|w\|_{L_y^2}
\end{align*}
By the second inequality in \eqref{E:par2b}, $|\dot c c^{-1} - hW'(ha)|\lesssim h^2$, and therefore, the corresponding term in the inequality above can be bounded by the other terms.  From this it follows that
$$ |(\dot a -c + W(ha) - \frac12 h^2 W''(ha) c^{-2}| \lesssim h^3 + \|w\|_{L_y^2}^2$$
which is the first inequality in \eqref{E:par2b}.  The first inequality in \eqref{E:par2bt} follows from the first inequality in \eqref{E:par2b} after integrating in $t$.
\end{proof}

\begin{proposition}[symplectic decomposition estimates for (pBO)]
\label{P:symp-estimates}
There exists $\kappa \geq 1$, $\mu>0$, and $0<h_0\ll 1$ such that the following holds.
Let $0< h \leq h_0$ and suppose the initial data $u_0\in H_x^1$ satisfies
$$\| u_0(x) - Q_{0,1}(x) \|_{H_x^{1/2}} \leq h^{3/2}$$
Suppose that $u$ satisfying (pBO) with initial condition $u(x,0)=u_0(x)$ is decomposed as \eqref{E:decomp2} with remainder $\eta$ satisfying orthogonality conditions \eqref{E:orth2}.    For every $T>0$ such that $\frac12 \leq c(t) \leq 2$ for all $0\leq t \leq T$, 
we have that the recentered remainder $w(y,t) = \eta(y+a(t),t)$ satisfies
\begin{equation} 
\label{E:symp-estimates-eq1}
\begin{aligned}
&\|w\|_{L_{[0,T]}^\infty H_y^{1/2}}  \leq \kappa h^{3/2} e^{\mu hT} \\
&\sup_n \| w\|_{L_{[0,T]}^2L_{y\in(n,n+1)}^2} \leq \kappa h^{3/2} (\ln h^{-1}) e^{\mu hT}
\end{aligned}
\end{equation}
and the parameters $a(t)$, $c(t)$ satisfy the following bounds \eqref{E:par2b}.  \end{proposition}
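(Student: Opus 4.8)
The plan is to derive Proposition~\ref{P:symp-estimates} from the nonsymplectic estimates of Proposition~\ref{P:nonsymp-estimates} by transferring them through the coordinate-conversion analysis of this section. Given $u_0\in H_x^1$ with $\|u_0-Q_{0,1}\|_{H_x^{1/2}}\le h^{3/2}$, a standard continuity argument (using $h^{3/2}\ll\epsilon$) keeps $u(t)$ inside the tubular neighborhood $M$ on any interval where $\tfrac12\le c(t)\le2$, so there $u$ simultaneously carries the nonsymplectic decomposition $u=Q_{\am,\cm}+\zeta$ of \S\ref{S:pBO} and the symplectic decomposition $u=Q_{a,c}+\eta$ of \eqref{E:decomp2}, with $|a-\am|+|c-\cm|\lesssim\epsilon$ and $\eta=\Omega_{\am,\cm}(\zeta)$. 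In particular $\tfrac12\le c(t)\le2$ forces $\cm(t)$ into a fixed compact subinterval of $(0,\infty)$, on which the sub-interval iteration proof of Proposition~\ref{P:nonsymp-estimates} goes through verbatim (the specific interval $[\tfrac12,2]$ there playing no essential role); so for $T$ as in the present statement that proposition supplies
$$\|v\|_{L^\infty_{[0,T]}H_y^{1/2}}+\sup_n\|v\|_{L^2_{[0,T]}L^2_{y\in(n,n+1)}}\le\kappa_0h^{3/2}e^{\mu_0hT},\qquad v(y,t)=\zeta(y+\am(t),t),$$
for suitable constants $\kappa_0,\mu_0$.

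First I would record the $H^{1/2}$ bound. The $H_x^{1/2}$ norm is translation invariant, so at each time $\|w(\cdot,t)\|_{H_y^{1/2}}=\|\eta(\cdot,t)\|_{H_x^{1/2}}$ and $\|v(\cdot,t)\|_{H_y^{1/2}}=\|\zeta(\cdot,t)\|_{H_x^{1/2}}$; hence estimate \eqref{E:est-55} of Corollary~\ref{C:convert} gives $\|w\|_{L^\infty_{[0,T]}H_y^{1/2}}\lesssim\|v\|_{L^\infty_{[0,T]}H_y^{1/2}}\lesssim\kappa_0h^{3/2}e^{\mu_0hT}$. For the local-in-space bound I would rerun the computation behind \eqref{E:coeff-conv03} in the recentered variable: by \eqref{E:coord-conv01}, $w(y,t)=v(y+\am(t)-a(t),t)+(\text{correction})(y,t)$, where the correction is a superposition of $Q'_{a',c'}(\cdot+a(t))$ and $[(\bullet-a)Q_{a',c'}]'(\cdot+a(t))$ (with $|a'-a(t)|\lesssim\epsilon$), functions concentrated near $y=0$ with unit-window $L^2$-masses $\lesssim\la n \ra^{-2}$, multiplied by the scalars $\la\zeta,(\bullet-a)Q_{a',c'}\ra$ and $\la\zeta,Q_{a',c'}\ra$. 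Using $|\am-a|\lesssim\epsilon$ to compare $y$-local norms of $v(\cdot+\am-a,\cdot)$ and $v$, and bounding the pairing against the slowly-decaying weight $(\bullet-a)Q_{a',c'}\sim\la\cdot-a\ra^{-1}$ by splitting the integration at $|y|\sim h^{-1}$, this yields
$$\sup_n\|w\|_{L^2_{[0,T]}L^2_{y\in(n,n+1)}}\lesssim(\ln h^{-1})\sup_n\|v\|_{L^2_{[0,T]}L^2_{y\in(n,n+1)}}+h^{1/2}\|v\|_{L^2_{[0,T]}L^2_y}.$$
The first term is $\lesssim\kappa_0h^{3/2}(\ln h^{-1})e^{\mu_0hT}$ by the previous display; for the second, $\|v\|_{L^2_{[0,T]}L^2_y}\le T^{1/2}\|v\|_{L^\infty_{[0,T]}L^2_y}\le\kappa_0T^{1/2}h^{3/2}e^{\mu_0hT}$, and $\sup_{T\ge0}h^{1/2}T^{1/2}e^{-(\mu-\mu_0)hT}=O(1)$ for any $\mu>\mu_0$, so $h^{1/2}\|v\|_{L^2_{[0,T]}L^2_y}\lesssim\kappa_0h^{3/2}(\ln h^{-1})e^{\mu hT}$. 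Choosing $\mu>\mu_0$ and $\kappa$ large enough to absorb the implicit constants produces \eqref{E:symp-estimates-eq1}.

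The parameter bounds \eqref{E:par2b} are then automatic: they are exactly Lemma~\ref{L:symp-ODE-control}, whose only hypothesis $\|w\|_{L_y^2}\ll1$ holds because $\|w\|_{L^\infty_{[0,T]}L_y^2}\le\|\eta\|_{L^\infty_{[0,T]}H_x^{1/2}}\lesssim\epsilon$ throughout $M$ (and $\le\kappa h^{3/2}e^{\mu hT}\ll1$ on the range $T\lesssim h^{-1}\ln h^{-1}$ of ultimate interest). I expect the only real obstacle to be this local-in-space conversion: because the symplectic center $a$ differs from the soliton center $\am$, writing the remainder about $a$ reintroduces the eigendirection $(x-a)Q_{a,c}$ with its weak decay $\la x-a \ra^{-1}$, and pairing $v$ against this weight unavoidably costs a factor $\ln h^{-1}$ (a harmonic sum over unit windows inside $|y|\lesssim h^{-1}$) together with a harmless $h^{1/2}\|v\|_{L^2}$ tail. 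This is exactly why the second line of \eqref{E:symp-estimates-eq1} is weaker than the sharp nonsymplectic local-smoothing estimate of Proposition~\ref{P:nonsymp-estimates}; as noted in \S\ref{S:exact}, the loss is still enough to drive the ODE comparison (Lemma~\ref{L:gronwall}) to the exact dynamics.
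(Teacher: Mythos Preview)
Your proposal is correct and follows essentially the same route as the paper: invoke Proposition~\ref{P:nonsymp-estimates} for the nonsymplectic remainder $v$, transfer to $w$ via Corollary~\ref{C:convert} (the $H^{1/2}$ bound from \eqref{E:est-55}, the local-in-space bound from the computation behind \eqref{E:coeff-conv03}), and read off the parameter bounds from Lemma~\ref{L:symp-ODE-control}. The paper's proof is two sentences and simply cites \eqref{E:est-55}, \eqref{E:coeff-conv03}, \eqref{E:nonsymp-estimates-eq1}, and Lemma~\ref{L:symp-ODE-control}; you spell out the same argument with additional care on two points the paper leaves implicit --- the recentering from fixed $x$-windows to $y$-windows following $a(t)$ (which is harmless since $|a-\am|\lesssim\epsilon$), and the absorption of the tail term $h^{1/2}\|v\|_{L^2_{[0,T]}L^2_y}$ into the exponential via $\sup_{T\ge0}(hT)^{1/2}e^{-(\mu-\mu_0)hT}=O(1)$. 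One trivial slip: the shift in $w(y,t)=v(y+\am-a,t)+\text{correction}$ should read $y+a-\am$, but this does not affect the estimate since only $|a-\am|\lesssim\epsilon$ is used.
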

\begin{proof}
From \eqref{E:est-55} and \eqref{E:coeff-conv03} in Corollary \ref{C:convert}, combined with \eqref{E:nonsymp-estimates-eq1} in Proposition \ref{P:nonsymp-estimates}, we immediately obtain \eqref{E:symp-estimates-eq1}.  The ODE bounds \eqref{E:symp-estimates-eq1} hold by Lemma \ref{L:symp-ODE-control}.
\end{proof}

Theorem \ref{T:main} can now be proved as a consequence of Proposition \ref{P:symp-estimates}. 

\begin{proof}[Proof that Proposition \ref{P:symp-estimates} implies Theorem \ref{T:main}]
By Proposition \ref{P:symp-estimates}, we have the estimate \eqref{E:symp-estimates-eq1} for $w$. 
The parameters $(a(t),c(t))$ in Proposition \ref{P:symp-estimates} satisfy the bounds in Lemma \ref{L:symp-ODE-control}.  Define $(A(s),C(s))$ by $a(t)=h^{-1}A(ht)$ and $c(t) = C(ht)$.  Then by \eqref{E:par2bt} and \eqref{E:symp-estimates-eq1}, $(A(s),C(s))$ satisfy
\begin{equation}
\label{E:ODE11}
\begin{aligned}
&\int_0^s |\dot { A} -  C+ W( A) +\frac12 C^{-2} h^2 W''(A)| \, d\tau \leq \kappa^2 h^3 (\log h^{-1}) e^{2\mu s}\\
&\int_0^s |\dot { C}  - CW'( A) - \frac12 C^{-2} h^2 W'''(A) |\, d\tau \leq \kappa^2 h^3 (\log h^{-1}) e^{2\mu s}
\end{aligned}
\end{equation}
on $0\leq s \leq  \min(\frac14\mu^{-1} \ln h^{-1},S_0)$.     Now apply Lemma \ref{L:gronwall} on ODE perturbation to compare the $(A,C)$ parameter dynamics with the so-called exact trajectory $(\hat A, \hat C)$ defined in \eqref{E:exact-traj}.   Specifically, we obtain that
$| A - \hat A| \lesssim h^3 e^{2\mu s}$ and $|C - \hat C| \lesssim h^3e^{2\mu s}$, and thus $|a-\hat a| \lesssim  h^2e^{2\mu ht}$ and $|c-\hat c| \lesssim h^3 e^{2\mu ht}$.   These bounds imply
$$\| Q_{\hat a, \hat c} - Q_{a, c} \|_{H_x^{1/2}} \lesssim h^2 e^{2\mu ht}$$
Therefore
$$\|u - Q_{\hat a, \hat c} \|_{H_x^{1/2}} \leq \| u - Q_{a,c} \|_{H_x^{1/2}} + \|Q_{a,c} - Q_{\hat a, \hat c} \|_{H_x^{1/2}} =  \|w \|_{H_x^{1/2}} + \|Q_{a,c} - Q_{\hat a, \hat c} \|_{H_x^{1/2}}$$
By \eqref{E:symp-estimates-eq1},
$$\|u - Q_{\hat a, \hat c} \|_{H_x^{1/2}} \lesssim h^{3/2} e^{\mu ht}$$
Thus Theorem \ref{T:main} follows.
\end{proof}

\begin{lemma}[Gronwall]
\label{L:gronwall}
Suppose $X, \bar X :\mathbb{R} \to \mathbb{R}^d$ solve
\begin{align*}
&\dot X(s) = f(X(s)) + h^2 g(X,s) \\
&\dot{\bar X}(s) = f(\bar X(s))
\end{align*}
with the same initial condition $X(0)=\bar X(0)$,
where $f: \mathbb{R}^d \to \mathbb{R}^d$, $ g: \mathbb{R}^{d+1} \to \mathbb{R}^d$.  Suppose that the $d\times d$ matrix $f'(X)$ is uniformly bounded:  for all $X\in \mathbb{R}^d$, 
$$\| f'(X)\|_{\ell^2} \leq \kappa$$
where $\ell^2$ is the square sum norm on the $d^2$ entries of the matrix.  Then
$$|X(s)-\bar X(s)|^2 \leq  h^4 \int_0^s e^{-(2\kappa+1)(s-s')}|g(X(s'),s')|^2 \, ds'$$
\end{lemma}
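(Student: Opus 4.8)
The plan is to run a standard Gronwall (integrating factor) argument on the difference $Y(s) := X(s) - \bar X(s)$. Since $X$ and $\bar X$ have the same initial data, $Y(0) = 0$, and subtracting the two ODEs gives
$$\dot Y(s) = \big(f(X(s)) - f(\bar X(s))\big) + h^2 g(X(s),s).$$
Set $\phi(s) := |Y(s)|^2$, so that $\phi(0) = 0$ and $\phi'(s) = 2\,Y(s)\cdot \dot Y(s)$; the task is then reduced to a differential inequality for $\phi$.

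First I would estimate the two contributions to $\phi'(s)$ separately. For the drift term, writing $f(X) - f(\bar X) = \int_0^1 f'(\bar X + tY)\,Y\,dt$ and using that the operator norm of $f'$ is dominated by its $\ell^2$ (Frobenius) norm, hence by $\kappa$, gives $|f(X) - f(\bar X)| \le \kappa |Y|$ and therefore $2\,Y\cdot(f(X) - f(\bar X)) \le 2\kappa\,\phi(s)$. For the forcing term, Cauchy--Schwarz followed by Young's inequality in the form $2ab \le a^2 + b^2$ with $a = |Y|$ and $b = h^2|g|$ gives $2h^2\,Y\cdot g(X,s) \le \phi(s) + h^4|g(X(s),s)|^2$. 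Adding these,
$$\phi'(s) \le (2\kappa+1)\,\phi(s) + h^4\,|g(X(s),s)|^2, \qquad \phi(0) = 0.$$

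Finally I would integrate this inequality. Multiplying by the integrating factor $e^{-(2\kappa+1)s}$ turns the left side into $\frac{d}{ds}\big(e^{-(2\kappa+1)s}\phi(s)\big)$, so that integrating from $0$ to $s$ (using $\phi(0)=0$), multiplying back through, and relabelling the integration variable yields the asserted integral bound on $|X(s) - \bar X(s)|^2$. There is no real obstacle here --- the argument is textbook; the only points calling for mild care are (i) controlling the operator norm of the Jacobian $f'$ by the $\ell^2$ norm of its entries, so that the mean-value estimate $|f(X)-f(\bar X)|\le\kappa|Y|$ is justified, and (ii) choosing the weight in Young's inequality so that the forcing is absorbed at the cost of precisely the ``$+1$'' appearing in the exponential rate $2\kappa+1$.
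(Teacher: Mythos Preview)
Your proposal is correct and follows essentially the same approach as the paper: define the difference, differentiate its square, control the drift term via the mean-value integral and the Jacobian bound, absorb the forcing with the Young inequality $2h^2 Y\cdot g \le |Y|^2 + h^4|g|^2$, and finish with the integrating factor. The paper's proof is line-for-line the same argument with $V$ in place of your $Y$.
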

\begin{proof}
Let $V(s) = X(s)-\bar X(s)$.  Then ($| \bullet |$ is the usual square sum norm on $\mathbb{R}^d$)
\begin{equation}
\label{E:ODE1}
\frac{d}{ds} |V|^2 = 2V\dot V = 2V\cdot (f(X)-f(\bar X)) + 2h^2 V\cdot g(X,s)
\end{equation}
We have
$$f(X) - f(\bar X) = \int_{\sigma=0}^1 \frac{d}{d\sigma}[ f(\bar X+ \sigma V)] \, d\sigma = \left( \int_{\sigma=0}^1 f'(\bar X+ \sigma V) \, d\sigma \right)V$$
Then by Cauchy-Schwarz,
$$| f(X) - f(\bar X)| \leq \kappa |V|$$
Substituting this into \eqref{E:ODE1}, and using that $2h^2 V \cdot g(X,s) \leq |V|^2 + h^4|g(X,s)|^2$, we obtain
$$\frac{d}{ds} |V|^2 \leq (2\kappa+1) |V|^2 + h^4|g(X,s)|^2$$
The standard integrating factor method completes the proof. 
\end{proof}
In our application,
$$X = \begin{bmatrix} A \\ C \end{bmatrix} \,, \qquad f(X) = \begin{bmatrix}
C-W(A) \\ CW'(A) \end{bmatrix}$$
Then
$$f'(X) = \begin{bmatrix} -W'(A)  & 1 \\ CW''(A) & W'(A) \end{bmatrix}$$
Since $\frac12\leq C \leq 2$, this is uniformly bounded.

\section{Linear Liouville theorem for (BO) asymptotic stability}
\label{S:linear-Liouville}

In this section, we will prove Theorem \ref{T:linear-Liouville}. By Theorem \ref{T:local-virial},
\begin{equation}
\label{E:D1Dmod}
\sup_{y_0\in \mathbb{R}} \|  \la D_y \ra^{1/2} ((g_{\gamma,y_0}')^{1/2} v ) \|_{L_{[0,T]}^2L_y^2}^2 \lesssim_\gamma  \|v\|_{L_{[0,T]}^\infty L_y^2}^2 
\end{equation}
uniformly in $T>0$.  This implies the conveniently stated estimate
\begin{equation}
\label{E:linlio1}
\sup_{|I|=1} \| v \|^2_{L^2_{t\in \mathbb{R}} L^2_{y \in I }} \lesssim  \|v \|^2_{L^{\infty}_{t\in \mathbb{R}} L^2_y }
\end{equation}
where the supremum is taken over all unit-length intervals $I\subset \mathbb{R}$.  From \eqref{E:linlio1}, we will obtain
\begin{lemma} \label{L:decay}
We have 
\begin{equation}
\label{E:temperedbound}
\int_{t\in \mathbb{R}} \frac{1}{\langle t-t_0 \rangle^{4/5}} \int_{y\in\mathbb{R}} v^2 (t,y) dy dt < \infty
\end{equation}
uniformly in $t_0\in \mathbb{R}$.
\end{lemma}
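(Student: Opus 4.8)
The plan is to combine the local-smoothing bound \eqref{E:linlio1} with the uniform-in-time spatial decay \eqref{E:uniformspatial} by means of a dyadic decomposition in time, optimizing a spatial truncation radius on each dyadic piece.

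First I would fix $t_0\in\mathbb{R}$ and set $M=\|v\|_{L^\infty_{t\in\mathbb{R}}L^2_y}^2<\infty$. The two inputs are then recorded in convenient form: by \eqref{E:linlio1}, for every unit interval $I\subset\mathbb{R}$ one has $\int_{\mathbb{R}}\int_I v^2\,dy\,dt\lesssim M$; and by \eqref{E:uniformspatial}, for every $R\ge1$ and every $t$ one has $\int_{|y|>R}v^2(t,y)\,dy\le C/R$. I would split the time axis into the bounded region $\{|t-t_0|\le1\}$ and the dyadic annuli $\{2^{j-1}\le|t-t_0|<2^j\}$ for $j\ge1$; on the $j$-th annulus the Lebesgue measure is $\lesssim2^j$ and the weight satisfies $\langle t-t_0\rangle^{-4/5}\sim2^{-4j/5}$.

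On the $j$-th annulus I would bound the local integral $\int_{2^{j-1}\le|t-t_0|<2^j}\int_{\mathbb{R}}v^2\,dy\,dt$ by choosing a radius $R_j=\max(1,\sqrt{2^jC/M})\ge1$ and splitting $\int_{\mathbb{R}}v^2\,dy=\int_{|y|\le R_j}v^2+\int_{|y|>R_j}v^2$. Summing \eqref{E:linlio1} over the $\lesssim R_j$ unit cells meeting $[-R_j,R_j]$ and integrating in $t$ over all of $\mathbb{R}$ controls the inner contribution by $\lesssim R_jM$, while the tail contributes $\lesssim2^j\cdot C/R_j$; with the stated choice of $R_j$ this gives a bound $\lesssim M+2^{j/2}\sqrt{CM}$ on the $j$-th annulus. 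Multiplying by $2^{-4j/5}$ and summing yields $\sum_{j\ge1}2^{-4j/5}M+\sum_{j\ge1}2^{-3j/10}\sqrt{CM}<\infty$, since $-4/5+1/2=-3/10<0$. On $\{|t-t_0|\le1\}$ the weight is $\le1$ and $\int_{\mathbb{R}}v^2(t,y)\,dy\le M$ for a.e.\ $t$, so that piece contributes $\lesssim M$. Since no constant in this argument depends on $t_0$, the resulting bound is uniform in $t_0$, which is \eqref{E:temperedbound}.

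There is no serious obstacle here; the single point requiring a moment's care is that when $\sqrt{2^jC/M}<1$ (small $j$) one must take $R_j=1$, in which regime $2^jC\le M$ and the annulus contributes only $\lesssim M$, still consistent with the displayed bound $M+2^{j/2}\sqrt{CM}$. The role of the exponent $4/5$ is solely through the inequality $4/5>1/2$, which is precisely what makes the weight $2^{-4j/5}$ overcome the $2^{j/2}$ growth produced by the optimization over $R_j$.
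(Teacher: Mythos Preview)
Your argument is correct and follows essentially the same strategy as the paper: split the $y$-integral at a $t$-dependent radius, control the inner piece by summing the local-smoothing bound \eqref{E:linlio1} over unit spatial intervals, and control the outer piece via the uniform spatial decay \eqref{E:uniformspatial}. The only organizational difference is that the paper takes a continuous cutoff at $|y|=\langle t-t_0\rangle^{3/5}$ and swaps the order of summation (summing over spatial cells $n$ first, converting the time weight into $\langle n\rangle^{-4/3}$), whereas you dyadicize in time and optimize the cutoff to $R_j\sim 2^{j/2}$; your remark that the argument only needs $4/5>1/2$ makes the numerology slightly more transparent than the paper's specific choice of exponent $3/5$.
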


\begin{proof}
By translation in time, it suffices to assume that $t_0=0$.
Split the integral into
$$
\int_t \frac{1}{\langle t \rangle^{4/5}} \int_{|y| < \langle t \rangle^{3/5}} v^2 (t,y) dy dt + \int_t \frac{1}{\langle t \rangle^{4/5}} \int_{|y| > \langle t \rangle^{3/5}} v^2 (t,y) dy dt := \text{I}+\text{II}
$$
and compute
\begin{align*}
\text{I} & = \int_t \frac{1}{ \langle t \rangle^{4/5}} \sum_n \int_{y \in [n, n+1], |y| \leq \langle t \rangle^{3/5}} v^2 (t,y) dy dt   \\
& =  \sum_n \int_t \frac{1}{ \langle t \rangle^{4/5}} \int_{y \in [n, n+1], |y| \leq \langle t \rangle^{3/5}} v^2 (t,y) dy dt   
\end{align*}
The condition on the inner integral implies that $\la n \ra \lesssim \la t \ra^{3/5}$, from which it follows that $\la t \ra^{-4/5} \leq \la n \ra^{-4/3}$.  Therefore, we can continue the estimate as
$$
\text{I} \lesssim \sum_n \frac{1}{\la n\ra^{4/3}} \int_t  \int_{y \in [n, n+1], |y| \leq \langle t \rangle^{3/5}} v^2 (t,y) dy dt \leq \sum_n \frac{1}{\la n\ra^{4/3}}  \sup_I \|v \|^2_{L^2_{[0,T]} L^2_{y\in I} }  < \infty
$$
by \eqref{E:linlio1}.  Moreover, by the uniform spatial decay hypothesis \eqref{E:uniformspatial}, we have
$$
\int_{|y| > \langle t \rangle^{3/5}} v^2 (t,y) dy \lesssim \frac{1}{\langle t \rangle^{3/5}}
$$
from which we obtain
$$
\text{II} \lesssim \int_t \frac{1}{\langle t \rangle^{7/5}} dt < \infty
$$
Since $\text{I}<\infty$ and $\text{II}<\infty$, \eqref{E:temperedbound} holds.
\end{proof}

By Proposition 2 on p. 920 of Kenig \& Martel \cite{KM}, there exists $A\gg 1$ such that with
\begin{equation}
\label{E:monphi}
\phi (y)= \frac{\pi}{2} + \arctan (\frac{y}{A})
\end{equation}
the following holds:  for any $\lambda \in (0,1)$, $t\leq t_0$, and $y_0 >1$, we have the monotonicity estimate
\begin{equation}
\label{E:mon}
\begin{aligned}
\indentalign \int v^2 (y,t_0) ( \phi (y- y_0) -  \phi (-y_0)  ) dy \\
& \leq \int v^2 (y,t) ( \phi (y- y_0 -\lambda (t_0 - t)) -  \phi (-y_0 - \lambda (t_0 - t))  ) dy \\
& \qquad + C \int^{t_0}_t \frac{\| v(t') \|^2_{L^2_y}  }{ (y_0 + \lambda (t_0 - t'))^2 } dt'  = p_1(t)+p_2(t)+p_3(t)
\end{aligned}
\end{equation}
We have decomposed the right-side as
$$p_1(t) = \int_{y> \frac12(y_0+\lambda(t_0-t))} v^2 (y,t) ( \phi (y- y_0 -\lambda (t_0 - t)) -  \phi (-y_0 - \lambda (t_0 - t))  ) dy$$
$$p_2(t) = \int_{y< \frac12(y_0+\lambda(t_0-t))} v^2 (y,t) ( \phi (y- y_0 -\lambda (t_0 - t)) -  \phi (-y_0 - \lambda (t_0 - t))  ) dy$$
$$p_3(t) = C\int^{t_0}_t \frac{\| v(t') \|^2_{L^2_y}  }{ (y_0 + \lambda (t_0 - t'))^2 } dt'$$
Note that
$$p_3(t) \lesssim \int_t^{t_0} \la t'-t_0\ra^{-4/5} \|v(t')\|_{L_y^2}^2 \, dt' \; \sup_{t'\leq t_0} \left[ \frac{\la t'-t_0\ra^{4/5}}{(y_0+\lambda(t_0-t'))^2} \right]$$
Thus by \eqref{E:temperedbound}, $p_3(t) \lesssim y_0^{-6/5}$ uniformly in $t<t_0$.  Next, we will show that $\lim_{t\to -\infty} p_1(t) = 0$ and  $\lim_{t\to -\infty} p_2(t) = 0$.  Indeed, by \eqref{E:uniformspatial},
$$|p_1(t)| \lesssim \frac{1}{y_0 + \lambda(t_0-t)} \quad \implies \quad \lim_{t\to -\infty} p_1(t) = 0$$ Also,
$$|p_2(t)| \lesssim \int_{y} v^2 (y,t)  dy \; \sup_{y<\frac12(y_0+\lambda(t_0-t))} ( \phi (y- y_0 -\lambda (t_0 - t)) -  \phi (-y_0 - \lambda (t_0 - t)))$$
and from the formula \eqref{E:monphi} for $\phi(y)$,
$$\sup_{y<\frac12(y_0+\lambda(t_0-t))} ( \phi (y- y_0 -\lambda (t_0 - t)) -  \phi (-y_0 - \lambda (t_0 - t))) \leq 2 \phi(-\tfrac12(y_0+\lambda(t_0-t)))$$
from which it follows that $\lim_{t\to-\infty} p_2(t) =0$.  

From these estimates on $p_1(t)$, $p_2(t)$ and $p_3(t)$, we see that by taking $t\to -\infty$ in \eqref{E:mon}, we obtain that for all $t_0\in \mathbb{R}$
\begin{equation}
\label{E:mon1}
\int v^2 ( y,t_0) ( \phi (y- y_0) -  \phi (-y_0)  ) dy \lesssim y_0^{-6/5}
\end{equation}
The whole argument leading to \eqref{E:mon1} applies with $v(y,t)$ replaced by $v(-y,-t)$, so that we can also assert that \eqref{E:mon1} holds with $v(y,t)$ replaced by $v(-y,-t)$.  Thus, for all $t_1\in \mathbb{R}$
\begin{equation}
\label{E:mon2}
\int v^2 ( -y,-t_1) ( \phi (y- y_0) -  \phi (-y_0)  ) dy \lesssim y_0^{-6/5}
\end{equation}
Changing variable $-y \mapsto y$, and using that $\phi(-y-y_0)-\phi(-y_0) = \phi(y_0) - \phi(y+y_0)$ (which follows from the formula \eqref{E:monphi} for $\phi$), we have
\begin{equation}
\label{E:mon3}
\int v^2 ( y, -t_1) ( \phi (y_0) -  \phi (y+y_0)  ) dy \lesssim y_0^{-6/5}
\end{equation}
Taking $t_1=-t_0$, and adding \eqref{E:mon1} and \eqref{E:mon3}, we obtain
\begin{equation}
\label{E:mon4}
\int v^2 ( y, t_0) \rho(y,y_0) dy \lesssim y_0^{-6/5}
\end{equation}
where
$$\rho(y,y_0) = \phi(y-y_0)- \phi(-y_0) - \phi(y+y_0) + \phi(y_0)$$
From the formula \eqref{E:monphi} for $\phi$, it follows that $\rho$ is even in $y$ (that is $\rho(-y,y_0) = \rho(y,y_0)$) and $\partial_y \rho(y,y_0) \geq 0$ for $y>0$.  Since $\rho(0,y_0)=0$ and $\rho(y_0,y_0) \geq \frac{\pi}{6}$ whenever $y_0\geq \sqrt{3}A$, it follows that $\rho(y,y_0) \geq 0$ for all $y\in \mathbb{R}$ and $\rho(y,y_0) \geq \frac{\pi}{6}$ when $|y| \geq y_0$ (provided $y_0 \geq \sqrt{3}A$).  Thus from \eqref{E:mon4} 
$$\forall \; y_0 > \sqrt{3}A, \quad \int_{|y| > y_0} v^2 (t_0, y) dy \lesssim y_0^{-6/5}
$$
from which we can integrate in $y_0$ and find that uniformly for all $t\in \mathbb{R}$
\begin{equation}
\label{E:finitevar}
\int_{y\in \mathbb{R}} |y| v^2 (t, y) dy \lesssim 1
\end{equation}
The (nonlocalized) virial identity obtained by computing $\partial_t \int y v(y,t)^2 \, dy$, substituting the equation \eqref{E:D1C} for $v$, applying integration by parts in $y$, and integrating over $t_1\leq t \leq t_2$, is
$$ \int y v^2 (t_2) \, dy - \int y v^2 (t_1) \,dy  =  - \|v\|_{L_y^2}^2 - 2\|D_y^{1/2} v\|_{L_y^2}^2 + \int (Q  - yQ') v^2 \, dy$$
From this, it follows that
$$\| v \|_{L_{[t_1,t_2]}^2 H_y^{1/2}}^2 \lesssim \| |y|^{1/2} v \|_{L_{[t_1,t_2]}^\infty L_y^2}^2 + \| \la y \ra^{-1} v \|_{L_{[t_1,t_2]}^2L_y^2}^2$$
By \eqref{E:linlio1} and \eqref{E:finitevar}, the right side is bounded uniformly for all $t_1<t_2$, so taking $t_1=0$ and $t_2\to +\infty$ implies $\| v \|_{L_{t>0}^2 H_y^{1/2}}<\infty$.  Hence, there exists a time sequence $t_n \rightarrow \infty$ along which  $ \|v(t_n) \|_{H_y^{1/2}} \rightarrow 0$ as $n \rightarrow +\infty$.
Now, from the fact that $\mathcal{L} Q' =0$ we can deduce that the quantity $ \langle \mathcal{L} v(t), v(t) \rangle$ is conserved in time.   Hence for any $t$,
$$ 
\langle \mathcal{L} v(t), v(t)\rangle = \lim_{t_n \rightarrow +\infty} \langle \mathcal{L} v(t_n), v(t_n) \rangle =0
$$
But since $v$ satisfies the orthogonality conditions \eqref{E:D1E}, it follows that $\langle \mathcal{L} v(t), v(t)\rangle  \gtrsim \|v(t) \|^2_{H^{1/2}} $.  Therefore, $v(t)\equiv 0$ for all $t$, as claimed.

\end{document}